\tikzset{
	dotA/.style={
		transform shape, fill,circle,inner sep=1.5pt, label distance=-1pt,
		%	transform shape,inner sep=0pt, label distance=-1pt,
		font={\normalsize }
	},
	>=stealth,
}
\newcommand{\R}{\mathbb{R}}
\newcommand{\complex}{\mathbb{C}}
\newcommand{\jap}[1]{\langle #1 \rangle}
\newcommand{\supp}{supp}
\newcommand{\dem}{\noindent \textit{Proof: }}
\renewcommand{\k}{\kappa}
\newcommand{\E}{\mathcal{E}}
\newcommand{\K}{\mathcal{K}}
\newcommand{\M}{\mathcal{M}}
\newcommand{\T}{\mathcal{T}}
\newcommand{\norma}[1]{{\left\vert\kern-0.25ex\left\vert\kern-0.25ex\left\vert #1 
    \right\vert\kern-0.25ex\right\vert\kern-0.25ex\right\vert}}
\renewcommand{\k}{\kappa}
\DeclareMathOperator{\sgn}{\mathrm{sgn}}
\newtheorem{theorem}{Theorem}[section]
\newtheorem{lemma}[theorem]{Lemma}
\theoremstyle{definition}
\newtheorem{definition}[theorem]{Definition}
\newtheorem{remark}[theorem]{Remark}
\newtheorem{example}[theorem]{Example}
\newtheorem*{theorem*}{Theorem}
\theoremstyle{plain}
\newtheorem{thm}{Theorem}[section]
\newtheorem*{thm*}{Theorem}
\newtheorem{prop}[thm]{Proposition}
\theoremstyle{definition}
\theoremstyle{remark}
\numberwithin{equation}{section}
\newtheoremstyle{mytheoremstyle} % name
{\topsep}                    % Space above
{\topsep}                    % Space below
{}                   % Body font
{}                           % Indent amount
{\scshape}                   % Theorem head font
{.}                          % Punctuation after theorem head
{.5em}                       % Space after theorem head
{}  % Theorem head spec (can be left empty, meaning ‘normal’)
\theoremstyle{mytheoremstyle} 
\theoremstyle{mytheoremstyle} 
\date{}
\author{Simão Correia, Gonçalo Pereira and Thyago S.R. Santos}
\title{Small-Amplitude Self-Similar Solutions for one-dimensional Nonlinear Dispersive Equations}
\keywords{dispersive equations, self-similar, NLS, mBO, gKdV}
\subjclass[2020]{35A01, 35B45, 35Q53} %     
\thanks{S. C. was partially supported by Funda\c{c}\~ao para a Ci\^encia e Tecnologia, through CAMGSD, IST-ID (project UID/04459/2025) and T.S.R.S was partially supported by FAPESP Grant No. 2024/15587-1.}
\begin{document}

\begin{abstract}
Given a nonlinear dispersive equation which admits a scaling invariance, there may exist self-similar solutions. In this work, we present a systematic approach for the construction of small-amplitude self-similar solutions, together with precise asymptotic descriptions at both small and large frequency scales. These ideas are then applied to three classic dispersive models: the modified Benjamin-Ono, the quartic Korteweg-de Vries and the cubic nonlinear Schrödinger equations.
\end{abstract}

\maketitle

%\tableofcontents
%SOMENTE PARA ME ORGANIZAR

\section{{Introduction}}

\subsection{Setting of the problem}

In mathematical problems which present a scale-invariance, self-similarity refers to the property of an object which remains unchanged as one moves through different scales. These objects connect microscopic and macroscopic scales and thus have been the pursuit of numerous mathematical and numerical studies.
In the present article, we aim to provide a roadmap to prove the existence of small-amplitude self-similar solutions to one-dimensional dispersive equations, together with precise asymptotic descriptions of both small and large (self-similar) scales. To that end, we will concentrate in three dispersive models: 
\begin{itemize}
    \item the cubic nonlinear Schrödinger equation,
 \begin{equation}\label{3NLS}
iq_t + q_{xx} + |q|^2q= 0, % quad (t,x)  \in \R \times \R, 
\tag{NLS}
\end{equation}
% one of the most fundamental nonlinear dispersive equations, with applications ranging from nonlinear optics, water waves to plasma physics and geometric flows. COMPLETAR MOTIVAÇÃO FÍSICA. A particularly interesting application links \eqref{3NLS} with the dynamics of vortex filaments, i.e., flows with vorticity concentrated over a three-dimensional curve \cite{DaRios1906}, as by Hasimoto \cite{Hasimoto} (see also \cite{PinkallGross_hasimoto}). This connection has allowed the study of self-similar filaments (such as two half-lines meeting at a vertex) via the nonlinear Schrödinger equation, see \cite{BV13} [MAIS REFS].
{one of the most fundamental nonlinear dispersive equations, with applications ranging from nonlinear optics, water waves to plasma physics and Bose--Einstein condensation \cite{Hasegawa_Tappert_1973,Zakharov_1968,Karpman_1975,Gross_1961, Sulem_Sulem_1999}. A particularly interesting application links \eqref{3NLS} with the dynamics of vortex filaments, i.e., flows with vorticity concentrated over a three-dimensional curve \cite{DaRios1906}, as discovered by Hasimoto \cite{Hasimoto} (see also \cite{PinkallGross_hasimoto}).  This connection has enabled the analysis of self-similar filaments, such as two half-lines meeting at a vertex, through the nonlinear Schrödinger equation \cite{BV13, Banica_Vega_2007,Banica_Vega_2008, GVV_binormal}.
}
\item  the modified Benjamin-Ono equation, 
\begin{equation}\label{dnls}
w_t +  Hw_{xx} + (w^3)_x= 0, %\quad (t,x)  \in \R \times \R,
\tag{mBO}
\end{equation}
  where $Hf(x) := (-i \sgn(\xi) \widehat{f}(\xi))^\vee(x)$ represents the Hilbert transform, is a fundamental dispersive model that describes one-dimensional internal waves with strong nonlinearity in deep stratified fluids, especially in the ocean. This model also plays a central role in analyzing the propagation and interaction of solitary waves in plasma and unconventional optical media, capturing phenomena such as modulational instabilities and rogue wave formation. Recent studies highlight its importance in describing abundant wave structures, multi-soliton solutions, and nonclassical symmetries, demonstrating its relevance for both physical applications and advances in applied mathematics and analytical methods (\cite{Yu_Song_2024}, \cite{Khater_2022}, \cite{Apeanti_2025}, \cite{IP_AP741}, \cite{KenigTakaoka2006}).
\item the quartic Korteweg-de Vries equation,
\begin{equation}\label{4Kdv}
v_t +  v_{xxx} + (v^4)_x= 0,
%\quad (t,x)  \in \R \times \R, 
\tag{4KdV}
\end{equation}
which is a higher-order version of the classical modified Korteweg-de Vries equation
\begin{equation}\label{mKdV}
    u_t +  u_{xxx} \pm (u^3)_x= 0.\tag{mKdV}
\end{equation}
This last model is known to describe 
acoustic waves in certain anharmonic lattices \cite{ZABUSKY1967223} and Alfvén waves in cold, collisionless plasmas \cite{KakutaniOno}. Similar to the \eqref{NLS} case, the modified KdV equation can also be linked to a geometric problem in planar fluid mechanics. More precisely, Goldstein and Petrich \cite{GoldsteinPetrich} proposed a model for the time evolution  of the boundary of a two-dimensional vortex patch under Euler's equations. Under this model, it can be seen that the curvature of the boundary satisfies \eqref{mKdV}. As such, self-similar solutions to \eqref{mKdV} relate directly to self-similar boundaries, such as two half-lines meeting at a corner or logarithmic spirals.
\end{itemize}
A common feature among all four dispersive models is the existence of a scaling invariance, namely
\begin{align*}
q_\lambda(t,x)&:=\lambda q(\lambda^2 t, \lambda x) \mbox{ for \eqref{3NLS}},\qquad &w_\lambda(t,x):=\lambda^{\frac12}w(\lambda^2t, \lambda x) \mbox{ for \eqref{dnls}},
\\
v_\lambda(t,x)&:=\lambda^{\frac23} v(\lambda^3 t, \lambda x) \mbox{ for \eqref{4Kdv}},\qquad &u_\lambda(t,x):=\lambda u(\lambda^3t, \lambda x) \mbox{ for \eqref{mKdV}}.
\end{align*}
As such, all four models may have \textit{self-similar solutions}, i.e., solutions which remain invariant by the scaling of the equation. In particular, these solutions must be of the form
\begin{align*}
q(t,x)&=\frac{1}{t^{\frac12}} Q\left(\frac{x}{t^{\frac12}}\right) \mbox{ for \eqref{3NLS}},\qquad &w(t,x)=\frac{1}{t^{\frac14}} W\left(\frac{x}{t^{\frac12}}\right) \mbox{ for \eqref{dnls}},
\\
v(t,x)&=\frac{1}{t^{\frac29}} V\left(\frac{x}{t^{\frac13}}\right) \mbox{ for \eqref{4Kdv}},\qquad &u(t,x)=\frac{1}{t^{\frac13}} U\left(\frac{x}{t^{\frac13}}\right) \mbox{ for \eqref{mKdV}},
\end{align*}
where the profile functions $Q, W, V$ and $U$ must satisfy, respectively,

\begin{equation}\label{edo associada 3NLS}
Q''(y) + |Q(y)|^2 Q(y) - \frac{i}{2}\big(Q(y) + y Q'(y)\big) = 0,
\end{equation}
\begin{equation}\label{edo associada DNLS}
HW''(y) - \frac{1}{2}\,y\, W'(y) - \frac{1}{4}\, W(y) + 3 W(y)^2W^\prime(y) = 0,
\end{equation}
%\textcolor{red}{ESTA EQUAÇÃO ESTÁ BEM-ESCRITA?}
\begin{equation}\label{edo associada 4kdV}
 V'''(y) + 4 V(y)^3 V'(y) - \frac{1}{3} y V'(y) - \frac{2}{9} V(y) = 0
\end{equation}
and
\begin{equation}\label{edo associada mkdV}
 U'''(y) \pm 3 U(y)^2 U'(y) - \frac{1}{3} y U'(y) - \frac{1}{3} U(y) = 0.
\end{equation}
Self-similar solutions have been shown to influence the global dynamics of the dispersive model: they appear in the asymptotic behavior of small solutions for large times \cite{HayashiNaumkin_mkdv2, GPR_mkdv, DeiftZhou}, are essential in the construction of finite-time blow-up solutions \cite{MerleRaphael} and have a stable blow-up behavior, either at initial time or at blow-up time \cite{lan, CollotRaphaelSzeftel,CC24, cote2024sharp} (see also the numerical experiments in \cite{Chapman_et_al}). Particularly essential to the proofs of these qualitative results is the precise description of the self-similar profile, either in physical space (via spectral properties) or in frequency space (via dispersive estimates). In particular, even if the existence of self-similar solutions is already known, a great deal of effort should be made in providing the tools to handle the interplay between these critical objects and the nonlinear dynamics.

\medskip

Even though all four models present a scale invariance, each equation possesses its own particular features: 
\begin{itemize}

    \item \textbf{Scattering properties.} Given a nonlinear dispersive equation, it may occur that globally defined solutions scatter at $t=+\infty$, i.e., that they behave as linear solutions. In other words, the total nonlinear effects over $t\in \R^+$ can be shown to have a finite influence in the dynamics. In some particular cases, (linear) scattering is impossible and must be replaced with \emph{modified} scattering: at $t=+\infty$, solutions behave linearly \emph{after} a phase correction.

In the cases of \eqref{3NLS}, \eqref{dnls} and \eqref{mKdV}, Hayashi and Naumkin \cite{MR1491867, HayashiNaumkin_mkdv1, HayashiNaumkin_mkdv2, HayashiNaumkin_nls, MR1491867} proved the existence of modified scattering behavior for small solutions. These results have been revisited in \cite{KatoPusateri} (for \eqref{3NLS}) and in \cite{CCV21, GPR_mkdv} (for \eqref{mKdV}). In the \eqref{4Kdv} case, linear scattering was proved by Tao \cite{Tao_4kdv} as a direct consequence of the Cauchy theory for small initial data over the scaling-critical space $\dot{H}^{-\frac16}(\R)$.
    
    \item \textbf{Critical regularity scale.} Given a scaling invariance, one may look for scaling-invariant spaces of initial data. These spaces lie in the so-called critical scale, which combines integrability, differentiability and summability in an optimal balance. A common collection of spaces to consider is the $H^s(\R)$ class, where criticality is determined completely through the regularity parameter $s$. A simple computation shows that the critical regularity $s_c$ for each of the four equations is
    $$
    s_c\eqref{3NLS}=s_c\eqref{mKdV}=-\frac12,\quad s_c\eqref{4Kdv}=-\frac16,\quad s_c\eqref{dnls}=0.
    $$
    In our analysis, a more relevant scale is given by the Fourier-Lebesgue class
    $$
    \mathcal{F}L^{s,\infty}(\R):=\{u\in \mathcal{S'}(\R): \|\jap{\xi}^s\widehat{u}\|_{L^\infty_\xi} <\infty\}, 
    $$
    in which the critical regularities are
      $$
    s_{c,\infty}\eqref{3NLS}=s_{c,\infty}\eqref{mKdV}=0,\quad s_{c,\infty}\eqref{4Kdv}=\frac13,\quad s_{c,\infty}\eqref{dnls}=\frac12.
    $$
As self-similar solutions are, by definition, scaling-invariant, their profiles will lie in critical spaces. As such, these critical Fourier-Lebesgue exponents  will appear later on in our proofs.

\item \textbf{Derivative loss in the nonlinearity.} The presence of a derivative in the nonlinearities of \eqref{dnls}, \eqref{mKdV} and \eqref{4Kdv} leads to several difficulties in the qualitative analysis of the dynamics: in the process of deriving suitable \textit{a priori} bounds, one must find a way to recover from such a derivative loss in other to have any hope of bootstrapping the nonlinear estimates.

In our context, this derivative loss is not a hindrance at all, as we will work with nonlinear operators that already incorporate the derivative loss. If anything, this loss will actually be \emph{helpful}, since it implies that, formally, the equation preserves the \emph{average} of the solution,
$$
\int_\R w(t,x)dx=\int_\R w(0,x)dx,\quad \int_\R v(t,x)dx=\int_\R v(0,x)dx,\quad \int_\R u(t,x)dx=\int_\R u(0,x)dx.
$$
\end{itemize}

As we will see below, the construction of small-amplitude self-similar solutions, together with their asymptotic behavior at small and large scales, is directly connected to these three features. Our rationale for the choice of the nonlinear dispersive models was driven by looking for different combinations of such features:
\begin{itemize}
    \item \eqref{3NLS}: modified scattering, $s_{c,\infty}=0$, no average conservation.
    \item \eqref{dnls}: modified scattering, $s_{c,\infty}=1/2$, preserves averages.
    \item \eqref{4Kdv}: linear scattering, $s_{c,\infty}=1/3$, preserves averages.
    \item \eqref{mKdV}: modified scattering, $s_{c,\infty}=0$, preserves averages.
\end{itemize}
Other important properties which greatly impact the dynamics of each equation include complete integrability and focusing/defocusing nonlinearities. In our context, these properties will have no impact in the construction of (small) self-similar solutions.

\subsection{Previous works} The pursuit of self-similar solutions in nonlinear dispersive models has attracted many experts in the field and lead to the development of different techniques, ranging from ODE techniques \cite{DonningerSchorkhuber, PerelmanVega, Troy, RottschaferKaper, BonaWeissler, FGO20, KavianWeissler}, bifurcation arguments \cite{BahriMartelRaphael, Koch_gkdv} and complete integrability methods \cite{DunstKokocki}, to local well-posedness for scaling-invariant initial data \cite{Planchon_wave, KatoOzawa_wave, Germain_wave_map, CazWei_asympt_ss, CazWei_more_ss, CDEW, GuoCui_beam}.

\begin{itemize}
    \item In the context of the \eqref{3NLS} equation, the results on self-similar solutions are essentially nonexistent: the small-data techniques of \cite{CazWei_asympt_ss, CazWei_more_ss} only cover large nonlinear powers, while the ODE techniques of \cite{RottschaferKaper, DonningerSchorkhuber, Troy, KavianWeissler} fail to cover the one-dimensional cubic case.

    \item As in equation \eqref{3NLS}, there are no known results in the literature concerning self-similar solutions for \eqref{dnls}. Notice that \eqref{edo associada DNLS} is a nonlocal equation, making the application of ODE techniques substantially more difficult.
    
\item For \eqref{4Kdv}, the only work proving the existence of (small) self-similar solutions is \cite{MVW25} by Molinet, Vento and Weissler. Their remarkable argument refines the aforementioned small data arguments (which rely only the existence of a suitable well-posedness theory at critical regularity) and uses the additional structure of a self-similar solution to propose a well-behaved integral operator whose fixed points are precisely the self-similar profiles. Their proof holds for lower dispersions (strictly above two derivatives and below three) and for larger exponents, but fails to produce any information on the asymptotic properties of the self-similar profile. Our Theorem \ref{teorema Self Similar 4KdV} will greatly complement their results, providing the asymptotic behavior of the profile.

\item For the modified KdV equation, there have been several works. When \eqref{edo associada mkdV} is reducible to a Painlevé II equation, strong results can be achieved \cite{DeiftZhou,HastingsMcLeod}. In \cite{PerelmanVega}, the authors treated the focusing case (with a $+$ sign in the nonlinearity). Particularly relevant to the present article is the work of the first author, Côte and Vega \cite{CCV20}, where a new approach, based on pure Fourier analysis and stationary phase arguments, was developed. More precisely, the authors prove the following result\footnote{The result is presented in the defocusing case. However, the sign of the nonlinear term is irrelevant and impacts only the definition of the ansatz \cite{CCV20}}: 

\begin{thm}[Self-similar solutions for \eqref{mKdV}, \cite{CCV20}]\label{teorema Self Similar mKdV}
Let $ \kappa \in \left(\frac{1}{2}, \frac{4}{7}\right) $ and $ c \in \mathbb{C} $ with $ |c| \ll 1 $. Then, there exist a constant $ A = A(c) \in \mathbb{C} $ and a tempered distribution $ U_c \in \mathcal{S}^\prime$ such that the function
$$
u(t,x) := t^{-\frac{1}{3}} U_c\left(t^{-\frac{1}{3}} x\right)
$$
is a (real-valued) self-similar solution to  \eqref{mKdV}. Furthermore, the function $ U_c $ is characterized by the relation
$$
e^{-i \xi^3}\widehat{U}_c(\xi) = S_{A(c)}(\xi)+ z(\xi),
$$
where 
\begin{equation}\label{eq:SA_mkdv}
    S_A(\xi)= e^{-\frac{3i}{4\pi}|A|^2\log |\xi|}\left(A + \frac{3i|A|^2Ae^{-i\frac{3}{4\pi}|A|^2\log 3}}{16\sqrt{2}\pi\xi^3} e^{-\frac{3i}{2\pi}|A|^2\log |\xi|-\frac{8i\xi^3}{9}}\right), \quad \xi\gg 1, \qquad S_A(\xi)=\overline{S_A(-\xi)},
\end{equation}
and the remainder term $ z(\xi)= \overline{z(-\xi)} $ satisfies
$$
z(0^+)=c\quad \text{and} \quad \| \langle \xi \rangle^{\kappa} z\|_{L_\xi^\infty} + \| \langle \xi \rangle^{\kappa+1} z^\prime \|_{L_\xi^\infty} \lesssim |c|.
$$
\end{thm}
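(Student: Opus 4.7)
The plan is to reformulate \eqref{edo associada mkdV} as a fixed-point problem on the Fourier side. Taking the Fourier transform of \eqref{edo associada mkdV} produces a first-order ODE
$$\frac{1}{3}\xi\,\widehat{U}_c'(\xi) \,=\, i\xi^3 \widehat{U}_c(\xi) \mp i\xi\, \widehat{U_c^3}(\xi),$$
in which the underlying Airy oscillations dominate. Introducing the linear-profile unknown $g(\xi) := e^{-i\xi^3}\widehat{U}_c(\xi)$ conjugates these oscillations away, reducing the equation to $g'(\xi) \propto e^{-i\xi^3}\widehat{U_c^3}(\xi)$, which after expanding the convolution reads schematically
$$g'(\xi) = c_0 \iint g(\xi_1)\,g(\xi_2)\,g(\xi-\xi_1-\xi_2)\, e^{i\Phi(\xi,\xi_1,\xi_2)}\, d\xi_1\, d\xi_2, \quad \Phi := \xi_1^3+\xi_2^3+(\xi-\xi_1-\xi_2)^3-\xi^3.$$
The whole problem then becomes the asymptotic analysis of this cubic oscillatory functional.

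Next, I would construct the ansatz $S_A$ by performing a stationary/non-stationary phase analysis of this right-hand side for $|\xi|\gg 1$. The phase $\Phi$ degenerates on the resonant set $\{\xi_1\xi_2(\xi-\xi_1-\xi_2)=0\}$, and it is there that the cubic interaction of high frequencies produces a logarithmically divergent contribution of the form $ic_1\,|g(\xi)|^2 g(\xi)/\xi$ to $g'(\xi)$. Since $|g|$ is (asymptotically) conserved by such an ODE, integration yields the modified-scattering phase $e^{-\frac{3i}{4\pi}|A|^2 \log|\xi|}$, with $A$ encoding the asymptotic amplitude of $g$ at infinity. A second-order expansion around the (isolated) non-resonant stationary point $\xi_1=\xi_2=\xi-\xi_1-\xi_2=\xi/3$, where $\Phi=-\tfrac{8}{9}\xi^3$, produces the subleading summand of \eqref{eq:SA_mkdv}, including its $e^{-\frac{8i\xi^3}{9}}/\xi^3$ structure and the precise constant $3/(16\sqrt{2}\pi)$.

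With the ansatz fixed, I would write $g = S_A + z$ and substitute into the integral equation, obtaining a fixed-point equation $z = \mathcal{L}_A z + \mathcal{N}(S_A, z)$ with datum $z(0^+)=c$, where $\mathcal{L}_A$ is linear of size $O(|A|^2)$ and $\mathcal{N}$ contains terms at least quadratic in $z$ plus residual contributions from $S_A$ (already subtracted off up to an admissible error). Using the decay $|S_A(\xi)|\lesssim |A|$ together with the target estimate $\|\langle\xi\rangle^\kappa z\|_{L^\infty} + \|\langle\xi\rangle^{\kappa+1} z'\|_{L^\infty} \lesssim |c|$, each cubic piece can be controlled by oscillatory-integral lemmas that trade gradient bounds on $\Phi$ for algebraic decay in $\xi$. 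The admissible range $\kappa\in(1/2,4/7)$ arises from balancing two opposing constraints: $\kappa>1/2$ ensures integrability of the triple convolution, whereas $\kappa<4/7$ prevents the derivative $z'$ from being swamped by the log-polynomial corrections generated when differentiating $S_A$. A Banach fixed-point argument for $|c|$ sufficiently small then produces $z$ and, simultaneously, determines $A = A(c)$ with $A(0)=0$. The symmetry $z(\xi)=\overline{z(-\xi)}$ is preserved by the iteration, so the resulting $U_c$ is real-valued.

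The main obstacle is the asymptotic analysis producing $S_A$: one must localize sharply around the resonant set of $\Phi$, extract both the leading logarithmic phase and the next-order algebraic correction with their exact constants, and at the same time produce remainder estimates uniform in $\xi$ in a norm compatible with the contraction. Any excess loss in these oscillatory expansions shrinks the admissible window for $\kappa$, and a loss beyond $4/7-1/2$ would collapse the fixed-point scheme entirely.
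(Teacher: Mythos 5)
Your proposal follows essentially the same route as the paper's review of \cite{CCV20}: pass to the conjugated Fourier profile $\tilde U(\xi)=e^{-i\xi^3}\widehat U(\xi)$, obtain the cubic oscillatory fixed-point equation, build the two-term ansatz $S_A$ from a stationary-phase analysis (logarithmic phase from the space--time resonances, subleading $e^{-8i\xi^3/9}/\xi^3$ term from the stationary point $\xi_j=\xi/3$), and contract for the remainder $z$ in $Z^\kappa$. Two points need correction or completion. First, your identification of the resonant set is wrong: with $\xi=\xi_1+\xi_2+\xi_3$ one has the factorization $\Phi=\xi^3-\xi_1^3-\xi_2^3-\xi_3^3=3(\xi-\xi_1)(\xi-\xi_2)(\xi-\xi_3)$, so the phase vanishes where some $\xi_j=\xi$ (equivalently where a pairwise sum of the other two vanishes), \emph{not} on $\{\xi_1\xi_2(\xi-\xi_1-\xi_2)=0\}$; the divergence comes from the stationary points $(\xi,\xi,-\xi)$ and permutations, which lie on this zero set. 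Your subsequent extraction of the $|g(\xi)|^2g(\xi)/\xi$ contribution is consistent with the correct resonances, so this reads as a slip, but localizing around the set you wrote would miss the divergence entirely. Second, the statement parametrizes the solution by $c=z(0^+)$ while the ansatz is parametrized by the high-frequency amplitude $A$; the argument actually fixes $A$, runs the contraction for $z$, reads off $c$ from the limit $\xi\to\infty$ of the integral equation (the data-to-scattering relation), and only then inverts $A\mapsto c(A)$ as a Lipschitz perturbation of the identity. Your phrase that the fixed point "simultaneously determines $A=A(c)$" glosses over this inversion, which is a separate, necessary step.
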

The precise knowledge of the asymptotics in frequency space can then be used to recover bounds in physical space. We point out that these asymptotics were crucial in the subsequent works of the first author and Côte \cite{CC24, cote2024sharp}, where the blow-up stability of self-similar solutions was proved.
\end{itemize}

\subsection{Main results}

Our goal is to extend the methodology presented in \cite{CCV20} to derive existence results for small-amplitude self-similar solutions in the lines of Theorem \ref{teorema Self Similar mKdV}.

To that end, let 
$\chi \in C^\infty(\mathbb{R})$ be a smooth cut-off function defined by\footnote{The choice of cut-off impacts only the smallness of the parameters in the main theorems.}
\begin{equation}\label{cutoff function}
\chi(\xi) =
\begin{cases}
0, & \text{if } \xi < 1/2, \\
1, & \text{if } \xi > 1.
\end{cases}    
\end{equation}

\begin{thm}[Self-similar solutions for \eqref{4Kdv}]\label{teorema Self Similar 4KdV}
Let $ \kappa \in \left(\frac{5}{8}, \frac{2}{3}\right) $ and $ c \in \mathbb{C} $ with $ |c| \ll 1 $. Then, there exist a constant $ A = A(c) \in \mathbb{C} $ and a tempered distribution $ V_c \in \mathcal{S}^\prime$ such that the function
$$
v(t,x) := t^{-\frac{2}{9}} V_c\left(t^{-\frac{1}{3}} x\right)
$$
is a (real-valued) self-similar solution to  \eqref{4Kdv}. Furthermore, the function $ V_c $ is characterized by the relation
$$
e^{-i \xi^3} \xi^{1/3} \widehat{V}_c(\xi) = S_{A(c)}(\xi)+ z(\xi),
$$
where
$$
S_A(\xi):= A \cdot \chi(\xi) + \overline{A} \cdot \chi(-\xi),
$$
and the remainder term $ z(\xi)= \overline{z(-\xi)} $ satisfies
$$
z(0^+)=c\quad \text{and} \quad \| \langle \xi \rangle^{\kappa} z\|_{L_\xi^\infty} + \| \langle \xi \rangle^{\kappa+1} z^\prime \|_{L_\xi^\infty} \lesssim |c|.
$$
\end{thm}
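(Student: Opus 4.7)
I would follow the Fourier-analytic strategy of \cite{CCV20}, adapted to the higher-order nonlinearity and to the higher critical Fourier-Lebesgue regularity $s_{c,\infty}=1/3$ of \eqref{4Kdv}. Taking the Fourier transform of \eqref{edo associada 4kdV} and rearranging yields the first-order linear ODE
\begin{equation*}
\xi \widehat{V}'(\xi) + \left(\tfrac{1}{3} - 3i\xi^3\right)\widehat{V}(\xi) = -3i\,\xi\, \widehat{V^4}(\xi), \qquad \xi>0,
\end{equation*}
whose integrating factor is exactly $\mu(\xi):=\xi^{1/3}e^{-i\xi^3}$. Setting $G(\xi):=\mu(\xi)\widehat{V}(\xi)$ turns the equation into $G'(\xi) = -3i\,\mu(\xi)\widehat{V^4}(\xi)$. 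Imposing the boundary values $G(0^+)=c$ and $G(+\infty)=A$ forces the compatibility relation
\begin{equation*}
A(c) = c - 3i\int_0^{+\infty}\mu(\eta)\,\widehat{V^4}(\eta)\,d\eta,
\end{equation*}
and, defining $z:=G - S_A$ on $(0,\infty)$ and extending by $z(-\xi)=\overline{z(\xi)}$, reduces the problem to the fixed-point equation
\begin{equation*}
z(\xi) = A(1-\chi(\xi)) + 3i\int_\xi^{+\infty}\mu(\eta)\,\widehat{V^4}(\eta)\,d\eta, \qquad \xi>0.
\end{equation*}
I would then run a Banach contraction argument in the weighted ball
\begin{equation*}
\mathcal{B}_{R|c|}:=\{z\in C^1(\mathbb{R}\setminus\{0\}):\ z(-\xi)=\overline{z(\xi)},\ \|\langle\xi\rangle^\kappa z\|_{L^\infty}+\|\langle\xi\rangle^{\kappa+1}z'\|_{L^\infty}\le R|c|\},
\end{equation*}
treating $A$ jointly as an unknown fixed by the compatibility identity above.

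\textbf{Key oscillatory estimate.} The core analytic task is to bound the nonlinear operator
\begin{equation*}
N(z,A)(\xi):=\int_\xi^{+\infty}\mu(\eta)\,\widehat{V^4}(\eta)\,d\eta
\end{equation*}
and its derivative by $\langle\xi\rangle^{-\kappa}(|A|+|c|)^{4}$. Substituting $\widehat{V}(\eta_j)=\mu(\eta_j)^{-1}(S_A(\eta_j)+z(\eta_j))$, the quadruple convolution $\widehat{V^4}$ decomposes into a finite family of three-dimensional oscillatory integrals with phase
\begin{equation*}
\Phi(\eta_1,\eta_2,\eta_3,\eta)=\pm\eta_1^3\pm\eta_2^3\pm\eta_3^3\pm(\eta-\eta_1-\eta_2-\eta_3)^3-\eta^3,
\end{equation*}
the signs being determined by the sign of each integration variable. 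When all four signs agree, $\Phi$ has no critical point and multidimensional van der Corput delivers strong non-resonant decay. Genuine stationary contributions appear only in sign-mixed configurations, where two of the four cubes cancel and the critical set reduces to a smooth one-parameter resonant manifold. A dyadic decomposition, integration by parts in the non-degenerate directions, and the pointwise bound $|\widehat{V}(\eta)|\lesssim|\eta|^{-1/3}(|A|+\langle\eta\rangle^{-\kappa}|c|)$ then yield the target estimate; an analogous argument handles $\partial_\xi N$.

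\textbf{Conclusion and main obstacle.} The Banach fixed-point theorem produces $z\in\mathcal{B}_{R|c|}$, hence $V_c$ through $\widehat{V}_c(\xi)=\mu(\xi)^{-1}(S_{A(c)}(\xi)+z(\xi))$; the preserved conjugation symmetry ensures $V_c$ and therefore $v$ are real-valued, while $A(c)=c+O(|c|^4)$ follows directly from the compatibility identity. The principal hurdle is the stationary-phase analysis of the quadruple convolution: compared to the two-dimensional convolution handled in \cite{CCV20}, the resonant strata are geometrically richer and the bookkeeping is noticeably more delicate. The narrow admissible window $\kappa\in(5/8,2/3)$ reflects the sharp trade-off between the decay of $z$ required to close the contraction and the decay gained through oscillation; crucially, for \eqref{4Kdv} the genuine four-wave resonances are integrable, which is precisely why $S_A$ carries no logarithmic phase correction and is consistent with the linear scattering behavior of the equation (in contrast with the modified scattering of \eqref{mKdV}).
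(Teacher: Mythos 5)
Your structural skeleton coincides with the paper's: the integrating factor $\mu(\xi)=\xi^{1/3}e^{-i\xi^3}$ reproduces the paper's variable $\tilde V=\xi^{1/3}e^{\mp i\xi^3}\widehat V$, your compatibility identity is exactly the data-to-scattering map $c(A,z)$, and the fixed-point equation $z(\xi)=A(1-\chi(\xi))+3i\int_\xi^\infty\mu\,\widehat{V^4}\,d\eta$ is the paper's $\Gamma_A$ written from the other endpoint. The only organizational difference is that you solve for $(z,A)$ jointly at fixed $c$, whereas the paper first fixes $A$, contracts for $z_A$, and then inverts $A\mapsto c(A)$ via a Lipschitz implicit function theorem; both orderings work and require the same estimates (the paper itself uses your ordering for the mBO case).

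However, there is a concrete error in your description of the key oscillatory estimate, which is where essentially all of the work lies. For \eqref{4Kdv} the dispersion symbol is the odd function $\xi^3$, so after substituting $\widehat V(\eta_j)=\mu(\eta_j)^{-1}(S_A+z)(\eta_j)$ the phase is \emph{always} $\Phi=\eta^3-\eta_1^3-\eta_2^3-\eta_3^3-\eta_4^3$: there are no sign choices attached to the signs of the integration variables (that phenomenon occurs for \eqref{3NLS}, where complex conjugation flips signs, and for \eqref{dnls}, whose symbol is $\xi|\xi|$ — not here). Consequently your claims that "when all four signs agree, $\Phi$ has no critical point" and that the resonances form "a smooth one-parameter resonant manifold" are both false: on the hyperplane $\eta=\eta_1+\eta_2+\eta_3+\eta_4$ the phase has exactly four \emph{isolated nondegenerate} critical points, $(\tfrac{\eta}{4},\tfrac{\eta}{4},\tfrac{\eta}{4},\tfrac{\eta}{4})$ and the permutations of $(\tfrac{\eta}{2},\tfrac{\eta}{2},\tfrac{\eta}{2},-\tfrac{\eta}{2})$, and the all-positive configuration is one of them. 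The correct statement is that three-dimensional stationary phase at these points yields decay $|\eta|^{-17/6}$ (after the $\eta^{5/3}$ Jacobian factor), which is integrable — this, not a sign cancellation, is why the constant ansatz suffices and no logarithmic phase correction appears. Beyond this, your plan of "dyadic decomposition and integration by parts in the non-degenerate directions" ignores two obstructions that drive the paper's entire Section 5: the factors carrying $z$ admit only \emph{one} derivative, so repeated integration by parts is unavailable whenever a $z$-factor is present, and the singular weights $|\eta_j|^{-1/3}$ force a separate, delicate treatment of all regions where some frequency is small. The window $\kappa\in(5/8,2/3)$ emerges precisely from that case analysis, so as written the central estimate of the theorem remains unproved.
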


{
\begin{thm}[Self-similar solutions for \eqref{dnls}]\label{teorema Self Similar_dnls}
Let $ \kappa \in \left(0, \frac{1}{4}\right) $ and $ c \in \mathbb{R} $ with $ |c| \ll 1 $. Then, there exist a constant $ A = A(c) \in \mathbb{C} $ and a tempered distribution $ W_c \in \mathcal{S}^\prime$ such that the function
$$
w(t,x) := t^{-\frac{1}{4}} W_c\left(t^{-\frac{1}{2}} x\right)
$$
is a self-similar solution to \eqref{dnls}. Furthermore, the function $ W_c $ is characterized by the relation
$$
e^{-i \xi|\xi|} |\xi|^{1/2} \widehat{W}_c(\xi) = S_{A(c)}(\xi)+ z(\xi),
$$
where
\begin{equation}\label{eq:ansatz_mbo}
    S_A(\xi):= \left(Ae^{ia\log \eta} + B \frac{e^{2i\eta^2/3+3ia\log\eta}}{\eta^2}\right)\chi(\eta),\quad \xi>0,\qquad S_A(-\xi)=\overline{S_A(\xi)},
\end{equation}
and the parameters $a,B$ are defined as
\begin{equation}
    a=\frac{12|A|^2+3|c|^2}{8\pi}.
\end{equation}
\begin{equation}
    B=\frac{3\sqrt{3} iA^3}{8\pi}.
\end{equation}
 The remainder term $ z(\xi)= \overline{z(-\xi)} $ satisfies
$$
z(0)=c\quad \text{and} \quad \| \langle \xi \rangle^{\kappa} z\|_{L_\xi^\infty} + \| \langle \xi \rangle^{\kappa+1} z^\prime \|_{L_\xi^\infty} \lesssim |c|.
$$
\end{thm}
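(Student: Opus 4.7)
The plan is to adapt the Fourier-side strategy of \cite{CCV20}, used above in Theorem \ref{teorema Self Similar mKdV}, to the nonlocal setting of \eqref{dnls}. First, I take the Fourier transform of \eqref{edo associada DNLS} and, to strip away the critical scaling weight and the free-evolution phase, introduce
\[
f(\xi) := |\xi|^{1/2} e^{-i \xi |\xi|} \widehat{W}(\xi).
\]
Since the critical Fourier--Lebesgue exponent of \eqref{dnls} is $s_{c,\infty}=1/2$, this is the natural self-similar renormalization and matches the normalization used in the statement of the theorem. The linear terms collapse and I expect to arrive at a first-order ODE of the form
\[
f'(\xi) + 4i|\xi|\, f(\xi) + 2i|\xi|^{1/2} e^{-i\xi|\xi|}\widehat{W^3}(\xi) = 0,
\]
which, after integration from the origin (where $f(0)=c$ is pinned by the average-conservation property noted in the introduction), becomes a nonlinear integral equation for $f$.

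The core of the analysis is a stationary-phase/resonance study of the trilinear term. Substituting $\widehat{W}(\xi_j) = |\xi_j|^{-1/2} e^{i\xi_j|\xi_j|} f(\xi_j)$ into the convolution expression for $\widehat{W^3}$ produces an oscillatory integral whose phase on the hyperplane $\xi_1+\xi_2+\xi_3=\xi$ is
\[
\Phi(\xi_1,\xi_2,\xi) = \xi_1|\xi_1| + \xi_2|\xi_2| + (\xi-\xi_1-\xi_2)|\xi-\xi_1-\xi_2| - \xi|\xi|.
\]
Two critical structures emerge. The mixed-sign three-wave resonance set $\{|\xi_1|=|\xi_2|=|\xi_3|\}$, on which $\Phi\equiv 0$, produces a logarithmically divergent diagonal contribution proportional to $i |f(\xi)|^2 f(\xi)/\xi$; reinserted into the ODE, it is responsible for the logarithmic phase $e^{ia\log\eta}$, and the low-frequency portion of the convolution (where $f\approx c$) accounts for the $|c|^2$ contribution to $a$. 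The interior stationary point $\xi_1=\xi_2=\xi_3=\xi/3$, whose restricted Hessian has determinant $12$ (giving the $\sqrt{3}$ factor in $B$), generates a secondary oscillatory correction of size $|\eta|^{-2}$ and amplitude proportional to $A^3$. Matching these two leading behaviors against the ODE pins down the coefficients $a$ and $B$ asserted in the theorem and leads to the ansatz $S_A$; the cutoff $\chi$ is inserted only to suppress the artificial singularities of $\log|\xi|$ and $|\xi|^{-2}$ near the origin.

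With $S_A$ in hand, I write $f = S_A + z$ and convert the problem to a fixed-point equation for $z$ in the weighted space
\[
X_\kappa := \bigl\{ z \in C^1(\mathbb{R}) : \|\langle\xi\rangle^\kappa z\|_{L^\infty} + \|\langle\xi\rangle^{\kappa+1} z'\|_{L^\infty} < \infty,\ z(-\xi) = \overline{z(\xi)}\bigr\},
\]
subject to the constraint $z(0)=c$. The parameter $A=A(c)$ is determined simultaneously through a scalar fixed-point equation coming from matching the asymptotic behavior of $f$ at $+\infty$. The admissible range $\kappa\in(0,1/4)$ reflects the usual balance: $\kappa>0$ ensures enough decay of $z$ at infinity to control the cubic nonresonant contributions, while $\kappa<1/4$ arises from the residual error left after stationary-phase extraction of the secondary term (the $|\eta|^{-2}$ contribution improves to $|\eta|^{-5/2}$ after one further integration by parts against the oscillation, leaving a $1/4$ gap once the critical scaling weight is taken into account). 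Smallness of $|c|$ forces smallness of $|A|$ and hence contraction.

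The main obstacle will be the stationary-phase analysis of the trilinear integral. Unlike in \cite{CCV20}, the mBO dispersion $\omega(\xi)=\xi|\xi|$ is only $C^{1,1}$, which requires extra care at the sign-change loci $\xi_j=0$ where the different sign-configuration regions meet. Moreover, the appearance of the $|c|^2$ correction to $a$---which has no analogue in the mKdV setting---requires delicate bookkeeping of low-frequency interactions inside the convolution; showing that the associated error still fits inside $X_\kappa$ with the prescribed weight will be the most technical part of the proof.
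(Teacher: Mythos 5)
Your proposal follows essentially the same route as the paper: the renormalization $\tilde W=|\xi|^{1/2}e^{-i\xi|\xi|}\widehat W$, the integrated profile equation with $\tilde W(0)=c$, the extraction of the ansatz from the degenerate resonances $|\xi_1|=|\xi_2|=|\xi_3|$ and the interior stationary point $\xi/3$, the identification of the low-frequency (high$\times$low$\times$low) interactions as the source of the $|c|^2$ term in $a$, the scalar fixed-point determining $A$ from the asymptotics at $+\infty$, and the contraction for $z$ in the weighted space $Z^\kappa$. The only cosmetic difference is your heuristic for the restriction $\kappa<\tfrac14$, which in the paper instead emerges from the trilinear estimates with the singular weights $|\eta_j|^{-1/2}$; this does not affect the structure of the argument.
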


\label{remark:dnls}}

\begin{thm}[Self-similar solutions for \eqref{3NLS}]\label{teorema Self Similar nls}
Let $ \kappa \in \left(0, \frac{1}{2}\right) $ and $ A \in \mathbb{C} $ with $|A| \ll 1 $. Then, there exists a tempered distribution $ Q_A \in \mathcal{S}^\prime$ such that the function
$$
q(t,x) := t^{-\frac{1}{2}} Q_A\left(t^{-\frac{1}{2}} x\right)
$$
is a self-similar solution to \eqref{3NLS}. Furthermore, the function $ Q_A $ is characterized by the relation
$$
e^{-i \xi^2} \widehat{Q}_A(\xi) = S_A(\xi)+ z(\xi),
$$
where
\begin{equation}\label{eq:ansatz_nls}
    S_A(\xi):= Ae^{-\frac{i|A|^2}{2\pi}\log|\xi|} \cdot \chi(\xi) + A e^{\frac{i|A|^2}{2\pi}\log|\xi|}\cdot \chi(-\xi)
\end{equation}
and the remainder term satisfies
$$
\|(\log |\xi|)^{-1}z\|_{L^\infty(|\xi|<1)} +\||\xi|z'\|_{L^\infty(|\xi|<1)} +\| |\xi|^{\kappa} z\|_{L^\infty(|\xi|>1)} + \| |\xi|^{\kappa+1} z^\prime \|_{L^\infty(|\xi|>1)} \lesssim |A|.
$$
\end{thm}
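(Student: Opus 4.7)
The plan is to mirror the Fourier-analytic approach of Theorem \ref{teorema Self Similar mKdV} and of \cite{CCV20}: reformulate the profile equation as a first-order integro-differential equation in $\xi$, extract the resonant self-interaction $|f|^2f$ that produces the logarithmic phase correction carried by $S_A$, and close a contraction for the remainder $z$ in the Banach space dictated by the theorem's norm.

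First, Fourier-transforming \eqref{edo associada 3NLS} with $\widehat{Q''}=-\xi^2\widehat Q$ and $\widehat{yQ'}=-\widehat Q-\xi\widehat Q'$, the profile equation reduces to the first-order ODE
\begin{equation}
\tfrac{i\xi}{2}\,\widehat Q'(\xi)=\xi^2\,\widehat Q(\xi)-\widehat{|Q|^2 Q}(\xi),
\end{equation}
whose homogeneous part is solved by the Schr\"odinger phase. Invoking the change of variable from the statement absorbs this oscillation, and after the shift $\eta_j=\xi_j-\xi$ in the convolution formula for $\widehat{|Q|^2 Q}$ the equation for $f$ takes the form $\xi f'(\xi) = \mathcal T[f](\xi)$, with
\begin{equation}
\mathcal T[f](\xi) = \frac{2i}{(2\pi)^2}\iint_{\R^2} e^{-2i\eta_1\eta_2}\,f(\xi+\eta_1)\,f(\xi+\eta_2)\,\overline{f(\xi+\eta_1+\eta_2)}\,d\eta_1\,d\eta_2
\end{equation}
a trilinear operator with a quadratic oscillatory phase.

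Second, I would extract the resonance by stationary phase. The phase $-2\eta_1\eta_2$ has a non-degenerate critical point at the origin, and the Fresnel identity $\iint e^{-2i\eta_1\eta_2}\,d\eta_1\,d\eta_2 = \pi$ shows that at $|\xi|\gg 1$ the leading order of $\mathcal T[f](\xi)$ is of the form $c\,|f(\xi)|^2 f(\xi)$ with $c$ a purely imaginary constant proportional to $1/(2\pi)$. The scalar model $\xi f' = c\,|f|^2 f$ preserves $|f|$ and rotates its phase at a rate proportional to $\log|\xi|$, which is precisely the logarithmic phase carried by $S_A$ in \eqref{eq:ansatz_nls}: on $|\xi|>1$ one has $|S_A|=|A|$ and $\xi S_A'$ proportional to $i|A|^2 S_A$. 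Writing $f = S_A + z$ and subtracting what $S_A$ already contributes, the equation for the remainder reads
$$\xi z'(\xi) = \mathcal L_A[z](\xi) + \mathcal N_A[z](\xi) + E_A(\xi),$$
with $E_A = \mathcal T[S_A] - \xi S_A'$ the ansatz error (cubic in $A$, oscillatory), $\mathcal L_A$ the linear-in-$z$ part (two copies of $S_A$), and $\mathcal N_A$ the quadratic/cubic remainder.

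Finally, I would close a contraction in the Banach space defined by the four weights in the statement. On $\{|\xi|>1\}$ one integrates $z' = (\mathcal L_A[z] + \mathcal N_A[z] + E_A)/\xi$ from $\pm\infty$ inward; the target decay $|z(\xi)|\lesssim|A|\,|\xi|^{-\kappa}$ with $\kappa<1/2$ is controlled by the gain obtainable from non-stationary-phase analysis of $E_A$ and of the oscillatory pieces of $\mathcal L_A$ and $\mathcal N_A$, the endpoint $\kappa=1/2$ being obstructed by contributions from the cut-off $\chi$. On $\{|\xi|<1\}$, $S_A\equiv 0$ and $f=z$, so $\xi z'(\xi) = \mathcal T[z](\xi)$. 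The main obstacle, and the feature that genuinely distinguishes \eqref{3NLS} from the three other models in this paper, is that \eqref{3NLS} does not preserve the average $\int Q\,dx$: hence $\mathcal T[z](0) = 2i\int|Q|^2 Q$ is generically non-zero, forcing $z'(\xi)\sim C(A)/\xi$ and thus $z(\xi)\sim C(A)\log|\xi|$ near the origin, exactly matching the weight $(\log|\xi|)^{-1}$ in the statement. Establishing trilinear estimates on $\mathcal T$ that control simultaneously the log-singular low-frequency regime and the $|\xi|^{-\kappa}$ high-frequency regime, and matching them through the transition $|\xi|\sim 1$ across the cut-off $\chi$, the Banach fixed-point theorem on a ball of radius $O(|A|)$ delivers the unique $z$, concluding the construction of the self-similar profile.
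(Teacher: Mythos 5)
Your proposal follows essentially the same route as the paper: pass to the gauged Fourier profile $f=e^{\mp i\xi^2}\widehat Q$ satisfying $\xi f'=\mathcal T[f]$, extract the space--time resonance at $(\xi,-\xi,\xi)$ by stationary phase to justify the logarithmic-phase ansatz $S_A$, integrate from $\pm\infty$ inward at high frequency (the paper's data-to-scattering map $c_\pm(A,z)$) while accepting the $\log|\xi|$ singularity at the origin caused by the lack of average conservation, and close a contraction for $z$ in the weighted space $Y^\kappa$. The only cosmetic differences are that you write the equation in differential rather than integral form and attribute the endpoint restriction $\kappa<1/2$ somewhat loosely to the cut-off, whereas in the paper it comes from the range of validity of the trilinear estimates; neither affects the argument.
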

\begin{remark}
    In Theorems \ref{teorema Self Similar mKdV}, \ref{teorema Self Similar 4KdV} and \ref{teorema Self Similar_dnls}, the self-similar solution is parametrized by $c\in \complex$, which describes the behavior near the zero frequency. We chose to do so as the parameter $c$ relates with the average-preservation property of each equation (and thus of physical significance). In the \eqref{3NLS} case, since the flow does not preserve the zero frequency, the parameter $c$ is of little relevance. As such, in  Theorem \ref{teorema Self Similar nls}, we parametrized instead self-similar solutions by their high-frequency behavior.
\end{remark}

% \begin{remark}
%     In Theorems \ref{teorema Self Similar 4KdV} and \ref{teorema Self Similar_dnls}, one could parametrize the self-similar solutions by the high-frequency parameter. If one does not impose the symmetry condition on the profile with respect to $\xi\mapsto - \xi$, one can actually build self-similar solutions with different high-frequency parameters at $\xi=\pm \infty$ as in Theorem \ref{teorema Self Similar nls}. 
% \end{remark}
\subsection{Overview of the method}

\subsubsection{{Review of the \eqref{mKdV} case.}} As shown in \cite{CCV20}, the self-similar profile $U$, solution to \eqref{edo associada mkdV}, is such that
$$
\tilde{U}(\xi):=e^{-i\xi^3}\widehat{U}(\xi)
$$
satisfies the fixed-point equation
\begin{equation}\label{eq:ss_mkdv}
\tilde{U}(\xi)=\tilde{U}(0^+) + \frac{3i}{4\pi^2}\int_0^\xi \iint\limits_{\eta=\eta_1+\eta_2+\eta_3} e^{-i\Phi}\tilde{U}(\eta_1){\tilde{U}(\eta_2)}\tilde{U}(\eta_3)d\eta_1 d\eta_2 d\eta,\qquad \Phi=\eta^3-\eta_1^3-\eta_2^3-\eta_3^3.
\end{equation} 
There exists another way to reach \eqref{eq:ss_mkdv}, which is perhaps a bit more enlightening: going back to \eqref{mKdV}, define the interaction representation $f$ of the solution $u$ as
$$
f(t,x)=e^{-t\partial_x^3}u(t,x),
$$
where $\{e^{-t\partial_x^3}\}_{t \in \R} $ is the linear group associated with \eqref{mKdV}. 
Using Duhamel's formula, the equation for $f$ reads
$$
f(t,x) = f(0,x) + \int_0^t e^{t'\partial_x^3}\left((e^{-t'\partial_x^3}f)^3\right)_x dt'.
$$
In Fourier variables, $\tilde{u}:=\widehat{f}$ satisfies
\begin{equation}\label{eq:interaction}
    \tilde{u}(t,\xi) = \tilde{u}(0,\xi) -\frac{3i\xi}{4\pi^2} \int_0^t \iint\limits_{\xi=\xi_1+\xi_2+\xi_3} e^{-it'\Phi}\tilde{u}(t',\xi_1) {\tilde{u}(t',\xi_2) }\tilde{u}(t',\xi_3)d\xi_1 d\xi_2 dt'.
\end{equation}
The representation of a dispersive equation in terms of $\tilde{u}$ has many advantages. All the nonlinear effects are concentrated in a single convolution integral and the interaction between different wave packets in the nonlinear term is perfectly encoded in the resonance function $\Phi$ (for an example of application, see the space-time resonance method developed by Germain, Masmoudi and Shatah \cite{GMS}).
Assuming now that the solution is self-similar, that is,
$$
\tilde{u}(t,\xi)=\tilde{U}(t^{\frac13}\xi),
$$
equation \eqref{eq:interaction} reduces to \eqref{eq:ss_mkdv}. 

To construct self-similar solutions, we must now find fixed points of \eqref{eq:ss_mkdv} in a suitable functional space.
\begin{itemize}

    \item In order to explore the oscillatory effects in the integral term, we must employ stationary-phase arguments, which require some information on the derivatives of $\tilde{U}$. As $s_{c,\infty}=0$, $L^\infty_\xi$ is a scaling-invariant space and we aim to derive uniform bounds on $\tilde{U}$ and $\partial_\xi\tilde{U}$.
    
    \item For low frequencies, since the \eqref{mKdV} flow preserves the zero frequency, we do not expect any sort of singular behavior near $\xi=0$.

    \item For large frequencies, one must understand precisely the behavior of
  \begin{equation}\label{eq:integral}
         \int_0^\infty \iint\limits_{\eta=\eta_1+\eta_2+\eta_3} e^{-i\Phi}\tilde{U}(\eta_1){\tilde{U}(\eta_2)}\tilde{U}(\eta_3)d\eta_1 d\eta_2 d\eta,
  \end{equation}
    which can morally be interpreted as the limit $t\to \infty$ in \eqref{eq:interaction}. In other words, we must comprehend the full nonlinear effects over $t\in \R^+$ - this is the scattering problem! As soon as this connection is made, one must look for what is known concerning scattering for \eqref{mKdV} \cite{HayashiNaumkin_mkdv1, HayashiNaumkin_mkdv2, GPR_mkdv}: the integral fails to converge (absolutely) at $\xi=\infty$ due to a logarithmic divergence $\sim \frac{1}{\xi}$, caused by the stationary points of $\Phi$,
    $$
    (\xi_1,\xi_2,\xi_3)=(\xi,\xi,-\xi), \ (\xi,-\xi,\xi), \ (-\xi,\xi,\xi) \ \mbox{ and } \left(\frac{\xi}{3},\frac{\xi}{3},\frac{\xi}{3}\right). 
    $$
    At the first three points, the resonance function $\Phi$ vanishes\footnote{These are the so-called space-time resonances, see \cite{GMS}.} and there is no oscillation available to make the integral simply convergent. At the last point, the resonance function is exactly $\Phi=-8\xi^3/9$ and therefore induces rapid oscillations near $\xi=\infty$, strong enough to make the integral converge.

    At this point, split $\tilde{U}=S_A+z$, where $S_A$ is meant to collect the contributions of the stationary points and $z$ is a remainder term. Plugging in \eqref{eq:ss_mkdv}, the term $S_A$ on the l.h.s. must compensate exactly the logarithmic divergences caused by the stationary points. To find such an ansatz $S_A$, one may use stationary phase arguments near each of the space-resonances in the interaction representation equation \eqref{eq:interaction} to write
    \begin{equation}\label{eq:stat}
        \tilde{u}_t(t,\xi) \simeq -\frac{i}{4\pi t}\left(|\tilde{u}(t, \xi)|^2\tilde{u}(t, \xi) + e^{-\frac{8i}{9}t\xi^3}\tilde{u}^3\left(t, \frac{\xi}{3}\right)\right),\quad  |t^{1/3}\xi|\gg 1.
    \end{equation}
    Assuming that the solution is self-similar, the first term on the l.h.s. leads to the first-order approximation
    $$
    Ae^{-\frac{3i}{4\pi}|A|^2\log|t^{1/3}\xi|},\quad |t^{1/3}\xi|\gg 1.
    $$
    However, as we also need to control derivatives in order to perform rigorously the stationary-phase arguments, we need to insert the above approximation in \eqref{eq:stat} - this produces the second-order ansatz \eqref{eq:SA_mkdv}.

    This term is the main asymptotic of $\tilde{U}$ at large frequencies and thus $z$ is expected to decay at least like $\jap{\xi}^{-\kappa}$ for some $\kappa>0$.

\item After the ansatz $S_A$ takes care of the divergent terms in the nonlinearity, \eqref{eq:ss_mkdv} becomes 
\begin{align*}
    z(\xi) &= \tilde{U}(0^+) +\int_0^\xi(\text{absolutely convergent})d\eta\\&= \tilde{U}(0^+) +\int_0^\infty(\text{absolutely convergent})d\eta - \int_\xi^\infty(\text{absolutely convergent})d\eta, \quad \xi\gg 1.
\end{align*}
Taking the limit $\xi\to+\infty$, we find
\begin{equation}\label{eq:datascat}
    \tilde{U}(0^+) =-\int_0^\infty(\text{absolutely convergent})d\eta.
\end{equation}
This equation relates the value at the zero frequency with the total nonlinear evolution over all scales. Since, at $t=0$, $\tilde{U}(t^{\frac13}\xi)\equiv \tilde{U}(0^+)$ for $\xi>0$, this equation corresponds to the \emph{initial data-to-scattering state} map for the self-similar solution. As a consequence, \eqref{eq:datascat} relates the small scale $\tilde{U}(0^+)$ with the large scale $A$.

\item Given $A\in \complex$ small, the considerations in the two points above lead us to conclude that \eqref{eq:ss_mkdv} allows for a fixed-point argument for the remainder $z$ in the space
$$
Z^\kappa = \{z\in L^\infty_{loc}(\R): \|\jap{\xi}^\kappa z\|_{L^\infty_\xi} + \|\jap{\xi}^{\kappa+1} z'\|_{L^\infty_\xi} <\infty\}.
$$
To implement the fixed-point argument, precise multilinear estimates over $Z^\kappa$ must be derived. In \cite{CCV20}, it is shown that the necessary estimates hold for $\kappa\in(\frac12, \frac47).$ 
\item The final step consists in inverting the relation between $\tilde{U}(0^+)$ and $A$, so that the self-similar solutions are parametrized by $\tilde{U}(0^+)=c\in \complex$ small.

\end{itemize}

In \cite{CCV20}, the parallel between \eqref{eq:ss_mkdv} and \eqref{eq:interaction} was not made and the generalization of these arguments to other dispersive equations was quite unclear. We will show how the scattering theory helps to make the proof more robust and therefore applicable to \eqref{3NLS}, \eqref{dnls} and \eqref{4Kdv}.

\subsubsection{{The \eqref{4Kdv} case}} Following the interaction representation approach, suppose that $v$ is a solution of \eqref{4Kdv} and define the interaction representation (in frequency variables) as
\begin{equation}\label{expressão do perfil 4KdV}
 \tilde{v}(t,\xi):=\left(e^{-t\partial_x^3}v(t)\right)^\wedge(\xi).   
\end{equation}
% where $\{e^{it\partial_x^3}\}_{t \in \R} $ is the linear group associated with equation \eqref{4Kdv}, i.e.
% $$
% \left(e^{it\partial_x^3}f\right)(x) := c \int_\R e^{i(x\cdot \xi -t \xi^3)} \widehat{f} (\xi) \, d\xi= \left( \exp(-it \xi^3) \widehat{f}(\xi)\right)^\vee (x).
% $$
Substituting the expression \eqref{expressão do perfil 4KdV} into \eqref{4Kdv}, we obtain the \textit{interaction representation equation}
\begin{equation}\label{eq.profile}
\tilde{v}_t(t,\xi)= - \frac{i \xi}{8\pi^3} \iiint e^{-it\Phi} \tilde{v}(t,\xi_1)\tilde{v}(t,\xi_2)\tilde{v}(t,\xi_3)\tilde{v}(t,\xi_4)\, d\xi_1d\xi_2d\xi_3,
\end{equation}
where $\xi_4= \xi - \xi_1-\xi_2-\xi_3$ and
$$
\quad \Phi:= \xi^3 - \xi_1^3-\xi_2^3-\xi_3^3-\xi_4^3.
$$
Assuming that the solution $v$ is self-similar,
$$
v(t,x)=\frac{1}{t^{\frac29}}V\left(\frac{x}{t^{\frac13}}\right),
$$
the interaction representation must satisfy
\begin{equation}\label{eq:perfil_4kdv}
    \tilde{v}(t,\xi)=(t^{\frac13}\xi)^{\frac{1}{3}}\tilde{V}(t^{\frac13}\xi),\qquad \tilde{V}=\xi^{\frac13}e^{i\xi^3}\widehat{V}.
\end{equation}
Since we are looking for real-valued self-similar solutions, $\tilde{V}(-\xi)=\overline{\tilde{V}(\xi)}$, which means that we may restrict our attention to $\xi>0$.

Substituting \eqref{eq:perfil_4kdv} into equation \eqref{eq.profile}, we find
\begin{equation}
    \tilde{V}(\xi) = \tilde{V}(0^+) - \frac{3i}{8\pi^3}\int_0^\xi \eta^{1/3}\left(\iiint\limits_{\eta=\eta_1+\dots+\eta_4} e^{-i\Phi}\frac{\tilde{V}(\eta_1)\tilde{V}(\eta_2)\tilde{V}(\eta_3)\tilde{V}(\eta_4)}{(\eta_1\eta_2\eta_3\eta_4)^{\frac13}} d\eta_1d\eta_2d\eta_3\right) d\eta.
\end{equation}
Defining the multilinear operator
$$
\mathcal{M}[f_1,f_2,f_3,f_4](\eta)=\iiint\limits_{\eta=\eta_1+\dots+\eta_4} e^{-i\Phi} \frac{f_1(\eta_1)f_2(\eta_2)f_3(\eta_3)f_4(\eta_4)}{(\eta_1\eta_2\eta_3\eta_4)^{\frac{1}{3}}}\,\, d\eta_1d\eta_2d\eta_3,
$$
and $\mathcal{M}[f]:=\mathcal{M}[f,f,f,f]$, the equation for the profile can be written as
\begin{equation}\label{eq:ss_4kdv}
    \tilde{V}(\xi) = c - \frac{3i}{8\pi^3}\int_0^\xi \eta^{\frac13}\mathcal{M}[\tilde{V}](\eta)d\eta,\qquad c:= \tilde{V}(0^+).
\end{equation}
We stress that \eqref{eq:ss_4kdv} can also be deduced from the corresponding ODE \eqref{edo associada 4kdV}. In particular, $\tilde{V}$ is a solution of \eqref{eq:ss_4kdv} iff $V$ is a solution to \eqref{edo associada 4kdV}.

% $$
% \frac{1}{3}\tilde{V}(\xi) + \xi\tilde{V}^\prime(\xi):= - 3i \xi \int e^{-i\Phi} \tilde{V}(\xi_1)\tilde{V}(\xi_2)\tilde{V}(\xi_3)\tilde{V}(\xi_4)\, d\xi_1d\xi_2d\xi_3.
% $$
% Defining $v(\xi):= \xi^{1/3}\tilde{V}(\xi)$ and denoting
% $$
% \mathcal{N}(\tilde{V})(\xi):=\int e^{-i\Phi} \tilde{V}(\xi_1)\tilde{V}(\xi_2)\tilde{V}(\xi_3)\tilde{V}(\xi_4)\, d\xi_1d\xi_2d\xi_3,
% $$
% we obtain
% \begin{equation}
% \xi^{2/3} v^\prime(\xi):= -3i \xi \,\mathcal{N}[{\xi^{-1/3}v}](\xi).    
% \end{equation}
% Integrating the last expression, we obtain the following equation
% \begin{equation}\label{eq1}
% v(\xi)=c - 3i\int_0^\xi \eta^{1/3}\,\mathcal{M}[v](\eta)\, d\eta
% \end{equation}
% with $c=v(0)$ and $\mathcal{M}[v]= \mathcal{M}[v,v,v,v]$ where
% $$
% \mathcal{M}[f_1,f_2,f_3,f_4](\eta)=\int e^{-i\Phi} \frac{f_1(\eta_1)f_2(\eta_2)f_3(\eta_3)f_4(\eta_4)}{(\eta_1\eta_2\eta_3\eta_4)^{\frac{1}{3}}}\,\, d\eta_1d\eta_2d\eta_3,
% $$
% for $\eta_4= \eta- \eta_1-\eta_2-\eta_3$. 
Let us now follow the ideas applied in the \eqref{mKdV} case:
\begin{itemize}
\item Since $s_{c,\infty}=1/3$ for \eqref{4Kdv} and $\tilde{V}=\xi^{\frac13}e^{i\xi^3}\hat{V}$, we aim to derive uniform bounds for $\tilde{V}$ (and $\tilde{V}^\prime$) in $L^\infty_\xi$.
    \item For small frequencies, since \eqref{4Kdv} preserves the zero frequency, we do not expect any singular behavior.
    \item For large frequencies, one must understand the scattering problem for \eqref{4Kdv}. In \cite{Tao_4kdv}, Tao showed the (linear) scattering for small data in $\dot{H}^{-\frac16}$ and thus we expect the nonlinear effects to remain bounded as one integrates up to $\xi=\infty$. As such, the high-frequency ansatz is simply
    $$
    S_A(\xi)=A,\quad \xi\gg 1.
    $$
    \item Writing $\tilde{V}=S_A + z$,  $z$ decaying at $\xi=+\infty$, if one takes the limit $\xi\to +\infty$,\eqref{eq:ss_4kdv} gives
    \begin{equation}\label{eq:datascat_4kdv}
        A=c-\frac{3i}{8\pi^3} \int_0^\infty \eta^{\frac{1}{3}}\mathcal{M}[S_A+z](\eta)d\eta.
    \end{equation}
    This is the initial data-to-scattering state map. Using \eqref{eq:datascat_4kdv}, we can define $c$ as a function of $A$ and $z$.
    For the remainder $z$, this choice of $c$ implies that
    \begin{equation}
        z(\xi) = \frac{3i}{8\pi^3} \int_0^\xi \eta^{\frac13}\mathcal{M}[\tilde{V}](\eta)d\eta
    \end{equation}
    which can be shown to decay polynomially.
    \item Given $A\in \complex$ small, these considerations allow us to perform a fixed-point argument for $z$ in the space
    $$
Z^\kappa = \{z\in L^\infty_{loc}(\R): \|\jap{\xi}^\kappa z\|_{L^\infty_\xi} + \|\jap{\xi}^{\kappa+1} z'\|_{L^\infty_\xi} <\infty\}.
$$
as long as $\kappa\in (\frac58, \frac 23)$. The fixed-point will follow from multilinear estimates over $Z^\kappa$ as in the \eqref{mKdV} case. However, the argument is a bit more involved for two reasons: first, the extra integration requires an additional step in the estimates (as these are done one integral at a time); second, the presence of singular weights in the expression for $\mathcal{M}$ requires a delicate control of the integral at low frequencies.
\item Finally, we invert the relation between $A$ and $c$ by proving that \eqref{eq:datascat_4kdv} is a perturbation of the identity map near 0.
\end{itemize}

\subsubsection{{The \eqref{dnls} case}.}
 As before, we start by rewriting \eqref{dnls} in the interaction representation variable. Setting
\begin{equation}\label{expressão do perfil}
 \tilde{w}(t,\xi):=\left(e^{tH\partial_x^2}w(t)\right)^\wedge(\xi),   
\end{equation}
% where $\{e^{it\partial_x^2}\}_{t \in \R} $ is the linear group associated with equation \eqref{dnls}, i.e.
% \begin{equation}\label{grupo da schrodinger}
% \left(e^{it\partial_x^2}f\right)(x) := c \int_\R e^{i(x\cdot \xi -t \xi^2)} \widehat{f} (\xi) \, d\xi= \left( \exp(-it \xi^2) \widehat{f}(\xi)\right)^\vee (x).    
% \end{equation}
equation \eqref{dnls} becomes
% Substituting the expression \eqref{expressão do perfil} into \eqref{dnls}, we obtain the \textit{profile equation}
\begin{equation}\label{eq.profile_dnls}
\tilde{w}_t(t,\xi)= \frac{i \xi}{4\pi^2} \iint \limits_{\xi=\xi_1+\xi_2+\xi_3} e^{it\Phi(\xi)} \tilde{w}(t,\xi_1){\tilde{w}(t,\xi_2)}\tilde{w}(t,\xi_3)\, d\xi_1d\xi_2,\qquad \Phi= \xi|\xi|  -\xi_1|\xi_1| - \xi_2|\xi_2| - \xi_3|\xi_3|.
\end{equation}
Assuming now that the solution to \eqref{dnls} is self-similar,
$$
w(t,x)=\frac{1}{t^{\frac14}}W\left(\frac{x}{t^\frac12}\right),
$$
this implies that
$$
\tilde{w}(t,\xi)=|t^{\frac12}\xi|^{\frac12}\tilde{W}(t^{\frac12}\xi),\quad \tilde{W}:=|\xi|^{\frac12}e^{-i\xi|\xi|}\hat{W}.
$$
Plugging into \eqref{eq.profile_dnls},
\begin{equation}
    \tilde{W}(\xi)=\tilde{W}(0) +\frac{2i}{4\pi^2} \int_0^\xi \eta^{\frac12}\left(\iint_{\eta_1+\eta_2+\eta_2=\eta}e^{i\Phi}\frac{\tilde{W}(\eta_1){\tilde{W}(\eta_2)}\tilde{W}(\eta_3)}{|\eta_1\eta_2\eta_3|^{\frac12}}d\eta_1d\eta_2\right)d\eta,\quad \xi\in \R.
\end{equation}
% This can also be derived via \eqref{edo associada DNLS}. Taking the Fourier transform,
% \begin{equation}
%     -\xi^2\widehat{W}(\xi)+\frac{i}{2}(\xi\widehat{W})'(\xi)-\frac{i}{4}\widehat{W}(\xi)=\xi\left(|W|^2W\right)^\wedge(\xi).
% \end{equation}
% The l.h.s. can be written as
% \begin{equation}
%     \frac{i\sgn(\xi)}{2|\xi|^{\frac12}}\left(2i\xi |\xi|^{\frac12}\widehat{W}(\xi) + \widehat{W}'(\xi)|\xi|^{\frac12} + \frac{1}{2|\xi|^{\frac12}}\right)
% \end{equation}

Introducing the multilinear operator
\begin{equation}
    \mathcal{I}[g_1,g_2,g_3](\eta):= \iint_{\eta_1+\eta_2+\eta_3=\eta} e^{i\Phi} \frac{g_1(\eta_1){g_2(\xi_2)}g_3(\eta_3)}{(\eta_1\eta_2\eta_3)^{\frac{1}{2}}}\,\, d\eta_1d\eta_2 
\end{equation}
and setting $\mathcal{I}[g]:=\mathcal{I}[g,g,g]$, we arrive at the profile equation
\begin{equation}\label{eq:ss_dnls}
    \tilde{W}(\xi)=c + \frac{i}{2\pi^2} \int_0^\xi \eta^{\frac12} \mathcal{I}[\tilde{W}](\eta)d\eta,\quad \xi\in \R, \qquad c:= \tilde{W}(0).
\end{equation}
We once again look for real-valued self-similar solutions, which imposes $\tilde{W}(-\xi)=\overline{\tilde{W}(\xi)}$.

Following the ideas discussed for \eqref{mKdV} and \eqref{4Kdv}, let us present the main ingredients to tackle \eqref{eq:ss_dnls}.
 {
\begin{itemize}
    \item As $s_{c,\infty}=1/2$ and $\tilde{W}=|\xi|^{\frac{1}{2}}e^{i\xi|\xi|}\widehat{W}$, we once again look for $\tilde{W}, \tilde{W}'\in L^\infty_\xi$.
    \item Since \eqref{dnls} preserves the average, the profile should remain well-behaved for small frequencies.
    \item Concerning the scattering theory for \eqref{dnls}, it has been shown in \cite{MR1491867, MR1877838} that \eqref{dnls} presents modified scattering behavior. This is caused by the resonances
    $$
    (\xi_1,\xi_2,\xi_3)\in\left\{(\xi,\xi,-\xi).(\xi,-\xi,\xi),(-\xi,\xi,\xi),\left(\frac{\xi}{3}, \frac{\xi}{3}, \frac{\xi}{3}\right)\right\}
    $$
    and by the high$\times$low$\times$low$\to$high interactions\footnote{These interactions are usually unobservable in the usual scattering theory, but appear here as a consequence of the singular behavior $|\xi|^{-\frac12}$ near the zero frequency.}
    Away from this point, the nonlinear effects can be shown to be finite when integrated up to $t=+\infty$.
    As in the \eqref{mKdV} case, we handle this problematic resonance via the ansatz \eqref{eq:ansatz_mbo}. Notice that, unlike the \eqref{mKdV} case, due to the  high$\times$low$\times$low$\to$high interactions, the phase parameter $a$ also depends on $c$.
    \item Taking $\tilde{W}=S_A+z$, for some decaying $z$, \eqref{eq:ss_dnls} implies that
\begin{align*}
        z(\xi)&=c+\int_0^\xi \text{(absolutely convergent)}d\eta \\&=c+\int_0^\infty \text{(absolutely convergent)}d\eta - \int_\xi^\infty \text{(absolutely convergent)}d\eta,\quad \xi\gg 1.
\end{align*}
as thus, taking the limit $\xi \to +\infty$, we find the \textit{data-to-scattering map}
\begin{equation}
    c=-\int_0^\infty \text{(absolutely convergent)}d\eta,
\end{equation}
which can be used to determine $A$ in terms of  $c$ in terms of $A$. Notice that, contrary to the \eqref{4Kdv} case, the integral on the r.h.s. is highly nonlinear in $A$ (see the ansatz \ref{eq:ansatz_mbo}).
\item For $c\in \R$ fixed, we perform a fixed-point argument for $z$ over
 $$
Z^\kappa = \{z\in L^\infty_{loc}(\R): \|\jap{\xi}^\kappa z\|_{L^\infty_\xi} + \|\jap{\xi}^{\kappa+1} z'\|_{L^\infty_\xi} <\infty\}.
$$
As we will see, the required multilinear estimates hold for $\kappa\in (0,\frac14)$. As in the \eqref{4Kdv} case, the presence of singularities at low frequencies in the convolution operator $\mathcal{I}$ is a nontrivial difficulty for the derivation of multilinear bounds.
\end{itemize}
}

\subsubsection{{The \eqref{3NLS} case}} Finally, we discuss the methodology to construct self-similar solutions to \eqref{3NLS}. For the sake of simplicity, since the sign in the nonlinear term has no impact in our construction, we focus on the focusing case. Consider the interaction representation
$$
\tilde{q}(t,\xi)=\left(e^{it\partial_x^2}q(t)\right)^\wedge(\xi),
$$
which satisfies
\begin{equation}\label{eq:profile}
\tilde{q}_t(t,\xi)= -\frac{i}{4\pi^2}\iint_{\xi=\xi_1+\xi_2+\xi_3} e^{-it\Phi} \tilde{q}(t,\xi_1)\overline{\tilde{q}(t,\xi_2)}\tilde{q}(t,\xi_3)\, d\xi_1d\xi_2,\qquad \Phi =\xi^2-\xi_1^2 + \xi_2^2 -\xi_3^2.
\end{equation}
Assuming that $u$ is a self-similar solution, 
$$
\tilde{q}(t,\xi)=\tilde{Q}(t^{\frac12}\xi),\quad \tilde{Q}:=e^{i\xi^2}\widehat{Q}.
$$
Replacing in \eqref{eq:profile}, we find, for $\xi>0$,
\begin{equation}\label{eq1}
\tilde{Q}(\xi)= \tilde{Q}(1) -\frac{i}{2\pi^2}\int_1^\xi \frac{1}{\eta}\iint_{\eta=\eta_1+\eta_2+\eta_3} e^{-i\Phi} {\tilde{Q}(\eta_1)\overline{\tilde{Q}(\eta_2)}\tilde{Q}(\eta_3)}\,\, d\eta_1d\eta_2 d\eta
\end{equation}
and, for $\xi<0$,
\begin{equation}\label{eq2}
\tilde{Q}(\xi)= \tilde{Q}(-1) -\frac{i}{2\pi^2}\int_{-1}^\xi \frac{1}{\eta}\iint e^{-i\Phi} {\tilde{Q}(\eta_1)\overline{\tilde{Q}(\eta_2)}\tilde{Q}(\eta_3)}\,\, d\eta_1d\eta_2 d\eta
\end{equation}
If we define
$$
\mathcal{T}[h_1,h_2,h_3](\eta):= \iint_{\eta=\eta_1+\eta_2+\eta_3}e^{-i\Phi} {h_1(\eta_1)\overline{h_2(\eta_2)}h_3(\eta_3)}\,\, d\eta_1d\eta_2
$$
and $\mathcal{T}[\tilde{Q}]:=\mathcal{T}[\tilde{Q}, \tilde{Q}, \tilde{Q}]$, \eqref{eq1}-\eqref{eq2} may be rewritten as
\begin{equation}\label{eq:ss_nls}
    \tilde{Q}(\xi)=\tilde{Q}(\pm 1) - \frac{i}{2\pi^2}\int_{\pm 1}^\xi \frac{1}{\eta}\mathcal{T}[\tilde{Q}](\eta)d\eta, \qquad \xi\in \R^\pm.
\end{equation}
Let us proceed with an overview of the proof of Theorem \ref{teorema Self Similar nls}.
\begin{itemize}
    \item Since $s_{c,\infty}=0$, we look for $\tilde{Q}, \tilde{Q}'\in L^\infty_\xi$.
    \item For low frequencies, due to the lack of derivative loss in the nonlinearity, the integral in $\eta$ in \eqref{eq:ss_nls} has a logarithmic divergence at the zero frequency\footnote{This is the reason why the integration in \eqref{eq:ss_nls} starts from $\xi=\pm 1$.}. As such, we expect that $\tilde{Q}(\xi)\sim \log |\xi|$ for $|\xi|\ll 1$.
    \item For large frequencies, as in the \eqref{dnls} case, \eqref{NLS} presents modified scattering behavior at $t=+\infty$ \cite{HayashiNaumkin_nls, KatoPusateri}, caused  by the space-time resonance
    $$
    (\xi_1,\xi_2,\xi_2)=(\xi,-\xi,\xi).
    $$
    To take care of this problematic divergence, we introduce the ansatz $S_A$ as in \eqref{eq:ansatz_nls}.
    \item After the removal of the divergent interaction and arguing as for \eqref{mKdV} and \eqref{dnls},
    $$
    \tilde{Q}(\pm 1) = -\int_{\pm 1}^{\pm \infty}(\text{absolutely convergent})d\eta.
    $$
    This allows us to determine the value at $\xi=\pm 1$ from the value at $\xi=\pm \infty$.
    \item The fixed point for the remainder $z$ must now be performed in a space which takes into account the possible singular behavior near $\xi=0$. To that end, define
    $$
\|z\|_{Y^\kappa} := \| (\log |\xi|)^{-1} z\|_{L^\infty({|\xi|\lesssim 1})} 
+ \| |\xi| z^\prime\|_{L^\infty({|\xi|\lesssim 1})} 
+ \| \langle \xi \rangle^\kappa z\|_{L^\infty({|\xi|\gg 1})} 
+ \| \langle \xi \rangle^{\kappa +1} z^\prime\|_{L^\infty({|\xi|\gg 1})} .
$$
and $Y^\kappa=\{z\in \mathcal{S}'(\R): \|z\|_{Y^\kappa}<\infty\}$.
For $A\in \complex$ fixed, we will derive the necessary multilinear estimates for $\mathcal{T}$ over $Y^\kappa$ in order to perform a fixed-point argument in $z$.

\end{itemize}

\subsection{Future perspectives} To conclude, let us now discuss some new research directions which arise from the analysis performed in this work.

\subsubsection{Small self-similar solutions for 1d dispersive equations.} Consider a generic 1d-nonlinear dispersive equation
\begin{equation}\label{eq dispersiva}
 u_t + i P(D_x)u =N(u) 
\end{equation}
where $P(D_x)$, is a linear (pseudo-)differential operator in the spatial variables given by a real Fourier symbol $P(\xi)$
$$
[{P(D_x) f}](x):= \left(P(\xi) \widehat{f}(\xi)\right)^\vee(x)
$$
and $N(u)$ is a general nonlinearity defined by the multilinear form $N(u)= N(u,...,u)$ where
$$
\mathcal{F}_x\left[N(f_1,...,f_k)\right] (\xi) = \idotsint\limits_{\xi = \xi_1 + \cdots + \xi_k} m(\xi_1, ...,\xi_k) \widehat{f_1}(\xi_1) \cdots \widehat{f_k}(\xi_k) \, d\xi_1 \cdots d\xi_{k-1},
$$
where $m=m(\xi_1,...,\xi_k) \in \mathbb{C}$ is a Fourier multiplier that represents the possible loss of derivatives occurring  in the nonlinear term. 

From this point on, we assume that our dispersive equation \eqref{eq dispersiva} has dispersion of order $n$ and a nonlinearity of order $k \in \mathbb{Z}_+ \setminus \{0,1\}$, with a loss of $m \in \mathbb{Z}_+$ derivatives, thus encompassing all the concrete models treated above. Therefore, \eqref{eq dispersiva} admits an invariance under scaling, that is, if $u=u(t,x) \in \mathbb{K}$ (with $\mathbb{K} = \mathbb{R}$ or $\mathbb{K} = \mathbb{C}$) is a solution of \eqref{eq dispersiva}, then
\begin{equation}\label{scaling generico}
u_\lambda(t,x) := \lambda^{\frac{n - m}{k - 1}} u(\lambda^n t, \lambda x),    
\end{equation}
is also a solution for every $\lambda > 0$.
\begin{example}[Generalized Korteweg–de Vries equation]
For $P(\xi)=\xi ^3$ and 
\begin{equation}\label{nonlinearidade da gkdv}
N(u)= \mp \frac{\partial_x(u^{k+1})}{k+1}\,, \qquad k \in \mathbb{Z}_+,  
\end{equation}
we obtain the generalized Korteweg–de Vries (gKdV) equation
$$
u_t+ u_{xxx} \pm u^k u_x=0,
$$
where $u: \R \times \R \to \R$.  In particular, for $k=1$ we recover the classical KdV equation, for $k=2$ we obtain \eqref{mKdV} and $k=4$ we recover the equation \eqref{4Kdv}.
\end{example}

\begin{example}[Nonlinear Schrödinger equation]
Choosing $P(\xi)= \xi^2$ and $N(u)=\pm i|u|^\alpha u$ with $\alpha>0$, we obtain the nonlinear Schrödinger (NLS) equation
\begin{equation}\label{NLS}
    iu_t + u_{xx} \pm |u|^\alpha u=0, 
\end{equation}
with $u:\R \times \R \to \mathbb{C}$.
\end{example}

\begin{example}[Generalized Benjamin–Ono equation]
For $P(\xi)=-|\xi|\xi$ and $N(u)$ as in \eqref{nonlinearidade da gkdv}, we obtain the generalized Benjamin–Ono equation
\begin{equation}\label{BO}
    u_t + Hu_{xx} \pm u^ku_x=0.  
\end{equation}
 In particular, for $k=1$ we recover the classical Benjamin-Ono equation and for $k=2$ we obtain \eqref{dnls}.

\end{example}

Since the equation \eqref{eq dispersiva} is invariant under scaling \eqref{scaling generico}, we can formally define the notion of a self-similar solution.

\begin{definition}[Self-similar solution for \eqref{eq dispersiva}]
A solution $ u=u(t,x)$ of \eqref{eq dispersiva} is said to be \textit{self-similar solution} if it satisfies
$$
u_\lambda \equiv u \quad \text{for all $\lambda >0$}.
$$
More precisely, they assume the form
 \begin{equation}
u(t,x):= t^{\frac{m-n}{(k-1)n}}V(t^{-\frac{1}{n}}x) 
 \end{equation}
for $ t > 0 $ and the self-similar profile $ V: \mathbb{R} \to \mathbb{K} $ satisfies (in self-similar variables $\zeta= t^{-\frac{1}{n}}x$) the equation
\begin{equation}\label{edo associada}
\frac{1}{k - 1} V(\zeta) - \frac{1}{n} \, \zeta V'(\zeta) + i\, P(D_\zeta) V'(\zeta) = N\big(V(\zeta)\big).
\end{equation}
\end{definition}
Within this framework, we propose a roadmap to construct small self-similar solutions of general 1d-dispersive equations:

\bigskip

\noindent \textbf{\underline{Roadmap for construction of self-similar solutions.}}
\begin{itemize}
    \item The self-similar profile is sought in the scaling-critical $\mathcal{F}L^{s,\infty}$ space (together with its derivative).
    \item For small frequencies, either there is no singularity (usually if the equation preserves the average) or there may exists a logarithmic (or possibly stronger) singularity. In the first case, one should work over $Z^\kappa$, while the second case requires the incorporation of the singular behavior as in $Y^\kappa$.
    \item For large frequencies, if the nonlinearity presents resonant interactions which are not integrable in time (leading to modified scattering behavior), one must introduce an ansatz that absorbs the divergent terms. If the nonlinearity is absolutely integrable over all times, one should take the constant ansatz.
    \item Once the ansatz is derived, one writes the data-to-scattering map, relating low and large frequencies.
    \item For a fixed asymptotic behavior at large frequencies, perform a fixed-point argument for the remainder.
\end{itemize}

\subsubsection{Finite-energy self-similar solutions} The methodology presented in this paper works for small amplitude self-similar solutions, since in this case it is easy to prove that the profile equation is a perturbation of the identity map.
One may attempt to further extend the self-similar curve - this requires some \textit{spectral information} on the profile equation. This is far from trivial, as our analysis is performed in the Fourier-Lebesgue class $\mathcal{F}L^{s,\infty}$ instead of the hilbertian class $H^s$.

Assuming that the self-similar curve can be extended for large values of the parameter $c$, it may occur that the corresponding high-frequency parameter $A$, related to $c$ via the data-to-scattering map, \emph{vanishes}. When this occurs, the solution in physical space will suddenly present a much weaker oscillatory behavior. As a consequence, this special self-similar solution can become of \emph{finite energy}. Finite-energy self-similar solutions are among the most desired and elusive objects in dispersive equations, as they are physically relevant (stable) blow-up solutions. In this direction, we refer the works \cite{DonningerSchorkhuber, Troy, RottschaferKaper, Koch_gkdv, BahriMartelRaphael} for different approaches on the subject.
% Consider $k \geq 2$ and assume that the equation \eqref{eq dispersiva} preserves the average, \textit{i.e.}
% $$
% \int_\R u(x,t) \, dx =\int_\R u(0,x)\, dx,
% $$
% for all $t>0$. Then, the following scenarios are expected:

% \begin{enumerate}
%     \item If \eqref{eq dispersiva}  admits usual scattering then it admits the construction of self-similar solutions in the sense of Theorem \eqref{teorema Self Similar 4KdV} (with a constant ansatz);

%     \item If \eqref{eq dispersiva} is scattering critical (\textit{i.e.}, it only admits modified scattering) then it admits the construction of self-similar solutions in the sense of Theorem \eqref{teorema Self Similar_dnls} (with a logarithmic ansatz).
% \end{enumerate}

\bigskip \noindent \textbf{Organization of the paper.} In Sections \ref{sec:4kdv_fixo}-\ref{sec:nls_fixo}, assuming the validity of the multilinear estimates on the operators $\mathcal{M}, \mathcal{I}$ and $\mathcal{T}$, we construct the self-similar solutions to \eqref{4Kdv}, \eqref{dnls} and \eqref{3NLS}, respectively. The last four sections (Sections \ref{sec:4kdv}-\ref{sec:nls}) are then dedicated to the precise technical derivation of the necessary multilinear bounds.

\bigskip \noindent \textbf{Notation.} Throughout this work, given $a\in \mathbb{R}$, we use $a^{+}$ (resp. $a^{-}$) to denote a number slightly larger (resp. smaller) than $a$.  
Given $a,b\in \mathbb{R}$, we write $a \lesssim b$ when there exists a universal constant $C>0$ such that $a \leq Cb$.  If $C$ can be chosen sufficiently small, we write $a \ll b$.  If $a \lesssim b$ and $b \lesssim a$, we write $a \sim b$.  Moreover, if $|b-a| \ll |a|$, we say that $a \simeq b$.

\section{{Self-similar solutions for 4KdV}}\label{sec:4kdv_fixo}

The key ingredient is a quadrilinear multilinear estimate, whose proof is postponed to Section \ref{sec:4kdv}.

\begin{thm}[Multilinear Estimates]\label{Multilinear estimates for M}
Let $\kappa \in  (\frac{5}{8},\frac{2}{3})$  and take, for $j=1,2,3,4$, $\kappa_j\in \{0,\kappa\}$ and $f_j\in Z^{\kappa_j}$. If $f_j\notin Z^{\kappa}$, suppose also that $f_j(\xi)\equiv \chi(\xi)$ or $f_j(\xi)\equiv \chi (-\xi)$. Then
\begin{equation}
    |\mathcal{M}[f_1,f_2,f_3,f_4](\eta)|\lesssim \jap{\eta} ^{-4/3-k}\prod_{j=1}^4\|f_j\|_{Z^{\kappa_j}}.
\end{equation}
\end{thm}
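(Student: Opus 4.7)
The argument rests on a careful analysis of the three-dimensional oscillatory integral defining $\mathcal{M}$. For $|\eta|\lesssim 1$ the desired bound follows by direct estimation: since the singular weights $|\eta_j|^{-1/3}$ lie in $L^1_{\mathrm{loc}}$ (as $1/3<1$) and each $f_j$ is bounded in $L^\infty$ through the $Z^{\kappa_j}$ norm, the volume integral converges uniformly. The substance of the theorem is to establish the decay $\jap{\eta}^{-4/3-\kappa}$ for $|\eta|\gg 1$.

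For $|\eta|\gg 1$, the plan is to introduce a dyadic decomposition $|\eta_j|\sim N_j$ and to exploit the convolution constraint $\eta=\eta_1+\eta_2+\eta_3+\eta_4$, which forces $\max_j N_j\sim|\eta|$. The triple integral is then evaluated \emph{one variable at a time}, beginning with the $\eta_3$-integral. After freezing $\eta_1,\eta_2$, this is a one-dimensional oscillatory integral with phase
\[
\phi_{\eta_1,\eta_2}(\eta_3)=-\eta_3^3-(\eta-\eta_1-\eta_2-\eta_3)^3+\text{const},
\]
whose stationary points $\eta_3=\pm(\eta-\eta_1-\eta_2-\eta_3)$ are non-degenerate with $|\phi''|\sim|\eta_3+\eta_4|$ away from the resonant hyperplane $\eta_1+\eta_2=\eta$. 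A combined application of van der Corput (near the stationary point) and repeated integration by parts (in the non-stationary regime), using the $Z^{\kappa_j}$ control of both $f_j$ and $f_j'$, produces a one-dimensional oscillatory gain. Iterating this scheme for the $\eta_2$- and $\eta_1$-integrals propagates further decay into $\eta$, which after summation over dyadic scales must dominate the scaling factor $|\eta|^{5/3}$ coming from the Jacobian and the weight $|\eta|^{-4/3}$.

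The key technical hurdle is to track the interaction between the singular weights $|\eta_j|^{-1/3}$ and the oscillatory decay, particularly near the coordinate hyperplanes $\eta_j=0$ where van der Corput becomes inefficient while $|\eta_j|^{-1/3}$ remains integrable. In those regions one must substitute oscillation by the direct estimate $\int_{|\eta_j|\lesssim\delta}|\eta_j|^{-1/3}d\eta_j\lesssim\delta^{2/3}$, which weakens the $|\eta|$-decay and must be balanced against the $Z^\kappa$-decay of the remaining factors. The upper restriction $\kappa<2/3$ is forced by keeping convolutions of the form $|\xi|^{-1/3}\ast\jap{\xi}^{-\kappa}$ within the integrable range, while the lower restriction $\kappa>5/8$ ensures that the accumulated error from the low-frequency trade-off is no larger than $\jap{\eta}^{-4/3-\kappa}$ after the dyadic summation.

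I expect the principal obstacle to arise in the configurations where several $f_j$ coincide with the slowly-decaying ansatz $\chi(\pm\xi)$, so that no $Z^\kappa$-decay is available in those variables and all the decay in $\eta$ must be extracted from oscillation in the remaining variables. The dyadic bookkeeping must then carefully exploit both the support restriction $\eta_j\gtrsim 1$ provided by $\chi$ (which eliminates the low-frequency singularity on that variable) and the stationary-phase gains in the decaying variables. A case analysis according to the number of $\chi$-type factors, ranging from zero to four, should close all configurations within the stated range of $\kappa$.
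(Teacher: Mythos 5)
Your overall architecture matches the paper's: a direct estimate for $|\eta|\lesssim 1$, and for $|\eta|\gg 1$ a case analysis according to how many of the $f_j$ equal the ansatz $\chi(\pm\xi)$, with the integral evaluated one variable at a time using a stationary-phase/integration-by-parts dichotomy while tracking the singular weights $|\eta_j|^{-1/3}$. However, there is a genuine gap in the mechanism you propose for the resonant configurations. When two (or three, or four) of the inputs are the non-decaying $\chi$'s, the inner iterated integrals do not merely \emph{decay}: they carry a residual oscillation that is indispensable for the outer integral. Concretely, the paper pairs the frequencies, writes $\Phi=\Phi_{12}+\Phi_{34}+\Upsilon$ with $\Upsilon=-3\eta\zeta(\eta-\zeta)$, and proves the asymptotic expansion $F_{12}(\zeta)\simeq e^{-3i\zeta^3/4}\,\zeta^{-7/6}+O(\zeta^{-25/6})$ for the inner $\chi\times\chi$ block. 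In the region $\zeta\simeq\eta$, $|\eta-\zeta|\lesssim 1$ (a space--time resonance of the outer phase, since $\partial_\zeta\Upsilon$ vanishes there), an absolute-value estimate of the $\zeta$-integral only yields $|\eta|^{-7/6}$, far short of $\jap{\eta}^{-4/3-\kappa}$ with $\kappa>5/8$; the theorem is saved only because the \emph{combined} phase satisfies $\partial_\zeta(\Upsilon+\tfrac{3}{4}\zeta^3)\sim\eta^2$, allowing one more integration by parts. Your scheme, which records only magnitude gains from van der Corput and sums dyadically, discards exactly this phase information and therefore cannot close the bilinear case (nor the analogous resonant subcases of the constant and linear terms, where the paper likewise needs the precise expansion of the kernel $\mathcal{K}$).

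A second, smaller problem: the $\eta_3$-phase you isolate has $\phi''=-6(\eta_3+\eta_4)$, which \emph{vanishes identically} at the stationary point $\eta_3=-\eta_4$ (equivalently on the hyperplane $\eta_1+\eta_2=\eta$), so that stationary point is degenerate rather than non-degenerate, and van der Corput with $|\phi''|\sim|\eta_3+\eta_4|$ gives no gain there. The paper sidesteps this by never integrating by parts in a fixed variable: it orders the frequencies, chooses the variable in which the phase derivative $\eta_j^2-\eta_k^2$ is provably large (or switches to a direct estimate on the small complementary region), and for the smooth $\chi$-only blocks invokes the full multidimensional stationary phase lemma after rescaling $\eta_j=\eta p_j$. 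To repair your argument you would need to (i) identify all stationary manifolds of the iterated phases, including the degenerate ones, and (ii) carry the leading oscillatory term of each inner integral into the next integration rather than only its modulus.
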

% As a directly corollary, we obtain the following estimate.
% \begin{theorem}[Multilinear Estimates II]\label{Multilinear estimates for M}
% Let $z \in Z^\kappa$, with $\k \in (\frac{1}{3},\frac{2}{3})$. Then if $|\eta| \lesssim 1$
% \begin{equation}
% |\mathcal{M}[S_A+z](\eta)| \lesssim \left(|A| +\|z\|_{Z^\k} \right)^4.
% \end{equation}
% Case $|\eta| \gg 1$
% \begin{equation}
% |\mathcal{M}[S_A+z](\eta)| \lesssim \jap{\eta}^{-4/3-\k}{\left(|A| +\|z\|_{Z^\k} \right)^4 }.
% \end{equation}
%  \end{theorem}

To proceed with the proof of Theorem \ref{teorema Self Similar 4KdV}, following \ref{eq:datascat_4kdv}, it is necessary to define the map $ c: \mathbb{C} \times Z^\k \to \mathbb{C} $ as 
$$
c(A,z):= A+ \frac{3i}{8\pi^3} \int_0^\infty \,\mathcal{M}[S_A+z](\eta)\,\eta^{1/3}\, d\eta.
$$
Observe that, if $ z\in Z^\k $, then by Theorem \ref{Multilinear estimates for M},
\begin{align*}
|c(A, z)| &\lesssim |A| + \int_0^\infty  + |\mathcal{M}[S_A + z](\eta) ||\eta|^{1/3} \, d\eta \\
&\lesssim |A| + (|A|+ \|z\|_{Z^\kappa}^4)\int_0^\infty \frac{1}{\jap{\eta}^{1+\kappa}}d\eta \lesssim |A| + \left(|A| +\|z\|_{Z^\k} \right)^4.
\end{align*} 
Next, for $\xi>0$, consider the operator  
\begin{equation}
\Gamma_A [z](\xi) := c(A, z) - S_A(\xi) - \frac{3i}{8\pi^3}  \int_0^\xi \mathcal{M}[S_A + z](\eta) \, \eta^{1/3} \, d\eta,
\end{equation}  
 with $A \in \mathbb{C}$ and $z \in Z^\k$. We extend $\Gamma_A$ for $\xi < 0$ via the relation $\Gamma_A [z](\xi) = \overline{\Gamma_A [z](-\xi)}$.

\begin{theorem}\label{ponto fixo Gamma_A}
Fix $ \kappa \in \left(\frac58, \frac23 \right) $ and $ A \in \mathbb{C}$ with $ |A| \ll 1 $. Then the operator $ \Gamma_A $ admits a unique fixed point $ z_A \in Z^\kappa(\mathbb{R}) $. Furthermore, $z_A$ satisfies
\begin{equation}\label{estimativa Z<A}
 \|z_A\|_{Z^\k} \lesssim |A|.   
\end{equation}
\end{theorem}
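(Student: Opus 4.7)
The plan is to apply the Banach fixed point theorem to $\Gamma_A$ on the closed ball $B_R := \{z\in Z^\kappa : \|z\|_{Z^\kappa}\leq R\}$ with $R = C|A|$ for a large constant $C$ to be fixed below. The first step is to rewrite $\Gamma_A$ in a cleaner form by plugging in the definition of $c(A,z)$:
\begin{equation*}
    \Gamma_A[z](\xi) = \bigl(A - S_A(\xi)\bigr) + \frac{3i}{8\pi^3}\int_\xi^\infty \mathcal{M}[S_A+z](\eta)\,\eta^{1/3}\,d\eta,\quad \xi>0.
\end{equation*}
This rearrangement is the key structural observation: the boundary term $A - S_A(\xi)$ is compactly supported in $(0,1]$ (since $S_A(\xi)=A$ for $\xi>1$) and bounded by $|A|$, while the tail integral is amenable to the multilinear estimate in Theorem \ref{Multilinear estimates for M}.

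Next, to verify the self-mapping property, I would expand $\mathcal{M}[S_A+z]$ multilinearly. Each slot is filled by $A\chi$, $\overline{A}\chi(-\cdot)$, or $z$; the first two fall under the admissible profiles in the hypothesis of Theorem \ref{Multilinear estimates for M} (with $\kappa_j=0$) and contribute factors of order $|A|$, while $z$-slots contribute factors of $\|z\|_{Z^\kappa}\leq R$. The multilinear bound therefore gives
\begin{equation*}
    |\mathcal{M}[S_A+z](\eta)| \lesssim \langle\eta\rangle^{-4/3-\kappa}(|A|+R)^4.
\end{equation*}
Since $\int_\xi^\infty \eta^{1/3}\langle\eta\rangle^{-4/3-\kappa}d\eta\lesssim \langle\xi\rangle^{-\kappa}$ for $\xi>0$ (using $\kappa\in(5/8,2/3)$ so the integrand decays like $\eta^{-1-\kappa}$), this yields $\|\langle\xi\rangle^\kappa \Gamma_A[z]\|_{L^\infty}\lesssim |A| + (|A|+R)^4$. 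For the derivative estimate, I differentiate directly to obtain
\begin{equation*}
    \Gamma_A[z]'(\xi) = -S_A'(\xi) - \frac{3i}{8\pi^3}\mathcal{M}[S_A+z](\xi)\,\xi^{1/3}.
\end{equation*}
The first term is compactly supported on the support of $\chi'$ and bounded by $|A|$, while the second is $\lesssim \langle\xi\rangle^{-1-\kappa}(|A|+R)^4$. Extending via $\Gamma_A[z](\xi)=\overline{\Gamma_A[z](-\xi)}$ for $\xi<0$, I conclude $\|\Gamma_A[z]\|_{Z^\kappa}\lesssim |A| + (|A|+R)^4$. Choosing $R=C|A|$ with $C$ sufficiently large and $|A|$ sufficiently small so that $(|A|+R)^4\ll R$, the self-mapping $\Gamma_A(B_R)\subset B_R$ follows.

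For the contraction property, applying the multilinear estimate to the telescoping decomposition
\begin{equation*}
    \mathcal{M}[S_A+z_1] - \mathcal{M}[S_A+z_2] = \sum_{j=1}^4 \mathcal{M}\bigl[\ldots, z_1-z_2, \ldots\bigr],
\end{equation*}
where $z_1-z_2$ occupies the $j$-th slot and the remaining slots contain $S_A+z_1$ or $S_A+z_2$, yields a bound $\lesssim \langle\eta\rangle^{-4/3-\kappa}(|A|+R)^3\|z_1-z_2\|_{Z^\kappa}$. Integrating as before gives
\begin{equation*}
    \|\Gamma_A[z_1] - \Gamma_A[z_2]\|_{Z^\kappa} \lesssim (|A|+R)^3\|z_1-z_2\|_{Z^\kappa} \lesssim |A|^3\,\|z_1-z_2\|_{Z^\kappa},
\end{equation*}
which is a strict contraction for $|A|\ll 1$. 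The Banach fixed point theorem then produces a unique fixed point $z_A\in B_R$, and the bound $\|z_A\|_{Z^\kappa}\lesssim |A|$ is inherited from $R=C|A|$. I expect the only genuine obstacle to be the multilinear bound of Theorem \ref{Multilinear estimates for M}, which is taken as granted here; once it is in place the fixed-point argument is essentially formal, and the presence of the full ansatz $A - S_A(\xi)$ (rather than merely the integral term) guarantees the linear-in-$|A|$ inhomogeneous contribution that drives the whole estimate.
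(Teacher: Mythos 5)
Your proof is correct and follows essentially the same approach as the paper: both exploit the cancellation built into the definition of $c(A,z)$ to obtain the tail-integral representation at infinity, and both reduce everything to Theorem \ref{Multilinear estimates for M} together with a Banach fixed-point argument on a ball of radius $\sim|A|$. Your unified formula $\Gamma_A[z](\xi)=(A-S_A(\xi))+\frac{3i}{8\pi^3}\int_\xi^\infty\mathcal{M}[S_A+z]\,\eta^{1/3}d\eta$ for all $\xi>0$ is a mild streamlining of the paper's case split between $|\xi|\lesssim 1$ and $|\xi|\gg 1$, and your explicit telescoping for the contraction fills in a step the paper leaves to the reader.
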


\dem  First, let $ z \in Z^\kappa(\mathbb{R}) $. Consider firstly $ 0<\xi \lesssim 1 $. Then, we have  
\begin{align}
\Gamma_A [z](\xi) &= c(A, z) - S_A(\xi) - \frac{3i}{8\pi^3}  \int_0^\xi \mathcal{M}[S_A + z](\eta) \eta^{1/3} \, d\eta \\
& = c(A, z) - \frac{3i}{8\pi^3}  \int_0^\xi \mathcal{M}[S_A + z](\eta) \eta^{1/3} \, d\eta,
\end{align}
where the second equality follows because $ S_A(\xi) $ is a cut-off function that cancels its contribution for small $ \xi$.  Now, by Theorem \ref{Multilinear estimates for M}, it follows that 
\begin{align}
|\Gamma_A [z](\xi)| &\leq |c(A, z)| + \int_0^\xi |\mathcal{M}[S_A + z](\eta)| |\eta|^{1/3} \, d\eta \\
& \lesssim \! |A|+  \left(|A| +\|z\|_{Z^\k} \right)^4  \left[1+\int_0^\xi |\eta|^{1/3} \, d\eta \right]\lesssim |A| + \left(|A| +\|z\|_{Z^\k} \right)^4 ,
\end{align}
since $ |\xi| \lesssim 1 $. Therefore, for $ \kappa \in  \left(\frac58, \frac23 \right) $, we conclude that
\begin{equation}
\langle \xi \rangle^{\kappa} |\Gamma_A [z](\xi)| \lesssim \left(|A| +\|z\|_{Z^\k} \right)^4,\qquad |\xi|\lesssim 1.
\end{equation}

Similarly, for the derivative,
\begin{equation}\label{expressão derivada de Gamma}
 (\Gamma_A [z]^\prime (\xi) = \frac{3i}{8\pi^3} \mathcal{M}[S_A + z](\xi) \xi^{1/3},   
\end{equation}
it follows from Theorem \ref{Multilinear estimates for M} that
\begin{align}
 \langle \xi \rangle^{\kappa+1} |(\Gamma_A [z])^\prime (\xi)| \lesssim \left(|A| +\|z\|_{Z^\k} \right)^4, \qquad |\xi|\lesssim 1.  
\end{align}
Now consider the case $ |\xi| \gg 1 $. In this scenario, observe that
\begin{align*}
\Gamma_A [z](\xi) &= c(A,z)- S_A(\xi) - \frac{3i}{8\pi^3}  \int_0^\xi \,\mathcal{M}[S_A+z](\eta)\,\eta^{1/3}\, d\eta \\
&= c(A,z)- A - \frac{3i}{8\pi^3}  \int_0^\infty \,\mathcal{M}[S_A+z](\eta)\,\eta^{1/3}\, d\eta + \frac{3i}{8\pi^3} \int_\xi^\infty \,\mathcal{M}[S_A+z](\eta)\,\eta^{1/3}\, d\eta\\
&= \frac{3i}{8\pi^3} \int_\xi^\infty \,\mathcal{M}[S_A+z](\eta)\,\eta^{1/3}\, d\eta,
\end{align*}
by the definition of $c(A,z)$.
Again, by Theorem \ref{Multilinear estimates for M},
\begin{align}
|\Gamma_A[z](\xi)|\lesssim \int_\xi^\infty \,|\mathcal{M}[S_A+z](\eta)||\eta|^{1/3}\, d\eta\lesssim \left(|A| +\|z\|_{Z^\k} \right)^4  \int_\xi^\infty \,|\eta|^{-k-1}\, d\eta\lesssim \left(|A| +\|z\|_{Z^\k} \right)^4 \jap{\xi}^{-\k}
\end{align}
since $ |\xi| \gg 1 $. Therefore  
$$  
\langle \xi \rangle^{\k} |\Gamma_A [z](\xi)| \lesssim \left(|A| +\|z\|_{Z^\k} \right)^4 <\infty.  
$$  
Similarly, using the expression \eqref{expressão derivada de Gamma}, we obtain  
$$  
\langle \xi \rangle^{\k+1} |(\Gamma_A [z])^\prime(\xi)| < \infty. 
$$  
Thus, $ \Gamma_A [z] \in Z^\k $. Since $ \mathcal{M}[S_A + z] $ is a quartic term in both $ A $ and $ z $, using arguments analogous to the previous estimates and Theorem \ref{Multilinear estimates for M}, it can be shown that for $ |A| \ll 1 $ sufficiently small, $ \Gamma_A $ is a contraction in a suitable closed ball of $Z^\k$. By Banach's fixed-point theorem, there exists a unique $ z_A \in Z^\k $ with $ \|z_A\|_{Z^\k} \ll 1 $ such that 
$$
\Gamma_A [z_A] \equiv z_A,
$$  
with
$$
\|z_A\|_{Z^\k} \lesssim |A|.
$$
\qed

To complete the proof of Theorem \ref{teorema Self Similar 4KdV}, it remains to invert the relation  
\begin{equation}\label{equação para c(A)}
 c(A) := c(A, z_A)=A+ \frac{3i}{8\pi^3}  \int_0^\infty \,\mathcal{M}[S_A+z_A](\eta)\,\eta^{1/3}\, d\eta. 
\end{equation}
where $ z_A \in Z^\kappa$ is given by Theorem \ref{ponto fixo Gamma_A}. To achieve this, we prove that \eqref{equação para c(A)} is a perturbation of the identity map near $0\in \complex$. 

For $ |A_1|, |A_2| \leq \varepsilon $ sufficiently small, Theorem \ref{Multilinear estimates for M} implies that  
$$
\left| \int_0^\infty \left[\mathcal{M}[S_{A_1} + z_{A_1}](\eta) - \mathcal{M}[S_{A_2} + z_{A_2}](\eta) \right]\, \eta^{1/3} \, d\eta \right| \lesssim \varepsilon^3 \left( |A_1 - A_2| + \|z_{A_1} - z_{A_2}\|_{Z^\k} \right).
$$
Arguing as in the proof of Theorem \ref{ponto fixo Gamma_A},
$$
\|z_{A_1} - z_{A_2}\|_{Z^\k} \lesssim \epsilon^3|A_1-A_2|,
$$
which implies that
\begin{equation}\label{eq:lips_A}
    \left| \int_0^\infty \mathcal{M}[S_{A_1} + z_{A_1}](\eta) \, \eta^{1/3} \, d\eta - \int_0^\infty \mathcal{M}[S_{A_2} + z_{A_2}](\eta) \, \eta^{1/3} \, d\eta \right| \lesssim \varepsilon^3 |A_1 - A_2|
\end{equation}

Define the function  
$$
F(c, A) := c - A - \int_0^\infty \mathcal{M}[S_A + z_A](\eta) \, \eta^{1/3} \, d\eta,
$$  
where $ c \in \mathbb{C} $ and $ A \in \mathbb{C} $ with $ |A| \ll 1 $. Note that $F$ is a Lipschitz function in variable $A$ and 
$$
F(c(A), A) = 0.
$$ 
Moreover, by \eqref{eq:lips_A},
$$
|F(c, A_1) - F(c, A_2)| \gtrsim (1 - \varepsilon^3) |A_1 - A_2|,
$$
for all $c\in \mathbb{C}$. To proceed, we need the following Implicit Function Theorem for Lipschitz functions proposed by Michael Wuertz in \cite{Wuertz2008}.

\begin{thm}[See \cite{Wuertz2008}, Theorem 5.1]\label{teo funçao implícia}
Let $U_m \subseteq \mathbb{R}^m$ and $U_n \subseteq \mathbb{R}^n$ be open. Fix $a \in U_m$ and $b \in U_n$. Consider 
\begin{equation} \label{eq:5.1}
    F : U_m \times U_n \to \mathbb{R}^n
\end{equation}
a Lipschitz function such that 
\begin{equation} \label{eq:5.2}
    F(a, b) = 0
\end{equation}
and with the property that there exists a constant $K > 0$ for which 
\begin{equation} \label{eq:5.3}
    |F(x, y_2) - F(x, y_3)| \geq K |y_2 - y_3| \quad \text{for all } (x, y_j) \in U, \, j = 1, 2.
\end{equation}
Then there exists an open set $V_m \subseteq \mathbb{R}^m$, with $a \in V_m$, and a Lipschitz function $\varphi : V_m \to U_n$ such that $\varphi(a) = b$ and 
\begin{equation} \label{eq:5.4}
    \{(x, y) \in V_m \times U_n : F(x, y) = 0\} = \{(x, \varphi(x)) : x \in V_m\}.
\end{equation}
In particular, 
$$
F(x, \varphi(x)) = 0, \quad \text{for all } x \in V_m.
$$
\end{thm}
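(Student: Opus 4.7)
The plan is to construct $\varphi$ pointwise and extract its Lipschitz regularity directly from (5.3): uniqueness for each fixed $x$ and Lipschitz dependence of $\varphi$ on $x$ are both immediate consequences of the lower Lipschitz bound, so the entire content of the theorem reduces to \emph{existence} — producing, for each $x$ near $a$, at least one $y$ near $b$ with $F(x,y)=0$.

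First I would dispatch the free parts. If $F(x,y_1)=F(x,y_2)=0$, then (5.3) forces $K|y_1-y_2|\leq 0$, hence $y_1=y_2$; we are therefore free to set $\varphi(x):=y$ whenever such $y$ exists. Once $\varphi$ is defined and $L$ denotes the joint Lipschitz constant of $F$, applying (5.3) with $y_j=\varphi(x_j)$ and using $F(x_j,\varphi(x_j))=0$ yields
$$K|\varphi(x_1)-\varphi(x_2)| \leq |F(x_1,\varphi(x_1))-F(x_1,\varphi(x_2))| = |F(x_2,\varphi(x_2))-F(x_1,\varphi(x_2))| \leq L|x_1-x_2|,$$
so $\varphi$ is $(L/K)$-Lipschitz on its domain, and (5.4) follows from the uniqueness above.

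The hard step is existence, which I would treat via a topological argument. Fix $r>0$ with $\overline{B(b,r)}\subset U_n$. By (5.3), $y\mapsto F(x,y)$ is a continuous injection on $\overline{B(b,r)}$ with lower-Lipschitz constant $K$, \emph{uniformly in} $x$. Brouwer's invariance of domain then implies that $F(x,B(b,r))$ is open in $\mathbb{R}^n$ and that $F(x,\partial B(b,r))$ is the topological boundary of $F(x,\overline{B(b,r)})$; moreover, (5.3) puts every point of $F(x,\partial B(b,r))$ at distance at least $Kr$ from $F(x,b)$. A standard connectedness argument — the ball $B(F(x,b),Kr)$ contains $F(x,b)$, is connected, and cannot meet the boundary of the image — then forces $B(F(x,b),Kr)\subset F(x,B(b,r))$. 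Because $F$ is Lipschitz in $x$ and $F(a,b)=0$, one has $|F(x,b)|\leq L|x-a|$, so choosing $V_m=B(a,\rho)$ with $\rho<Kr/L$ places $0$ inside $B(F(x,b),Kr)$, and thus inside $F(x,B(b,r))$, for every $x\in V_m$. This produces $\varphi(x)$ and completes the proof.

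The main obstacle is precisely this quantitative existence step, which — lacking any differentiability hypothesis on $F$ — cannot be handled by the classical contraction-mapping proof of the $C^1$ IFT. Invariance of domain is the cleanest tool but is nonconstructive; a workable substitute is a direct Brouwer fixed-point argument applied to $y\mapsto y-\delta F(x,y)$ on $\overline{B(b,r)}$ for a suitably small $\delta>0$, using (5.3) to certify that the closed ball is mapped into itself once $x$ is close enough to $a$. Either route sidesteps the usual smoothness requirements and delivers the Lipschitz implicit function that is needed to invert the data-to-scattering map $c(A)$ in the proofs above.
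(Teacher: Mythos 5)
This theorem is imported verbatim from Wuertz \cite{Wuertz2008}; the paper gives no proof of it, so there is nothing to compare against except the citation. Your proof is correct and self-contained, and it follows what is essentially the standard route for a Lipschitz (non-differentiable) implicit function theorem. The uniqueness and Lipschitz-regularity steps are immediate from \eqref{eq:5.3} exactly as you write them, and the existence step is the genuinely nontrivial part: the uniform lower bound $|F(x,y)-F(x,y')|\ge K|y-y'|$ makes $F(x,\cdot)$ a continuous injection on $\overline{B(b,r)}$, invariance of domain makes $F(x,B(b,r))$ open with $\partial\bigl(F(x,B(b,r))\bigr)\subseteq F(x,\partial B(b,r))$, and the distance bound $Kr$ from $F(x,b)$ to that boundary combined with the connectedness of $B(F(x,b),Kr)$ gives the quantitative surjectivity $B(F(x,b),Kr)\subseteq F(x,B(b,r))$; since $|F(x,b)|\le L|x-a|$, the point $0$ lands in this ball for $|x-a|<Kr/L$. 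This is a complete argument. One caveat: the ``workable substitute'' you mention at the end — applying Brouwer's fixed point theorem to $y\mapsto y-\delta F(x,y)$ on $\overline{B(b,r)}$ — does not go through on the stated hypotheses alone, because \eqref{eq:5.3} does not force that map to send the closed ball into itself (take $F(x,y)=Ry$ with $R$ a rotation of $\mathbb{R}^2$: then $K=1$ but $|y-\delta Ry|=\sqrt{1+\delta^2}\,|y|>|y|$). Some additional monotonicity or a degree-theoretic reformulation would be needed there; since your main argument does not rely on it, this does not affect the validity of the proof.
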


Thus, applying the Theorem \ref{teo funçao implícia} to the function $F$, for $ |c| \ll 1 $ small enough (identifying $\mathbb{C}$ with $\mathbb{R}^2$ in the standard way) we can invert the relation \eqref{equação para c(A)}, thereby completing the proof of Theorem \ref{teorema Self Similar 4KdV}.
\qed

\section{{Self-similar solutions for mBO}}\label{sec:dnls_fixo}

First, recall that
\begin{align}
\mathcal{I}[g_1,g_2,g_3](\eta)&=\iint e^{i\Phi} \frac{g_1(\eta_1){g_2(\eta_2)}g_3(\eta_3)}{|\eta_1\eta_2\eta_3|^{\frac{1}{2}}}\,\, d\eta_1d\eta_2\\
&= \iint\limits_{\eta = \eta_1+\eta_2+\eta_3}\frac{1}{|\eta_1\eta_2\eta_3|^\frac{1}{2}}e^{i (|\eta|\eta-|\eta_1|\eta_1-|\eta_2|\eta_2-|\eta_3|\eta_3)} g_1(\eta_1){g_2(\eta_2)}g_3(\eta_3) d\eta_1d\eta_2.
\end{align}

Take $\phi\in C_c^\infty(\R)$ a cut-off function over the interval $[-1,1]$ and split $\mathcal{I}$ according to the size of $\eta_1,\eta_2$ and $\eta_3$:
\begin{align}
    \mathcal{I}[g_1,g_2,g_3](\eta) &= \mathcal{I}_{\text{h}\times \text{h}}[g_1,g_2,g_3](\eta) + \mathcal{I}_{\text{l}\times \text{l}\times \text{h}}[g_1,g_2,g_3](\eta) + \mathcal{I}_{\text{h}\times \text{l}\times \text{l}}[g_1,g_2,g_3](\eta) \\&\qquad + \mathcal{I}_{\text{l}\times \text{h}\times \text{l}}[g_1,g_2,g_3](\eta) + \mathcal{I}_{\text{l}\times \text{l}\times \text{l}}[g_1,g_2,g_3](\eta)\label{eq:splitI}
\end{align}
where
\begin{equation}
    \mathcal{I}_{\text{h}\times \text{l}\times \text{l}}[g_1,g_2,g_3](\eta) := \iint\limits_{\eta = \eta_1+\eta_2+\eta_3}\frac{1}{|\eta_1\eta_2\eta_3|^\frac{1}{2}}e^{i \Phi} g_1(\eta_1){g_2(\eta_2)}g_3(\eta_3) \phi(\eta_2)\phi(\eta_3)(1-\phi(\eta_1))d\eta_1d\eta_2.
\end{equation}
\begin{align*}
    \mathcal{I}_{\text{l}\times \text{l}\times \text{h}}[g_1,g_2,g_3](\eta) &:= \iint\limits_{\eta = \eta_1+\eta_2+\eta_3}\frac{1}{|\eta_1\eta_2\eta_3|^\frac{1}{2}}e^{i \Phi} g_1(\eta_1){g_2(\eta_2)}g_3(\eta_3) \phi(\eta_2)\phi(\eta_3) (1-\phi(\eta_1))d\eta_1d\eta_2 \\&=  \mathcal{I}_{\text{h}\times \text{l}\times \text{l}}[g_3,g_2,g_1](\eta).
\end{align*}
\begin{align*}
    \mathcal{I}_{\text{l}\times \text{h}\times \text{l}}[g_1,g_2,g_3](\eta) &:= \iint\limits_{\eta = \eta_1+\eta_2+\eta_3}\frac{1}{|\eta_1\eta_2\eta_3|^\frac{1}{2}}e^{i \Phi} g_1(\eta_1){g_2(\eta_2)}g_3(\eta_3) \phi(\eta_1)\phi(\eta_3) (1-\phi(\eta_2))d\eta_1d\eta_2\\&=\mathcal{I}_{\text{h}\times \text{l}\times \text{l}}[g_2,g_1,g_3](\eta).
\end{align*}
and
\begin{equation}
    \mathcal{I}_{\text{l}\times \text{l}\times \text{l}}[g_1,g_2,g_3](\eta) := \iint\limits_{\eta = \eta_1+\eta_2+\eta_3}\frac{1}{|\eta_1\eta_2\eta_3|^\frac{1}{2}}e^{i \Phi} g_1(\eta_1){g_2(\eta_2)}g_3(\eta_3) \phi(\eta_1)\phi(\eta_2)\phi(\eta_3)d\eta_1d\eta_2.
\end{equation}

As in the \eqref{4Kdv} case, we need some multilinear estimates. For the proof of Theorems \ref{thm:multi_mbo_1} and \ref{thm:multi_mbo_2}, see Sections \ref{sec:est_mbo1} and \ref{sec:est_mbo2}, respectively.

\begin{theorem}[Multilinear Estimates I]\label{thm:multi_mbo_1}
    Let $\k_j \in \left( -\frac{1}{4},\frac{1}{4}\right)$ and $g_j \in Z^{\k_j}$, for $j=1,2,3$. Then, for $0<\eta\lesssim1$.
    $$\left|\mathcal I[g_1,g_2,g_3](\eta)\right| \lesssim \prod_{j=1}^3\|g_j\|_{Z^{\kappa_j}}.$$
    Moreover, for $\eta\gg1$,
    $$\left|\mathcal I_{\text{h}\times \text{h}}[g_1,g_2,g_3](\eta)\right| \lesssim \jap{\eta}^{-\frac32 - \k_1-\k_2-\k_3}\prod_{j=1}^3\|g_j\|_{Z^{\kappa_j}}, $$
    and
     \begin{equation}
         \left|\mathcal I_{\text{h}\times \text{l}\times \text{l}}[g_1,g_2,g_3](\eta)-\frac{\pi}{4\eta^{\frac32}}g_1(\eta)g_2(0)g_3(0)\right| \lesssim \jap{\eta}^{-\frac52}\log^2\eta\prod_{j=1}^3\|g_j\|_{Z^{\kappa_j}},
     \end{equation}
\end{theorem}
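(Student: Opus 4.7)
The plan is to treat the three displayed bounds in turn, using the decomposition of $\mathcal I$ introduced just before the theorem statement; since $\mathcal I_{l\times h\times l}$ and $\mathcal I_{l\times l\times h}$ are permutations of $\mathcal I_{h\times l\times l}$, one really only has to control three pieces.

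For the low-frequency estimate $0<\eta\lesssim 1$, each piece of the decomposition is handled separately. For $\mathcal I_{l\times l\times l}$ all $\eta_j$'s are $O(1)$, so $|g_j(\eta_j)|\lesssim \|g_j\|_{Z^{\kappa_j}}$ and the singular weight $|\eta_1\eta_2\eta_3|^{-1/2}$ is integrable in two dimensions against the constraint $\eta_1+\eta_2+\eta_3=\eta$. For the pieces in which at least one $\eta_j$ is high, the fact that $\eta$ is bounded forces a second $\eta_j$ to be high as well, so the phase is non-stationary with $|\partial_{\eta_j}\Phi|\sim |\eta_j|$ in the relevant region. A single integration by parts against the high variable, combined with the $\|g_j'\|$ control built into the norm $Z^{\kappa_j}$, produces a convergent integrand uniformly bounded in $\eta$.

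For $\mathcal I_{h\times h}$ with $\eta\gg 1$, I would parametrise the integral by $(\eta_1,\eta_2)\in\mathbb R^2$ with $\eta_3=\eta-\eta_1-\eta_2$, compute $\partial_{\eta_1}\Phi=-2|\eta_1|+2|\eta_3|$ and $\partial_{\eta_2}\Phi=-2|\eta_2|+2|\eta_3|$, and identify the stationary points inside the ``at least two high'' region: either the total resonance $\eta_1=\eta_2=\eta_3=\eta/3$ or the space-time resonances $(\pm\eta,\pm\eta,\mp\eta)$ and permutations. At each such point the Hessian of $\Phi$ is non-degenerate and of order one independent of $\eta$, so $2$-dimensional stationary phase gives a contribution equal (up to constants) to the amplitude at the critical point, which is bounded by $\eta^{-3/2-\kappa_1-\kappa_2-\kappa_3}$ since $|\eta_j|\sim\eta$ and $|g_j(\eta_j)|\lesssim \langle\eta\rangle^{-\kappa_j}$ there. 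In the complementary sub-region with exactly two high variables and one low, no stationary point exists and $|\partial \Phi|\sim\eta$, so repeated integration by parts yields decay strictly faster than required.

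For the asymptotic of $\mathcal I_{h\times l\times l}$ with $\eta\gg 1$, I change variables to $(\eta_2,\eta_3)$ with $\eta_1=\eta-\eta_2-\eta_3\approx\eta$ (since $|\eta_2|,|\eta_3|\lesssim 1$), whereupon the phase simplifies to
\begin{equation*}
\Phi=2\eta(\eta_2+\eta_3)-(\eta_2+\eta_3)^2-\eta_2|\eta_2|-\eta_3|\eta_3|.
\end{equation*}
I then Taylor expand $g_1(\eta_1)=g_1(\eta)+O((\eta_2+\eta_3)\langle\eta\rangle^{-1-\kappa_1})\|g_1\|_{Z^{\kappa_1}}$, $g_j(\eta_j)=g_j(0)+O(\eta_j)\|g_j\|_{Z^{\kappa_j}}$ for $j=2,3$, and $|\eta_1|^{-1/2}=\eta^{-1/2}(1+O((\eta_2+\eta_3)/\eta))$, and replace $\Phi$ by its leading piece $2\eta(\eta_2+\eta_3)$. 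The leading integral factorises into two one-dimensional Fresnel integrals $\int|x|^{-1/2}e^{2i\eta x}\phi(x)\,dx=\sqrt{\pi/\eta}+O(\eta^{-3/2})$, producing exactly the stated $\eta^{-3/2}$ main term. Each Taylor remainder carries an extra factor $\eta_2+\eta_3$ or $\eta_j$ that kills one of the $|\eta_j|^{-1/2}$ singularities and therefore costs one additional $\eta^{-1}$ in the Fresnel evaluation, giving the error $\eta^{-5/2}$; the $\log^2\eta$ factor comes from integrating the remainders against the weights $\langle\eta_j\rangle^{-\kappa_j}$ near the endpoints of the $\kappa_j$-range. The main obstacle throughout is the interplay between the singular weights $|\eta_j|^{-1/2}$ and the non-smooth phase $\eta_j|\eta_j|$, both singular at $\eta_j=0$: IBP breaks down across the origin, so one must arrange every IBP away from $\eta_j=0$ and handle the residual low-frequency regions by direct Fresnel evaluation.
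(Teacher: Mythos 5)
Your overall architecture (split by frequency size, Fresnel asymptotics for the resonant $\text{h}\times\text{l}\times\text{l}$ piece, oscillation for $\text{h}\times\text{h}$) matches the paper's, and your treatment of $\mathcal I_{\text{h}\times\text{l}\times\text{l}}$ — Taylor expansion plus factorization into one-dimensional Fresnel integrals — is a legitimate alternative to the paper's substitution $\eta_j=y_j^2$ followed by a two-dimensional Fresnel lemma. However, there is a genuine gap in your treatment of $\mathcal I_{\text{h}\times\text{h}}$. You invoke two-dimensional stationary phase at the resonances $(\eta/3,\eta/3,\eta/3)$ and $(\pm\eta,\pm\eta,\mp\eta)$, but the amplitude contains $g_1g_2g_3$ with $g_j\in Z^{\k_j}$, i.e.\ only \emph{one} derivative of each $g_j$ is controlled; the stationary phase lemma requires a smooth amplitude, and even a crude $O(\lambda^{-1})$ bound in two dimensions costs two derivatives. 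Moreover, as you yourself observe, the Hessian of $\Phi$ is $O(1)$ in the original variables, so there is no large parameter multiplying the quadratic part of the phase: the integral does not localize to an $O(\eta^{-1})$-neighborhood of the critical points, and ``amplitude at the critical point times $\lambda^{-d/2}$'' is not a consequence of any standard lemma here. (Rescaling $p_j=\eta_j/\eta$ produces the large parameter $\eta^2$ but makes the derivatives of $g_j(\eta p_j)$ grow like $\eta$, which again defeats the lemma.) The same constraint blocks your ``repeated integration by parts'' in the non-stationary region: after one IBP each $g_j$ is spent, and in the sub-case of $\mathcal I_{\text{h}\times\text{h}}$ with two high frequencies and one low one a single IBP only yields $\jap{\eta}^{-1-\k_1-\k_2}$, which is weaker than the target $\jap{\eta}^{-3/2-\k_1-\k_2-\k_3}$. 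The paper resolves exactly this by writing $\Phi=\tfrac12\big((\eta+\eta_2)^2-(\eta_3-\eta_1)^2\big)$ in the $(+,-,+)$ configuration, using the non-singular identities $e^{i\Phi}=(1+ir^2)^{-1}\partial_r(r e^{i\Phi})$ in the variables $r=\eta+\eta_2$, $s=\eta_3-\eta_1$ (and then $u,v=\tfrac{r\pm s}{2}$), and organizing two integrations by parts so that no $g_j$ is ever differentiated twice; stationary phase is reserved for the smooth explicit ansatz in Theorem \ref{thm:multi_mbo_2}. Without some version of this bookkeeping your argument for the second estimate does not close.

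Two smaller points. First, your error analysis for $\mathcal I_{\text{h}\times\text{l}\times\text{l}}$ overstates the gain: with only $g_j'\in L^\infty$, the remainder $g_j(\eta_j)-g_j(0)=O(|\eta_j|)$ upgrades the corresponding one-dimensional integral from $O(\eta^{-1/2})$ to $O(\eta^{-1})$ (one IBP away from the origin is all you have), a gain of $\eta^{-1/2}$ per factor rather than the $\eta^{-1}$ you claim; you should check that the resulting error still suffices for the application in Section \ref{sec:dnls_fixo}. Second, in the low-frequency bound you must also be careful that the pieces of $\mathcal I_{\text{h}\times\text{h}}$ with $\eta_1\simeq-\eta_2$ large and $\eta_3$ small combine the derivative constraint with the weight $|\eta_3|^{-1/2}$, so ``a single integration by parts against the high variable'' has to be arranged so that the differentiated factor is integrable; the paper splits at $|\eta_3|\sim\jap{\eta_1}^{-1}$ for exactly this reason.
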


When $g_j\in Z^0$, $j=1,2,3,$ the decay ensured by Theorem \ref{thm:multi_mbo_2} is not enough to ensure integrability in\footnote{The resulting logarithmic divergence is to be expected, as \eqref{dnls} presents modified scattering with logarithmic phase corrections.} $\eta$. At this level, we need another estimate, which holds only for the ansatz in \eqref{eq:ansatz_mbo}.

\begin{theorem}[Multilinear Estimates II]\label{thm:multi_mbo_2}
    For every $A, B \in \mathbb C$ and $a\in \R$ with $|a|+|B|\lesssim |A|$, define
\begin{equation}\label{eq:defi_Saab}
        S_{A,a,B}(\eta):=\left(Ae^{ia\log \eta} + B \frac{e^{2i\eta^2/3+3ia\log\eta}}{\eta^2}\right)\chi(\eta),\quad \eta>0,\qquad S_{A,a,B}(-\eta)=\overline{S_{A,a,B}(\eta).}
\end{equation}
    % $$\left|\mathcal I[S_{A,a}](\eta)\right| \lesssim ??? \quad \text{for} \ |\eta| \lesssim 1; $$
    Then, for $\eta\ge 1$,
    $$|\mathcal{R}[S_{A,a,B}]|:=\left|\mathcal{I}[S_{A,a,B}] - \frac{3|A|^2A\pi e^{ia\log \eta}}{\eta^\frac32} - e^{2i\eta^2/3+3ia\log\eta}\frac{i\pi A^3\sqrt{3}e^{-3ia\log 3}}{\eta^\frac32}\right|\lesssim |A|^3 \langle\eta\rangle^{-\frac{7}{2}}.$$
    Moreover, for $A_j, B_j \in \mathbb C$ and $a_j\in\R$ with $|a_j|+|B_j|\lesssim |A_j|$, $j=1,2$,
    % $$\left|\mathcal I[S_{A_1}](\eta)- \mathcal{I}[S_{A_2}](\eta)\right| \lesssim (|A_1|^2+|A_2|^2)|A_1-A_2|, \quad \text{for} \ |\eta| \lesssim 1; $$
    \begin{align}
        \left|\mathcal R[S_{A_1,a_1,B_1}](\eta)-\mathcal R[S_{A_2,a_2,B_2}](\eta)\right| \lesssim  (|A_1|^2+|A_2|^2)(|A_1-A_2| + |B_1-B_2| + |a_1-a_2|\log \eta)\langle\eta\rangle^{-\frac{7}{2}},\label{eq:IA_contr}
    \end{align}

\end{theorem}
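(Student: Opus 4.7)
The plan is to treat $\mathcal{I}[S_{A,a,B}](\eta)$ as a two-dimensional oscillatory integral with large parameter $\eta^2$. Since $S_{A,a,B}$ is supported on $\{|\eta_j|\geq 1/2\}$ by the cut-off $\chi$, the amplitude $|\eta_1\eta_2\eta_3|^{-1/2}$ is smooth and uniformly bounded in the relevant region (so in particular we are in the regime $\mathcal I_{\text{h}\times\text{h}\times\text{h}}$). The rescaling $\eta_j = \eta\tilde\eta_j$ puts the integral in the form
\[
\mathcal I[S_{A,a,B}](\eta) = \eta^{1/2}\iint_{\tilde\eta_1+\tilde\eta_2+\tilde\eta_3=1}\frac{e^{i\eta^2\tilde\Phi(\tilde\eta_1,\tilde\eta_2)}}{|\tilde\eta_1\tilde\eta_2\tilde\eta_3|^{1/2}}\prod_{j=1}^{3}S_{A,a,B}(\eta\tilde\eta_j)\,d\tilde\eta_1\,d\tilde\eta_2,
\]
where $\tilde\Phi = 1 - |\tilde\eta_1|\tilde\eta_1 - |\tilde\eta_2|\tilde\eta_2 - |\tilde\eta_3|\tilde\eta_3$ with $\tilde\eta_3 = 1-\tilde\eta_1-\tilde\eta_2$.

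Next, one locates the critical points of $\tilde\Phi$, which from $|\tilde\eta_1|=|\tilde\eta_2|=|\tilde\eta_3|$ consist precisely of the diagonal point $(1/3,1/3,1/3)$, where $\tilde\Phi=2/3$ and the Hessian is negative definite, together with the three permutations of $(1,1,-1)$, where $\tilde\Phi=0$ and the Hessian is non-degenerate indefinite. Applying the standard two-dimensional stationary-phase formula with $\lambda=\eta^2$ at each critical point, and using the leading-order approximation $S_{A,a,B}(\eta)\simeq Ae^{ia\log\eta}$ on the support (the $B$-correction contributes at order $\eta^{-2}$, which falls into the remainder), one recovers the two main terms of the theorem. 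The sum over the three permutations of $(1,1,-1)$ reproduces the non-oscillatory $3|A|^2A\pi e^{ia\log\eta}/\eta^{3/2}$ term (the vanishing of $\tilde\Phi$ at these points reflecting the space-time resonance), while the diagonal critical point yields the oscillatory $A^3$ term carrying the phase $e^{2i\eta^2/3+3ia\log\eta-3ia\log 3}$, with the explicit constant determined by $|\det H|^{1/2}$ and the Morse signature.

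To obtain the sharp remainder bound $|\mathcal R[S_{A,a,B}]|\lesssim |A|^3\langle\eta\rangle^{-7/2}$, one decomposes the domain of integration into small neighborhoods of the four critical points and a complementary non-resonant region. On the non-resonant region, $\nabla\tilde\Phi$ does not vanish, and iterated integration by parts against $\nabla\tilde\Phi$ gives $O(\eta^{-N})$ decay; boundary terms produced by the cut-off $\chi(\eta\tilde\eta_j)$ at $\tilde\eta_j\sim \eta^{-1}$ are handled similarly since $\nabla\tilde\Phi$ remains bounded away from zero there. Inside each Morse neighborhood, Morse's lemma provides the full stationary-phase expansion up to the next-order correction of size $\lambda^{-2}=\eta^{-4}$; combining with the $\eta^{1/2}$ prefactor outside yields the announced $\eta^{-7/2}$ bound. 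The contributions of the $B$-term in the ansatz and the second-order terms of the stationary-phase expansion are all of size $|A|^3\eta^{-7/2}$ and are absorbed into $\mathcal R$.

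For the Lipschitz-type estimate \eqref{eq:IA_contr}, the idea is to exploit the trilinearity of $\mathcal I$ to decompose $\mathcal I[S_{A_1,a_1,B_1}] - \mathcal I[S_{A_2,a_2,B_2}]$ as a sum of three trilinear terms in which exactly one slot carries the difference $S_{A_1,a_1,B_1}-S_{A_2,a_2,B_2}$, combined with the pointwise estimate
\[
\bigl|S_{A_1,a_1,B_1}(\eta)-S_{A_2,a_2,B_2}(\eta)\bigr|\lesssim |A_1-A_2| + |A_2||a_1-a_2|\log|\eta| + |B_1-B_2|\langle\eta\rangle^{-2},
\]
on the support of $\chi$, where the $\log|\eta|$ factor is forced by $|e^{ia_1\log|\eta|}-e^{ia_2\log|\eta|}|\le |a_1-a_2|\log|\eta|$. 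Repeating the stationary-phase analysis on each of the three trilinear terms, and noting that the explicit main terms in the statement depend polynomially on $(A,B)$ and only through $e^{\pm ika\log\eta}$ on $a$ so that their differences cancel between $\mathcal R[S_{A_1,a_1,B_1}]$ and $\mathcal R[S_{A_2,a_2,B_2}]$, yields the desired bound. The principal technical obstacle throughout is sustaining the $\eta^{-7/2}$ precision: this requires careful tracking of the second-order stationary-phase corrections, uniform control on the Morse change of coordinates in the presence of the (mildly) singular Jacobian $|\tilde\eta_1\tilde\eta_2\tilde\eta_3|^{-1/2}$, and a sharp treatment of the boundary terms coming from the cut-off $\chi$ to ensure they do not spoil the $\eta^{-2}$ improvement over the leading order.
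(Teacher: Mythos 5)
Your proposal follows essentially the same route as the paper: the rescaling $\eta_j=\eta p_j$ to produce a two-dimensional oscillatory integral with large parameter $\eta^2$, identification of the four stationary points $(1,1,-1)$ and permutations (where $\Psi=0$, giving the non-oscillatory $|A|^2A$ term via $S(-\eta)=\overline{S(\eta)}$) and $(1/3,1/3,1/3)$ (where $\Psi=2/3$, giving the oscillatory $A^3$ term), stationary phase in Morse neighborhoods with the $O(\eta^{-7/2})$ next-order correction, integration by parts in the non-resonant region, relegating the $B$-contributions to the remainder via their extra $\eta^{-2}$ weight, and the trilinear decomposition plus the pointwise bound on $S_{A_1,a_1,B_1}-S_{A_2,a_2,B_2}$ for the Lipschitz estimate. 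This is precisely the argument of Lemmas 6.1 and 6.2 and the concluding paragraph of Section 6.
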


% In particular, we have.

% \begin{theorem}[Multilinear estimates II]\label{thm: bounds for I}
%     Let $\k\in\left( 0, \frac{1}{2} \right)$ and $z \in Z^{\k}$. Then, the following estimates hold:
%     $$\left|\mathcal I[S_A+z,S_A+z,S_A+z](\eta)\right| \lesssim (|A|+\|z\|_{Z^\k})^3, \quad \text{for} \ |\eta| \lesssim 1; $$
%     $$\left|\mathcal I[S_A+z,S_A+z,S_A+z](\eta)- \frac{\pi|A|^2A \  e^{ia \log |\eta|}}{\eta^\frac{3}{2}}\right| \lesssim \frac{(|A|+\|z\|_{Z^\k})^3}{\langle\eta\rangle^{\frac{3}{2}+\k}}, \quad \text{for} \ |\eta| \gg 1.$$
% \end{theorem}

We are now in shape to construct our solution. First, we have to match the asymptotic behavior at large frequencies to find the correct ansatz. This will determine a system of four equations in five unknowns, $c, A, a, B$ and $z\in Z^\k$. For each $c\in \complex$ small, we then perform a fixed-point argument to solve the system and build the self-similar solution.

\begin{remark} The main difference when compared with Section \ref{sec:4kdv_fixo} is the construction of the ansatz. Therein, since the phase of the ansatz depends only on the high-frequency parameter $A$, one may perform the whole construction using $A$ as a parameter and then invert the relation between $A$ and $c$. In the \eqref{dnls} case, the ansatz also depends on the low-frequency parameter $c$, which forces us to construct the solution directly in terms of $c$. 
We point out that, while the argument is slightly different, the required estimates are essentially the same. This underlines once again the robustness of the method.

\end{remark}

We focus on the argument for $\xi>0$, as we are assuming that $\tilde{W}(-\xi)=\overline{\tilde{W}(\xi)}$. Plugging
\begin{equation}\label{eq:ansatz_W}
    \tilde{W}(\xi)=S_{A,a,B}(\xi) + z(\xi)
\end{equation}
into \eqref{eq:ss_dnls},
\begin{align}
    z(\xi) =\nonumber &\ -\left(Ae^{ia\log \xi} + B \frac{e^{2i\xi^2/3+3ia\log\xi}}{\xi^2}\right)\chi(\xi)+c+\frac{i}{2\pi^2}\int_0^\xi |\eta|^\frac12\mathcal{I}[\tilde{W}](\eta)d\eta\\=&\ -\left(Ae^{ia\log \xi} + B \frac{e^{2i\xi^2/3+3ia\log\xi}}{\xi^2}\right)\chi(\xi)+c+\frac{i}{2\pi^2}\int_0^1 |\eta|^\frac12\mathcal{I}[\tilde{W}](\eta)d\eta \\  & + \frac{i}{2\pi^2}\Bigg[\int_1^\xi \frac{\pi (12|A|^2 + 3|c|^2)Ae^{ia\log \eta}}{4\eta} d\eta + \int_1^\xi \frac{e^{2i\eta^2/3+3ia\log\eta}i\pi A^3\sqrt{3}e^{-3ia\log 3}}{\eta}d\eta\Bigg]\\ & + \frac{i}{2\pi^2}\int_1^\xi \left(|\eta|^\frac12\mathcal{I}[\tilde{W}](\eta) - \frac{\pi (12|A|^2 + 3|c|^2)Ae^{ia\log \eta}}{4\eta}-\frac{e^{2i\eta^2/3+3ia\log\eta}i\pi A^3\sqrt{3}e^{-3ia\log 3}}{\eta}\right)d\eta.
\end{align}
Since
\begin{align*}
    \int_1^\xi \frac{e^{2i\eta^2/3+3ia\log \eta}}{\eta}d\eta = \left[\frac{3e^{2i\eta^2/3+3ia\log \eta}}{4i\eta^2}\right]_1^\xi
 - \int_1^\xi\left(-\frac i2+\frac{3a}{4}\right)\frac{e^{2i\eta^2/3+3ia\log \eta} }{\eta^3}d\eta,\end{align*}
we find, for $\xi >1$
\begin{align}
    z(\xi) = &\ -\left(Ae^{ia\log \xi} + B \frac{e^{2i\xi^2/3+3ia\log\xi}}{\xi^2}\right)+c+\frac{i}{2\pi^2}\int_0^1 |\eta|^\frac12\mathcal{I}[\tilde{W}](\eta)d\eta\nonumber \\  & + \frac{i}{2\pi^2}\Bigg[\frac{\pi(12|A|^2+3|c|^2)Ae^{ia\log \eta}}{4ia} + \frac{3\sqrt{3}i\pi A^3e^{-3ia\log    3}e^{2i\eta^2/3+3ia\log \eta}}{4i\eta^2}\Bigg]_1^\xi\label{eq:defi_z0}\\  
    & + \frac{i}{2\pi^2}\int_1^\xi \left(|\eta|^\frac12\mathcal{I}[\tilde{W}](\eta) - \frac{\pi (12|A|^2 + 3|c|^2)Ae^{ia\log \eta}}{4\eta}-\frac{e^{2i\eta^2/3+3ia\log\eta}i\pi A^3\sqrt{3}e^{-3ia\log 3}}{\eta}\right)d\eta\nonumber\\ & - \frac{i}{2\pi^2}\int_1^\xi \sqrt{3}i\pi A^3e^{-3ia\log 3}\left(-\frac i2+\frac{3a}{4}\right)\frac{e^{2i\eta^2/3+3ia\log \eta} }{\eta^3}d\eta.
\end{align}

% \begin{align}
%      &\ e^{ia\log\xi}\left(A+B\frac{e^{2i\eta^2}}{\eta^2}\right) + z(\xi)\\ = &\ c+\frac{i}{2\pi^2}\Bigg[\int_0^1 |\eta|^\frac12\mathcal{I}[\tilde{W}](\eta)d\eta + \left[ \frac{\pi (2|A|^2 + |c|^2)Ae^{ia\log \eta}}{2ia}\right]_1^\xi  + \left[\frac{\pi c^2 \overline{A}e^{2i\eta^2+ia\log \eta}}{16i\eta^2}\right]_1^\xi\Bigg]\\ & + \frac{i}{2\pi^2}\Bigg[\int_1^\xi \left(|\eta|^\frac12\mathcal{I}[\tilde{W}](\eta) - \frac{\pi (2|A|^2 + |c|^2)Ae^{ia\log \eta}}{2\eta}-\frac{\pi c^2\overline{A}e^{2i\eta^2+ia\log \xi}}{4\eta}\right)d\eta\Bigg]\label{eq:asympto_dnls}\\ & + \frac{i}{2\pi^2} \int_1^\xi\left(-\frac18+\frac{ia}{16}\right)\frac{\pi c^2 \overline{A}e^{2i\eta^2+ia\log \eta} }{\eta^3}d\eta.
% \end{align}
We equate the terms in $e^{ia\log\xi}$ to 0, which gives the relation
\begin{equation}\label{eq:defi_a}
    a=\frac{12|A|^2+3|c|^2}{8\pi}.
\end{equation}
Doing the same for the $e^{2i\xi^2/3+3ia\log\xi}$ terms yields
\begin{equation}\label{eq:defi_B}
    B=\frac{3\sqrt{3} iA^3}{8\pi}.
\end{equation}
Under these choices of $a$ and $B$, \eqref{eq:defi_z0} reduces to
\begin{align}
  z(\xi)& =  c+\frac{i}{2\pi^2}\int_0^1 |\eta|^\frac12\mathcal{I}[\tilde{W}](\eta)d\eta\label{eq:defi_z2}\\& - \frac{i}{2\pi^2}\Bigg(\frac{\pi(12|A|^2+3|c|^2)A}{4ia} + \frac{3\sqrt{3}\pi A^3e^{-3ia\log 3}e^{2i/3}}{4}\Bigg) \\ & + \frac{i}{2\pi^2}\int_1^\xi \left(|\eta|^\frac12\mathcal{I}[\tilde{W}](\eta) - \frac{\pi (12|A|^2 + 3|c|^2)Ae^{ia\log \eta}}{4\eta}-\frac{e^{2i\eta^2/3+3ia\log\eta}i\pi A^3\sqrt{3}e^{-3ia\log 3}}{\eta}\right)d\eta\\ & - \frac{i}{2\pi^2}\int_1^\xi \sqrt{3}i\pi A^3e^{-3ia\log 3}\left(-\frac i2+\frac{3a}{4}\right)\frac{e^{2i\eta^2/3+3ia\log \eta} }{\eta^3}d\eta,\qquad \xi>1.
\end{align}
Assuming that $z(\xi)\to 0$ as $\xi\to \infty$ and the integrals are absolutely convergent over $\R^+$, the limit $\xi \to \infty$ gives
\begin{align}
    A&= c+\frac{i}{2\pi^2}\int_0^1 |\eta|^\frac12\mathcal{I}[\tilde{W}](\eta)d\eta\label{eq:defi_A} -\frac{3\sqrt{3}\pi A^3e^{-3ia\log 3}e^{2i/3}}{4}\\ & + \frac{i}{2\pi^2}\Bigg[\int_1^\infty \left(|\eta|^\frac12\mathcal{I}[\tilde{W}](\eta) - \frac{\pi (12|A|^2 + 3|c|^2)Ae^{ia\log \eta}}{4\eta}-\frac{e^{2i\eta^2/3+3ia\log\eta}i\pi A^3\sqrt{3}e^{-3ia\log 3}}{\eta}\right)d\eta\Bigg]\\ & - \frac{i}{2\pi^2}\int_1^\infty \sqrt{3}i\pi A^3e^{-3ia\log 3}\left(-\frac i2+\frac{3a}{4}\right)\frac{e^{2i\eta^2/3+3ia\log \eta} }{\eta^3}d\eta.
\end{align}
\begin{prop}\label{prop:construcao_A}
Fix $\k\in \left(0,\frac14\right)$ and $\varepsilon>0$. For $z\in Z^\k$ and $c\in \R$ with $|c| + \|z\|_{Z^\k}<\varepsilon$,  there exists a unique $A=A(c,z)\in \complex$ satisfying \eqref{eq:defi_A} (with the parameter $a$ given by \eqref{eq:defi_a}). Moreover,
\begin{equation}
    |A(c,z_1)- A(c,z_2)|\lesssim \varepsilon^2|z_1-z_2|,\quad\mbox{for } \|z_1\|_{Z^k},\|z_2\|_{Z^k}<\varepsilon. \label{eq:contr_A}
\end{equation}
\end{prop}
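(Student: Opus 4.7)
The plan is to interpret \eqref{eq:defi_A} as a fixed-point equation for $A$ and apply the Banach contraction principle. Define, for $c\in\R$ and $z\in Z^\kappa$ fixed, the map $\Phi=\Phi_{c,z}:\complex\to\complex$ by setting $\Phi(A)$ equal to the right-hand side of \eqref{eq:defi_A}, where $a=a(A,c)$ is given by \eqref{eq:defi_a}, $B=B(A)$ is given by \eqref{eq:defi_B}, and $\tilde W=S_{A,a,B}+z$. Since $a=O(|A|^2+|c|^2)$ and $B=O(|A|^3)$, the ansatz satisfies $\|S_{A,a,B}\|_{Z^\kappa}\lesssim |A|$, so $\|\tilde W\|_{Z^\kappa}\lesssim |A|+\|z\|_{Z^\kappa}$. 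I will look for a fixed point in the ball
\begin{equation}
    \mathcal B_\varepsilon := \{A\in\complex : |A-c|\le K\varepsilon^3\}\subset\{|A|\le 2\varepsilon\}
\end{equation}
for a suitable $K>0$, so that $|A|\lesssim \varepsilon$ throughout.

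The core step is to estimate $\Phi(A)-c$ by splitting the right-hand side of \eqref{eq:defi_A} into four pieces. The low-frequency integral $\int_0^1 |\eta|^{1/2}\mathcal{I}[\tilde W](\eta)\,d\eta$ is bounded using the first bound of Theorem \ref{thm:multi_mbo_1} by $\lesssim\|\tilde W\|_{Z^\kappa}^3\lesssim (|A|+\|z\|_{Z^\kappa})^3\lesssim\varepsilon^3$. The explicit cubic term $A^3 e^{-3ia\log 3}e^{2i/3}$ is of size $|A|^3\lesssim\varepsilon^3$. The main high-frequency integral is precisely the remainder $\mathcal R[S_{A,a,B}]$ appearing in Theorem \ref{thm:multi_mbo_2} plus cross terms involving $z$ (each multilinear in $\tilde W$ with at least one copy of $z$), and by the $\langle\eta\rangle^{-7/2}$-bound of that theorem combined with Theorem \ref{thm:multi_mbo_1} for the cross terms (whose decay is $\langle\eta\rangle^{-3/2-3\kappa}$, integrable since $\kappa>0$), the whole contribution is again $O(\varepsilon^3)$. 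Finally, the $\eta^{-3}$ term is trivially controlled by $|A|^3\lesssim\varepsilon^3$. Consequently, for $\varepsilon$ small enough, $\Phi$ maps $\mathcal B_\varepsilon$ into itself.

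For the contraction estimate, given $A_1,A_2\in\mathcal B_\varepsilon$, I differentiate each of the four pieces with respect to $A$. The dependence of $(a,B,S_{A,a,B})$ on $A$ is at most cubic (with $|a_1-a_2|\lesssim\varepsilon|A_1-A_2|$ and $|B_1-B_2|\lesssim\varepsilon^2|A_1-A_2|$), so Theorem \ref{thm:multi_mbo_1} yields a Lipschitz bound $\lesssim\varepsilon^2|A_1-A_2|$ on the $[0,1]$ integral and on the $z$-containing cross terms. The purely ansatz-based main integral is handled by the quantitative Lipschitz bound \eqref{eq:IA_contr}, which gives exactly $\lesssim\varepsilon^2|A_1-A_2|$ (after using $|a_1-a_2|\log\eta\cdot\langle\eta\rangle^{-7/2}\in L^1_\eta$). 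Combining everything,
\begin{equation}
|\Phi(A_1)-\Phi(A_2)|\le C\varepsilon^2|A_1-A_2|,
\end{equation}
which is a contraction for $\varepsilon$ small. Banach's theorem then produces a unique fixed point $A=A(c,z)\in\mathcal B_\varepsilon$.

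It remains to establish \eqref{eq:contr_A}. Letting $A_j=A(c,z_j)$, write
\begin{equation}
    A_1-A_2=\Phi_{c,z_1}(A_1)-\Phi_{c,z_2}(A_2)=\bigl[\Phi_{c,z_1}(A_1)-\Phi_{c,z_1}(A_2)\bigr]+\bigl[\Phi_{c,z_1}(A_2)-\Phi_{c,z_2}(A_2)\bigr].
\end{equation}
The first bracket is bounded by the contraction estimate $C\varepsilon^2|A_1-A_2|$ above. The second bracket depends on $z$ only through the cross terms of the multilinear integrals (the parameters $a,B$ do not see $z$); each such term is linear in the $z$-entry and cubic overall in $\|\tilde W\|_{Z^\kappa}$, so Theorem \ref{thm:multi_mbo_1} gives a bound $\lesssim\varepsilon^2\|z_1-z_2\|_{Z^\kappa}$. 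Absorbing the $|A_1-A_2|$-term on the left yields \eqref{eq:contr_A}. The main obstacle in the argument is the large-frequency integral, which is only conditionally convergent; this is overcome by invoking precisely the cancellation built into Theorem \ref{thm:multi_mbo_2}, together with its Lipschitz refinement \eqref{eq:IA_contr} that is tailored to control the nonlinear dependence of $(a,B)$ on $A$.
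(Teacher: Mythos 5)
Your overall strategy coincides with the paper's: a Banach fixed-point argument for $A$ on a small ball, with boundedness coming from Theorems \ref{thm:multi_mbo_1} and \ref{thm:multi_mbo_2}, the contraction from the multilinear structure of $\mathcal{I}$ together with \eqref{eq:IA_contr}, and the final absorption argument for \eqref{eq:contr_A}.

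There is, however, a genuine gap in your treatment of the conditionally convergent high-frequency integral. You decompose its integrand as $|\eta|^{1/2}\mathcal R[S_{A,a,B}]$ plus ``cross terms involving $z$'' and claim the latter decay like $\langle\eta\rangle^{-3/2-3\kappa}$, hence are integrable. This fails for the high$\times$low$\times$low cross terms: since $S_{A,a,B}$ vanishes near the origin, the low-frequency entries there are necessarily carried by $z$, and by the second estimate of Theorem \ref{thm:multi_mbo_1} these terms behave like $\frac{3\pi}{4\eta^{3/2}}\tilde{W}(0){\tilde{W}(0)}\tilde{W}(\eta)\sim \frac{3\pi}{4\eta^{3/2}}|c|^2Ae^{ia\log\eta}$, so after multiplication by $|\eta|^{1/2}$ they contribute a non-integrable $|c|^2|A|/\eta$ tail. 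This is precisely why the counterterm subtracted in \eqref{eq:defi_A} carries the coefficient $12|A|^2+3|c|^2$ rather than just $12|A|^2$ (and why $a$ in \eqref{eq:defi_a} depends on $c$): the $3|c|^2/\eta$ piece must be cancelled against the high$\times$low$\times$low contribution, as the paper does in \eqref{eq:llh_prova}--\eqref{eq:asympt_llh}, before the remaining error $O(\langle\eta\rangle^{-1-\kappa})$ can be integrated. Your decomposition leaves the $3|c|^2/\eta$ part of the counterterm unmatched and assigns the cross terms a decay rate they do not have, so the convergence of the integral defining $\Phi(A)$ --- the crux of the proposition --- is not actually established. (A smaller inaccuracy: even the high$\times$high cross terms decay only like $\langle\eta\rangle^{-3/2-\kappa}$, since the $S$-entries lie in $Z^0$ rather than $Z^\kappa$; this is still integrable after the $|\eta|^{1/2}$ weight and does not affect the argument.)
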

\begin{proof}
    The proof follows from a fixed-point argument for the operator
\begin{align}
    \Theta[A]&= c+\frac{i}{2\pi^2}\int_0^1 |\eta|^\frac12\mathcal{I}[\tilde{W}](\eta)d\eta\nonumber -\frac{3\sqrt{3}\pi A^3e^{-3ia\log 3}e^{2i/3}}{4}\\ \label{eq:defi_A_prova}& + \frac{i}{2\pi^2}\int_1^\infty \left(|\eta|^\frac12\mathcal{I}[\tilde{W}](\eta) - \frac{\pi (12|A|^2 + 3|c|^2)Ae^{ia\log \eta}}{4\eta}-\frac{e^{2i\eta^2/3+3ia\log\eta}i\pi A^3\sqrt{3}e^{-3ia\log 3}}{\eta}\right)d\eta\\ & - \frac{i}{2\pi^2}\int_1^\infty \sqrt{3}i\pi A^3e^{-3ia\log 3}\left(-\frac i2+\frac{3a}{4}\right)\frac{e^{2i\eta^2/3+3ia\log \eta} }{\eta^3}d\eta,
\end{align}
over the space $E=\{A\in \complex: |A|\le 2\varepsilon\}$.\\

\textit{Step 1. Boundedness of $\Theta$.}  We estimate each term in \eqref{eq:defi_A_prova}, where we recall that $\tilde{W}$ is given by \eqref{eq:ansatz_W},
 $$
  \tilde{W}=S_{A,a,B} + z\in Z^0.
 $$
 Theorem \ref{thm:multi_mbo_1} gives directly
\begin{equation}\label{eq:controlo_etapeq}
        \left|\int_0^1 |\eta|^\frac12\mathcal{I}[\tilde{W}](\eta)d\eta \right| \lesssim \varepsilon^3.
\end{equation}
    For the integral involving $\mathcal{I}[\tilde{W}]$ over $(1,\infty)$, we must decompose $\mathcal{I}[\tilde{W}]$ as in \eqref{eq:splitI},
    $$
     \mathcal{I}[\tilde{W}] = \mathcal{I}_{\text{h}\times \text{h}}[\tilde{W}] + \mathcal{I}_{\text{l}\times \text{l}\times \text{h}}[\tilde{W}] + \mathcal{I}_{\text{h}\times \text{l}\times \text{l}}[\tilde{W}] + \mathcal{I}_{\text{l}\times \text{h}\times \text{l}}[\tilde{W}] + \mathcal{I}_{\text{l}\times \text{l}\times \text{l}}[\tilde{W}]$$
The last term vanishes for $\eta>1$ (since, in the corresponding frequency domain, $|\eta|\le |\eta_1|+ |\eta_2|+ |\eta_3|\ll 1$). By Theorem \ref{thm:multi_mbo_1},
    \begin{equation}\label{eq:llh_prova}
\left|\mathcal{I}_{\text{h}\times \text{l}\times \text{l}}[\tilde{W}](\eta)+\mathcal{I}_{\text{l}\times \text{h}\times \text{l}}[\tilde{W}]+\mathcal{I}_{\text{l}\times \text{l}\times \text{h}}[\tilde{W}](\eta)-\frac{3\pi}{4\eta^{\frac32}}\tilde{W}(0){\tilde{W}(0)}\tilde{W}(\eta)\right|\lesssim \varepsilon^3 \jap{\eta}^{-\frac{5}{2}}\log^2 |\eta|
    \end{equation}
By definition of $\tilde{W}(0)$, for $\eta>1$,
$$
\tilde{W}(0){\tilde{W}(0)}\tilde{W}(\eta) = |c|^2A e^{ia\log\eta}+O(|\eta|^{-\k}).
$$
Together with \eqref{eq:llh_prova}, 
  \begin{equation}\label{eq:asympt_llh}
\left|\mathcal{I}_{\text{h}\times \text{l}\times \text{l}}[\tilde{W}](\eta)+\mathcal{I}_{\text{l}\times \text{l}\times \text{h}}[\tilde{W}](\eta)-\frac{3\pi}{4\eta^{\frac32}}|c|^2Ae^{ia\log \eta}\right|\lesssim \varepsilon^3 \jap{\eta}^{-\k-\frac32}.
    \end{equation}
It remains to control $\mathcal{I}_{\text{h}\times \text{h}}[\tilde{W}]$. On the one hand, Theorem \ref{thm:multi_mbo_1} implies that
$$
\left|\mathcal{I}[\tilde{W}](\eta) - \mathcal{I}[S_{A,a,B}](\eta)\right|\lesssim \varepsilon^3\jap{\eta}^{-\frac32 - \k}, \qquad S_{A,a,B}\text{ as in \eqref{eq:defi_Saab}},
$$
since the difference can be written as a sum of $\mathcal{I}$'s involving at least one $z\in Z^\k$. On the other hand, Theorem \ref{thm:multi_mbo_2} gives
$$
\left|\mathcal{I}[S_{A,a,B}] - \frac{3|A|^2A\pi e^{ia\log \eta}}{\eta^\frac32} - e^{2i\eta^2/3+3ia\log\eta}\frac{i\pi A^3\sqrt{3}e^{-3ia\log 3}}{\eta^\frac32}\right|\lesssim \varepsilon^3 \langle\eta\rangle^{-\frac{7}{2}}.
$$
Together with \eqref{eq:asympt_llh}, this implies that
\begin{equation}\label{eq:I_sem_div}
    \left||\eta|^\frac12\mathcal{I}[\tilde{W}](\eta) - \frac{\pi (12|A|^2 + 3|c|^2)Ae^{ia\log \eta}}{4\eta}-\frac{e^{2i\eta^2/3+3ia\log\eta}i\pi A^3\sqrt{3}e^{-3ia\log 3}}{\eta}\right|\lesssim \frac{\varepsilon^3}{\jap{\eta}^{1+\k}}
\end{equation}
and thus
\begin{equation}\label{eq:controlo_asympt}
   \Bigg|\int_1^\infty \left(|\eta|^\frac12\mathcal{I}[\tilde{W}](\eta) - \frac{\pi (12|A|^2 + 3|c|^2)Ae^{ia\log \eta}}{4\eta}-\frac{e^{2i\eta^2/3+3ia\log\eta}i\pi A^3\sqrt{3}e^{-3ia\log 3}}{\eta}\right)d\eta\Bigg|\lesssim  \varepsilon^3.
\end{equation}
Estimates \eqref{eq:controlo_etapeq} and \eqref{eq:controlo_asympt} imply that
\begin{align}
    |\Theta[z,a,B,c]-c|&\lesssim \left|\int_0^1 |\eta|^\frac12\mathcal{I}[\tilde{W}](\eta)d\eta\right|\label{eq:est_A} +|A|^3\\ & + \Bigg|\int_1^\infty \left(|\eta|^\frac12\mathcal{I}[\tilde{W}](\eta) - \frac{\pi (12|A|^2 + 3|c|^2)Ae^{ia\log \eta}}{4\eta}-\frac{e^{2i\eta^2/3+3ia\log\eta}i\pi A^3\sqrt{3}e^{-3ia\log 3}}{\eta}\right)d\eta\Bigg|\\ & -\left|\int_1^\infty \sqrt{3}i\pi A^3e^{-3ia\log 3}\left(-\frac i2+\frac{3a}{4}\right)\frac{e^{2i\eta^2/3+3ia\log \eta} }{\eta^3}d\eta\right| \lesssim \varepsilon^3.
\end{align}

\textit{Step 2. Bounds on the difference of two ansatz.} Given $A_1,A_2\in \complex$, define $a_j, B_j$, $j=1,2$, via \eqref{eq:defi_a} and \eqref{eq:defi_B} and set
$$
S_j:= S_{A_j,a_j,B_j},\quad j=1,2.
$$
For $\eta \ll1 $, $S_1(\eta)-S_2(\eta) = 0$. For $\eta\gtrsim 1$, we have
    \begin{align*}
        |(S_1-S_2)(\eta)| & \leq \left|\left(e^{ia_1\log |\eta|}-e^{ia_2\log |\eta|}\right)\left(A_1  + B_1 \frac{e^{2i\xi^2}}{\xi^2}\right)\right|+\left|A_1-A_2 \right| + \frac{1}{\jap{\eta}^2}|B_1-B_2|\\
        &  \lesssim (1+\varepsilon^2 \log |\eta|) \left(|a_1-a_2|+|A_1-A_2|+|B_1-B_2| \right).
    \end{align*}
    Analogously,
    % $$(S_{A_1}-S_{A_2})'(\eta)=  \left(A_1 e^{- i \frac{|A_1|^2}{2\pi} \log |\eta|}-A_2 e^{- i \frac{|A_2|^2}{2\pi} \log |\eta|}\right)\chi'(\eta) - \frac{i}{2\pi|\eta|} (|A_1|^2S_{A_1}-|A_2|^2S_{A_2})(\eta),$$ a similar argument shows that 
    $$|(S_{1}-S_{2})'(\eta)| \lesssim \frac{1}{|\eta|}(1+\varepsilon^2\log |\eta|)\left(|a_1-a_2|+|A_1-A_2|+|B_1-B_2| \right).$$
    % It follows that, for all $\eta$, and $\delta >0$,
    % $$\langle\eta\rangle^{-\delta}|(S_{A_1}-S_{A_2})(\eta)|+ \langle \eta \rangle^{-1-\delta} |(S_{A_1}-S_{A_2})'(\eta)| \lesssim \varepsilon^2\log (2+|\eta|)\langle\eta \rangle^{-\delta}|A_1-A_2| \leq C(\delta)\varepsilon^2 |A_1-A_2|$$
    % where we considered $\log(2+|\eta|)$ instead of $\log|\eta|$ to handle the region $|\eta| \lesssim 1$. 
    In particular, it follows that $S_{1}-S_{2} \in Z^{-\delta}$, for every $\delta > 0$, with $\|S_1-S_2\|_{Z^{-\delta}}\lesssim|A_1-A_2|$.

\textit{Step 3. Contraction estimate.} Let
$$
\tilde{W}_j=S_j+ z,\quad j=1,2.
$$
Then
\begin{align}
    &\qquad |\Theta[A_1]-\Theta[A_2]| \\&\lesssim \int_0^1 |\eta|^{\frac12}\left|\mathcal{I}[\tilde{W}_1]- \mathcal{I}[\tilde{W}_2]\right|d\eta + \left|\frac{3\sqrt{3}\pi A_1^3e^{-3ia_!\log 3}e^{2i/3}}{4}-\frac{3\sqrt{3}\pi A_2^3e^{-3ia_2\log 3}e^{2i/3}}{4}\right|\\& + \Bigg|\int_1^\infty \left(|\eta|^\frac12\mathcal{I}[\tilde{W}_1](\eta) - \frac{\pi (12|A_1|^2 + 3|c|^2)A_1e^{ia_1\log \eta}}{4\eta}-\frac{e^{2i\eta^2/3+3ia\log\eta}i\pi A_2^3\sqrt{3}e^{-3ia_2\log 3}}{\eta}\right)d\eta\\&\qquad -\int_1^\infty \left(|\eta|^\frac12\mathcal{I}[\tilde{W}_2](\eta) - \frac{\pi (12|A_2|^2 + 3|c|^2)A_1e^{ia_2\log \eta}}{4\eta}-\frac{e^{2i\eta^2/3+3ia\log\eta}i\pi A_2^3\sqrt{3}e^{-3ia_2\log 3}}{\eta}\right)d\eta  \Bigg|\\& + \Bigg|\int_1^\infty  A_1^3e^{-3ia_1\log 3}\left(-\frac i2+\frac{3a_1}{4}\right)\frac{e^{2i\eta^2/3+3ia_1\log \eta} }{\eta^3}- A_2^3e^{-3ia_2\log 3}\left(-\frac i2+\frac{3a_2}{4}\right)\frac{e^{2i\eta^2/3+3ia_2\log \eta} }{\eta^3}d\eta\Bigg|\\&\lesssim I + II + III + IV.
\end{align}
The terms $II$ and $IV$ are easily bounded by $\varepsilon^2|A_1-A_2|$. For the term $I$, using the multilinear structure of $\mathcal{I}$ and Theorem \ref{thm:multi_mbo_1},
$$
\int_0^1 |\eta|^{\frac12}\left|\mathcal{I}[\tilde{W}_1]- \mathcal{I}[\tilde{W}_2]\right|d\eta \lesssim \int_0^1|\eta|^{\frac12}\left(\|\tilde{W}_1\|_{Z^0}^2 + \|\tilde{W}_2\|_{Z^0}^2\right)\|\tilde{W}_1 - \tilde{W}_2\|_{Z^{-\delta}}d\eta \lesssim \varepsilon^2|A_1-A_2|.
$$
Finally, for the term $III$, we write
\begin{equation}
    \mathcal{I}[\tilde{W_1}] - \mathcal{I}[\tilde{W_1}] = \left((\mathcal{I}[\tilde{W_1}]-\mathcal{I}[S_1]) - (\mathcal{I}[\tilde{W_2}]-\mathcal{I}[S_2])\right) + (\mathcal{I}[S_1]-\mathcal{I}[S_2])\label{eq:I_difere}
\end{equation}
and decompose $III$ accordingly. Again by the multilinear structure of $\mathcal{I}$, the first term can be written as a sum of
$$
\mathcal{I}[z,S_1-S_2,g_3], \quad \text{for }g_3\in\{z, S_1, S_2\} \subset Z^0.
$$
By Theorem \ref{thm:multi_mbo_1},
$$
\int_1^\infty |\eta|^{\frac12}|\mathcal{I}[z,S_1-S_2,g_3]|(\eta)d\eta\lesssim \int_1^\infty|\eta|^{-\frac32 -k + \delta} d\eta\|z\|_{Z^\k}\|S_1-S_2\|_{Z^{-\delta}}\|g_3\|_{Z^0}\lesssim \varepsilon^2|A_1-A_2|.
$$
As for the second term in \eqref{eq:I_difere}, we must use Theorem \ref{thm:multi_mbo_2} to find
\begin{align*}
    &\Bigg|\int_1^\infty \left(|\eta|^\frac12\mathcal{I}[\tilde{W}_1](\eta) - \frac{\pi (12|A_1|^2 + 3|c|^2)A_1e^{ia_1\log \eta}}{4\eta}-\frac{e^{2i\eta^2/3+3ia\log\eta}i\pi A_2^3\sqrt{3}e^{-3ia_2\log 3}}{\eta}\right)d\eta\\&\qquad -\int_1^\infty \left(|\eta|^\frac12\mathcal{I}[\tilde{W}_2](\eta) - \frac{\pi (12|A_2|^2 + 3|c|^2)A_1e^{ia_2\log \eta}}{4\eta}-\frac{e^{2i\eta^2/3+3ia\log\eta}i\pi A_2^3\sqrt{3}e^{-3ia_2\log 3}}{\eta}\right)d\eta  \Bigg|\\ & \lesssim \varepsilon^2|A_1-A_2|\int_1^\infty \jap{\eta}^{-3}\log \eta d\eta \lesssim  \varepsilon^2|A_1-A_2|.
\end{align*}
We finally conclude that
\begin{equation}
    |\Theta[A_1]-\Theta[A_2]|\lesssim \varepsilon^2|A_1-A_2|
\end{equation}
which proves that $\Theta:E\to E$ is a contraction. By Banach's fixed point theorem, there exists a unique $A=A(c,z)$ with $|A|\le 2\varepsilon$ satisfying \eqref{eq:defi_A}.

\textit{Step 4. Proof of \eqref{eq:contr_A}.}  Arguing as in Step 3 and using the multilinear structure of $\mathcal{I}$,
$$
|A_1-A_2|\lesssim \varepsilon^2\left(\|z_1-z_2\|_{Z^\k} + |A_1-A_2|\right),
$$
from which we immediately conclude \eqref{eq:contr_A}.
\end{proof}

% \begin{theorem}
%     Fix $\k\in \left(0,\frac12\right)$ and $\varepsilon>0$. Given $c\in \R$ with $|c|<\varepsilon$, there exists a unique $(z, A, a, B)\in  Z^\k\times \complex\times\R\times \complex^2$ satisfying  \eqref{eq:defi_z}, \eqref{eq:defi_A}, \eqref{eq:defi_a} and \eqref{eq:defi_B}.
% \end{theorem}
\begin{proof}[Proof of Theorem \ref{teorema Self Similar_dnls}]
Recalling \eqref{eq:defi_z}, the proof follows once again by a fixed-point argument for 
\begin{align}
    \Gamma[z]=\nonumber &\ -\left(Ae^{ia\log \xi} + B \frac{e^{2i\xi^2/3+3ia\log\xi}}{\xi^2}\right)\chi(\xi)+c+\frac{i}{2\pi^2}\int_0^\xi |\eta|^\frac12\mathcal{I}[\tilde{W}](\eta)d\eta\\=&\ -\left(Ae^{ia\log \xi} + B \frac{e^{2i\xi^2/3+3ia\log\xi}}{\xi^2}\right)\chi(\xi)+c+\frac{i}{2\pi^2}\int_0^1 |\eta|^\frac12\mathcal{I}[\tilde{W}](\eta)d\eta\label{eq:defi_z} \\  & + \frac{i}{2\pi^2}\Bigg[\int_1^\xi \frac{\pi (12|A|^2 + 3|c|^2)Ae^{ia\log \eta}}{4\eta} d\eta + \int_1^\xi \frac{e^{2i\eta^2/3+3ia\log\eta}i\pi A^3\sqrt{3}e^{-3ia\log 3}}{\eta}d\eta\Bigg]\\ & + \frac{i}{2\pi^2}\Bigg[\int_1^\xi \left(|\eta|^\frac12\mathcal{I}[\tilde{W}](\eta) - \frac{\pi (12|A|^2 + 3|c|^2)Ae^{ia\log \eta}}{4\eta}-\frac{e^{2i\eta^2/3+3ia\log\eta}i\pi A^3\sqrt{3}e^{-3ia\log 3}}{\eta}\right)d\eta\Bigg],
\end{align}
where $A=A(c,z)$ is given by Proposition \ref{prop:construcao_A} and $a, B$ are defined via \eqref{eq:defi_a} and \eqref{eq:defi_B} respectively. To that end, we consider the space 
    $$
    E=\left\{ z\in Z^\k :z(-\xi)=\overline{z(\xi)},\ z(0)=c,\  \|z\|_{Z^\k} \le 2 \varepsilon\right\},
    $$
endowed with the metric
$$
d(z_1,z_2) = \|z_1-z_2\|_{Z^{\k-\delta}},
$$
so that $(E,d)$ is a complete metric space. 

\textit{Step 1. Boundedness of $\Gamma$.} 
For $\xi<1$, we apply Theorem \ref{thm:multi_mbo_1} to \eqref{eq:defi_z}, yielding
$$
|\Gamma[z](\xi)-c|\lesssim |S_{A,a,B}(\xi)| + \left|\int_0^\xi|\eta|^{\frac12}\mathcal{I}[\tilde{W}](\eta)d\eta\right| \lesssim |\eta|\varepsilon^3
$$
and
$$
|(\Gamma[z])'(\xi)| \lesssim |\xi|^\frac12 |\mathcal{I}[\tilde{W}](\xi)| \lesssim \varepsilon^3
$$
For $\xi>1$, we instead use the expression \eqref{eq:defi_z2} (which holds by the choice of $a$ and $B$). Moreover, by the definition of $A=A(c,z)$,
\begin{align}
  \Gamma[z](\xi)=& \frac{i}{2\pi^2}\int_\xi^\infty \left(|\eta|^\frac12\mathcal{I}[\tilde{W}](\eta) - \frac{\pi (12|A|^2 + 3|c|^2)Ae^{ia\log \eta}}{4\eta}-\frac{e^{2i\eta^2/3+3ia\log\eta}i\pi A^3\sqrt{3}e^{-3ia\log 3}}{\eta}\right)d\eta\label{eq:z_eta_grande}\\ & - \frac{i}{2\pi^2}\int_\xi^\infty \sqrt{3}i\pi A^3e^{-3ia\log 3}\left(-\frac i2+\frac{3a}{4}\right)\frac{e^{2i\eta^2/3+3ia\log \eta} }{\eta^3}d\eta.
  \end{align}
By the computations of the first step in the proof of Proposition \ref{prop:construcao_A},
$$
|\Gamma[z](\xi)|\lesssim \varepsilon^3\int_\xi^\infty |\eta|^{-1-\k}d\eta \lesssim \varepsilon^3 |\xi|^{\k}, \quad |(\Gamma[z])'(\xi)|\lesssim \varepsilon^3|\xi|^{-1-k},
$$
which shows that $\Gamma[z]\in E$ for $z\in E$.

\textit{Step 2. Contraction estimate.} First of all, observe that, by Proposition \ref{prop:construcao_A}, $A, a, B$ are Lipschitz continuous in $z$ (with constant $\varepsilon^2$).
For $\xi<1$, the estimate is direct, we focus on $\xi>1$. Let
$$
\tilde{W_j}=S_{A_j,a_j,B_j} +z_j,\quad j=1,2.
$$
Then
\begin{align}
  &\Gamma[z_1](\xi)- \Gamma[z_2](\xi)\\&= \frac{i}{2\pi^2}\Bigg[\int_\xi^\infty \left(|\eta|^\frac12\mathcal{I}[\tilde{W_1}](\eta) - \frac{\pi (12|A_1|^2 + 3|c|^2)A_1e^{ia_1\log \eta}}{4\eta}-\frac{e^{2i\eta^2/3+3ia_1\log\eta}i\pi A_1^3\sqrt{3}e^{-3ia_1\log 3}}{\eta}\right)d\eta \\&\qquad - \int_\xi^\infty \left(|\eta|^\frac12\mathcal{I}[\tilde{W}_2](\eta) - \frac{\pi (12|A_2|^2 + 3|c|^2)A_2e^{ia_2\log \eta}}{4\eta}-\frac{e^{2i\eta^2/3+3ia_2\log\eta}i\pi A_"^3\sqrt{3}e^{-3ia_2\log 3}}{\eta}\right)d\eta\Bigg]\\ & + \frac{\sqrt{3}}{2\pi}\int_\xi^\infty  A_1^3e^{-3ia_1\log 3}\left(-\frac i2+\frac{3a_1}{4}\right)\frac{e^{2i\eta^2/3+3ia_1\log \eta} }{\eta^3} -  A_2^3e^{-3ia_2\log 3}\left(-\frac i2+\frac{3a_2}{4}\right)\frac{e^{2i\eta^2/3+3ia_"\log \eta} }{\eta^3}d\eta.
  \end{align}
These terms can be written either involving $$z_1-z_2\quad \mbox{or}\quad S_{A_1,a_1,B_1}-S_{A_2,a_2,B_2}.$$
In the first case, the same arguments of Step 2 yield a contraction estimate
$$
|(\mbox{terms in }z_1-z_2)'(\xi)|\lesssim \varepsilon^2|\xi|^{-1-\k+\delta}\|z_1-z_2\|_{Z^{\k-\delta}},\quad  |(\mbox{terms in }z_1-z_2)(\xi)|\lesssim \varepsilon^2|\xi|^{-\k+\delta}\|z_1-z_2\|_{Z^{\k-\delta}}
$$
For the second case, arguing as in the third step of the proof of Proposition \ref{prop:construcao_A},
$$
|(\mbox{terms in }S_{A_1,a_1,B_1}-S_{A_2,a_2,B_2})'(\xi)|\lesssim \varepsilon^2|\xi|^{-1-\k+\delta}|A_1-A_2|,$$
$$ |(\mbox{terms in }S_{A_1,a_1,B_1}-S_{A_2,a_2,B_2})(\xi)|\lesssim \varepsilon^2|\xi|^{-\k+\delta}|A_1-A_2|.
$$
We conclude that
$$
d(\Gamma[z_1],\Gamma[z_2])=\|\Gamma[z_1]-\Gamma[z_2]\|_{Z^{\k-\delta}}\lesssim \varepsilon^2\|z_1-z_2\|_{Z^{\k-\delta}} + \varepsilon^2|A_1-A_2|\lesssim \varepsilon^2\|z_1-z_2\|_{Z^{\k-\delta}},
$$
where we used \eqref{eq:contr_A} with $\k$ replaced by $\k-\delta$. Hence $\Gamma$ is a contraction over $E$ and the proof is finished.

\end{proof}

\section{{Self-similar solutions for NLS}}\label{sec:nls_fixo}

As in the previous cases, the key to the proof lies in suitable multilinear estimates. The proofs of these results can be found in Section \ref{sec:nls}.

\begin{theorem}[Multilinear Estimates I] \label{thm:multi_nls}
    Let $\k_j \in \left(   -\frac12, \frac12\right)$ and $g_j \in Y^{\k_j}$, for $j=1,2,3$  Then the following estimates hold:
    $$\left|\mathcal{T}[h_1,h_2,h_3](\eta)\right| \lesssim \langle\eta\rangle^{-\k_1-\k_2-\k_3} \prod_{j=1}^3 \|h_j\|_{Y^{\k_j}}. $$
\end{theorem}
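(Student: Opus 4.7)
The plan is to begin with the algebraic identity
\begin{equation*}
\Phi = \eta^2-\eta_1^2+\eta_2^2-\eta_3^2 = 2(\eta-\eta_1)(\eta-\eta_3),
\end{equation*}
valid under the constraint $\eta_1+\eta_2+\eta_3=\eta$, and change variables to $u:=\eta-\eta_1$, $v:=\eta-\eta_3$ (so that $\eta_2=u+v-\eta$). Then
\begin{equation*}
\mathcal{T}[h_1,h_2,h_3](\eta) = \iint_{\R^2} e^{-2iuv}\,h_1(\eta-u)\,\overline{h_2(u+v-\eta)}\,h_3(\eta-v)\,du\,dv,
\end{equation*}
which has a clean bilinear phase with its unique critical point at $u=v=0$, corresponding to the NLS space-time resonance $(\eta,-\eta,\eta)$. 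An important feature of these variables is that $\partial_u$ never hits $h_3$ and $\partial_v$ never hits $h_1$, which sharpens the subsequent integration by parts.

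I would treat $|\eta|\lesssim 1$ and $|\eta|\gg 1$ separately. For $|\eta|\lesssim 1$, I decompose each $h_j$ into its low- and high-frequency components via $\mathbbm{1}_{|\eta_j|\lesssim 1}$ and bound the resulting eight pieces directly. The $Y^{\k_j}$-norm yields $|h_j(\eta_j)| \lesssim (1+|\log|\eta_j||)\|h_j\|_{Y^{\k_j}}$ on the low part and $|h_j(\eta_j)|\lesssim \langle\eta_j\rangle^{-\k_j}\|h_j\|_{Y^{\k_j}}$ on the high part; since $\langle\eta\rangle\sim 1$ we only need an $O(1)$ bound, which follows from local integrability of the logarithmic singularities combined with polynomial decay on the high-frequency tails.

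For $|\eta|\gg 1$, I split the $(u,v)$-plane into the near-resonant square $\{|u|,|v|\le 1\}$ and its complement. Inside the square, all three arguments have size $\sim|\eta|$, so the product of high-frequency $Y^{\k_j}$ bounds contributes exactly $\langle\eta\rangle^{-\k_1-\k_2-\k_3}$, times the $O(1)$ area. Outside, I integrate by parts in $u$ on $\{|v|\gtrsim 1\}$ via $e^{-2iuv}=(-2iv)^{-1}\partial_u e^{-2iuv}$, gaining a factor $|v|^{-1}$ and replacing $h_1$ or $h_2$ by its derivative (which carries one extra power of $\langle\cdot\rangle^{-1}$ at high frequency), and symmetrically in $v$ on $\{|u|\gtrsim 1\}$. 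Iterating as needed and summing dyadically over the sizes of $|u|$, $|v|$ yields the desired decay.

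The main obstacle will be the integration-by-parts step in the regime $|\eta_2|\ll 1$. When $\partial_u$ hits $\overline{h_2(u+v-\eta)}$, the resulting $h_2'$ is controlled only by $|\eta_2|^{-1}\|h_2\|_{Y^{\k_2}}$, which is borderline non-integrable in $\eta_2$. This forces a further subdivision of the IBP region according to the size of $|\eta_2|$: on the slab $\{|\eta_2|\lesssim 1\}$ one abandons the IBP in $u$ and instead uses the direct logarithmic bound on $h_2$ together with oscillation in $v$, while on $\{|\eta_2|\gg 1\}$ the polynomial decay of $h_2'$ closes the estimate. Balancing these contributions against each other is delicate and exploits the restriction $\k_j\in (-1/2,1/2)$ in the dyadic accounting, in the same spirit as the mKdV analysis of \cite{CCV20}, but adapted to the bilinear NLS phase and the logarithmic low-frequency scale built into $Y^{\k}$.
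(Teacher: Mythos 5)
Your factorization $\Phi=2(\eta-\eta_1)(\eta-\eta_3)$ and the passage to the variables $u=\eta-\eta_1$, $v=\eta-\eta_3$ is a legitimate reorganization (the paper instead works region by region in $(\eta_1,\eta_2,\eta_3)$, integrating by parts in $\eta_1$ or $\eta_2$ with one frequency held fixed, and reduces the all-high-frequency case to the mBO estimates, since $Y^\kappa\sim Z^\kappa$ at high frequency). The near-resonant square and the dyadic IBP away from it are fine in the regions where all frequencies are large. The problem is precisely at the step you flag as the main obstacle, and your proposed resolution of it does not close. On the slab $\{|\eta_2|=|u+v-\eta|\lesssim 1\}$ you propose to ``use the direct logarithmic bound on $h_2$ together with oscillation in $v$.'' But a purely direct bound on that slab gives, after integrating out $\eta_2$ over an interval of length $O(1)$ at fixed $v$ (so $u\approx\eta-v$, hence $\eta_1\approx v$), a remaining integral of the form
\begin{equation}
\int \langle v\rangle^{-\kappa_1}\langle \eta-v\rangle^{-\kappa_3}\,dv,
\end{equation}
which diverges whenever $\kappa_1+\kappa_3\le 1$ — i.e.\ always, since $\kappa_j<\tfrac12$. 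And if you instead invoke ``oscillation in $v$'' by integrating by parts in $v$, the derivative necessarily falls on $h_2(u+v-\eta)$ as well as $h_3$, producing $|h_2'(\eta_2)|\lesssim|\eta_2|^{-1}\|h_2\|_{Y^{\kappa_2}}$, and $\int_{|\eta_2|\le 1}|\eta_2|^{-1}\,d\eta_2$ diverges. So neither half of your dichotomy controls the slab.

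Two concrete repairs exist, and the paper uses both. First, when $\eta_2$ is the small frequency one should integrate by parts in a direction that \emph{fixes} $\eta_2$: in your coordinates this is $\partial_u-\partial_v$, which gains $|\eta_1-\eta_3|^{-1}$ and differentiates only $h_1,h_3$ (this is the paper's High$\times$Low$\times$High case; the complementary region $\eta_1\simeq\eta_3\simeq\eta/2$ has measure $O(|\eta|^{-1})$ in the relevant variable and is estimated directly). Second, when the frequency carrying the logarithmic singularity is the one you must differentiate (the paper's Low$\times$Low$\times$High and Low$\times$High$\times$Low cases), the correct cut is not at $|\eta_j|\sim1$ but at $|\eta_j|\sim\langle\eta\rangle^{-1}$ or $\langle\eta\rangle^{-2}$: on the inner piece the direct bound $\int_{|\eta_j|\le\langle\eta\rangle^{-1}}|\log|\eta_j||\,d\eta_j\lesssim\langle\eta\rangle^{-1}\log\langle\eta\rangle$ wins, while on the annulus $\langle\eta\rangle^{-1}\lesssim|\eta_j|\lesssim1$ the IBP term $\int|\eta_j|^{-1}\,d\eta_j\lesssim\log\langle\eta\rangle$ costs only a logarithm, absorbed by the strict inequality $\kappa_j<\tfrac12$. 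The same two-scale cut is needed for the symmetric borderline cases $|\eta_1|\ll1$ or $|\eta_3|\ll1$, where $h_1'$ or $h_3'$ produces $|\eta_1|^{-1}$ or $|\eta_3|^{-1}$; your writeup does not address these at all. Without these refinements the argument has a genuine gap at its central step.
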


\begin{theorem}[Multilinear Estimates II]
    For every $A \in \mathbb C$, the following estimates hold:
    $$\left|\mathcal T[S_A,S_A,S_A](\eta)\right| \lesssim |A|^3, \quad \text{for} \ |\eta| \lesssim 1; $$
    $$\left|\mathcal T[S_A,S_A,S_A](\eta)-\pi|A|^2A\  e^{i \frac{|A|^2}{2\pi} \log |\eta|}\right| \lesssim |A|^3\langle\eta\rangle^{-\frac{7}{2}}, \quad \text{for} \ |\eta| \gg 1.$$ 
\end{theorem}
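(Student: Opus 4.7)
The two estimates are of rather different character. For $|\eta|\lesssim 1$, the bound $|\mathcal{T}[S_A,S_A,S_A](\eta)|\lesssim |A|^3$ is essentially immediate: since $|S_A(\xi)|\le |A|$ and $S_A$ is supported in $|\xi|\ge 1/2$, a direct inspection gives $\|S_A\|_{Y^0}\lesssim |A|$, and the bound then follows from Theorem~\ref{thm:multi_nls} with $\kappa_1=\kappa_2=\kappa_3=0$. The real content of the theorem lies in the sharp high-frequency asymptotic.

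For $|\eta|\gg 1$, the plan is a precise two-dimensional stationary-phase analysis. Eliminating $\eta_3$ via the convolution constraint, the phase $\Phi$ becomes a quadratic in $(\eta_1,\eta_2)$ with a unique interior critical point at the space--time resonance where $|\eta_1|=|\eta_2|=|\eta_3|=|\eta|$; there $\Phi$ vanishes and the Hessian is indefinite and non-degenerate, with $|\det|=4$. After localising with smooth cut-offs, I would split the integration domain into a unit ball centred at the resonance and its complement. On the complement, $|\nabla\Phi|\sim|\eta|$, and repeated integration by parts against $\Phi$ gains a factor of $\eta^{-1}$ per step, producing a remainder of the desired size $|A|^3\jap{\eta}^{-7/2}$ after enough iterations. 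Near the resonance, a linear change of variables reduces $\Phi$ exactly to a hyperbolic quadratic form $\bigl(\begin{smallmatrix}-4 & -2 \\ -2 & 0\end{smallmatrix}\bigr)$, while the amplitude factors as $|A|^2 A$ times a slowly varying logarithmic phase of magnitude $O(|A|^2)$. Taylor-expanding this phase in powers of $\eta^{-1}$ and tracking the cancellations forced by the symmetry of the resonance, the leading stationary-phase contribution reduces to a two-dimensional Gaussian integral equal to $\pi$, which reproduces the stated main term $\pi|A|^2 A\, e^{i\frac{|A|^2}{2\pi}\log|\eta|}$.

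The main obstacle is achieving the sharp remainder decay $\jap{\eta}^{-7/2}$. A naïve stationary-phase expansion produces only $O(\eta^{-2})$ corrections coming from the curvature of the logarithmic amplitude, so reaching $-7/2$ requires pushing the Taylor expansions of both amplitude and phase to higher order and exhibiting the cancellations between the resulting cross-terms, much as in the mBO analysis. Moreover, the boundary contributions from the cut-offs $\chi$ near $|\eta_j|\simeq 1$ must be handled separately; they are confined to regions where $\Phi$ stays bounded away from zero, so a further non-stationary-phase argument absorbs them below $\jap{\eta}^{-7/2}$. This part parallels the corresponding analysis in the proof of Theorem~\ref{thm:multi_mbo_2}.
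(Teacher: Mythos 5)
Your overall strategy --- the direct bound via Theorem \ref{thm:multi_nls} with $\kappa_j=0$ for $|\eta|\lesssim 1$, and for $|\eta|\gg 1$ a stationary-phase analysis around the unique space--time resonance (Hessian $\bigl(\begin{smallmatrix}-4 & -2\\ -2 & 0\end{smallmatrix}\bigr)$, $|\det|=4$, signature $0$, Gaussian factor $\pi$) combined with integration by parts off the resonance --- is the right one. In fact the paper never writes this proof out: Section \ref{sec:nls} proves only Multilinear Estimates I, and the intended argument is the one carried out for \eqref{dnls} in Lemmas \ref{lem:IA}--\ref{lem:IAB}, which is precisely the template you describe. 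Your identification of the leading term is correct.

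The genuine gap is the remainder exponent, which you flag but do not close. After rescaling $\eta_j=\eta p_j$ the integral is $\eta^{2}\iint e^{-i\eta^{2}\Psi}(\cdots)\,dp_1dp_2$ with $\Psi$ exactly quadratic and an $\eta$-independent amplitude near the critical point; Theorem \ref{fase estacionaria} with $\lambda=\eta^{2}$, $d=2$ gives the main term plus $O(\lambda^{-2})\cdot\eta^{2}=O(|A|^{3}\eta^{-2})$. In the mBO case the weights $|\eta_j|^{-1/2}$ make the prefactor $\eta^{1/2}$, and the same $O(\lambda^{-2})$ error becomes $\eta^{-7/2}$; for \eqref{3NLS} the prefactor is $\eta^{2}$ and the method stops at $\eta^{-2}$. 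Your proposal to reach $-7/2$ by ``pushing the Taylor expansions to higher order and exhibiting cancellations'' cannot succeed: the $\lambda^{-2}$ term of the expansion is explicit, proportional to $\langle (D^{2}\Phi)^{-1}\nabla,\nabla\rangle$ applied to the logarithmic amplitude at the critical point, and is a generically nonzero quantity of size $O(|A|^{5}\eta^{-2})$, which is not $O(|A|^{3}\eta^{-7/2})$. So either the stated exponent is an artifact of transcribing Theorem \ref{thm:multi_mbo_2} (the likely reading --- note that $O(\eta^{-2})$, indeed any $O(\eta^{-\kappa})$ with $\kappa<\tfrac12$, is all that the construction of $c_\pm$ and the fixed point actually use), or one must subtract a further explicit $O(|A|^{5})\eta^{-2}$ correction from $\mathcal{T}$. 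As written, your argument proves the statement only with $\langle\eta\rangle^{-2}$ in place of $\langle\eta\rangle^{-7/2}$. A secondary inaccuracy: off a unit ball around the resonance one only has $|\nabla\Phi|\gtrsim 1$ (it is comparable to the distance to the resonance), not $|\nabla\Phi|\sim|\eta|$; the non-stationary region should be treated in the rescaled variables, where each integration by parts gains $\eta^{-2}$ against amplitude derivatives that remain integrable on $|p_j|>\eta^{-1}$, exactly as at the end of the proof of Lemma \ref{lem:IA}.
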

Following the same spirit as for \eqref{dnls}, we fix $\kappa \in \left(0, \tfrac{1}{2}\right)$ and define the mapping $c_+: \mathbb{C} \times Y^{\kappa} \to \mathbb{C}$ as
\begin{equation}
    c_+(A,z) := A- \frac{i}{2\pi^2}
    \int_1^{+\infty} \left(\mathcal{T}[S_A+z](\eta)-\pi|A|^2A\ e^{-i \frac{|A|^2}{2\pi} \log |\eta|}\right)\eta^{-1}\,d\eta
  %  \label{eq: def of func c}
\end{equation}
and analogously for $c_-$.
Observe that, for the proof of Theorem \ref{teorema Self Similar nls}, it is sufficient to perform a fixed-point argument for the operator  
$$
\Gamma_{A} [z](\xi) := c_\pm(A,z) - S_{A}(\xi) + \frac{i}{2\pi^2} \int_{\pm1}^\xi \mathcal{T}[S_A+z](\eta)\,\eta^{-1}\, d\eta,\quad \xi\in \R^\pm,
$$
in the space $Y^\k$. The proof is essentially the same as the argument used in the proof of Theorem \ref{teorema Self Similar_dnls}, the only difference resides in the low-frequency regime.  Therefore, assuming $z \in Y^\k$, we note that, for $|\xi| \lesssim 1$, by Theorem \ref{thm:multi_nls},
\begin{align}
|\Gamma_{A}  [z](\xi)| & \lesssim |c(A,z)| + \int_1^\xi |\mathcal{T}[S_A+z](\eta)|\,|\eta|^{-1}\, d\eta \\
& \lesssim |c(A,z)| + \int_1^\xi |\eta|^{-1}\, d\eta  \lesssim \log |\xi| .
\end{align}
Similarly,
\begin{align}
|(\Gamma_{A}  [z])^\prime(\xi)| & \lesssim |\mathcal{T}[S_A+z](\xi)|\,|\xi|^{-1}  \lesssim |\xi|^{-1}.
\end{align}
 Hence, $\Gamma_{A} [z] \in Y^\k$. The contraction estimates are similar, we leave them to the interested reader.
 % Proceeding along the same lines as in the proof of Theorem \eqref{thm: fixed point z_A}, we obtain the following result.
% \begin{theorem} \label{thm: fixed point z_A NLS}
%    Fix $\k \in \left(0, \tfrac{1}{2}\right)$ and $A \in \mathbb{C}$ with $|A| \ll 1$. Then there exists $c=c(A,z) \in \mathbb{C}$, with $|c| \ll 1$, such that the operator $\Gamma_{A}$ has a unique fixed point $z_A \in Y^\k$. Moreover, $z_A$ satisfies
% $$
% \|z_A\|_{Y^\k} \lesssim |A|.
% $$

\section{Multilinear estimates for 4KdV}\label{sec:4kdv}

 This section is dedicated to the proof of Theorem \ref{Multilinear estimates for M}, that is, to the derivation of multilinear estimates for 
 $$
 \mathcal{M}[f_1,f_2,f_3,f_4](\eta)=\iiint e^{-i\Phi} \frac{f_1(\eta_1)f_2(\eta_2)f_3(\eta_3)f_4(\eta_4)}{(\eta_1\eta_2\eta_3\eta_4)^{\frac{1}{3}}}\,\, d\eta_1d\eta_2d\eta_3,\quad \eta_4= \eta- \eta_1-\eta_2-\eta_3,
$$
with $f_j \in Z^{\k_j}$, $\k_j\in \{0,\kappa\}$. If $f_j\notin Z^\k$, we assume that $f_j(\xi)=\chi(\xi)$ or $f_j(\xi)=\chi(-\xi)$. To simplify the exposition, we consider just the first possibility, as the second follows from the same computations. This section is then divided into four subsections, depending on the exact number of functions lying in $Z^\k$.

\begin{remark}
It is worth mentioning that, both in this section and in the following ones, we will consider several regions of integration. We remark that, in the computations below, the integral of a function $f$ over a region $\mathcal{A}$ should be understood as the integral of $f\phi$, where $\phi \in C^\infty(\mathbb{R}^d)$, with $d = 3,4$, is a suitable cut-off function supported in a small neighborhood of $\mathcal{A}$ such that $\phi|_\mathcal{A} \equiv 1$. In particular, if 
$$
|f(\eta_j)| \lesssim \langle \eta_j \rangle^{-\kappa}
\quad \text{and} \quad 
|\partial_{\eta_j} f(\eta_j)| \lesssim \langle \eta_j \rangle^{-\kappa - 1},
$$
then 
$$
|f(\eta_j)\phi| \lesssim \langle \eta_j \rangle^{-\kappa},
\quad
|\partial_{\eta_j}(f(\eta_j)\phi)| \lesssim \langle \eta_j \rangle^{-\kappa - 1}.
$$
\end{remark}

\subsection{Estimates for the constant term}\label{sec:constant_term}
Let $f_j \notin Z^\k$ for $j=1,2,3,4$. Then we need to estimate the term

\begin{equation}\label{M(1)}
 \mathcal{M}[f_1,f_2,f_3,f_4](\eta)= \iiint e^{-i\Phi} \frac{\chi(\eta_1)\chi(\eta_2)\chi(\eta_3)\chi(\eta_4)}{(\eta_1\eta_2\eta_3\eta_4)^{\frac{1}{3}}}\, d\eta_1d\eta_2d\eta_4 =: \mathcal{M}_{\text{constant}}(\eta),
\end{equation}
with $\eta_3= \eta_1-\eta_2-\eta_4$. First, consider $ |\eta| \lesssim 1 $. Let us assume, without loss of generality, that 
\begin{equation}\label{Ordenaçao dos etas}
|\eta_1| \geq |\eta_2| \geq |\eta_3| \geq |\eta_4|.
\end{equation}
 Now, observe that
\begin{align*}
\Phi(\eta_1,\eta_2,\eta_3,\eta_4) &= \eta^3 - \sum_{j=1}^4\eta_j^3
     = \eta^3 -\eta_4^3- \sum_{j=1}^3\eta_j^3
     = \left[(\eta - \eta_4)^3 - \sum_{j=1}^3\eta_j^3 \right]+ 3(\eta^2\eta_4 - \eta\eta_4^2).
\end{align*}
Therefore, we can rewrite \eqref{termo linear} as
\begin{align*}
 \mathcal{M}_{\text{constant}}(\eta) =: \int e^{-3i(\eta^2\eta_4 - \eta\eta_4^2)}\K(\eta- \eta_4)\eta_4^{-1/3} \chi(\eta_4)d\eta_4.
\end{align*}
with
$$
\mathcal{K}(\zeta):=\iint \limits_{\zeta= \eta_1+\eta_2+\eta_3} e^{-i (\zeta^3 - \eta_1^3-\eta_2^3-\eta_3^3)}\frac{\chi(\eta_1)\chi(\eta_2)\chi(\eta_3)}{(\eta_1\eta_2\eta_3)^{\frac{1}{3}}}\, d\eta_1d\eta_2
$$

A fundamental tool for estimating the terms from this point onward is the stationary phase lemma. For the sake of completeness, we state it below.

\begin{theorem}[Stationary phase lemma for smooth functions (see, e.g., \cite{Fedoryuk1989})]\label{fase estacionaria}
Let $\phi : \mathbb{R}^d \to \mathbb{R}$ be smooth, and assume that $\phi$ has a nondegenerate critical point at $x_0$. 
If $\psi : \mathbb{R}^d \to \mathbb{C}$ is smooth and supported in a sufficiently small\footnote{So that it does not contain any other critical points.} neighborhood of $x_0$, then the oscillatory integral
$$
I(\lambda) := \int_{\mathbb{R}^d} e^{i\lambda \phi(x)} \psi(x)\, dx
$$
satisfies, as $\lambda \to \infty$,
$$
I(\lambda)
= (2\pi )^{\frac{d}{2}} e^{\frac{i\pi}{4} \sgn D^2 \phi(x_0) }\Bigl(\det D^2 \phi(x_0)\Bigr)^{-\frac12}\,
e^{i\lambda \phi(x_0)}\,
\psi(x_0)
\,\lambda^{-d/2}
+ O\!\left(\lambda^{-d/2 -1}\right).
$$
where $\sgn D^2\phi(x_0)$ is the difference between the number of positive and negative eigenvalues of the Hessian matrix $D^2\phi(x_0)$.
\end{theorem}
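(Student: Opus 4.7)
The plan is the classical two-step reduction: first bring $\phi$ into quadratic normal form near $x_0$ by a local change of variables (Morse's Lemma), then evaluate the resulting model oscillatory integral by Fresnel's formula, and finally control the error via a Taylor expansion of the amplitude combined with a parity argument. After translating $x_0$ to the origin and factoring out the unit-modulus prefactor $e^{i\lambda\phi(x_0)}$, we may assume $x_0=0$ and $\phi(0)=0$. Since $0$ is a nondegenerate critical point, Morse's Lemma (in its Hessian-preserving form, proved via the Moser homotopy trick) furnishes a smooth diffeomorphism $\Psi$ from a neighbourhood of $0$ to itself, with $\Psi(0)=0$, such that
\begin{equation*}
\phi(\Psi(y))=\tfrac{1}{2}\langle Hy,y\rangle,\qquad H:=D^{2}\phi(0).
\end{equation*}
Differentiating this identity twice at $y=0$ yields $(D\Psi(0))^{\top}H\,D\Psi(0)=H$, hence $|\det D\Psi(0)|=1$.

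After shrinking the support of $\psi$ if needed, the change of variables $x=\Psi(y)$ rewrites the integral as
\begin{equation*}
I(\lambda)=\int_{\mathbb{R}^{d}}e^{i\lambda\langle Hy,y\rangle/2}\,a(y)\,dy,\qquad a(y):=\psi(\Psi(y))\,|\det D\Psi(y)|,
\end{equation*}
with $a$ smooth, compactly supported, and $a(0)=\psi(0)$. Diagonalizing $H$ by an orthogonal rotation (which preserves Lebesgue measure and the signature), the integral, after the substitution $a(y)\mapsto a(0)$, factorizes into a product of one-dimensional Fresnel integrals. The standard identity
\begin{equation*}
\int_{\mathbb{R}}e^{i\lambda\mu t^{2}/2}\,dt=\Bigl(\frac{2\pi}{\lambda|\mu|}\Bigr)^{1/2}e^{i\pi\operatorname{sgn}(\mu)/4}\qquad(\mu\neq 0),
\end{equation*}
justified by analytic continuation from the Gaussian regularization $\mu\mapsto\mu-i\varepsilon$ and contour rotation, together with $\prod_{j}|\mu_{j}|=|\det H|$ and $\sum_{j}\operatorname{sgn}(\mu_{j})=\operatorname{sgn}(H)$, yields
\begin{equation*}
a(0)\int_{\mathbb{R}^{d}}e^{i\lambda\langle Hy,y\rangle/2}\,dy=(2\pi)^{d/2}\lambda^{-d/2}|\det H|^{-1/2}e^{i\pi\operatorname{sgn}(H)/4}\,\psi(0),
\end{equation*}
which is exactly the announced leading term.

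The remaining task, and the main obstacle, is to show that replacing $a(y)$ by $a(0)$ produces an error of order $\lambda^{-d/2-1}$ rather than the naive $\lambda^{-(d+1)/2}$. I would expand $a(y)=a(0)+\nabla a(0)\cdot y+R(y)$ with $|R(y)|\lesssim|y|^{2}$ on a fixed neighbourhood of $0$. After diagonalization each linear term $y_{j}$ paired with the even kernel $e^{i\lambda\mu_{j}y_{j}^{2}/2}$ integrates to zero by parity, so the linear contribution vanishes identically, and the gain of a full power of $\lambda^{-1/2}$ (instead of only $\lambda^{-1/4}$) is recovered. The quadratic remainder is then split into a near zone $\{|y|\lesssim\lambda^{-1/2}\}$, where direct volume estimation yields $O(\lambda^{-d/2-1})$, and a far zone, where repeated integration by parts with the operator $L=-i\lambda^{-1}(Hy\cdot\nabla)/|Hy|^{2}$ (which satisfies $L\,e^{i\lambda\langle Hy,y\rangle/2}=e^{i\lambda\langle Hy,y\rangle/2}$ off the origin) produces arbitrarily fast decay in $\lambda$. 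The genuine technical nuisance throughout is the conditional convergence of the Fresnel integrals, which forces all computations to be performed with the Gaussian regularization $e^{-\varepsilon|y|^{2}}$ and the limit $\varepsilon\downarrow 0$ taken at the end; once this limit is justified (by dominated convergence in the far zone and explicit Fresnel evaluation in the near zone), the asymptotic expansion with the sharp remainder $O(\lambda^{-d/2-1})$ follows.
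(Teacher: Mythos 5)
The paper does not prove this statement at all: it is quoted as a classical result with a reference to Fedoryuk, and is only used as a black box in the stationary-phase computations later on. So there is no in-paper argument to compare with; what you have written is the standard textbook proof (Hessian-preserving Morse lemma, diagonalization, one-dimensional Fresnel integrals via Gaussian regularization, then an error estimate for the amplitude minus its value at the critical point), and in outline it is correct, including the observation $\bigl(D\Psi(0)\bigr)^{\top}H\,D\Psi(0)=H$ forcing $|\det D\Psi(0)|=1$ so that the amplitude value at the origin is exactly $\psi(x_0)$.

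Two points in your error analysis need repair, though neither is fatal. First, your claim that in the far zone $\{|y|\gtrsim\lambda^{-1/2}\}$ repeated integration by parts with $L$ ``produces arbitrarily fast decay in $\lambda$'' is not correct for the quadratic remainder $R$: each integration by parts gains a factor of order $\lambda^{-1}|y|^{-2}$, which is $O(1)$ at the inner boundary $|y|\sim\lambda^{-1/2}$, so the decay saturates. The correct statement is that after roughly $\lfloor d/2\rfloor+2$ integrations by parts the far-zone contribution (including the boundary terms at $|y|\sim\lambda^{-1/2}$) is $O(\lambda^{-d/2-1})$, which is exactly what you need; arbitrarily fast decay is only available for amplitude pieces supported away from the critical point. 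Second, you Taylor-expand $a(y)=a(0)+\nabla a(0)\cdot y+R(y)$ and then pair $a(0)$ and the linear term with full-space Fresnel integrals, but these two pieces are not compactly supported, while $a$ is; to make the parity argument and the leading-term evaluation honest you should insert an even (e.g.\ radial) cutoff $\chi$ equal to $1$ near the origin, writing $a=[a(0)+\nabla a(0)\cdot y]\chi+b$, and dispose of the $(1-\chi)$ pieces by nonstationary phase; the remainder $b$ then vanishes to second order and is handled by your near/far splitting. Alternatively, the whole remainder estimate can be obtained more cleanly by Parseval: the quadratic-phase integral equals an explicit constant times $\int e^{-i\langle H^{-1}\xi,\xi\rangle/(2\lambda)}\widehat a(\xi)\,d\xi$, and expanding the exponential to first order gives the leading term plus an error bounded by $\lambda^{-1}\int|\xi|^{2}|\widehat a(\xi)|\,d\xi$, i.e.\ the sharp $O(\lambda^{-d/2-1})$ without any zone decomposition.
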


\begin{lemma}\label{Lema função K}
The function $\K(\zeta)$ satisfies
\begin{equation}
|\K(\zeta)| \lesssim \jap{\zeta}^{-2},
\end{equation}
and
\begin{equation}
|\K^\prime(\zeta)| \lesssim 1,
\end{equation}
for all $\zeta \in \R.$
\end{lemma}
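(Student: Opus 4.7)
My plan is to treat $\K$ as a 2D oscillatory integral and apply stationary phase. Viewing
\[
\Phi(\zeta,\eta_1,\eta_2) = \zeta^3-\eta_1^3-\eta_2^3-(\zeta-\eta_1-\eta_2)^3
\]
as a function of $(\eta_1,\eta_2)$, one computes $\nabla_{(\eta_1,\eta_2)}\Phi = 3\bigl((\zeta-\eta_1-\eta_2)^2-\eta_1^2,\,(\zeta-\eta_1-\eta_2)^2-\eta_2^2\bigr)$, so in the positive region where the cut-offs live the unique critical point is $\eta_1=\eta_2=\zeta-\eta_1-\eta_2=\zeta/3$, with non-degenerate Hessian of determinant $12\zeta^2$. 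Since the cut-offs force $\eta_j\geq 1/2$, the integrand vanishes whenever $\zeta<3/2$, making both bounds trivial in that range; on any compact range $\zeta\in[3/2,C]$ the integrand is pointwise bounded on a finite-area domain, yielding $|\K(\zeta)|\lesssim 1$ and $|\K'(\zeta)|\lesssim 1$ immediately.

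For $\zeta\gg 1$ I rescale $\eta_j=\zeta x_j$ to extract the large parameter:
\[
\K(\zeta)=\zeta\iint e^{-i\zeta^3\tilde\Phi(x_1,x_2)}\,\tilde G(x_1,x_2,\zeta)\, dx_1\, dx_2,\qquad \tilde\Phi:=1-x_1^3-x_2^3-(1-x_1-x_2)^3,
\]
\[
\tilde G:=\frac{\chi(\zeta x_1)\chi(\zeta x_2)\chi(\zeta(1-x_1-x_2))}{(x_1 x_2(1-x_1-x_2))^{1/3}}.
\]
Under this rescaling the critical point is pinned at $(1/3,1/3)$ with $|\det D^2_x\tilde\Phi|=12$, and for $\zeta$ large the cut-offs are identically $1$ in a fixed neighborhood of it. I split the integral into a small ball $B$ around $(1/3,1/3)$ and its complement. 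On $B$, Theorem \ref{fase estacionaria} with $\lambda=\zeta^3$ and $d=2$ gives a contribution of order $\zeta^{-3}$, so $|\K(\zeta)|\lesssim \zeta\cdot\zeta^{-3}=\zeta^{-2}$. On the complement, $\nabla_x\tilde\Phi$ is bounded below, so iterating the operator $L=(i\zeta^3|\nabla_x\tilde\Phi|^2)^{-1}\nabla_x\tilde\Phi\cdot\nabla_x$ through repeated integration by parts produces $O(\zeta^{-N})$ decay for any $N$; the cut-off transition zones are harmless there, as they sit uniformly far from the critical point.

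For $\K'(\zeta)$ I differentiate under the integral and get three contributions,
\[
\K'(\zeta)=\iint e^{-i\zeta^3\tilde\Phi}\tilde G\, dx - 3i\zeta^3\!\iint\tilde\Phi\, e^{-i\zeta^3\tilde\Phi}\tilde G\, dx + \zeta\!\iint e^{-i\zeta^3\tilde\Phi}\partial_\zeta\tilde G\, dx,
\]
which I estimate separately. The first is $O(\zeta^{-3})$ by the previous stationary phase analysis. The second is the dominant term: since $\tilde\Phi(1/3,1/3)=8/9\neq 0$, stationary phase applied with amplitude $\tilde\Phi\tilde G$ again gives $O(\zeta^{-3})$, so the prefactor $\zeta^3$ produces $O(1)$. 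The third involves $\partial_\zeta\tilde G$, supported in the thin strip of area $\sim\zeta^{-1}$ on which one of the cut-off arguments lies in $[1/2,1]$; a direct estimate on that strip yields $\iint|\partial_\zeta\tilde G|\,dx\lesssim \zeta^{-5/3}$, giving a contribution $O(\zeta^{-2/3})\lesssim 1$. Combining, $|\K'(\zeta)|\lesssim 1$.

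The main technical obstacle will be ensuring that all error constants in the stationary phase expansion are uniform in $\zeta$; this is precisely why the rescaling is useful, since the critical point, the Hessian of $\tilde\Phi$, and the bulk of $\tilde G$ all become independent of $\zeta$, while the cut-off transitions are pushed uniformly toward the boundary of the simplex, away from the critical point, so the constants in Theorem \ref{fase estacionaria} can be taken uniform in $\zeta$.
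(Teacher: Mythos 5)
Your proof is correct and follows essentially the same route as the paper: rescale $\eta_j=\zeta x_j$, apply stationary phase at the critical point with $|\det D^2\tilde\Phi|=12$, integrate by parts in the non-stationary region, and handle small $\zeta$ directly. Your version is in fact slightly cleaner in two places: you correctly note that the cut-offs confine the support to $\eta_j\geq 1/2$, so only the critical point $(1/3,1/3,1/3)$ contributes and the regime $\zeta\lesssim 1$ is trivial by support considerations (the paper formally expands around all four critical points, three of which lie outside the support, and invokes an unnecessary integration by parts for small $\zeta$); and your decomposition of $\K'$ exhibits the genuinely $O(1)$ contribution $\zeta^3\iint\tilde\Phi\,e^{-i\zeta^3\tilde\Phi}\tilde G$ explicitly, together with a correct $O(\zeta^{-2/3})$ bound on the cut-off transition strip, whereas the paper packages the dominant piece as $-3i\zeta^2\K(\zeta)$ and bounds it by the already-established decay of $\K$.
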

\begin{proof} First, let us start with $\K(\zeta)$. For the case $ |\zeta| \lesssim 1 $, it is possible to integrate by parts in the variable $ \eta_1 $ to achieve the required decay, ensuring the integrability of $ \K $. That is,  
$$  
|\K(\zeta)| \lesssim 1.  
$$

Now, consider $|\zeta| \gg 1$. We are interested in studying the oscillatory integral.
\begin{align*}
\K(\zeta)&=\iint e^{-i (\zeta^3 - \eta_1^3-\eta_2^3-\eta_3^3)}\frac{\chi(\eta_1)\chi(\eta_2)\chi(\eta_3)}{(\eta_1\eta_2\eta_3)^{\frac{1}{3}}}\,d\eta_1d\eta_2 \\
&= \zeta \iint e^{-i \zeta^3 P}\frac{\chi(\zeta p_1)\chi(\zeta p_2)\chi(\zeta p_3)}{(p_1p_2p_3)^{\frac{1}{3}}}\,\,dp_1dp_2,
\end{align*}
with $p_j= \frac{\eta_j}{\zeta}$ and 
$$
P(p_1,p_2)=1- p_1^3-p_2^3-p_3^3.
$$
To estimate $\K(\zeta)$, we need to understand the behavior of the phase function $P$. The stationary points are given by
$$
\vec{q}_1=(-1,1), \,\, \vec{q}_2=(1/3,1/3),\,\, \vec{q}_3=(1,-1)\,\text{and}\,\, \vec{q}_4=(1,1).
$$
Furthermore, we have
\begin{equation}
\det (D^2 P)(p_1,p_2)= -6 \left[ (p_3 + p_1)(p_3+p_2) -p_3^2 \right],
\end{equation}
Now, let $ \varphi_j \in C^\infty_c(\R^2) $ be a bump function around a small neighborhood of $ \vec{q}_j $ for $ j \in \{1,2,3,4\} $. Also, consider $ \varphi_5= 1 - \sum_{j=1}^4 \varphi_j $. Thus, we can write
$$
\K(\zeta)= \zeta \sum_{j=1}^5 \K_j(\zeta),
$$
where
\begin{align*}
\K_j(\zeta)&= \iint e^{-i \zeta^3 P}\frac{\chi(\zeta p_1)\chi(\zeta p_2)\chi(\zeta p_3)}{(p_1p_2p_3)^{\frac{1}{3}}} \varphi_j(p_1,p_2)\,dp_1dp_2, \quad \text{for} \,\, j=1,2,3,4,5.
\end{align*}
Observe that, for any $ j=1,2,3, 4, 5 $ we have
$$
\frac{\chi(\zeta p_1)\chi(\zeta p_2)\chi(\zeta p_3)}{(p_1p_2p_3)^{\frac{1}{3}}} \varphi_j(p_1,p_2) \in C^\infty_c(\R^2).
$$
Therefore, by the stationary phase lemma (Theorem \ref{fase estacionaria}),
\begin{align*}
\K_j(\zeta) \simeq_j \zeta^{-3} e^{i \zeta^3 P(\vec{q}_j)} + O(\zeta^{-6}),
\end{align*}
for $j=1,2,3,4$. In the case $j=5$, we are far from any stationary point, so $\K_5$ decays to any order, that is, for any $\gamma>0$.

\begin{equation*}
\K_5(\zeta)\simeq O(\zeta^{-3\gamma}).
\end{equation*}
Thus, we can conclude
\begin{equation}\label{expansao de K}
\K(\zeta) \simeq \zeta^{-2} \sum_{j=1}^4 e^{i \zeta^3 P(\vec{q}_j)} + O(\zeta^{-5}).
\end{equation}
In other words
$$
|\K(\zeta)| \lesssim |\zeta|^{-2},
$$
for $|\zeta| \gg 1$.  Now, for $\K'$, we have that
% it suffices to observe that
% $$
% \frac{\partial}{\partial {\zeta}}\left[\frac{\chi(\eta_1)\chi(\eta_2)\chi(\eta_3)}{(\eta_1\eta_2\eta_3)^{1/3}}\right]
% = 
% \frac{\partial}{\partial {\eta_3}}\left[\frac{\chi(\eta_1)\chi(\eta_2)\chi(\eta_3)}{(\eta_1\eta_2\eta_3)^{1/3}}\right].
% $$
% Hence, after an integration by parts, we obtain
\begin{equation}
\K'(\zeta)
= -3i \zeta^2 \K(\zeta)
+ 3i \iint e^{-i (\zeta^3 - \eta_1^3-\eta_2^3-\eta_3^3)}\frac{\chi(\eta_1)\chi(\eta_2)}{(\eta_1\eta_2\eta_3)^{1/3}}\left(\eta_2^2\chi(\eta_3) - \frac{\chi(\eta_3)}{3\eta_3} + \chi'(\eta_3)
\right)\,d\eta_1d\eta_2.
\end{equation}
Using arguments similar to those employed before, we deduce that
\begin{equation}
\left| \iint e^{-i (\zeta^3 - \eta_1^3-\eta_2^3-\eta_3^3)}\frac{\chi(\eta_1)\chi(\eta_2)}{(\eta_1\eta_2\eta_3)^{1/3}}\left(\eta_2^2\chi(\eta_3) - \frac{\chi(\eta_3)}{3\eta_3} + \chi'(\eta_3)
\right)\,d\eta_1d\eta_2\right|
\lesssim 1.
\end{equation}
Therefore, by what has been proved above, we conclude that
$$
|\K'(\zeta)| \lesssim 1.
$$

\end{proof}

\begin{lemma}
    For any $\eta\in \R$,
\begin{equation}
    |\mathcal{M}_{\text{constant}}(\eta)|\lesssim \jap{\eta}^{-\frac{17}{6}}.
\end{equation}
\end{lemma}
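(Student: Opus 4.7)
Since Lemma \ref{Lema função K} already provides $|\mathcal{K}(\zeta)| \lesssim \jap{\zeta}^{-2}$, the regime $|\eta| \lesssim 1$ is immediate: bounding $|\mathcal{M}_{\text{constant}}(\eta)|$ by $\int_{1/2}^\infty |\mathcal{K}(\eta-\eta_4)|\eta_4^{-1/3}\, d\eta_4$ and inserting the decay of $\mathcal{K}$ gives a constant, which is trivially $\lesssim \jap{\eta}^{-17/6}$ on this range. The content of the statement therefore concerns $|\eta| \gg 1$.

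For this regime I plan to abandon the one-variable form and return to the full three-dimensional representation, mimicking the stationary phase analysis carried out for $\mathcal{K}$ in Lemma \ref{Lema função K}, one dimension higher. Substituting $\eta_j = \eta p_j$ for $j = 1,2,4$ (and setting $p_3 := 1 - p_1 - p_2 - p_4$) one obtains
\begin{equation*}
\mathcal{M}_{\text{constant}}(\eta) = \eta^{5/3} \iiint e^{-i\eta^3 P(p_1,p_2,p_4)}\frac{\chi(\eta p_1)\chi(\eta p_2)\chi(\eta p_3)\chi(\eta p_4)}{(p_1 p_2 p_3 p_4)^{1/3}} \, dp_1 dp_2 dp_4,
\end{equation*}
with $P := 1 - p_1^3 - p_2^3 - p_3^3 - p_4^3$, so that the large parameter $\lambda = \eta^3$ becomes explicit. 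The critical points of $P$ satisfy $p_1^2 = p_2^2 = p_3^2 = p_4^2$ together with $\sum_j p_j = 1$; only $p_\ast := (1/4,1/4,1/4)$ lies in the positive orthant, and a direct computation of the Hessian yields $\det D^2 P(p_\ast) = -27/2 \neq 0$. Applying Theorem \ref{fase estacionaria} in dimension $d = 3$ to a bump localized around $p_\ast$ (where, for $\eta \gg 1$, all four $\chi$ factors are identically $1$) produces a contribution of order $|\eta^3|^{-3/2} = |\eta|^{-9/2}$ multiplied by the bounded amplitude value $4^{4/3}$; combined with the prefactor $\eta^{5/3}$, this is precisely $|\eta|^{-17/6}$.

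The remaining pieces are handled by a partition of unity exactly as in the proof of Lemma \ref{Lema função K}: bumps around the other (sign-mixed) critical points of $P$ vanish identically on our positively-supported amplitude, while on the complement $|\nabla P|$ is bounded below and repeated integration by parts in $p$ yields arbitrary polynomial decay in $|\eta|$. The main obstacle I foresee lies precisely in this last step, because the cutoffs $\chi(\eta p_j)$ carry derivatives of size $|\eta|$ near the boundary $\{p_j \sim 1/\eta\}$ and must not be allowed to contaminate the integration by parts. This is innocuous near $p_\ast$, which sits well inside the positive orthant and can be covered by an $\eta$-independent bump on which the cutoffs are trivial; near the boundary itself the partition of unity must instead be chosen with care, exploiting that a small $p_j$ (with the remaining coordinates bounded below) forces $|p_3^2 - p_j^2|$ to stay bounded away from zero, so that the corresponding component of $\nabla P$ never degenerates and each integration by parts gains a factor $\eta^{-3}$ without incurring any additional $\eta$ loss from $\chi'$.
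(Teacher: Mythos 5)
Your proposal is correct and follows essentially the same route as the paper: the small-$\eta$ case via the decay of $\mathcal{K}$, and for $|\eta|\gg 1$ the rescaling $\eta_j=\eta p_j$, stationary phase at the nondegenerate critical point $(1/4,1/4,1/4)$ producing $\eta^{5/3}\cdot|\eta|^{-9/2}=|\eta|^{-17/6}$, and repeated integration by parts off the critical set where the $\eta^{-3}$ gain per step dominates the $O(|\eta|^{|\alpha|})$ growth of derivatives of the amplitude (cutoffs and singular weight). The only cosmetic difference is that the paper formally retains bumps around the sign-mixed critical points and estimates them too, whereas you discard them because the positively-supported cutoffs annihilate the amplitude there; both are fine.
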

\begin{proof}
For $|\eta| \lesssim 1$,  by Lemma \ref{Lema função K}, it follows that  
\begin{align*}  
|\mathcal{M}_{\text{constant}}(\eta)| \lesssim \int |\K(\eta-\eta_4)| |\eta_4|^{-1/3} |\chi(\eta_4)|\, d\eta_4 \lesssim \int |\eta_4|^{-1/3}\jap{\eta-\eta_4}^{-2} \, d\eta_4 \lesssim 1.  
\end{align*} 
Now, suppose $ |\eta| \gg 1 $. Writing $\eta_j =\eta p_j$, we have
\begin{align}
\Phi(\eta_1,\eta_2,\eta_3,\eta_4) &= \eta^3 - \sum_{i=1}^4 \eta_j^3 = \eta^3 \left( 1- \sum_{i=1}^4 p_j^3\right) := \eta^3 Q(p_1,p_2,p_3,p_4)
\end{align}
and thus
\begin{equation}
\mathcal{M}_{\text{constant}}(\eta)= \eta^{5/3} \iiint e^{-i\eta^3 Q} \frac{\chi(\eta p_1)\chi(\eta p_2)\chi(\eta p_3)\chi(\eta p_4)}{(p_1p_2p_3 p_4)^{\frac{1}{3}}}\,dp_1 dp_2dp_3.
\end{equation}
Note that the phase $ Q $ has four stationary points:
$$
P_1=  \left(\frac{1}{4},\frac{1}{4},\frac{1}{4}\right), \quad P_2=\left(\frac{1}{2} ,\frac{1}{2} ,\frac{1}{2} \right), \quad P_3= \left(\frac{1}{2} ,-\frac{1}{2} ,\frac{1}{2} \right), \quad P_4= \left(\frac{1}{2} ,\frac{1}{2} ,-\frac{1}{2} \right).
$$
Let $ \psi_j \in C^\infty_c(\mathbb{R}^3) $ be a bump function supported in a sufficiently small neighborhood of $ P_j $ for $ j = 1, 2, 3, 4$. Additionally, define $ \psi_5 = 1 - \sum_{j=1}^4 \psi_j $. With this, we can express
$$
 \mathcal{M}_{\text{constant}}(\eta)= \sum_{j=1}^5 \mathcal{M}_{\text{constant},j}(\eta),
$$
where
\begin{equation}\label{F_j(n)}
\mathcal{M}_{\text{constant},j}(\eta):=\eta^{5/3} \iiint e^{-i\eta^3 Q} \frac{\chi(\eta p_1)\chi(\eta p_2)\chi(\eta p_3)\chi(\eta p_4)}{(p_1p_2p_3 p_4)^{\frac{1}{3}}}\psi_j(p_1,p_2,p_3)\,dp_1 dp_2dp_3.
\end{equation}

\noindent\textbf{Case 1: Stationary case.}
First, note that for every $ j=1, \dots, 4$, we have
$$
\frac{\chi(\eta p_1)\chi(\eta p_2)\chi(\eta p_3)\chi(\eta p_4)}{(p_1p_2p_3p_4)^{\frac{1}{3}}}\, \psi_j(p_1,p_2,p_3)= \frac{\psi_j(p_1,p_2,p_3)}{(p_1p_2p_3p_4)^{\frac{1}{3}}}\, \in C^\infty_c(\R^3).
$$
% Now, observe that if we define $ p_j = \frac{\eta_j}{\eta} $:
% \begin{align}
% \Phi(\eta_1,\eta_2,\eta_3,\eta_4) &= \eta^3 - \sum_{i=1}^4 \eta_j^3 = \eta^3 \left( 1- \sum_{i=1}^4 p_j^3\right) := \eta^3 Q(\eta_1,\eta_2,\eta_3,\eta_4).
% \end{align}
% Applying the change of variables $\eta_i \mapsto p_i$ in \eqref{F_j(n)} with $i = 1, 2, 3$ , we obtain
% \begin{equation}
% \mathcal{M}_j[f_1,f_2,f_3,f_4](\eta)= \eta^{5/3} \iiint e^{-i\eta^3 Q} \frac{\chi(\eta p_1)\chi(\eta p_2)\chi(\eta p_3)\chi(\eta p_4)}{(p_1p_2p_3 p_4)^{\frac{1}{3}}}\, \psi_j(\eta p_1,\eta p_2,\eta p_3)\,dp_1 dp_2dp_3.
% \end{equation}
Since all stationary points are non-degenerate, by the stationary phase lemma (Theorem \ref{fase estacionaria}) we have
$$
\mathcal{M}_{\text{constant},j}(\eta) \simeq   e^{i\eta^3 Q(P_j)} |\eta|^{-17/6} + O(|\eta|^{-15/2}),
$$
and thus
$$
|\mathcal{M}_{\text{constant},j}(\eta)| \lesssim |\eta|^{-17/6},\quad \text{for}\,\,\, j=1,2,3,4.
$$

\noindent\textbf{Case 2: Non-stationary case.} Now we need to focus on estimating the term $ \M_5(\eta) $. Note that
$$
g(p_1,p_2,p_3)=\frac{\chi(\eta p_1)\chi(\eta p_2)\chi(\eta p_3)\chi(\eta p_4)}{(p_1p_2p_3p_4)^{\frac{1}{3}}}\, \psi_j(p_1,p_2,p_3),\quad p_4=1-p_1-p_2-p_3,
$$
satisfies, for any multi-index $\alpha$,
$$
\|D^\alpha g\|_{L^\infty(\R)}\lesssim |\eta|^{|\alpha|},
$$
As we are far from any stationary point, repeated integration by parts yield arbitrary decay (notice that each IBP yields an $\eta^{-3}$ factor, which compensates the growth of $D^\alpha g$). In particular,
$$
|\mathcal{M}_{\text{constant},5}(\eta)| \lesssim |\eta|^{-\frac{17}{6}}.
$$
% Thus, we can conclude that  
% $$
% | \mathcal{M}[f_1,f_2,f_3,f_4](\eta)| \lesssim  \jap{\eta}^{-4/3 - \kappa}.
% $$
\end{proof}
\subsection{Estimates for the linear term}
Now, consider $f_1,f_2,f_3 \notin Z^\k$ and $f_4 \in Z^\k$. We focus on the term
\begin{align}\label{termo linear}
 \mathcal{M}[f_1,f_2,f_3,f_4](\eta)&\simeq \int e^{-i\Phi} \frac{\chi(\eta_1)\chi(\eta_2)\chi(\eta_3)}{(\eta_1\eta_2\eta_3\eta_4)^{\frac{1}{3}}} f_4(\eta_4)\, d\eta_1d\eta_2d\eta_4=: \mathcal{M}_{\text{linear}},\quad \eta_3= \eta - \eta_1-\eta_2-\eta_4,
\end{align}
As in Section \ref{sec:constant_term}, we can rewrite this term as
% \begin{align*}
% \Phi(\eta_1,\eta_2,\eta_3,\eta_4) &= \eta^3 - \sum_{j=1}^4\eta_j^3
%      = \eta^3 -\eta_4^3- \sum_{j=1}^3\eta_j^3
%      = (\eta - \eta_4)^4 - \sum_{j=1}^3\eta_j^3 + 3\eta^2\eta_4 - 3\eta\eta_4^2
% \end{align*}
% Denoting by $\zeta = \eta - \eta_4$ we obtain $ \Phi = \Phi_1 + \Phi_2$, where
% $$
% \Phi_1= \zeta^3 - \sum_{j=1}^3 \eta_j^3
% $$
% with $\eta_3= \zeta - \eta_1 - \eta_2$ and
% $$
% \Phi_2 = 3\eta^2\eta_4 - 3\eta\eta_4^2 \simeq (\eta+\eta_4)\eta \eta_4.
% $$
% Therefore, we can rewrite \eqref{termo linear} as
\begin{align*}
 \mathcal{M}_{\text{linear}}(\eta) &= \int e^{-3i(\eta^2\eta_4 - \eta\eta_4^2)}\left[ \iint e^{-i (\zeta^3 - \eta_1^3-\eta_2^3-\eta_3^3)}\frac{\chi(\eta_1)\chi(\eta_2)\chi(\eta_3)}{(\eta_1\eta_2\eta_3)^{\frac{1}{3}}}\, d\eta_1d\eta_2 \right]\eta_4^{-1/3} f_4(\eta_4)\, d\eta_4\\   
 &:= \int e^{-3i(\eta^2\eta_4 - \eta\eta_4^2)}\K(\eta-\eta_4)\eta_4^{-1/3} f_4(\eta_4)\, d\eta_4.
\end{align*}

\begin{lemma}
    For any $\eta\in \R$,
\begin{equation}
    |\mathcal{M}_{\text{linear}}(\eta)|\lesssim \jap{\eta}^{-\frac{4}{3}-k}.
\end{equation}
\end{lemma}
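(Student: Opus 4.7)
The plan is to view $\mathcal{M}_{\text{linear}}(\eta)$ as a one-dimensional oscillatory integral in $\eta_4$ with phase $\phi(\eta_4):=-3(\eta^2\eta_4-\eta\eta_4^2)$, whose unique stationary point is $\eta_4=\eta/2$ and which satisfies $\phi''=6\eta$. The amplitude $g(\eta_4):=\K(\eta-\eta_4)\,\eta_4^{-1/3}\,f_4(\eta_4)$ obeys, by Lemma \ref{Lema função K} and the assumption $f_4\in Z^\kappa$,
\[
|g(\eta_4)|\lesssim \langle\eta-\eta_4\rangle^{-2}\,|\eta_4|^{-1/3}\,\langle\eta_4\rangle^{-\kappa}.
\]
For $|\eta|\lesssim 1$, direct estimation using the local integrability of $\eta_4^{-1/3}$ gives a uniform bound $|\mathcal{M}_{\text{linear}}(\eta)|\lesssim 1\sim \langle\eta\rangle^{-4/3-\kappa}$, so one reduces to $|\eta|\gg 1$. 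In that regime I will use a smooth partition of unity $1=\psi_1+\psi_2+\psi_3$ adapted to
\[
R_1:=\{|\eta_4|\leq |\eta|/10\},\qquad R_2:=\{|\eta_4-\eta|\leq |\eta|/10\},\qquad R_3:=\mathbb{R}\setminus (R_1\cup R_2).
\]

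On $R_1$, one has $|\eta-\eta_4|\sim |\eta|$, so $|\K(\eta-\eta_4)|\lesssim \langle\eta\rangle^{-2}$; the hypothesis $\kappa<2/3$ yields $\int_{R_1}|\eta_4|^{-1/3}\langle\eta_4\rangle^{-\kappa}\,d\eta_4\lesssim \langle\eta\rangle^{2/3-\kappa}$, and the $R_1$ contribution is the saturating $\langle\eta\rangle^{-4/3-\kappa}$. On $R_3$ I split further: for $|\eta|/10\leq |\eta_4|\leq 10|\eta|$ (which contains the stationary point $\eta_4=\eta/2$) the naive bound $|g|\lesssim \langle\eta\rangle^{-2-1/3-\kappa}$ integrated over a set of length $\sim |\eta|$ already gives $\langle\eta\rangle^{-4/3-\kappa}$; for $|\eta_4|>10|\eta|$ one has $\langle\eta-\eta_4\rangle\sim \langle\eta_4\rangle$, so $|g|\lesssim \langle\eta_4\rangle^{-7/3-\kappa}$ and $\int_{|\eta|}^\infty t^{-7/3-\kappa}\,dt\sim \langle\eta\rangle^{-4/3-\kappa}$. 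Note that the stationary point contributes only $\langle\eta\rangle^{-17/6-\kappa}$ by Theorem \ref{fase estacionaria}, well within the target; no stationary-phase expansion is actually needed.

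The main obstacle is $R_2$, where $\K$ loses its $\langle\cdot\rangle^{-2}$ decay and a direct estimate only yields $\langle\eta\rangle^{2/3-\kappa}$, off by a factor $\langle\eta\rangle^2$. Here I will integrate by parts once, exploiting that $|\phi'(\eta_4)|=3|\eta||2\eta_4-\eta|\sim |\eta|^2$ throughout $R_2$. Using $|\K'|\lesssim 1$ from Lemma \ref{Lema função K}, $|\eta_4|\sim |\eta|$, and $|f_4|+\langle\eta_4\rangle|f_4'|\lesssim \langle\eta\rangle^{-\kappa}$, one checks $\|g\|_{L^\infty(R_2)}+|\eta|\,\|g'\|_{L^\infty(R_2)}\lesssim \langle\eta\rangle^{-1/3-\kappa}$, while $|\phi''/(\phi')^2|\lesssim |\eta|^{-3}$. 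Hence $|(g/\phi')'|\lesssim \langle\eta\rangle^{-7/3-\kappa}$ on $R_2$, and integrating over a set of length $\sim |\eta|$ produces the desired $\langle\eta\rangle^{-4/3-\kappa}$; the boundary terms from IBP land where $\psi_2$ transitions into $\psi_1+\psi_3$, i.e.\ where $|\eta-\eta_4|\sim|\eta|$, so they inherit the $\langle\eta\rangle^{-2}$ decay of $\K$ and are of order $\langle\eta\rangle^{-13/3-\kappa}$, more than absorbable.
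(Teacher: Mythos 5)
Your proof is correct, and it takes a noticeably leaner route than the paper's in the region away from $\eta_4\approx\eta$. The paper substitutes the asymptotic expansion \eqref{expansao de K} of $\K$ (with its oscillatory factors $e^{iP(\vec q_j)(\eta-\eta_4)^3}$) into the integral when $|\eta-\eta_4|\gtrsim|\eta|$ and then integrates by parts, splitting into $|\eta_4|\gtrsim|\eta|$ and $|\eta_4|\ll|\eta|$; you instead observe that on $R_1$ and $R_3$ the pure size bounds $|\K(\zeta)|\lesssim\jap{\zeta}^{-2}$ and $|f_4(\eta_4)|\lesssim\jap{\eta_4}^{-\kappa}$ already deliver $\jap{\eta}^{-4/3-\kappa}$ with no use of the oscillation at all. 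This is a genuine simplification, and it also sidesteps the stationary point $\eta_4=\eta/2$ of $\phi$ (which lies inside the paper's Case~1.1 region and makes the $|\phi'|\sim|\eta|^2$ lower bound used there fail near that point); your remark that the stationary point sits where $\K$ retains its full $\jap{\cdot}^{-2}$ decay, so that no stationary-phase expansion is needed, is exactly the right observation. On $R_2$ your argument coincides with the paper's Case~2: one integration by parts using $|\phi'(\eta_4)|=3|\eta|\,|2\eta_4-\eta|\sim|\eta|^2$ and $|\K'|\lesssim 1$.

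One small internal inconsistency: the claim $\|g\|_{L^\infty(R_2)}+|\eta|\,\|g'\|_{L^\infty(R_2)}\lesssim\jap{\eta}^{-1/3-\kappa}$ is too strong. The term $\K'(\eta-\eta_4)\,\eta_4^{-1/3}f_4(\eta_4)$ in $g'$ is only $O(\jap{\eta}^{-1/3-\kappa})$, since $\K'$ is merely bounded and does not decay; hence $\|g'\|_{L^\infty(R_2)}\lesssim\jap{\eta}^{-1/3-\kappa}$ without the extra factor of $|\eta|$. This does not damage the conclusion: dividing by $|\phi'|\sim|\eta|^2$ still gives $|g'/\phi'|\lesssim\jap{\eta}^{-7/3-\kappa}$, and integrating over the length-$\sim|\eta|$ region $R_2$ yields the stated $\jap{\eta}^{-4/3-\kappa}$, so your final bound $|(g/\phi')'|\lesssim\jap{\eta}^{-7/3-\kappa}$ and everything downstream of it stand as written.
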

\begin{proof}

\noindent\textbf{Case 1:} $|\eta- \eta_4| \gtrsim |\eta|.$ By the expression \eqref{expansao de K} in the proof of Lemma \ref{Lema função K}, we have 
\begin{equation}
\mathcal{M}_{\text{linear}}(\eta) =  \sum_{j=1}^4 \int e^{-i \Phi^j }(\eta-\eta_4)^{-2} \eta_4^{-1/3} f_4(\eta_4)\, d\eta_4 + \int e^{-3i(\eta^2\eta_4 - \eta\eta_4^2)}O((\eta-\eta_4)^{-5})\eta_4^{-1/3} f_4(\eta_4)\, d\eta_4,
\end{equation}
with $\Phi^j = 3\eta^2\eta_4 - 3\eta\eta_4^2 - P(\vec{q}_j)(\eta-\eta_4)^3$, \textit{i.e.}
$$
\Phi^j = 3\eta^2\eta_4 - 3\eta\eta_4^2, \quad j=1,2,3.
$$
and  $\Phi^4 = 3\eta^2\eta_4 - 3\eta\eta_4^2- \frac{8}{9}(\eta-\eta_4)^3$. Let us start estimating the term
\begin{align*}
I &:=\int e^{-i( 3\eta^2\eta_4 - 3\eta\eta_4^2 )}(\eta-\eta_4)^{-2} \eta_4^{-1/3} f_4(\eta_4)\, d\eta_4 \\
&\simeq  \int e^{-i \eta(\eta-\eta_4)\eta_4} (\eta- \eta_4)^{-2}\eta_4^{-1/3} f_4(\eta_4)\, d\eta_4
\end{align*}
We split our analysis in two cases:

\noindent\textbf{Case 1.1: } $|\eta_4|\gtrsim |\eta|$. Integrating by parts, since $|\eta- \eta_4| \gtrsim |\eta|$, we can estimate by
\begin{align*}
 |I| &\lesssim |\eta|^{-2} \int_{|\eta_4|\gtrsim |\eta|} |\eta_4|^{-4/3 - \k} \, d\eta_4   \lesssim |\eta|^{-7/3 - \k}.
\end{align*}

\noindent \textbf{Case 1.2: } $|\eta_4| \ll |\eta|$. In this region, do not integrate by parts, because this could increase the singularity in the variable $\eta_4$. Since $\eta_4$ is small, we have
$$
|\eta - \eta_4| \sim |\eta|
$$
then
\begin{align*}
|I| &\lesssim   \int |\eta - \eta_4|^{-2}  |\eta_4|^{-1/3}| f_4(\eta_4)| d\eta_4\lesssim \int |\eta - \eta_4|^{-2} |\eta_4|^{-1/3}\langle \eta_4\rangle^{-\k} d\eta_4\\
&\lesssim |\eta|^{-2} \int  |\eta_4|^{-1/3}\langle \eta_4\rangle^{-\k} d\eta_4\lesssim   {|\eta|^{-4/3-\k}},
\end{align*}
since that $\k < \frac{2}{3}$. Then
\begin{equation}
|I| \lesssim|\eta|^{-4/3-\k}.
\end{equation}

In a closely analogous manner, one can derive for $ \k \in  \left(\frac58, \frac23 \right) $ that
$$
\left| \int e^{-i \Phi^4 }(\eta-\eta_4)^{-2} \eta_4^{-1/3} f_4(\eta_4)\, d\eta_4 
 \right|\lesssim |\eta|^{-4/3- \k}
$$
and
$$
\left| \int e^{-i\Phi_2}O((\eta-\eta_4)^{-5})\eta_4^{-1/3} f_4(\eta_4)\, d\eta_4 \right| \lesssim |\eta|^{-4/3- \k}.
$$

 \noindent\textbf{Case 2:} $|\eta- \eta_4| \ll |\eta|:$In this region, we have $ |\eta| \sim |\eta_4| $, and in this case, 
$$
|\partial_{\eta_4} (\eta^2\eta_4 - \eta\eta_4^2)| \sim |\eta|^2.
$$
Moreover, by Lemma~\ref{Lema função K}, we know that $ |\K'(\eta - \eta_4)| \lesssim 1 $. 
Therefore, performing an integration by parts yields
$$
| \mathcal{M}_{\text{linear}}(\eta)| \lesssim \langle \eta \rangle^{-4/3 - \kappa},
$$
which completes the proof.

\end{proof}
\subsection{Estimates for the bilinear term}
Now, let $f_1, f_2 \notin Z^\k$ and $f_3, f_4 \in Z^\k$. Hence, we need to estimate
\begin{align*}
 \mathcal{M}[f_1,f_2,f_3,f_4](\eta)=\iiint e^{-i\Phi}\frac{\chi(\eta_1)\chi(\eta_2)f_3(\eta_3)f_4(\eta_4)}{(\eta_1\eta_2\eta_3\eta_4)^{1/3}}\, d\eta_3d\eta_2d\eta_1=: \mathcal{M}_{\text{bilin}}.
\end{align*}
Note that
\begin{align*}
   \Phi&= \eta^3 - \sum_{j=1}^4 \eta_j^3 \\
   &= \left[(\eta_1+\eta_2)^3- \eta_1^3- \eta_2^3\right] +\left[(\eta_3+\eta_4)^3- \eta_3^3- \eta_4^3\right]  + \left[\eta^3  - (\eta_1+\eta_2)^3- (\eta_3+\eta_4)^3\right]\\
   &=:  \Phi_{12} + \Phi_{34} + \Upsilon,
\end{align*}
where
$$
\Phi_{12} =(\eta_1+\eta_2)^3- \eta_1^3- \eta_2^3,
$$
$$
\Phi_{34} = (\eta_3+\eta_4)^3- \eta_3^3- \eta_4^3,
$$
and 
$$
\Upsilon = \eta^3  - (\eta_1+\eta_2)^3- (\eta_3+\eta_4)^3.
$$
Therefore, if we define $\zeta= \eta_1+ \eta_2= \eta-\eta_3-\eta_4$ and consider the change of variables $\eta_1 \mapsto \zeta$,
\begin{align*}
 \mathcal{M}_{\text{bilin}}(\eta)&=\int e^{-i \Upsilon} F_{12}(\zeta) F_{34} (\eta- \zeta) \, d\zeta
\end{align*}
with
\begin{equation}\label{F12}
F_{12}:=\int e^{-i \Phi_{12}} \frac{\chi(\eta_1)\chi(\eta_2)}{(\eta_1\eta_2)^{1/3}} \, d\eta_2  
\end{equation}

\begin{equation}\label{F13}
 F_{34}:=\int e^{-i \Phi_{34}} \frac{f_3(\eta_3)f_4(\eta_4)}{(\eta_3\eta_4)^{1/3}} \, d\eta_3.   
\end{equation}

\begin{lemma}
The function $F_{12}$ satisfies
$$
|F_{12}(\zeta)| \lesssim |\zeta|^{-1/9}, \quad \text{for } |\zeta| \lesssim 1,
$$  
Moreover, for $|\zeta|\gg 1$,
\begin{equation}\label{asymp_F12}
     F_{12}(\zeta) \simeq  e^{-3i\zeta^3/4 }\zeta^{-7/6} + O\left(\zeta^{-25/6}\right). 
\end{equation}
In particular,
$$
|F_{12}(\zeta)| \lesssim |\zeta|^{-7/6}, \quad \text{for } |\zeta| \gg 1.
$$
\end{lemma}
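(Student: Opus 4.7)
The first observation is that the cut-offs in the integrand of $F_{12}$ force $\zeta \ge 1$: since $\chi(\eta_j) = 0$ whenever $\eta_j < 1/2$, the product $\chi(\eta_1)\chi(\eta_2) = \chi(\zeta - \eta_2)\chi(\eta_2)$ vanishes unless $\eta_2 \in [1/2, \zeta - 1/2]$, which is empty when $\zeta < 1$. Thus $F_{12}$ is identically zero on $(-\infty, 1)$. For $\zeta$ in any bounded interval $[1, C]$, the $\eta_2$-integration runs over the bounded interval $[1/2, \zeta - 1/2]$ and the amplitude $(\eta_1 \eta_2)^{-1/3}$ is integrable (singularity of order $-1/3 > -1$), giving the crude bound $|F_{12}(\zeta)| \lesssim 1$, which in particular dominates $|\zeta|^{-1/9}$ throughout $|\zeta| \lesssim 1$.

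For $|\zeta| \gg 1$, compute $\Phi_{12} = (\eta_1 + \eta_2)^3 - \eta_1^3 - \eta_2^3 = 3\zeta\eta_2(\zeta - \eta_2)$ and rescale $\eta_2 = \zeta p$ to put $F_{12}$ into the standard oscillatory form
\begin{equation}
F_{12}(\zeta) = \zeta^{1/3} \int e^{-i\zeta^3 \phi(p)}\, a_\zeta(p)\, dp, \qquad \phi(p) = 3p(1-p), \quad a_\zeta(p) = \frac{\chi(\zeta p)\chi(\zeta(1-p))}{(p(1-p))^{1/3}}.
\end{equation}
The phase $\phi$ has a unique non-degenerate stationary point at $p_0 = 1/2$, with $\phi(1/2) = 3/4$ and $\phi''(1/2) = -6$. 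Choose a smooth cut-off $\rho \in C_c^\infty$ supported in $(1/4, 3/4)$ and equal to $1$ near $p_0$, and decompose $a_\zeta = \rho\, a_\zeta + (1 - \rho)\, a_\zeta$.

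On $\mathrm{supp}\,\rho$, the arguments $\zeta p$ and $\zeta(1-p)$ are $\gtrsim \zeta \gg 1$, so both $\chi$-factors equal $1$ and $\rho\, a_\zeta = \rho(p)(p(1-p))^{-1/3}$ is a smooth, $\zeta$-independent amplitude. Applying Theorem~\ref{fase estacionaria} with $\lambda = \zeta^3$ and $d = 1$, and combining with the prefactor $\zeta^{1/3}$, produces the leading term $C\,\zeta^{-7/6}\, e^{-3i\zeta^3/4} + O(\zeta^{-25/6})$, where the constant $C$ is determined by $a_\zeta(1/2) = 4^{1/3}$, $\phi''(1/2) = -6$ and the signature factor. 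For the complementary piece $(1 - \rho)\, a_\zeta$, we have $|\phi'(p)| \ge 3/2$ on its support, so repeated integration by parts is admissible (there are no boundary terms since $(1-\rho)\, a_\zeta$ is compactly supported in $[1/(2\zeta), 1 - 1/(2\zeta)]$). The only technical subtlety is the integrable boundary singularity $(p(1-p))^{-1/3}$: its $k$-th derivative behaves like $(p(1-p))^{-1/3-k}$ with $L^1_p$-norm growing only polynomially in $\zeta$, so a fixed number of integrations by parts (three suffice) yields a contribution of order $O(\zeta^{-N})$ with $N > 25/6$; the same argument handles the derivatives hitting the transitional regions of $\chi(\zeta p)$ and $\chi(\zeta(1-p))$. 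Combining the two pieces establishes \eqref{asymp_F12}, from which the pointwise bound $|F_{12}(\zeta)| \lesssim |\zeta|^{-7/6}$ is immediate. The main (mild) obstacle is the bookkeeping of the integrable but $\zeta$-dependent boundary singularities in the non-stationary region, which is why three integrations by parts (rather than one) are used.
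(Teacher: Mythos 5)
Your high-frequency argument is correct and is essentially the paper's: the paper likewise rescales $\eta_2=\zeta p$ (obtaining the phase $3\zeta^3 p(1-p)$, stationary at $p=1/2$ with value $3/4$ and $\phi''=-6$), applies the stationary phase lemma there (noting that the $\chi$-factors are identically $1$ near the critical point, so the amplitude is smooth and $\zeta$-independent), and disposes of the non-stationary region by repeated integration by parts; your bookkeeping of the $(p(1-p))^{-1/3-k}$ singularities and of the derivatives falling on $\chi(\zeta\,\cdot)$ is sound, and the exponents $\zeta^{1/3}\cdot\zeta^{-3/2}=\zeta^{-7/6}$ and $\zeta^{1/3}\cdot\zeta^{-9/2}=\zeta^{-25/6}$ match.

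The low-frequency part is where you diverge. Your observation that $\chi(\eta_1)\chi(\eta_2)$ forces $\eta_1,\eta_2\ge 1/2$, hence $\zeta\ge 1$, is correct for the formula as literally written, and it does prove the stated bound for $|\zeta|\lesssim 1$ — but it proves it vacuously, which should make you suspicious of why a genuinely singular exponent $|\zeta|^{-1/9}$ appears in the statement. The reason is that the bilinear estimate is also needed (as announced at the start of Section 5: ``$f_j(\xi)\equiv\chi(\xi)$ or $f_j(\xi)\equiv\chi(-\xi)$ \ldots the second follows from the same computations'') for the mixed configuration $\chi(\eta_1)\chi(-\eta_2)$. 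There the support for small $\zeta$ is the \emph{unbounded} set $\{\eta_1\ge 1/2,\ \eta_2\le -1/2,\ \eta_1+\eta_2=\zeta\}$, on which $\eta_1\simeq-\eta_2$ can be arbitrarily large and the amplitude $|\eta_1\eta_2|^{-1/3}\sim|\eta_2|^{-2/3}$ is not integrable at infinity; one must exploit the oscillation $\partial_{\eta_2}\Phi_{12}\sim|\zeta||\eta_2|$. The paper's proof does exactly this: it splits at $|\eta_2|\sim|\zeta|^{-1/3}$, estimates the inner region directly ($\int_{|\eta_2|\le|\zeta|^{-1/3}}|\eta_2|^{-2/3}\,d\eta_2\lesssim|\zeta|^{-1/9}$), and integrates by parts in the outer region. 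Your support argument collapses entirely in that case, so your low-frequency proof is not ``the same computation'' for the variant the surrounding argument actually requires. To make the proposal robust you should replace the support observation by the splitting-plus-integration-by-parts argument (or at least verify it separately for the mixed-sign cutoffs).
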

\begin{proof}
    
We begin with the high-frequency case $|\zeta|\gg 1$. Observe that, for $\zeta$ fixed,
$$
\left|\frac{\partial \Phi_{12}}{\partial \eta_1}\right|\sim |\eta_1^2-\eta_2^2|\sim |\eta||\eta_1-\eta_2|
$$
If  $ \eta_1 \not\simeq \eta_2 $, we are far from a stationary point of the phase $\Phi_{12}$.
 In this case, we can integrate by parts in \eqref{F12} to achieve a decay of arbitrarily high order. If $\eta_1 \simeq \eta_2 \simeq \frac{\zeta}{2}$, define $ p_i = \frac{\eta_i}{\zeta} $ for $ i = 1, 2 $. Then
\begin{align*}
\Phi_{12} &= \zeta \eta_1 \eta_2= \zeta^3 p_1 p_2:= \zeta^3 Q_{12}.
\end{align*}
Applying the change of variables $\eta_2 \mapsto p_2$,
$$
F_{12}(\zeta) = \zeta^{1/3} \int e^{-i \zeta^3 Q_{12}} \frac{\chi(\zeta p_1)\chi(\zeta p_2)}{(p_1p_1)^{1/3}} \, dp_2.
$$
By the stationary phase lemma (Theorem \ref{fase estacionaria}) (as in the proof of Lemma \ref{Lema função K}),
\begin{equation}\label{F12 2}
 F_{12}(\zeta) \simeq  e^{-3i\zeta^3 /4}\zeta^{-7/6} + O\left(\zeta^{-25/6}\right),   
\end{equation}
as claimed.

% In the low frequency regime $|\zeta|\lesssim  1$, write
% \begin{align*}
% F_{12}&=\int e^{-i \Phi_{12}} \frac{\chi(\eta_1)\chi(\eta_2)}{(\eta_1\eta_2)^{1/3}} \, d\eta_2\\
% &= \int_{|\eta_2|\leq |\zeta|^{-1/2}} e^{-i \Phi_{12}} \frac{\chi(\eta_1)\chi(\eta_2)}{(\eta_1\eta_2)^{1/3}} \, d\eta_2+ \int_{|\eta_2|> |\zeta|^{-1/2}} e^{-i \Phi_{12}} \frac{\chi(\eta_1)\chi(\eta_2)}{(\eta_1\eta_2)^{1/3}} \, d\eta_2=: F_{12}^1 + F_{12}^2.
% \end{align*}
% For $F^1_{12}$, a direct estimate yields
% \begin{align*}
% |F_{12}^1| = \left|  \int_{|\eta_2|\leq |\zeta|^{-1/2}} e^{-i \Phi_{12}} \frac{\chi(\eta_1)\chi(\eta_2)}{(\eta_1\eta_2)^{1/3}} \, d\eta_2 \right|\simeq  \int_{|\eta_2|\leq |\zeta|^{-1/2}}  \frac{1}{|\eta_2|^{2/3}}\, d\eta_2 \lesssim |\zeta|^{-1/6}.
% \end{align*}
% For $F_{12}^2$, by an integration by parts,
% \begin{align}
% |F_{12}^2| &\lesssim \int_{|\eta_2|> |\zeta|^{-1/2}} \frac{1}{|\eta \eta_1 \eta_2|} \frac{1}{|\eta_1\eta_2|^{1/3}} \, d\eta_2 \\
% &\simeq \int_{|\eta_2|> |\zeta|^{-1/2}} \frac{1}{|\zeta||\eta_2|^2} \frac{1}{|\eta_2|^{2/3}} \, d\eta_2\\
% &\simeq |\zeta|^{-1} \int_{|\eta_2|> |\zeta|^{-1/2}} \frac{1}{|\eta_2|^{8/3}} \, d\eta_2 \\
% &\lesssim  |\zeta|^{-1/6}.
% \end{align}
% Thus, we conclude that  
% $$
% |F_{12}(\zeta)| \lesssim |\zeta|^{-1/6}, \quad \text{for } |\zeta| \lesssim 1,
% $$  
% and  
% $$
% |F_{12}(\zeta)| \lesssim |\zeta|^{-7/6}, \quad \text{for } |\zeta| \gg 1.
% $$

In the low frequency regime $|\zeta|\lesssim  1$, write
\begin{align*}
F_{12}&=\int e^{-i \Phi_{12}} \frac{\chi(\eta_1)\chi(\eta_2)}{(\eta_1\eta_2)^{1/3}} \, d\eta_2\\
&= \int_{|\eta_2|\leq |\zeta|^{-1/3}} e^{-i \Phi_{12}} \frac{\chi(\eta_1)\chi(\eta_2)}{(\eta_1\eta_2)^{1/3}} \, d\eta_2+ \int_{|\eta_2|> |\zeta|^{-1/3}} e^{-i \Phi_{12}} \frac{\chi(\eta_1)\chi(\eta_2)}{(\eta_1\eta_2)^{1/3}} \, d\eta_2=: F_{12}^1 + F_{12}^2.
\end{align*}
For $F^1_{12}$, a direct estimate yields
\begin{align*}
|F_{12}^1| = \left|  \int_{|\eta_2|\leq |\zeta|^{-1/3}} e^{-i \Phi_{12}} \frac{\chi(\eta_1)\chi(\eta_2)}{(\eta_1\eta_2)^{1/3}} \, d\eta_2 \right|\simeq  \int_{|\eta_2|\leq |\zeta|^{-1/3}}  \frac{1}{|\eta_2|^{2/3}}\, d\eta_2 \lesssim |\zeta|^{-1/9}.
\end{align*}
For $F_{12}^2$, by an integration by parts,
\begin{align}
|F_{12}^2| &\lesssim |\zeta|^{-\frac19}+\int_{|\eta_2|> |\zeta|^{-1/3}} \frac{1}{|\zeta| |\eta_2|^{8/3}} \, d\eta_2\lesssim |\zeta|^{-\frac19}.
\end{align}
Thus, we conclude that  
$$
|F_{12}(\zeta)| \lesssim |\zeta|^{-1/9}, \quad \text{for } |\zeta| \lesssim 1,
$$ 
\end{proof}

By performing similar computations and recalling that $ z \in Z^\k $, we have the following lemma.
\begin{lemma}\label{lem:est_F34}
For $\k>1/2$, let $f_3, f_4 \in Z^\k(\R)$. Then, the function $F_{34}$ satisfies
\begin{equation}\label{eq:F34}
    |F_{34}(\eta -\zeta)| \lesssim \jap{\eta- \zeta}^{-5/3 - \k}
\end{equation}
and
\begin{equation}\label{eq:F'34}
    |F^\prime_{34}(\eta-\zeta)| \lesssim |\eta-\zeta|^{-\frac23},\quad \mbox{ for }|\eta-\zeta|<1.
\end{equation}
\end{lemma}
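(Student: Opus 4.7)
The plan is to treat $F_{34}(w)$ as an oscillatory integral in $\eta_3$ with phase $\Phi_{34}(\eta_3, w-\eta_3) = 3w\,\eta_3(w-\eta_3)$, viewing $w$ as a parameter. For $w\neq 0$ this phase has a single non-degenerate critical point at $\eta_3 = w/2$ with $\partial_{\eta_3}^2\Phi_{34} = -6w$, so the analysis falls into the now-familiar dichotomy of stationary phase versus integration by parts used in Section~\ref{sec:constant_term}.

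For the decay estimate \eqref{eq:F34} I would split into two regimes. In the low-frequency regime $|w|\lesssim 1$, no oscillation is needed: the pointwise bound $|f_j(\eta_j)|\lesssim\langle\eta_j\rangle^{-\kappa}$ yields
$$|F_{34}(w)|\lesssim \int \frac{d\eta_3}{|\eta_3|^{1/3}|w-\eta_3|^{1/3}\langle\eta_3\rangle^\kappa\langle w-\eta_3\rangle^\kappa}\lesssim 1,$$
uniformly in $w$, because both $|\cdot|^{-1/3}$ singularities are locally integrable and the tails decay as $|\eta_3|^{-2/3-2\kappa}$ (integrable for $\kappa>1/6$); this matches $\langle w\rangle^{-5/3-\kappa}\sim 1$. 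In the high-frequency regime $|w|\gg 1$, I would rescale $\eta_3 = w\beta$ to obtain
$$F_{34}(w) = w^{1/3}\int e^{-3iw^3\beta(1-\beta)}\frac{f_3(w\beta)\,f_4(w(1-\beta))}{[\beta(1-\beta)]^{1/3}}\,d\beta,$$
insert a smooth partition of unity around the non-degenerate critical point $\beta=1/2$, apply Theorem~\ref{fase estacionaria} there (using $|f_j(w/2)|\lesssim |w|^{-\kappa}$) to extract a contribution of size $|w|^{1/3-3/2-2\kappa}=|w|^{-7/6-2\kappa}$, and integrate by parts in $\beta$ on the remaining pieces to obtain arbitrarily rapid decay. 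The assumption $\kappa>1/2$ then gives $-7/6-2\kappa\leq -5/3-\kappa$, delivering \eqref{eq:F34}.

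For the singular derivative estimate \eqref{eq:F'34} I would differentiate under the integral. The resulting expression splits into a bounded part (involving either $f_4'$ or the polynomial factor $3\eta_3(2w-\eta_3)$ from $\partial_w\Phi_{34}$, both estimated as in the low-frequency case above) and the genuinely singular term
$$-\tfrac{1}{3}\int e^{-i\Phi_{34}}\frac{f_3(\eta_3)\,f_4(w-\eta_3)}{\eta_3^{1/3}(w-\eta_3)^{4/3}}\,d\eta_3,$$
whose integrand has a non-integrable $|w-\eta_3|^{-4/3}$ singularity at $\eta_3=w$. I would split the $\eta_3$-domain into the regions $|w-\eta_3|>|w|/2$ and $|w-\eta_3|<|w|/2$. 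On the first, $(w-\eta_3)^{-4/3}\lesssim |w|^{-4/3}$, and direct estimation — further subdividing by $|\eta_3|\lessgtr|w|$ to control the $|\eta_3|^{-1/3}$ singularity, and using the tail decay $|\eta_3|^{-2\kappa}$ for $|\eta_3|\gg|w|$ — produces a bound of size $|w|^{-2/3}$. On the second region, I would integrate by parts via $(w-\eta_3)^{-4/3}=\partial_{\eta_3}[3(w-\eta_3)^{-1/3}]$: the boundary terms at $|w-\eta_3|=|w|/2$ are of size $|w|^{-2/3}$, and the leftover integral is also $O(|w|^{-2/3})$ after using $|\partial_{\eta_3}\Phi_{34}|\sim |w|^2$ and $\int_0^{|w|/2}|\eta_4|^{-1/3}d\eta_4\sim |w|^{2/3}$. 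Combined, $|F'_{34}(w)|\lesssim |w|^{-2/3}$.

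The main obstacle is precisely this last step: naive differentiation yields a non-integrable $|w-\eta_3|^{-4/3}$ singularity at $\eta_3=w$, and recovering the expected $|w|^{-2/3}$ blow-up requires the careful region-by-region analysis just described, where the oscillation $e^{-i\Phi_{34}}$ is exploited on the most singular slice through an integration by parts that trades the $-4/3$ exponent for a $-1/3$ one at the cost of boundary contributions that precisely produce the target rate. Everything else (the decay bound at infinity and the bounded pieces of the derivative) is routine within the stationary-phase/IBP framework already used in Section~\ref{sec:constant_term}.
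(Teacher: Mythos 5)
Your overall architecture (low/high frequency split for \eqref{eq:F34}; isolating the singular term for \eqref{eq:F'34}) matches the paper's, but two of your key steps fail for reasons the paper explicitly has to work around.

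First, for \eqref{eq:F34} in the regime $|w|\gg 1$ you propose to rescale, apply Theorem \ref{fase estacionaria} at $\beta=1/2$, and integrate by parts repeatedly elsewhere to get "arbitrarily rapid decay." Neither step is available: the amplitude contains $f_3,f_4\in Z^\k$, which carry only \emph{one} controlled derivative ($\|\jap{\xi}^{\k+1}f_j'\|_{L^\infty}<\infty$), so the smooth stationary phase lemma does not apply to it, and after a single integration by parts you hit $f_j'$ and cannot differentiate again — repeated IBP, and hence arbitrary decay on the non-stationary pieces, is impossible. This is precisely the obstruction the paper points out ("we cannot directly apply stationary phase arguments or perform multiple integrations by parts ... since the functions $f_j$ do not possess enough regularity"); its proof instead performs exactly one integration by parts and compensates for the weaker gain by a case analysis in $|\eta_4|$ and $|\eta_3-\eta_4|$ (thresholds $|\rho|^{-2}$ and $|\rho|^{-1/2}$), using the tail decay $\jap{\eta_j}^{-\k-1}$ of $f_j'$ and the hypothesis $\k>1/2$ to reach $\jap{\rho}^{-5/3-\k}$. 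The near-critical-point piece could be salvaged with a van der Corput--type bound needing only one derivative of the amplitude, but the non-stationary pieces genuinely require the single-IBP case analysis; as written your argument has a hole here.

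Second, for \eqref{eq:F'34} you classify the term carrying the factor $\partial_w\Phi_{34}=3\eta_3(2w-\eta_3)$ (equivalently $3(\rho^2-\eta_3^2)$ in the paper's variables) as a "bounded part ... estimated as in the low-frequency case above," i.e.\ by taking absolute values. That cannot work: at large frequencies the integrand behaves like $|\eta_3|^{2}\cdot|\eta_3|^{-2/3-2\k}=|\eta_3|^{4/3-2\k}$, which is not integrable for any $\k<7/6$ (and in the theorem $\k<2/3$). One must exploit the oscillation: the paper splits at $|\eta_3|\sim|\rho|^{-1/2}$, estimates directly below that threshold and integrates by parts above it, obtaining $|\rho|^{-7/6+\k}$ in both cases — and this is exactly where the hypothesis $\k>1/2$ enters, since $|\rho|^{-7/6+\k}\le|\rho|^{-2/3}$ requires $\k\ge 1/2$. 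Your treatment of the genuinely singular $|w-\eta_3|^{-4/3}$ piece, by contrast, is sound (and the paper's is even simpler: by ordering $|\eta_3|\ge|\eta_4|$ it arranges for the $-4/3$ power to fall on the \emph{large} frequency, so that term is absolutely integrable without any IBP), but the omission of the phase-derivative term is a genuine gap.
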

\begin{proof} We begin with \eqref{eq:F34}. Note that, in the case $|\eta-\zeta|>1$, unlike $F_{12}$, we cannot directly apply stationary phase arguments or perform multiple integrations by parts to control the term $F_{34}$, since the functions $f_j$ do not possess enough regularity for such manipulations. Therefore, we need to proceed more carefully with our estimates. 

First, denote $\rho := \eta - \zeta$. Without loss of generality, assume that $|\eta_3| \geq |\eta_4|$, so that $|\eta_3| \gtrsim |\rho|$. We focus on the case $|\rho| \gg 1$, and divide the analysis into the following cases:

\medskip
\noindent \textbf{Case 1:} $|\eta_4| \ll |\rho|^{-2}$. Since $|\eta_4|$ is small and $\rho = \eta_3 + \eta_4$, we have $|\eta_3| \sim |\rho|$. Estimating directly and using the bound $|f_j(\eta_j)| \lesssim \langle \eta_j \rangle^{-\kappa}$, we obtain
\begin{align}
|F_{34}(\rho)| 
  &\lesssim |\rho|^{-1/3 - \kappa} 
  \int_{|\eta_4| \ll |\rho|^{-2}} |\eta_4|^{-1/3}\jap{\eta_4}^{-\k}\, d\eta_4 \\
  &\lesssim |\rho|^{-1/3 - \kappa} 
  \int_{|\eta_4| \ll |\rho|^{-2}} |\eta_4|^{-1/3}\, d\eta_4 \\
   & \lesssim |\rho|^{-5/3 - \kappa}.
\end{align}

\medskip
\noindent \textbf{Case 2:} $|\rho|^{-2}\lesssim |\eta_4| \le 1 $. 
In this region, we may integrate by parts in the variable $\eta_3$, and since
$$
|\eta_3^2 - \eta_4^2| \gtrsim |\rho|^2,
$$
we deduce that
\begin{align}
|F_{34}(\rho)| 
&\lesssim |\rho|^{-2} 
   \int |\eta_3|^{-4/3 - \kappa} |\eta_4|^{-1/3 - \kappa}\, d\eta_4
   + |\rho|^{-2} 
   \int |\eta_3|^{-1/3 - \kappa} |\eta_4|^{-4/3 - \kappa}\, d\eta_4 \\
&\lesssim |\rho|^{-5/3 - \kappa}.
\end{align}
\textbf{Case 3:} $|\eta_4|>1$, $|\eta_3-\eta_4|\ll |\rho|^{-\frac12}$. Once again, we integrate directly
$$
\int \frac{d\eta_4}{\jap{\eta_3}^{\k+\frac13}\jap{\eta_4}^{\k + \frac13}} \lesssim |\rho|^{-\frac12}\cdot\frac{1}{\jap{\rho}^{2\k+\frac23}}\lesssim |\rho|^{-\frac76 -2\k}. 
$$
\textbf{Case 4:} $|\eta_4|>1$, $|\eta_3-\eta_4|\gtrsim |\rho|^{-\frac12}$. Integrating by parts, the integral can be bounded as
\begin{align*}
    \left|\int e^{-i\Phi_{34}}\partial_{\eta_4}\left(\frac{f_3(\eta_3)f_4(\eta_4)}{(\eta_3^2-\eta_4^2)|\eta_3|^{\frac13}|\eta_4|^{\frac13}}\right)d\eta_4\right| &\lesssim \int \frac{\eta_3+\eta_4}{(\eta_3^2-\eta_4^2)^2}\frac{f_3(\eta_3)f_4(\eta_4)}{|\eta_3|^{\frac13}|\eta_4|^{\frac13}} d\eta_4 + \int \frac{1}{\eta_3^2-\eta_4^2}\left|\partial_{\eta_4}\left(\frac{f_3(\eta_3)f_4(\eta_4)}{|\eta_3|^{\frac13}|\eta_4|^\frac13}\right)\right|d\eta_4\\&\lesssim |\rho|^{-\frac53-2\k}\int \frac{1}{|\eta_3-\eta_4|^2}d\eta_4 + |\rho|^{-\frac43-\k}\int \frac{d\eta_4}{|\eta_3-\eta_4|\jap{\eta_4}^{\k+\frac43}}\\&\quad +  |\rho|^{-\frac73-\k}\int \frac{d\eta_4}{|\eta_3-\eta_4|\jap{\eta_4}^{\k+\frac13}} \\&\lesssim |\rho|^{-\frac76-2\k} + |\rho|^{-\frac53 -\k} \lesssim |\rho|^{-\frac53 - \k}
\end{align*}
since $\k>1/2$.

\medskip
We now move to the proof of \eqref{eq:F'34}, where we consider only $|\rho|<1$. Assume that $|\eta_3|\gtrsim |\eta_4|$. Then
\begin{equation}\label{eq:F'34_2}
    F_{34}'(\rho) 
= -3i \int e^{-i\Phi_{34}} (\rho^2 - \eta_3^2) 
   \frac{f_3(\eta_3) f_4(\eta_4)}{(\eta_3 \eta_4)^{1/3}}\, d\eta_4
   + \int e^{-i\Phi_{34}} 
     \frac{f_3'(\eta_3) f_4(\eta_4)}{(\eta_3 \eta_4)^{1/3}}\, d\eta_4
   - \frac{1}{3} \int e^{-i\Phi_{34}} 
     \frac{f_3(\eta_3) f_4(\eta_4)}{\eta_3^{4/3} \eta_4^{1/3}}\, d\eta_4.
\end{equation}
For the first term, if $|\eta_3|\lesssim |\rho|^{-\frac12}$,
$$
\left|\int e^{-i\Phi_{34}} (\rho^2 - \eta_3^2) 
   \frac{f_3(\eta_3) f_4(\eta_4)}{(\eta_3 \eta_4)^{1/3}}\, d\eta_4\right|\lesssim \int_{|\eta_3|<|\rho|^{-\frac12}} \frac{|\eta_3|^2}{|\eta_3|^{\frac13}|\eta_4|^{\frac13}\jap{\eta_3}^\k\jap{\eta_4}^\k
   }d\eta_4\lesssim |\rho|^{-\frac76+\k}.
$$
On the other hand, if $|\eta_3|\gg |\rho|^{-\frac12}$, we integrate by parts to find
$$
\left|\int e^{-i\Phi_{34}}\partial_{\eta_3}\left(\frac{(\rho^2-\eta_4^2)f_3(\eta_3)f_4(\eta_4)}{(\eta_3^2-\eta_4^2)|\eta_3|^{\frac13}|\eta_4|^{\frac13}}\right)d\eta_3\right|\lesssim |\rho|^{-1}\int |\eta_3|^{-\frac23 - 2\k} d\eta_3 \lesssim |\rho|^{-\frac76+\k}.
$$
The second term in \eqref{eq:F'34_2} is bounded directly, as there is enough decay and no singularity near 0. The same occurs for the last term when $|\eta_4|\gtrsim 1$.

It remains to estimate the last term in the region $|\eta_4|\ll 1$. If $|\eta_4|\ll |\rho|$, then we bound the integral by
$$
\int_{|\eta_4|\ll|\rho|} \frac{d\eta_4}{|\eta_3|^{\frac43}|\eta_4|^\frac13} \lesssim |\rho|^{-\frac43}\int_{|\eta_4|\ll|\rho|} \frac{d\eta_4}{|\eta_4|^\frac13}\lesssim |\rho|^{-\frac23}.
$$
If $|\eta_4|\gtrsim |\rho|$, we estimate as
$$
\int_{|\rho|\lesssim |\eta_4|\le|\eta_3|} \frac{d\eta_4}{|\eta_3|^{\frac43}|\eta_4|^\frac13} \lesssim \int_{|\rho|\lesssim |\eta_4|\le|\eta_3|} \frac{d\eta_4}{|\eta_4|^\frac53} \lesssim |\rho|^{-\frac23}.
$$

% For the derivative term $F_{34}'$, the argument follows in a similar way, since

\end{proof}

We are now in position to bound 
\begin{align*}
 \mathcal{M}_{\text{bilin}}(\eta)&=\int e^{-i \Upsilon(\eta,\zeta)} F_{12}(\zeta) F_{34} (\eta- \zeta) \, d\zeta
\end{align*}
with $\Upsilon= \eta^3 - \zeta^3 - (\eta-\zeta)^3 =-3 \eta \zeta (\eta- \zeta)$. Once again, it will be necessary to divide the analysis into several regions.

\noindent \textbf{Case 1:} $|\eta|\lesssim 1$. In this scenario, we have directly
$$
| \mathcal{M}_{\text{bilin}}(\eta)|  \lesssim \int_{|\zeta|\lesssim 1}|\zeta|^{-\frac16}d\zeta + \int_{|\zeta|\gg1} |\zeta|^{-\frac76} d\zeta\lesssim 1.
$$
\noindent \textbf{Case 2:} $|\zeta|\gg 1$, $|\eta|\gg 1$ and $|\eta-\zeta|\gg 1$. 
In this case,
\begin{align*}
| \mathcal{M}_{\text{bilin}}(\eta)| \lesssim \int \jap{\zeta}^{-7/6} \jap{\eta- \zeta}^{-5/3-\k}\, d\zeta \lesssim |\eta|^{-5/3-\k}.
\end{align*}

\noindent \textbf{Case 3:} $|\eta| \gg 1$ and  $|\zeta|\lesssim 1$.
In this region, $|\eta-\zeta| \simeq |\eta| >1$ and
\begin{align*}
| \mathcal{M}_{\text{bilin}}(\eta)| \lesssim \int |\zeta|^{-1/6} |\eta- \zeta|^{-5/3-\k}\, d\zeta\lesssim |\eta|^{-5/3-\k} \int |\zeta|^{-1/6} \, d\zeta\lesssim |\eta|^{-5/3-\k}.
\end{align*}

\noindent \textbf{Case 4:} $|\zeta|\gg 1$, and $|\eta-\zeta|\lesssim 1$, which implies that $\zeta\simeq \eta$.
In this case, it is necessary to use the expansion \eqref{asymp_F12}:
\begin{align*}
 \mathcal{M}_{\text{bilin}}(\eta)&=\int e^{-i \Upsilon(\eta,\zeta)} F_{12}(\zeta) F_{34} (\eta- \zeta) \, d\zeta\\
& \simeq  \int e^{-i(\Upsilon(\eta,\zeta) +3\zeta^3/4)}|\zeta|^{-7/6} F_{34} (\eta- \zeta) \, d\zeta + \int e^{-i \Upsilon(\eta,\zeta)} O(|\zeta|^{-25/6}) F_{34} (\eta- \zeta) \, d\zeta
\end{align*}
The second term is bounded directly. For the first, since
$$
\partial_\zeta \left(\Upsilon(\eta,\zeta)+ \frac{3\zeta^3}{4}\right) = \eta(\eta -2 \zeta) - \frac{9}{4} \zeta^2 \sim \eta^2,
$$
by integrating by parts, by Lemma \ref{lem:est_F34}, 
\begin{align*}
I&=  \int e^{-i(\Upsilon(\eta,\zeta)- \zeta^3)}\partial_\zeta\left(\frac{\zeta^{-7/6} F_{34} (\eta- \zeta)}{\partial_\zeta \left(\Upsilon(\eta,\zeta)+ \frac{3\zeta^3}{4}\right)} \right)\, d\zeta\\
&=  \eta^{-2}  \int e^{-i(\Upsilon(\eta,\zeta)- \zeta^3)}|\zeta|^{-13/6} F_{34} (\eta- \zeta) \, d\zeta +  \eta^{-2}  \int e^{-i(\Upsilon(\eta,\zeta)- \zeta^3)}|\zeta|^{-7/6} \partial_\zeta F_{34} (\eta- \zeta) \, d\zeta \\ & \lesssim |\eta|^{-4/3-\k},
\end{align*}
Therefore
$$
| \mathcal{M}_{\text{bilin}}(\eta)| \lesssim |\eta|^{-4/3-\k},
$$
which concludes the proof of the bilinear estimate.

\subsection{Estimates for the higher-order terms}
Finally, we turn to the cubic or quartic terms. More precisely, consider
\begin{equation}\label{eq_Mhigh}
     \mathcal{M}[f_1,f_2,f_3,f_4](\eta) = \iiint e^{-i\Phi} \frac{f_1(\eta_1)f_2(\eta_2)f_3(\eta_3)f_4(\eta_4)}{(\eta_1\eta_2\eta_3\eta_4)^{1/3}} \, d\eta_3d\eta_2d\eta_1 := \mathcal{M}_{\text{high}}(\eta)
\end{equation}
where we may have $f_j = \chi$ for some (and only one) $j \in \{1,2,3,4\}$, and the remaining factors belong to the class $Z^\k$. By symmetry, we may assume, without loss of generality, that $\eta_1$ is chosen so that $|\eta-\eta_1|\gtrsim |\eta-\eta_j|$, for all $j$. This implies $|\eta - \eta_1| \gtrsim |\eta|$, and we order the remaining frequencies, $|\eta_2|\ge |\eta_3|\ge |\eta_4|$. In this way, we can decompose

\begin{equation}\label{Mtrilinear}
   \mathcal{M}_{\mathrm{high}}(\eta)
= \int e^{3i\eta\eta_1(\eta-\eta_1)}\, 
\mathcal{E}(\eta-\eta_1)\, 
\eta_1^{-1/3}\,
f_1(\eta_1)\, d\eta_1, 
\end{equation}

where
$$
\mathcal{E}(\gamma)=\mathcal{E}(f_2,f_3,f_4)(\gamma) :=  \iint_{\gamma=\eta_2+\eta_3+\eta_4} e^{-i(\gamma^3-\eta_2^3-\eta_3^3-\eta_4^3)} \frac{f_2(\eta_2)f_3(\eta_3)f_4(\eta_4)}{(\eta_2\eta_3\eta_4)^{1/3}} \, d\eta_2d\eta_3.
 $$
\begin{lemma}\label{Lema Estimativas E}
Let $f_2, f_3 \in Z^\k(\R)$  with $\k \in (\frac{5}{8}, \frac23)$ and $f_4 \in Z^{\k_4}(\R)$ with $\k_4 \in \{0, \k\}$. Then there exists $\theta \geq  2^+ +\k_4$ such that
$$
|\E(\gamma)| \lesssim |{\gamma}|^{-\theta}, \quad {\text{for} \quad |\gamma|\gg 1.}
$$ 
\end{lemma}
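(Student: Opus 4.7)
The plan is to adapt the bilinear reduction used for $\mathcal{M}_{\text{bilin}}$ to the present trilinear setting. The first step is to introduce the intermediate variable $\zeta := \eta_2 + \eta_3$ and use the algebraic identity
\begin{equation*}
\gamma^3 - \eta_2^3 - \eta_3^3 - \eta_4^3 = \Upsilon(\gamma,\zeta) + 3\zeta\eta_2(\zeta-\eta_2), \qquad \eta_4 = \gamma - \zeta,
\end{equation*}
with $\Upsilon(\gamma,\zeta) := 3\gamma\zeta(\gamma-\zeta)$, to obtain after the change of variables $(\eta_2,\eta_3)\mapsto(\eta_2,\zeta)$ the one-dimensional representation
\begin{equation*}
\mathcal{E}(\gamma) = \int e^{-i\Upsilon(\gamma,\zeta)}\, G(\zeta)\, \frac{f_4(\gamma-\zeta)}{(\gamma-\zeta)^{1/3}}\, d\zeta, \qquad G(\zeta) := \int e^{-3i\zeta\eta_2(\zeta-\eta_2)}\, \frac{f_2(\eta_2)\, f_3(\zeta-\eta_2)}{(\eta_2(\zeta-\eta_2))^{1/3}}\, d\eta_2.
\end{equation*}
Since $G$ has exactly the structure of $F_{34}$ from Lemma \ref{lem:est_F34} and $\kappa > 5/8 > 1/2$, the same case analysis will yield $|G(\zeta)| \lesssim \langle\zeta\rangle^{-5/3-\kappa}$ together with the singular bound $|G'(\zeta)| \lesssim |\zeta|^{-2/3}$ for $|\zeta|\lesssim 1$; an analogous argument (keeping track of the extra factors produced by differentiating in $\zeta$) gives the large-frequency derivative estimate $|G'(\zeta)| \lesssim \langle\zeta\rangle^{-2/3-\kappa}$.

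The outer $\zeta$-integral is then estimated by splitting according to the sizes of $\zeta$ and $\rho := \gamma-\zeta$. On $|\zeta|\lesssim 1$ (so $|\rho|\sim|\gamma|$), the derivative $|\partial_\zeta\Upsilon| = 3|\gamma||\gamma-2\zeta| \sim |\gamma|^2$ is maximal, and a single integration by parts in $\zeta$ combined with the $|\zeta|^{-2/3}$ bound on $G'$ will give a contribution of order $|\gamma|^{-7/3-\kappa_4}$. On $|\zeta|,|\rho|\gtrsim 1$, the phase $\Upsilon$ has a single non-degenerate critical point at $\zeta=\gamma/2$ with $|\partial_\zeta^2\Upsilon| = 6|\gamma|$, so the critical band has natural width $|\gamma|^{-1/2}$; a direct estimate over the band using $|G|\lesssim|\gamma|^{-5/3-\kappa}$ and $|f_4(\rho)/\rho^{1/3}|\lesssim|\gamma|^{-1/3-\kappa_4}$ produces a $|\gamma|^{-5/2-\kappa-\kappa_4}$ contribution, while away from the band a dyadic decomposition plus integration by parts (using the large-$\zeta$ bound on $G'$) delivers decay of at least comparable order.

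The delicate region is $|\rho|\lesssim 1$ (so $|\zeta|\sim|\gamma|$), where the singular factor $|\rho|^{-1/3}$ competes with the oscillation $e^{-3i\gamma^2\rho}$ at scale $|\gamma|^{-2}$. This region is further split at the threshold $|\rho|\sim|\gamma|^{-2}$: on $|\rho|\ll|\gamma|^{-2}$ a direct estimate with $\int|\rho|^{-1/3}d\rho\lesssim|\gamma|^{-4/3}$ and $|G|\lesssim|\gamma|^{-5/3-\kappa}$ gives a $|\gamma|^{-3-\kappa}$ contribution; on $|\gamma|^{-2}\lesssim|\rho|\lesssim 1$ one integration by parts in $\rho$ is performed, the dangerous term arising when the derivative falls on $|\rho|^{-1/3}$ and producing a $|\rho|^{-4/3}$-weight whose integral over the available range is bounded by $|\gamma|^{2/3}$, giving again $|\gamma|^{-3-\kappa}$, with matching boundary terms at $|\rho|\sim|\gamma|^{-2}$. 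Collecting all contributions yields $|\mathcal{E}(\gamma)| \lesssim |\gamma|^{-\theta}$ with $\theta \geq \min\{5/2+\kappa+\kappa_4,\ 3+\kappa,\ 7/3+\kappa_4\}$, exceeding $2^+ + \kappa_4$ since $\kappa > 5/8$ and $\kappa_4 \leq \kappa$.

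The main obstacle I anticipate is precisely the region $|\rho|\lesssim 1$: the $|\rho|^{-1/3}$ singularity forbids aggressive integration by parts while the $|\gamma|^{-2}$ oscillation scale demands some oscillatory gain, and only the explicit threshold splitting at $|\rho|\sim|\gamma|^{-2}$ reconciles these competing constraints. A secondary difficulty is establishing the large-$\zeta$ derivative bound $|G'(\zeta)|\lesssim\langle\zeta\rangle^{-2/3-\kappa}$, which is not covered by Lemma \ref{lem:est_F34} and requires repeating its case analysis after differentiating the inner integral in $\zeta$.
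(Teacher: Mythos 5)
Your reduction to the one--dimensional integral over $\zeta=\eta_2+\eta_3$ is algebraically correct, and the pointwise bound $|G(\zeta)|\lesssim\langle\zeta\rangle^{-5/3-\kappa}$ does follow from the same case analysis as Lemma \ref{lem:est_F34}. The gap is the asserted derivative estimate $|G'(\zeta)|\lesssim\langle\zeta\rangle^{-2/3-\kappa}$ for $|\zeta|\gg1$, which is false: the inner phase $3\zeta\eta_2(\zeta-\eta_2)$ has a non-degenerate stationary point at $\eta_2=\zeta/2$, so $G$ carries an oscillatory component of the form $c\,\zeta^{-1/2}e^{-3i\zeta^3/4}\,f_2(\zeta/2)f_3(\zeta/2)\,\zeta^{-2/3}\sim e^{-3i\zeta^3/4}\zeta^{-7/6-2\kappa}$ (the exact analogue of the expansion \eqref{asymp_F12} for $F_{12}$). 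Differentiating this piece in $\zeta$ costs a factor $\zeta^2$, so $G'$ contains a term of size $\zeta^{5/6-2\kappa}$, which for $\kappa\in(\tfrac58,\tfrac23)$ is enormously larger than $\langle\zeta\rangle^{-2/3-\kappa}$. With the true size of $G'$, the two places where you integrate by parts against $\Upsilon$ alone no longer close: in the off-band part of the region $|\zeta|,|\rho|\gtrsim1$ your own bookkeeping yields $|\gamma|^{-1/2-2\kappa-\kappa_4}$, and $\tfrac12+2\kappa<2$ since $\kappa<\tfrac23$; and in the region $|\rho|\lesssim1$ the term in which $\partial_\rho$ falls on $G(\gamma-\rho)$ --- a term you do not list among the dangerous ones --- gives only $|\gamma|^{-7/6-2\kappa}$, which is insufficient when $\kappa_4=\kappa$ because $\tfrac76+2\kappa<2+\kappa$ for $\kappa<\tfrac56$.

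The repair is the device the paper uses in Case 4 of the bilinear estimate for $\mathcal{M}_{\text{bilin}}$: split $G$ into its stationary-phase leading term plus a remainder, absorb the factor $e^{-3i\zeta^3/4}$ into the outer phase, verify that $\Upsilon(\gamma,\zeta)+\tfrac34\zeta^3$ has $\zeta$-derivative of size $\gamma^2$ in the relevant regions (its new stationary points $\zeta=2\gamma/3$ and $\zeta=2\gamma$ contribute only $|\gamma|^{-2-2\kappa-\kappa_4}$), and only then integrate by parts. This is a genuinely necessary extra idea, not a routine verification, and without it your argument does not reach $\theta\geq 2^++\kappa_4$. For comparison, the paper's own proof never forms the 1D reduction at all: it runs a direct case analysis on the two-dimensional integral, always integrating by parts in the largest frequency $\eta_2$ against the full phase, and it disposes of the near-diagonal band $\big||\eta_2|-|\eta_3|\big|\leq|\gamma|^{-1/2+\kappa/3}$ (precisely the region that generates your oscillatory piece of $G$) by a direct measure estimate --- which is exactly where the threshold $\kappa>\tfrac58$ enters.
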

\begin{proof} As done previously, we need to divide the analysis into several regions. For brevity, we will estimate only the terms that yield the worst decay (typically, those corresponding to the lowest frequency and the highest singularity). 
Let us assume that  $|\eta_2| \geq |\eta_3| \geq |\eta_4|$. In particular, we may suppose that $|\eta_2| \sim |\gamma|$.

\medskip

\noindent \textbf{Case 1:} $|\eta_4|\geq 1$. In this situation, we further divide the analysis into two subregions:

\medskip
\noindent \textbf{Case 1.1:} $\big||\eta_2|-|\eta_4|\big|\leq 1$.  
In this regime, integration by parts is not available. However, since the frequency ordering ensures
$$
|\eta_j|\sim |\gamma|, \qquad j=2,3,4,
$$
we may estimate directly:
\begin{align*}
|\mathcal{E}(\gamma)|
&\lesssim \iint |\eta_2|^{-1/3-\kappa}|\eta_3|^{-1/3-\kappa}|\eta_4|^{-1/3-\kappa_4}\, d\eta_3 d\eta_4 \\
&\lesssim |\gamma|^{-1-2\kappa-\kappa_4}\\
&\lesssim |\gamma|^{-2^+-\kappa_4}.
\end{align*}

\medskip
\noindent \textbf{Case 1.2:} $\big||\eta_2|-|\eta_4|\big|>1$.  
Here we may integrate by parts in $\eta_2$, treating $\eta_4$ as a dependent variable. This yields
\begin{align}
\mathcal{E}(\gamma)
=&6 \iint e^{-i(\gamma^3-\eta_2^3-\eta_3^3-\eta_4^3)}
\frac{1}{(\eta_2+\eta_4)(\eta_2-\eta_4)^2}
\frac{f_2(\eta_2)f_3(\eta_3)f_4(\eta_4)}{(\eta_2\eta_3\eta_4)^{1/3}}\, d\eta_2 d\eta_3 \label{eq:I1}\\
&- \iint e^{-i(\gamma^3-\eta_2^3-\eta_3^3-\eta_4^3)}
\frac{1}{\eta_2^2-\eta_4^2}
\partial_{\eta_2}\!\left(\frac{f_2(\eta_2)f_3(\eta_3)f_4(\eta_4)}{(\eta_2\eta_3\eta_4)^{1/3}}\right)\! d\eta_2 d\eta_3 \label{eq:I2}\\
=&: I_1 + I_2. \nonumber
\end{align}

We begin with the estimate for $I_2$. In the worst case, the derivative hits the smallest frequency, so that
\begin{align}
|I_2|
&\lesssim \iint \frac{1}{|\eta_2^2-\eta_4^2|}
|\eta_2|^{-1/3-\kappa}|\eta_3|^{-1/3-\kappa}|\eta_4|^{-4/3-\kappa_4}\, d\eta_3 d\eta_4 \\
&\lesssim |\gamma|^{-4/3^- -\kappa} \iint
\frac{1}{\big||\eta_2|-|\eta_4|\big|^{1^+}}
|\eta_3|^{-1/3-\kappa}|\eta_4|^{-4/3-\kappa_4}\, d\eta_3 d\eta_4.
\end{align}

We now distinguish between several subcases:

\smallskip
\noindent $\bullet$ {If $|\eta_4|\gtrsim|\gamma|$},
\begin{align}
|I_2|
&\lesssim |\gamma|^{-8/3^--\kappa-\kappa_4}
\iint \frac{1}{\big||\eta_2|-|\eta_4|\big|^{1^+}} |\eta_3|^{-1/3-\kappa}\, d\eta_3 d\eta_4 \\
&\lesssim |\gamma|^{-2^--2\kappa-\kappa_4}\\
&\lesssim |\gamma|^{-2^+-\kappa_4}.
\end{align}

\noindent $\bullet$ {If $|\eta_4|\ll|\gamma|$ and $|\eta_3|\gtrsim|\gamma|$},  since  
$\big||\eta_2|-|\eta_4|\big|\sim|\eta_2|\gtrsim|\gamma|$,
$$
|I_2|\lesssim |\gamma|^{-5/3-2\kappa}\lesssim |\gamma|^{-2^+-\kappa_4}.
$$

\noindent $\bullet$ {If $|\eta_4|\ll|\gamma|$ and $|\eta_3|\ll|\gamma|$}, integration in $\eta_3$ gives
$$
|I_2|\lesssim |\gamma|^{-5/3-2\kappa}\lesssim |\gamma|^{-2^+-\kappa_4}.
$$

\medskip
We now turn to $I_1$, where the worst-case scenario is when $|\eta_2+\eta_4|\gtrsim |\eta_2-\eta_4|$. From \eqref{eq:I1},
\begin{align}
|I_1|
&\lesssim \iint \frac{1}{|\eta_2+\eta_4|\,|\eta_2-\eta_4|^{2}}
|\eta_2|^{-1/3-\kappa}|\eta_3|^{-1/3-\kappa}|\eta_4|^{-1/3-\kappa_4}\, d\eta_3 d\eta_4 \\
&\lesssim |\gamma|^{-4/3-\kappa}
\iint \frac{1}{|\eta_2-\eta_4|^{2}}
|\eta_3|^{-1/3-\kappa}|\eta_4|^{-1/3-\kappa_4}\, d\eta_3 d\eta_4.
\end{align}

The worst contribution occurs when $|\eta_4|\gtrsim|\gamma|$; otherwise  
$|\eta_2-\eta_4|^2\sim|\gamma|^2$ and we gain sufficient decay. Thus,
$$
|I_1|\lesssim |\gamma|^{-5/3-\kappa-\kappa_4}\lesssim |\gamma|^{-2^+-\kappa_4}.
$$

\bigskip
% ------------------------------------------------------

\noindent \textbf{Case 2:} $|\eta_4|\leq 1$.  
We split the analysis further:

\medskip
\noindent \textbf{Case 2.1:} $|\eta_4|> |\gamma|^{-\k}$.  
Here $|\eta_2^2-\eta_4^2|\sim|\gamma|^2$, so integrating by parts in $\eta_2$ and estimating the worst term (derivative hitting $\eta^{-1/3}$) gives
\begin{align*}
|\mathcal{E}(\gamma)|
&\lesssim |\gamma|^{-2}\iint
|\eta_2|^{-1/3-\kappa}|\eta_3|^{-1/3-\kappa}
|\eta_4|^{-4/3}\langle\eta_4\rangle^{-\kappa_4}\, d\eta_3 d\eta_4 \\
&\lesssim |\gamma|^{-7/3-\kappa}
\iint |\eta_3|^{-1/3-\kappa}|\eta_4|^{-4/3}\, d\eta_3 d\eta_4 \\
&\lesssim |\gamma|^{-5/3-5\kappa/3}\\
&\lesssim |\gamma|^{-2^+-\kappa_4}.
\end{align*}

\medskip
\noindent \textbf{Case 2.2:} $|\eta_4|\leq |\gamma|^{-\k}$.  
We now distinguish four subcases.

\medskip
\noindent \textbf{Case 2.2.1:} $|\eta_3|>1$ and $\big||\eta_2|-|\eta_3|\big|>|\gamma|^{-1/2 + \k/3}$.  
In this region, we integrate by parts in $\eta_2$ (with $\eta_3$ dependent):
\begin{align}
\mathcal{E}(\gamma)
=&6 \iint e^{-i(\gamma^3-\eta_2^3-\eta_3^3-\eta_4^3)}
\frac{1}{(\eta_2+\eta_3)(\eta_2-\eta_3)^2}
\frac{f_2(\eta_2)f_3(\eta_3)f_4(\eta_4)}{(\eta_2\eta_3\eta_4)^{1/3}}\, d\eta_2 d\eta_3 \\
&- \iint e^{-i(\gamma^3-\eta_2^3-\eta_3^3-\eta_4^3)}
\frac{1}{\eta_2^2-\eta_3^2}
\partial_{\eta_2}\!\left(\frac{f_2(\eta_2)f_3(\eta_3)f_4(\eta_4)}{(\eta_2\eta_3\eta_4)^{1/3}}\right)\! d\eta_2 d\eta_3\\
=&: J_1 + J_2.
\end{align}

For $J_2$, estimating the worst contribution:
\begin{align}
|J_2|
&\lesssim \iint
\frac{1}{|\eta_2+\eta_3|\,|\eta_2-\eta_3|}
|\eta_2|^{-1/3-\kappa}|\eta_3|^{-4/3-\kappa}|\eta_4|^{-1/3}\, d\eta_3 d\eta_4 \\
&\lesssim |\gamma|^{-4/3-\kappa}\iint
\frac{1}{|\eta_2-\eta_3|}
|\eta_3|^{-4/3-\kappa}|\eta_4|^{-1/3}\, d\eta_3 d\eta_4.
\end{align}

If $|\eta_3|\ll|\gamma|$, then
$$
|J_2|\lesssim |\gamma|^{-7/3-\kappa}\lesssim |\gamma|^{-2^+-\kappa_4},
$$

if $|\eta_3|\gtrsim|\gamma|$,
$$
|J_2|\lesssim |\gamma|^{-8/3-2\kappa}|\log|\gamma||\lesssim |\gamma|^{-2^+-\kappa_4}.
$$

For $J_1$,
\begin{align}
|J_1|
&\lesssim \iint \frac{1}{|\eta_2+\eta_3|\,|\eta_2-\eta_3|^2}
|\eta_2|^{-1/3-\kappa}|\eta_3|^{-1/3-\kappa}|\eta_4|^{-1/3}\, d\eta_3 d\eta_4 \\
&\lesssim |\gamma|^{-4/3-\kappa}\iint \frac{1}{|\eta_2-\eta_3|^2}
|\eta_3|^{-1/3-\kappa}|\eta_4|^{-1/3}\, d\eta_3 d\eta_4 \\
\end{align}
If $|\eta_3|\ll|\gamma|$,
$$
|J_1|\lesssim |\gamma|^{-8/3-2\kappa}\lesssim |\gamma|^{-2^+-\kappa_4}.
$$
If $|\eta_3|\gtrsim|\gamma|,$
$$
\quad |J_1|\lesssim |\gamma|^{-7/6-7\kappa/3}\lesssim |\gamma|^{-2^+-\kappa_4}.
$$

\medskip
\noindent \textbf{Case 2.2.2:} $|\eta_3|>1$ and $\big||\eta_2|-|\eta_3|\big|\leq |\gamma|^{-1/2 + \k/3}$.  
Here, integration by parts is not possible, but
$$
|\gamma|\lesssim|\eta_2|\sim|\eta_3|.
$$
Thus, a direct estimate gives
$$
|\mathcal{E}(\gamma)|\lesssim |\gamma|^{-7/6-7\kappa/3}\lesssim |\gamma|^{-2^+-\kappa_4}.
$$

\medskip
\noindent \textbf{Case 2.2.3:} $|\eta_3|<|\gamma|^{-2}<1$.  
Then
\begin{align}
|\mathcal{E}(\gamma)|
&\lesssim \iint |\eta_2|^{-1/3-\kappa}|\eta_3|^{-1/3}|\eta_4|^{-1/3}\, d\eta_3 d\eta_4\\
&\lesssim |\gamma|^{-1/3-\kappa}
\iint |\eta_3|^{-1/3} |\eta_4|^{-1/3}\, d\eta_3 d\eta_4 \\
&\lesssim|\gamma|^{-5/3-5\kappa/3}\\
&\lesssim |\gamma|^{-2^+-\kappa_4}.
\end{align}

\medskip
\noindent \textbf{Case 2.2.4:} $|\gamma|^{-2}\lesssim|\eta_3|\leq 1$.  
Here we may integrate by parts in $\eta_2$ with $\eta_3$ dependent, since  
$|\eta_2^2-\eta_3^2|\sim|\gamma|^2$. Estimating the worst term yields
\begin{align}
|\mathcal{E}(\gamma)|
&\lesssim |\gamma|^{-2}\iint
|\eta_2|^{-1/3-\kappa}|\eta_3|^{-4/3}|\eta_4|^{-1/3}\, d\eta_2 d\eta_4\\
&\lesssim |\gamma|^{-7/3-\kappa}
\iint |\eta_3|^{-4/3}|\eta_4|^{-1/3}\, d\eta_3 d\eta_4\\
&\lesssim |\gamma|^{-5/3-5\kappa/3}\\
&\lesssim |\gamma|^{-2^+-\kappa_4},
\end{align}
which completes the proof.

\end{proof}

Now, if  $| \eta - \eta_1 |\gtrsim |\eta|\gtrsim 1$, estimating \eqref{Mtrilinear} directly, we obtain by Lemma \ref{Lema Estimativas E},
\begin{align}
| \mathcal{M}_{\mathrm{high}}(\eta)|
&\lesssim \int \langle \eta - \eta_1 \rangle^{-2^+ - \kappa_4}\, |\eta_1|^{-1/3}\, |f_1(\eta_1)| \, d\eta_1 \\
&\lesssim |\eta|^{-4/3 - \kappa}
\int \langle \eta - \eta_1 \rangle^{-(2/3)^+ + \kappa - \kappa_4}\, |\eta_1|^{-1/3}\, |f_1(\eta_1)| \, d\eta_1 \\
&\lesssim |\eta|^{-4/3 - \kappa}
\int \langle \eta - \eta_1 \rangle^{-(2/3)^+ + \kappa - \kappa_4}\, |\eta_1|^{-1/3}\jap{\eta_1}^{-\k_1}\, \, d\eta_1 .
\end{align}
Since $\k_1+\k_4\ge \k$, the integral is uniformly bounded and thus
\begin{align}
|\mathcal{M}_{\mathrm{high}}(\eta)|
\lesssim |\eta|^{-4/3 - \kappa}.
\end{align}
On the other hand, if $|\eta-\eta_1|\gtrsim 1 \gg |\eta|$, then, by \eqref{Mtrilinear}
\begin{align}
| \mathcal{M}_{\mathrm{high}}(\eta)|
&\lesssim \int \langle \eta - \eta_1 \rangle^{-2^+ - \kappa_4}\, |\eta_1|^{-1/3}\, |f_1(\eta_1)| \, d\eta_1 \\
&\lesssim 
\int \langle \eta_1 \rangle^{-2^+ - \kappa_4}\, |\eta_1|^{-1/3}\jap{\eta_1}^{-\k_1}\, \, d\eta_1 \\
&< \infty.
\end{align}
Finally, if $|\eta-\eta_1|\lesssim 1$, then $|\eta_j|\lesssim 1$ for all $j \in \{1,2,3,4\}$ and we bound \eqref{eq_Mhigh} directly:
\begin{align}
| \mathcal{M}_{\mathrm{high}}(\eta)|
\lesssim \iiint |\eta_1\eta_2\eta_3\eta_4|^{-\frac13} d\eta_1d\eta_2d\eta_3
< \infty.
\end{align}

\qed

\section{Multilinear estimates for mBO - Proof of Theorem \ref{thm:multi_mbo_1}}\label{sec:est_mbo1}

We split the multilinear operator $\mathcal{I}$ as in \eqref{eq:splitI} and estimate each term separately.
For $\mathcal{I}_{\text{h}\times\text{h}}$, notice that there are four cases to consider, depending on the signs of $\eta_1,\eta_2$ and $\eta_3$ (and their permutations):
    $$
    (+,+,+),\quad (+,-,+), \quad (-.+,-),\quad (-,-,-).
    $$
Since the third and fourth cases can be reduced to the second and first cases by complex conjugation of the operator, we prove the multilinear estimate for the first two configurations. As such, we consider the operators
$$
\mathcal{I}_{\text{h}\times\text{h}}^{(+,+,+)}[g_1,g_2,g_3](\eta):=\iint\limits_{\eta_1,\eta_2,\eta_3>0}\frac{1}{|\eta_1\eta_2\eta_3|^\frac{1}{2}}e^{i (\eta^2-\eta_1^2-\eta_2^2-\eta_3^2)} g_1(\eta_1){g_2(\eta_2)}g_3(\eta_3)\sum_{j=1}
^3\prod_{k\neq j}(1-\phi(\eta_j)) d\eta_1d\eta_2.
$$
and
$$
\mathcal{I}_{\text{h}\times\text{h}}^{(+,-,+)}[g_1,g_2,g_3](\eta):=\iint\limits_{\eta_1,\eta_3>0, \eta_2<0}\frac{1}{|\eta_1\eta_2\eta_3|^\frac{1}{2}}e^{i (\eta^2-\eta_1^2+\eta_2^2-\eta_3^2)} g_1(\eta_1){g_2(\eta_2)}g_3(\eta_3)\sum_{j=1}
^3\prod_{k\neq j}(1-\phi(\eta_j)) d\eta_1d\eta_2.
$$

\subsection{Bounds for $\mathcal{I}_{\text{h}\times\text{h}}^{(+,+,+)}$}\label{sec:Ihh}

\begin{lemma}
    Let $\k_j \in \left( -\frac{1}{4},\frac{1}{4}\right)$ and $g_j \in Z^{\k_j}$, for $j=1,2,3$. Then
    $$\left|\mathcal I_{\text{h}\times \text{h}}^{(+,+,+)}[g_1,g_2,g_3](\eta)\right| \lesssim \jap{\eta}^{-\frac32 - \k_1-\k_2-\k_3}\prod_{j=1}^3\|g_j\|_{Z^{\kappa_j}}, $$
\end{lemma}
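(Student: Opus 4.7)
The claim is vacuous unless $|\eta| \gg 1$: since $\eta_1,\eta_2,\eta_3>0$ and $\eta_1+\eta_2+\eta_3 = \eta$, one has $\eta_j \le |\eta|$ for each $j$, so $|\eta| \lesssim 1$ forces every $\eta_j \lesssim 1$, which is incompatible with the cutoff $\sum_{j=1}^3 \prod_{k\neq j}(1-\phi(\eta_k))$ requiring at least two $\eta_k \gtrsim 1$. I would therefore restrict to $|\eta| \gg 1$ and perform the rescaling $\eta_j = \eta p_j$, obtaining
\begin{equation*}
\mathcal{I}_{\text{h}\times\text{h}}^{(+,+,+)}[g_1,g_2,g_3](\eta) = \eta^{1/2}\iint e^{i\eta^2 Q(p_1,p_2)}\, G(p_1,p_2;\eta)\, dp_1 dp_2,
\end{equation*}
with $Q(p_1,p_2) = 1 - p_1^2 - p_2^2 - p_3^2$, $p_3 := 1-p_1-p_2$, and amplitude $G = (p_1 p_2 p_3)^{-1/2} \prod_{j=1}^3 g_j(\eta p_j)\, \tilde\Psi(p_1,p_2)$, where $\tilde\Psi$ collects the rescaled cutoffs. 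A direct computation shows $Q$ admits a unique nondegenerate critical point $p^* = (1/3,1/3)$ in the open simplex $\{p_1,p_2,p_3>0\}$, with $|\nabla Q(p)| \sim |p - p^*|$ locally.

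The argument is then a stationary phase analysis adapted to the limited regularity of the $g_j$. I would partition the $p$-plane into a near-stationary region $|p-p^*| \lesssim \eta^{-1}$ and a far region $|p-p^*| \gtrsim \eta^{-1}$, further dyadically decomposed into shells $|p-p^*| \sim \tau$, $\tau \in [\eta^{-1},1]$. In the near region, $p_j \sim 1/3$ so $\eta p_j \sim \eta$, whence the $Z^{\kappa_j}$ bound gives $|g_j(\eta p_j)| \lesssim \eta^{-\kappa_j}\|g_j\|_{Z^{\kappa_j}}$; direct estimation with area $\lesssim \eta^{-2}$ produces a contribution $\lesssim \eta^{1/2}\cdot \eta^{-2}\cdot \eta^{-\kappa_1-\kappa_2-\kappa_3}$, matching the target bound $\eta^{-3/2-\kappa_1-\kappa_2-\kappa_3}\prod_j\|g_j\|_{Z^{\kappa_j}}$. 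In the far region, I would integrate by parts using the operator $L = \frac{-i}{\eta^2|\nabla Q|^2}\nabla Q\cdot\nabla$: the critical observation is that a derivative on $g_j(\eta p_j)$ yields $\eta g_j'(\eta p_j)$, whose size $\eta \cdot \eta^{-\kappa_j-1} = \eta^{-\kappa_j}$ shows that the chain rule costs no powers of $\eta$.

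A single integration by parts already closes the estimate for the terms in which the derivative falls on some $g_j$: their shell contribution is $\eta^{1/2}\cdot \tau^2 \cdot \eta^{-2-\kappa_1-\kappa_2-\kappa_3}\tau^{-1}$, and $\sum_\tau \tau$ is bounded. For the remaining terms, where the derivative landed on the smooth factor $(p_1p_2p_3)^{-1/2}$ or on the rational function $\nabla Q/|\nabla Q|^2$, one IBP gives only $\eta^{-3/2-\kappa_1-\kappa_2-\kappa_3}$ per dyadic scale—summable only up to a logarithm in $\eta$. To kill this log I would apply a second IBP to this piece, which is legitimate since the first derivative hit only smooth factors. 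After the second IBP, the worst contribution (derivative again on a smooth factor) is $\eta^{1/2}\cdot \tau^2 \cdot \eta^{-4-\kappa_1-\kappa_2-\kappa_3}\tau^{-4}$, whose dyadic sum is dominated by $\tau \sim \eta^{-1}$ and yields exactly $\eta^{-3/2-\kappa_1-\kappa_2-\kappa_3}$; the subsidiary contribution from the derivative hitting $g_j$ at the second step contributes only $\eta^{-7/2-\kappa_1-\kappa_2-\kappa_3}\log\eta$, which is absorbed.

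The main technical obstacle I anticipate is a subregion where some $p_j$ approaches the boundary of the simplex (so $\eta p_j \ll \eta$), since there the weight $(p_1p_2p_3)^{-1/2}$ is singular and $g_j(\eta p_j)$ no longer enjoys high-frequency decay. However, the high-high cutoff forces at least two $p_k \gtrsim \eta^{-1}$, so at most one $p_j$ can approach the boundary; in such configurations one is automatically far from the stationary point with $|\nabla Q| \sim 1$, so unlimited IBPs recover any decay needed, and the remaining integrable singularity of $(p_j)^{-1/2}$ poses no problem.
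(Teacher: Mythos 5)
Your bulk analysis (rescaling $\eta_j=\eta p_j$, isolating the nondegenerate interior critical point $p^*=(1/3,1/3)$, dyadic shells, and a careful count of how many derivatives each $g_j$ can absorb) is sound where all three $p_j$ stay comparable to $1$, and it reproduces the correct threshold $\eta^{-3/2-\kappa_1-\kappa_2-\kappa_3}$ coming from the $(\eta/3,\eta/3,\eta/3)$ resonance. (Minor slip: the subsidiary term after the second IBP sums to $\eta^{-5/2-\sum\kappa_j}$, not $\eta^{-7/2-\sum\kappa_j}\log\eta$, since $\sum_{\tau\ge\eta^{-1}}\tau^{-1}\sim\eta$; it is still harmless.)

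The genuine gap is your last paragraph. The cutoff in $\mathcal I_{\text{h}\times\text{h}}^{(+,+,+)}$ only forces \emph{two} of the frequencies to exceed $1$, so the region $0<\eta_3\lesssim 1$, $\eta_1,\eta_2\gtrsim 1$ (i.e.\ the strip $p_3\lesssim\eta^{-1}$) genuinely belongs to the operator, and it cannot be dismissed by saying that ``unlimited IBPs recover any decay needed.'' Two things break there. First, each application of your operator $L$ can land a derivative on the weight, turning $p_3^{-1/2}$ into $p_3^{-3/2}$, which is \emph{not} integrable on $(0,\eta^{-1})$ — so even a single gradient IBP over the whole strip produces a divergent integral, and repeated IBPs only make it worse. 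Second, when $\eta p_3\lesssim 1$ the chain rule is no longer free: $\partial_{p_3}\bigl(g_3(\eta p_3)\bigr)=\eta g_3'(\eta p_3)\sim\eta$, a full power of $\eta$ worse than $g_3$ itself, and $g_3$ contributes no decay at all in this range; moreover unlimited IBPs are not available anyway since each $g_j$ has only one derivative in $Z^{\kappa_j}$. A direct estimate of the strip gives only $\eta^{-\kappa_1-\kappa_2}$, far from the target, so oscillation must be exploited — but in a way that never differentiates the singular factor $|\eta_3|^{-1/2}$ indiscriminately. The paper resolves this with a dedicated second step: it integrates by parts in $\eta_1$ while keeping $\eta_3$ as an \emph{integration variable that the derivative does not touch} (using the identity $\partial_{\eta_1}\bigl((\eta_1-\eta_2)e^{i\Phi}\bigr)=2(1-i(\eta_1-\eta_2)^2)e^{i\Phi}$ with $\eta_2$ dependent, so only $g_1,g_2$ and bounded rational factors are hit), and splits further at $|\eta_3|\sim\jap{\eta_1}^{-1}$, estimating directly below that threshold and accepting the integrable $|\eta_3|^{-3/2}$ only above it. Some device of this kind is indispensable, and your proposal does not supply one.
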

\begin{proof}
\textit{Step 1. $|\eta_3|>1$.} We observe that $|\eta_j|^{-\frac{1}{2}} \simeq \langle \eta_j\rangle^{-\frac{1}{2}}$. In particular, given $g_j \in Z^{\k_j}$, it is not hard to check that $f_j:=|\eta_j|^{-\frac{1}{2}}g_j \in Z^{\k_j+\frac{1}{2}}$. In particular, we are interested in estimating\footnote{The cut-off functions in $\eta_1$, $\eta_2$ or $\eta_3$ can be included in the functions $f_1,f_2$ and $f_3$.} 
    $$\iint\limits_{|\eta_j| \ge1}e^{i \Phi} f_1(\eta_1){f_2(\eta_2)}f_3(\eta_3) d\eta_1d\eta_2,\quad f_j\in Z^{\k+\frac12}.$$
    
In this case, the phase function reads
$$
\Phi=\eta^2-\eta_1^2-\eta_2^2-\eta_3^2
$$
    We suppose, without loss of generality, that $|\eta_1|\ge |\eta_2|\ge |\eta_3|$, which implies that $|\eta_1|\gtrsim |\eta|$. We start by integrating by parts in $\eta_1$ using the relation
    $$
    \partial_{\eta_1}\left((\eta_1-\eta_3)e^{i\Phi}\right)=2(1-i(\eta_1-\eta_3)^2)e^{i\Phi}.
    $$
    The resulting integral is given by
\begin{align*}
    &\iint e^{i\Phi}\frac{(\eta_1-\eta_3)}{2(1-i(\eta_1-\eta_3)^2)}f_2(\eta_2)\left(f_1'(\eta_1)f_3(\eta_3)+f_1(\eta_1)f_3'(\eta_3)\right)d\eta_1d\eta_2 \\+ & \iint e^{i\Phi}\frac{(\eta_1-\eta_3)^2}{2(1-i(\eta_1-\eta_3)^2)^2}f_1(\eta_1)f_2(\eta_2)f_3(\eta_3)d\eta_1d\eta_2 =: I_1 + I_2+ I_3.
\end{align*}

For $I_1$, we can integrate by parts in $\eta_2$, since
$$
    \partial_{\eta_2}\left((\eta_2-\eta_3)e^{i\Phi}\right)=2(1-i(\eta_2-\eta_3)^2)e^{i\Phi}.
$$
This leads us to estimate
\begin{align*}
    &\iint e^{i\Phi}\frac{(\eta_1-\eta_3)}{2(1-i(\eta_1-\eta_3)^2)}\frac{(\eta_2-\eta_3)}{2(1-i(\eta_2-\eta_3)^2)}f_1'(\eta_1)(f_2(\eta_2)f_3'(\eta_3)+f_2'(\eta_2)f_3(\eta_3))d\eta_1d\eta_2 \\  + &\iint e^{i\Phi}\partial_{\eta_2}\left(\frac{(\eta_1-\eta_3)}{2(1-i(\eta_1-\eta_3)^2)}\frac{(\eta_2-\eta_3)}{2(1-i(\eta_2-\eta_3)^2)}\right)f_1'(\eta_1)f_2(\eta_2)f_3(\eta_3)d\eta_1d\eta_3.
    \end{align*}
We estimate
$$
\left|\frac{(\eta_1-\eta_3)}{2(1-i(\eta_1-\eta_3)^2)}\frac{(\eta_1-\eta_2)}{2(1-i(\eta_1-\eta_2)^2)}\right|\lesssim \frac{1}{\jap{\eta_1-\eta_3}\jap{\eta_1-\eta_2}}
$$
and
$$
\left|\partial_{\eta_2}\left(\frac{(\eta_1-\eta_3)}{2(1-i(\eta_1-\eta_3)^2)}\frac{(\eta_2-\eta_3)}{2(1-i(\eta_2-\eta_3)^2)}\right)\right|\lesssim \frac{1}{\jap{\eta_1-\eta_3}\jap{\eta_2-\eta_3}^2}.
$$
Therefore
\begin{align*}
    |I_1|&\lesssim \iint \frac{1}{\jap{\eta_1-\eta_3}\jap{\eta_2-\eta_3}\jap{\eta_1}^{\k_1+\frac32}\jap{\eta_2}^{\k_2+\frac12}\jap{\eta_3}^{\k_3+\frac32}}d\eta_1d\eta_2 \\&+ \iint \frac{1}{\jap{\eta_1-\eta_3}\jap{\eta_2-\eta_3}^2\jap{\eta_1}^{\k_1+\frac32}\jap{\eta_2}^{\k_2+\frac12}\jap{\eta_3}^{\k_3+\frac12}}d\eta_1d\eta_2\\& \lesssim \jap{\eta}^{-\k_1-\frac52} + \jap{\eta}^{-\k_1-\k_3-2} \lesssim \jap{\eta}^{-\k_1-\k_2-\k_3-\frac32}.
\end{align*}

We now focus on the $I_2$ term. Integrating now in $\eta_1,\eta_3$, we may integrate by parts once again in $\eta_1$, using the relation 
    $$
    \partial_{\eta_1}\left((\eta_1-\eta_2)e^{i\Phi}\right)=(1-2i(\eta_1-\eta_2)^2)e^{i\Phi}.
    $$
    The resulting terms are
    \begin{align*}
    &\iint e^{i\Phi}\frac{(\eta_1-\eta_3)}{2(1-i(\eta_1-\eta_3)^2)}\frac{(\eta_1-\eta_2)}{2(1-i(\eta_1-\eta_2)^2)}(f_1'(\eta_1)f_2(\eta_2)+f_1(\eta_1)f_2'(\eta_2))f_3'(\eta_3)d\eta_1d\eta_3 \\  + &\iint e^{i\Phi}\partial_{\eta_1}\left(\frac{(\eta_1-\eta_3)}{2(1-i(\eta_1-\eta_3)^2)}\frac{(\eta_1-\eta_2)}{2(1-i(\eta_1-\eta_2)^2)}\right)f_1(\eta_1)f_2(\eta_2)f_3'(\eta_3)d\eta_1d\eta_3.
    \end{align*}
Since,
$$
\left|\partial_{\eta_1}\left(\frac{(\eta_1-\eta_3)}{2(1-i(\eta_1-\eta_3)^2)}\frac{(\eta_1-\eta_2)}{2(1-i(\eta_1-\eta_2)^2)}\right)\right|\lesssim \frac{1}{\jap{\eta_1-\eta_3}\jap{\eta_1-\eta_2}^2},
$$
we find
\begin{align*}
    |I_2|&\lesssim \iint \frac{1}{\jap{\eta_1-\eta_3}\jap{\eta_1-\eta_2}\jap{\eta_1}^{\k_1+\frac12}\jap{\eta_2}^{\k_2+\frac32}\jap{\eta_3}^{\k_3+\frac32}}d\eta_1d\eta_2 \\&+  \iint \frac{1}{\jap{\eta_1-\eta_3}\jap{\eta_1-\eta_2}^2\jap{\eta_1}^{\k_1+\frac12}\jap{\eta_2}^{\k_2+\frac12}\jap{\eta_3}^{\k_3+\frac32}}d\eta_1d\eta_2\\&\lesssim \jap{\eta}^{-\k_1-\frac52} + \jap{\eta}^{-\k_1-\k_2-2}\lesssim \jap{\eta}^{-\k_1-\k_2-\k_3-\frac32}.
\end{align*}
For the $I_3$ term, we integrate by parts again in $\eta_1$, which yields
\begin{align*}
     I_3=&-\iint e^{i\Phi}\frac{(\eta_1-\eta_3)^3}{2(1-i(\eta_1-\eta_3)^2)^3}f_2(\eta_2)(f_1'(\eta_1)f_3(\eta_3)+f_1(\eta_1)f_3'(\eta_3))d\eta_1d\eta_2 \\&-  \iint e^{i\Phi}(\eta-\eta_1)\partial_{\eta_1}\left(\frac{(\eta_1-\eta_3)^2}{2(1-i(\eta_1-\eta_3)^2)^3}\right)f_1(\eta_1)f_2(\eta_2)f_3(\eta_3)d\eta_1d\eta_2 
\end{align*}
which we bound directly as
\begin{align*}
    |I_3|\lesssim & \ \iint \frac{1}{\jap{\eta_1-\eta_3}^3\jap{\eta_1}^{\k_1+\frac12}\jap{\eta_2}^{\k_2+\frac12}\jap{\eta_3}^{\k_3+\frac32}}d\eta_1d\eta_2 \\&+ \iint \frac{1}{\jap{\eta_1-\eta_3}^4\jap{\eta_1}^{\k_1+\frac12}\jap{\eta_2}^{\k_2+\frac12}\jap{\eta_3}^{\k_3+\frac12}}d\eta_1d\eta_2 \\\lesssim &\ 
    \jap{\eta}^{-\k_1-\frac72} +\jap{\eta}^{-\k_1-\k_2-\k_3-\frac32}\lesssim \jap{\eta}^{-\k_1-\k_2-\k_3-\frac32}.
    \end{align*}

\textit{Step 2. $|\eta_3|<1$.} In this case, we have to be careful when integrating by parts, as the weight $|\eta_3|^{-\frac12}$ may introduce strong singularities. Write the integral as
$$
\iint\limits_{|\eta_j| \ge1}e^{i \Phi} f_1(\eta_1){f_2(\eta_2)}\frac{g_3(\eta_3)}{|\eta_3|^{\frac12}} d\eta_1d\eta_3,\quad f_j\in Z^{\k+\frac12}.
$$
Since
$$
    \partial_{\eta_1}\left((\eta_1-\eta_2)e^{i\Phi}\right)=2(1-i(\eta_1-\eta_2)^2)e^{i\Phi},
$$
integrating by parts leads to
\begin{align*}
    &\iint e^{i\Phi}\frac{(\eta_1-\eta_2)}{2(1-i(\eta_1-\eta_2)^2)}\left(f_1'(\eta_1)f_2(\eta_3)+f_1(\eta_1)f_2'(\eta_3)\right)\frac{g_3(\eta_3)}{|\eta_3|^\frac12}d\eta_1d\eta_3 \\+ & \iint e^{i\Phi}\frac{(\eta_1-\eta_2)^2}{2(1-i(\eta_1-\eta_2)^2)^2}f_1(\eta_1)f_2(\eta_2)\frac{g_3(\eta_3)}{|\eta_3|^\frac12}d\eta_1d\eta_3=: I_1 + I_2 + I_3.
\end{align*}
If $|\eta_3|\lesssim \jap{\eta_1}^{-1}$, a direct estimate yields
\begin{align*}
    |I_1+I_2+I_3|&\lesssim \iint \frac{1}{\jap{\eta_1-\eta_2}\jap{\eta_1}^{\k_1+\frac12}\jap{\eta_2}^{\k_2+\frac32}|\eta_3|^{\frac12}}d\eta_1d\eta_3 + \iint \frac{1}{\jap{\eta_1-\eta_2}^2\jap{\eta_1}^{\k_1+\frac12}\jap{\eta_2}^{\k_2+\frac12}|\eta_3|^\frac12}d\eta_1d\eta_3\\&\lesssim \jap{\eta}^{-\k_1-2}.
\end{align*}
If $|\eta_3|\gtrsim \jap{\eta_1}^{-1}$, we may integrate by parts as done in Step 1, with the worst-case scenario occuring when the derivative hits the singular weight $|\eta_3|^{-\frac12}$. Let us exemplify with one such term:
\begin{align*}
    &\left|\iint e^{i\Phi}\frac{(\eta_1-\eta_3)}{2(1-i(\eta_1-\eta_3)^2)}\frac{(\eta_1-\eta_2)}{2(1-i(\eta_1-\eta_2)^2)}(f_1'(\eta_1)f_2(\eta_2)+f_1(\eta_1)f_2'(\eta_2))\frac{g_3(\eta_3)}{|\eta_3|^{\frac32}}d\eta_1d\eta_3\right| \\&\lesssim \iint \frac{1}{\jap{\eta_1-\eta_2}\jap{\eta_1-\eta_3}\jap{\eta_1}^{\k_1+\frac12}\jap{\eta_2}^{\k_2+\frac32}|\eta_3|^{\frac32}}d\eta_1d\eta_3\lesssim \jap{\eta}^{-\k_1-2}.
\end{align*}
A similar estimate holds for the remaining terms, we leave it to the interested reader.
\end{proof}
\subsection{Bounds for $ \mathcal{I}_{\text{h}\times\text{h}}^{(+,-,+)}$}
We now focus on the operator
\begin{equation}\label{eq:I+-+}
    \mathcal{I}_{\text{h}\times\text{h}}^{(+,-,+)}[g_1,g_2,g_3](\eta):=\iint\limits_{\eta_1,\eta_3>0, \eta_2<0}\frac{1}{|\eta_1\eta_2\eta_3|^\frac{1}{2}}e^{i (\eta^2-\eta_1^2+\eta_2^2-\eta_3^2)} g_1(\eta_1){g_2(\eta_2)}g_3(\eta_3)\sum_{j=1}
^3\prod_{k\neq j}(1-\phi(\eta_j)) d\eta_1d\eta_2
\end{equation}
and tackle first the case $|\eta_j|>1$, $j=1,2,3$.
Writing once again $f_j=|\eta_j|^{-\frac{1}{2}}g_j \in Z^{\k_j+\frac{1}{2}}$. we must bound
    $$\iint\limits_{|\eta_j| \gg 1}e^{i (\eta^2-\eta_1^2+\eta_2^2-\eta_3^2)} f_1(\eta_1){f_2(\eta_2)}f_3(\eta_3) d\eta_1d\eta_2,\quad f_j\in Z^{\k+\frac12}.$$

\begin{remark}
    In the above integral, we remove the restrictions $\eta_1,\eta_3>0$, $\eta_2<0,$ to be able to reuse the estimates from this section in the \eqref{3NLS} framework.
\end{remark}
    
    We start by writing the phase function as 
    $$\Phi = \eta^2-\eta_1^2+\eta_2^2-\eta_3^2 = \frac{(\eta+\eta_2)^2-(\eta_3-\eta_1)^2}{2}.$$
    In particular, defining
    $$\begin{cases}
        r = \eta+\eta_2\\
        s = \eta_3-\eta_1
    \end{cases} \Leftrightarrow\begin{cases}
        \eta_1 = \eta-\frac{r+s}{2}\\
        \eta_2 = r-\eta\\
        \eta_3 = \eta-\frac{r-s}{2}
    \end{cases}$$
    we perform a change of variables to obtain:
    $$\iint\limits_{|\eta_j| \gg 1} e^{i\frac{r^2-s^2}{2}} f_1\left(\eta-\frac{r+s}{2}\right){f_2\left( r-\eta\right)}f_3\left(\eta-\frac{r-s}{2}\right)drds.$$
        
    Now, we consider the identities   
    $$e^{i\frac{r^2-s^2}{2}} = \frac{\partial_r \left( r \ e^{i\frac{r^2-s^2}{2}} \right)}{1+ir^2} \quad \text{and}\quad e^{i\frac{r^2-s^2}{2}} = \frac{\partial_s \left( s \ e^{i\frac{r^2-s^2}{2}} \right)}{1-is^2}$$
    and perform integration by parts, on both $r$ and $s$. However, we need to be careful as $f_j$ cannot be differentiated more than once. 

    Integrating by parts in $r$,
    $$-\frac{1}{2}\iint\limits_{|\eta_j|\gg1} r \ e^{i\frac{r^2-s^2}{2}} \partial_r \left[\frac{1}{1+ir^2}f_1\left(\eta-\frac{r+s}{2}\right){f_2\left( r-\eta\right)}f_3\left(\eta-\frac{r-s}{2}\right)\right]drds,$$
    which we write as the sum of
    \begin{equation}
            \frac{1}{4}\iint\limits_{|\eta_j|\gg1}  \frac{re^{i\frac{r^2-s^2}{2}}}{1+ir^2}{f_2\left( r-\eta\right)}\left[f_1'\left(\eta-\frac{r+s}{2}\right)f_3\left(\eta-\frac{r-s}{2}\right) +f_1\left(\eta-\frac{r+s}{2}\right)f_3'\left(\eta-\frac{r-s}{2}\right)\right]drds
        \label{eq: IBP f'h'}
    \end{equation}
    and
    \begin{equation}
            \frac{1}{2}\iint\limits_{|\eta_j|\gg1}  e^{i\frac{r^2-s^2}{2}}\frac{r}{1+ir^2} \left[\frac{2ir}{1+ir^2} {f_2\left( r-\eta\right)}- {f_2'\left( r-\eta\right)}\right]f_1\left(\eta-\frac{r+s}{2}\right)f_3\left(\eta-\frac{r-s}{2}\right)drds
        \label{eq: IBP g'}
    \end{equation}

    We focus on \eqref{eq: IBP f'h'}. Since both $f_1$ and $f_3$ depend on $s$, we cannot perform another integration by parts directly on \eqref{eq: IBP f'h'}. Therefore, we consider another change of variables:
    $$\begin{cases}
        u = \frac{r+s}{2}\\
        v = \frac{r-s}{2}
    \end{cases} \Leftrightarrow \begin{cases}
        r = u+v\\
        s = u-v
    \end{cases} \Leftrightarrow
    \begin{cases}
        \eta_1 = \eta-u\\
        \eta_2 = u+v-\eta\\
        \eta_3 = \eta-v
    \end{cases}.$$
    With this in mind, we can write \eqref{eq: IBP f'h'} as
    \begin{align*}
        &\frac{1}{2}\iint\limits_{|\eta_j| \gg 1}  e^{i2uv} \frac{u+v}{1+i(u+v)^2}{f_2\left( u+v-\eta\right)}\left[f_1'\left(\eta-u\right)f_3\left(\eta-v\right)+f_1\left(\eta-u\right)f_3'\left(\eta-v\right)\right]dudv
    \end{align*}

    It becomes clear that now we can integrate by parts again in a suitable variable. Indeed, since both integrals have the same structure, we will restrict our analysis to the first one. To avoid singularities, we will again use the identities
    $$e^{i2uv} = \frac{\partial_u \left(u e^{i2uv} \right)}{1+i2uv}, \quad e^{i2uv} = \frac{\partial_v \left(v e^{i2uv} \right)}{1+i2uv}.$$
    Integrating by parts on $v$, 
    \begin{align*}
        & -\frac{1}{2}\iint\limits_{|\eta_j| \gg 1}  \frac{ve^{i2uv}}{1+i2uv} \frac{u+v}{1+i(u+v)^2}f_1'\left(\eta-u\right)\left({f_2'\left( u+v-\eta\right)}f_3\left(\eta-v\right)-f_2\left( u+v-\eta\right)f_3'\left(\eta-v\right)\right)dudv\\
        & -\frac{1}{2}\iint\limits_{|\eta_j| \gg 1} e^{i2uv}\frac{v}{1+i2uv} \frac{1-i(u+v)^2}{(1+i(u+v)^2)^2}f_1'(\eta-u){f_2\left( u+v-\eta\right)}f_3\left(\eta-v\right)dudv\\
        & +\frac{1}{2}\iint\limits_{|\eta_j| \gg 1} e^{i2uv}\frac{i2uv}{(1+i2uv)^2} \frac{u+v}{1+i(u+v)^2}f_1'(\eta-u){f_2\left( u+v-\eta\right)}f_3\left(\eta-v\right)dudv
    \end{align*}
    or, in the original variables,
    \begin{align}
        & -\frac{1}{2}\iint\limits_{|\eta_j| \gg 1} e^{i\Phi} \frac{\eta-\eta_3}{1+i\Phi} \frac{\eta+\eta_2}{1+i(\eta+\eta_2)^2}f_1'(\eta_1)\left({f_2'(\eta_2)}f_3(\eta_3)-{f_2(\eta_2)}f_3'(\eta_3)\right)d\eta_1 d\eta_2 \label{eq: IBP 1 1}\\
        & -\frac{1}{2}\iint\limits_{|\eta_j| \gg 1} e^{i\Phi}\frac{\eta-\eta_3}{1+i\Phi} \frac{1-i(\eta+\eta_2)^2}{(1+i(\eta+\eta_2)^2)^2}f_1'(\eta_1){f_2(\eta_2)}f_3(\eta_3)d\eta_1d\eta_2\label{eq: IBP 1 2}\\
        & +\frac{1}{2}\iint\limits_{|\eta_j| \gg 1} e^{i\Phi} \frac{i\Phi}{(1+i\Phi)^2} \frac{\eta+\eta_2}{1+i(\eta+\eta_2)^2}f_1'(\eta_1){f_2(\eta_2)}f_3(\eta_3)d\eta_1d\eta_2 \label{eq: IBP 1 3}
    \end{align}

\begin{lemma}
    Assume $f_j\in Z^{\k_j+\frac{1}{2}}$ for some $\k_j\in \left(-\frac{1}{4},\frac{1}{4}\right)$. Then
    $$
    |\eqref{eq: IBP f'h'}|\lesssim \left|\eqref{eq: IBP 1 1}\right| + \left|\eqref{eq: IBP 1 2}\right| + \left|\eqref{eq: IBP 1 3}\right|  \lesssim \langle \eta \rangle^{-\frac{3}{2}-\k_1-\k_2-\k_3}.
    $$
\end{lemma}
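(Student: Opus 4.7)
The plan is to bound each of \eqref{eq: IBP 1 1}, \eqref{eq: IBP 1 2} and \eqref{eq: IBP 1 3} directly, by passing absolute values inside the integral and combining the pointwise estimates
\[
\left|\frac{\eta+\eta_2}{1+i(\eta+\eta_2)^2}\right|\lesssim \langle \eta+\eta_2\rangle^{-1},\quad \left|\frac{1-i(\eta+\eta_2)^2}{(1+i(\eta+\eta_2)^2)^2}\right|\lesssim \langle \eta+\eta_2\rangle^{-2},
\]
\[
\left|\frac{1}{1+i\Phi}\right|,\ \left|\frac{i\Phi}{(1+i\Phi)^2}\right|\lesssim \langle \Phi\rangle^{-1},
\]
together with the bounds $|f_j(\eta_j)|\lesssim \langle \eta_j\rangle^{-\kappa_j-1/2}$ and $|f_j'(\eta_j)|\lesssim \langle\eta_j\rangle^{-\kappa_j-3/2}$ coming from $f_j\in Z^{\kappa_j+1/2}$. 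For the actual integration I would pass to the coordinates $(u,v)=(\eta-\eta_1,\eta-\eta_3)$ already used to produce these terms, in which $\eta_2=u+v-\eta$, $\eta+\eta_2=u+v$ and $\Phi=2uv$; the oscillatory denominators then decouple neatly as functions of $u+v$ and $uv$, and the function weights become $\langle \eta-u\rangle$, $\langle u+v-\eta\rangle$, $\langle\eta-v\rangle$.

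For \eqref{eq: IBP 1 1}, since two of the three $f_j$ are differentiated, the integrand is dominated by
\[
\langle uv\rangle^{-1}\langle u+v\rangle^{-1}\,\langle \eta-u\rangle^{-\kappa_1-3/2}\,\langle u+v-\eta\rangle^{-\kappa_2-\sigma}\,\langle \eta-v\rangle^{-\kappa_3-2+\sigma},
\]
with $\sigma\in\{1/2,3/2\}$ depending on which of $f_2,f_3$ is differentiated. Splitting $(u,v)$ according to whether $|u|,|v|\lesssim \langle\eta\rangle$ or not, and observing that the three weights $\langle\eta-u\rangle$, $\langle u+v-\eta\rangle$, $\langle \eta-v\rangle$ cannot simultaneously be $\lesssim 1$ for large $|\eta|$, one extracts the claimed gain $\langle\eta\rangle^{-3/2-\kappa_1-\kappa_2-\kappa_3}$ from elementary two-dimensional integration, in the same spirit as Step~1 of Section~\ref{sec:Ihh}.

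For \eqref{eq: IBP 1 2} and \eqref{eq: IBP 1 3} only $f_1'$ appears, so the derivative-gain is weaker; this is compensated by the stronger denominator $\langle u+v\rangle^{-2}$ in \eqref{eq: IBP 1 2} and by the bounded factor $|\Phi|/\langle\Phi\rangle^2\lesssim \langle\Phi\rangle^{-1}$ in \eqref{eq: IBP 1 3}, which effectively provides the missing unit of decay. The main obstacle I anticipate is the resonance region $\{|\Phi|\lesssim 1\}$, equivalently $|uv|\lesssim 1$, where $\langle\Phi\rangle^{-1}$ yields no gain and the whole estimate has to be absorbed by the remaining factors; there I would argue by cases depending on which of $u,v$ is small, relying on $\langle u+v\rangle^{-1}$ or $\langle u+v\rangle^{-2}$, on the largeness of $\langle u+v-\eta\rangle$ or $\langle \eta-u\rangle$, and on the improved decay of $f_1'$. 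In the non-resonant region $|\Phi|\gtrsim 1$, the bound proceeds as for \eqref{eq: IBP 1 1} after distributing one copy of $\langle\Phi\rangle^{-1}$ as $\langle u\rangle^{-1/2}\langle v\rangle^{-1/2}$ on the diagonal and using elementary integrals; summing the three contributions then yields the stated bound.
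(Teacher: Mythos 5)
Your proposal follows essentially the same route as the paper: take absolute values inside, bound the rational oscillatory factors pointwise by $\langle\Phi\rangle^{-1}=\langle 2(\eta-\eta_1)(\eta-\eta_3)\rangle^{-1}$ and $\langle\eta+\eta_2\rangle^{-1}$ (or $\langle\eta+\eta_2\rangle^{-2}$), and finish by an elementary case analysis in the variables $(u,v)=(\eta-\eta_1,\eta-\eta_3)$ according to which frequencies are comparable to $\eta$ --- exactly the decomposition the paper carries out for \eqref{eq: IBP 1 1} and then invokes verbatim for \eqref{eq: IBP 1 2} and \eqref{eq: IBP 1 3}. The only caveat is your step ``distributing $\langle\Phi\rangle^{-1}$ as $\langle u\rangle^{-1/2}\langle v\rangle^{-1/2}$,'' which is valid only near the diagonal $|u|\sim|v|$ (it fails when, say, $|u|\gg1$ and $|uv|\sim1$), so off the diagonal you must argue as in the paper's Cases 2--5, using whichever of $\langle u\rangle^{-1}$, $\langle v\rangle^{-1}$ is available together with the weights on the $f_j$.
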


\begin{proof}     

 Observe that
    $$\left|\frac{\eta-\eta_3}{1+i\Phi} \right|\lesssim \frac{|\eta-\eta_3|}{\langle (\eta-\eta_1)(\eta-\eta_3)\rangle}, \quad \left|\frac{\eta+\eta_2}{1+i(\eta+\eta_2)^2} \right|\lesssim \langle \eta+\eta_2\rangle^{-1}.$$
As such,
\begin{equation}
        \left|\eqref{eq: IBP 1 1}\right| \lesssim  \iint_{|\eta_j|\gg 1}\frac{|\eta-\eta_3|}{\langle (\eta-\eta_1)(\eta-\eta_3)\rangle \langle \eta+\eta_2\rangle}\jap{\eta_1}^{-\k-\frac32}\left(\jap{\eta_2}^{-\k-\frac32}\jap{\eta_3}^{-\k-\frac12} + \jap{\eta_2}^{-\k-\frac12}\jap{\eta_3}^{-\k-\frac32}\right)d\eta_1d\eta_2=:I.
\end{equation}

As the terms in \eqref{eq: IBP 1 2} and \eqref{eq: IBP 1 3} are handled similarly to \eqref{eq: IBP 1 1} (taking absolute values inside the integral), we prove only the estimate for \eqref{eq: IBP 1 1}.
   
    \noindent\textit{Step 1. $\eta>1$} We divide the proof into several regions.
          
    \noindent $\bullet$ \textbf{Case 1:} $\eta_1 \simeq -\eta_2 \simeq \eta_3 \simeq \eta$.

    Here it is convenient to use the variables $u$ and $v$. Indeed, we bound the integral by
    $$I\lesssim \langle\eta \rangle^{-\k_1-\k_2-\k_3-\frac{7}{2}}\iint\limits_{|u|,|v|\ll\eta} \frac{|v|}{\langle u v \rangle} \langle u+v \rangle^{-1} dudv\lesssim \langle\eta \rangle^{-\k_1-\k_2-\k_3-\frac{7}{2}}\log^2\jap{\eta}.$$

    % Now, observe that
    % \begin{align*}
    %     &\iint\limits_{|u|,|v|\ll|\eta|} \frac{|v|}{\langle u v \rangle} \langle u+v \rangle^{-1} dudv \\
    %     \lesssim &\iint\limits_{\substack{|u|\lesssim|v|\ll|\eta|\\ u \not \simeq-v}} \frac{|v|}{\langle u v \rangle} \langle v \rangle^{-1} dudv+\iint\limits_{\substack{|u|,|v|\ll|\eta|\\u\simeq-v}} \langle v\rangle^{-1} \langle u+v \rangle^{-1} dudv+\iint\limits_{\substack{|v|\lesssim|u|\ll|\eta|\\ u \not \simeq-v}} \frac{|u|}{\langle u v \rangle} \langle u \rangle^{-1} dudv\\
    %     \lesssim &\iint\limits_{\substack{|\tilde u|\ll |v|^2\\|v| \ll |\eta|}} \langle \tilde u \rangle^{-1} \langle v \rangle^{-1} d\tilde udv+\iint\limits_{|w|,|v|\ll|\eta|} \langle v\rangle^{-1} \langle w \rangle^{-1} dwdv+\iint\limits_{\substack{|\tilde v|\ll |u|^2\\|u| \ll |\eta|}} \langle \tilde v \rangle^{-1} \langle u \rangle^{-1} d\tilde vdu \lesssim \log^2 \langle \eta \rangle
    % \end{align*}
    % where we change variables $\tilde u = \tilde v = uv$ and $w = u+v$. In particular, this estimates implies that in the stationary point the oscillatory integral decays, for any $\varepsilon >0$, like
    % $$\langle\eta \rangle^{-\k_1-\k_2-\k_3-\frac{7}{2}+\varepsilon} \lesssim \langle \eta \rangle^{-\frac{3}{2}-\k_1-\k_2-\k_3}.$$
    
    \noindent $\bullet$ \textbf{Case 2:} $\eta_1 \simeq \eta$ and $\eta_3 \not \simeq \eta$

    In this region, we have that $\eta_3 \simeq-\eta_2$. In particular, we  can again consider the variables $u,v$ and write 
    \begin{align*}
       I\lesssim \langle \eta\rangle^{-\k_1-\frac{3}{2}}\iint \frac{|v|}{\langle uv\rangle} \langle v\rangle^{-1}  \langle \eta-v\rangle^{-\k_2-\k_3-2}dudv 
        \lesssim \langle \eta \rangle^{-\frac{3}{2}-\k_1-\k_2-\k_3}.
    \end{align*}

    \noindent $\bullet$ \textbf{Case 3:} $\eta_3 \simeq \eta$ and $\eta_1 \not \simeq \eta$

    This region can be treated as case 2. Indeed, we have that $\eta_1 \simeq-\eta_2$ and
    \begin{align*}
        I\lesssim &\ \langle \eta \rangle^{-\k_3-\frac{1}{2}}\iint\ \frac{|\eta-\eta_3|}{\langle (\eta-\eta_1)(\eta-\eta_3)\rangle}\langle \eta-\eta_1\rangle^{-1} \langle \eta_1\rangle^{-\k_1-\k_2-3}d\eta_1d\eta_3\\
        +&\  \langle \eta \rangle^{-\k_3-\frac{3}{2}}\iint\frac{|\eta-\eta_3|}{\langle (\eta-\eta_1)(\eta-\eta_3)\rangle}\langle \eta-\eta_1\rangle^{-1} \langle \eta_1\rangle^{-\k_1-\k_2-2}d\eta_1d\eta_3
        \\\lesssim &\  \langle \eta\rangle^{-\k_3-\frac{1}{2}}\iint \frac{|v|}{\langle uv\rangle}  \langle u\rangle^{-1}  \langle \eta-u\rangle^{-\k_1-\k_2-3}dudv+\langle \eta\rangle^{-\k_3-\frac{3}{2}}\iint \frac{|v|}{\langle uv\rangle}  \langle u\rangle^{-1}  \langle \eta-u\rangle^{-\k_1-\k_2-2}dudv \\
        \lesssim & \langle \eta \rangle^{-\frac{3}{2}-\k_1-\k_2-\k_3}.
    \end{align*}
    
    \noindent $\bullet$ \textbf{Case 4:} $\eta_1 \not \simeq \eta$ and $|\eta_1| \lesssim |\eta|$

    In this region, we have that $\langle (\eta-\eta_1)(\eta-\eta_3)\rangle \simeq \langle \eta (\eta-\eta_3) \rangle$. We will consider some additional subregions.
    
    \noindent $\circ$ \textbf{Case 4.1:} $|\eta_3| \lesssim |\eta|$ and $\eta_3 \not \simeq \eta$
    In this region, we have that 
    $$\frac{|\eta-\eta_3|}{\langle \eta (\eta-\eta_3)\rangle} \lesssim \frac{\eta}{\langle\eta\rangle^2} \lesssim \langle \eta \rangle^{-1}.$$
    In particular, we estimate the integral as
    $$I\lesssim \langle \eta \rangle^{-1} \iint \langle \eta+\eta_2\rangle^{-1} \langle \eta_1 \rangle^{-\k_1-\frac{3}{2}}(\langle \eta_2 \rangle^{-\k_2-\frac{3}{2}}\langle \eta_3\rangle^{-\k_3-\frac{1}{2}}+\langle \eta_2 \rangle^{-\k_2-\frac{1}{2}}\langle \eta_3\rangle^{-\k_3-\frac{3}{2}})d\eta_1d\eta_2\lesssim\langle \eta \rangle^{-\frac{3}{2}-\k_1-\k_2-\k_3}.$$

    \noindent $\circ$ \textbf{Case 4.2:} $|\eta_3| \gg |\eta|$
    
    In this region, we have that 
    $$\frac{|\eta-\eta_3|}{\langle \eta (\eta-\eta_3)\rangle} \lesssim \frac{|\eta_3|}{\langle\eta\rangle\langle \eta_3 \rangle}\lesssim\langle \eta\rangle^{-1}.$$
    Moreover, since $|\eta_1| \lesssim |\eta|$ and $\eta_1 \not \simeq \eta$, we have that $\eta_3 \simeq-\eta_2$ and $|\eta_2|,|\eta_3| \gg |\eta|$. In particular, $\langle \eta+\eta_2\rangle^{-1} \simeq \langle\eta_3\rangle^{-1}$ and we may bound the integral by
    $$ \langle \eta\rangle^{-1}\iint\limits_{\substack{|\eta_1|\lesssim |\eta| \\ |\eta_3| \gg |\eta|}} \langle \eta_1 \rangle^{-\k_1-\frac{3}{2}}\langle \eta_3 \rangle^{-\k_2-\k_3-3}d \eta_1d\eta_3.\lesssim  \langle \eta\rangle^{-\k_2-\k_3-3}\int\limits_{|\eta_1|\lesssim |\eta| } \langle \eta_1 \rangle^{-\k_1-\frac{3}{2}}d \eta_1 \lesssim \langle \eta \rangle^{-\frac{3}{2}-\k_1-\k_2-\k_3}.$$

    \noindent $\bullet$ \textbf{Case 5:} $|\eta_1| \gg |\eta|$

    We can adapt the proofs for case 4. 

    \noindent $\circ$ \textbf{Case 5.1:} $|\eta_3| \lesssim |\eta|$ and $\eta_3 \not \simeq \eta$

    In this case, we have 
    $$\frac{|\eta-\eta_3|}{\langle (\eta-\eta_1)(\eta-\eta_3)\rangle} \simeq\frac{|\eta|}{\langle \eta_1 \rangle \langle \eta \rangle} \lesssim \frac{|\eta|}{\langle \eta \rangle^2} \lesssim \langle \eta\rangle^{-1}$$
    and the proof follows as in case 4.1.

    \noindent $\circ$ \textbf{Case 5.2:} $|\eta_3| \gg |\eta|$
    
    In this region, 
    $$\frac{|\eta-\eta_3|}{\langle (\eta-\eta_1)(\eta-\eta_3)\rangle} \simeq\frac{|\eta_3|}{\langle \eta_1\rangle\langle \eta_3\rangle}\lesssim \langle \eta_1 \rangle^{-1}.$$
    We need to consider two cases: $\eta_1 \simeq \eta_3$ and $\eta_1 \not \simeq \eta_3$. If $\eta_1 \simeq \eta_3$, then $\eta_2 \simeq \eta-2\eta_1 \simeq -\eta_1$ and we estimate the integral as
    $$\iint\limits_{\substack{|\eta_1| \gg |\eta|\\\eta_1 \simeq -\eta_2}} \langle \eta_1\rangle^{-\k_1-\k_2-\k_3-\frac{11}{2}}d\eta_1d\eta_2 \lesssim \langle \eta\rangle^{-\frac{7}{2}-\k_1-\k_2-\k_3}.$$

    In the case $\eta_1 \not \simeq \eta_3$,  if $|\eta| \ll|\eta_1| \ll |\eta_3|$, then $\eta_2 \simeq-\eta_3$ and $\langle \eta+\eta_2\rangle^{-1}\simeq \langle \eta_3\rangle^{-1}$ which yields
    $$\iint\limits_{|\eta_1|,|\eta_3| \gg |\eta|} \langle \eta_1\rangle^{-\k_1-\frac{5}{2}} \langle \eta_3\rangle^{-\k_2-\k_3-3}d\eta_1d\eta_3 \lesssim \langle \eta\rangle^{-\frac{7}{2}-\k_1-\k_2-\k_3}.$$
     Otherwise, if $|\eta|\ll|\eta_3|\ll|\eta_1|$, then $\eta_2 \simeq -\eta_1$ and using that $\langle \eta_1 \rangle^{-1} \lesssim \langle \eta_3 \rangle^{-1}$, we obtain 
    $$\iint\limits_{|\eta_1|,|\eta_3| \gg |\eta|} \langle \eta_1\rangle^{-\k_1-\k_2-3} \langle \eta_3\rangle^{-\k_3-\frac{5}{2}}d\eta_1d\eta_3 \lesssim \langle \eta\rangle^{-\frac{7}{2}-\k_1-\k_2-\k_3}.$$

    \noindent \textit{Step 2.} $|\eta| \lesssim 1$. We bound the integral by
    $$\iint \frac{|\eta_3|}{\langle \eta_1 \eta_3\rangle} \langle \eta_1\rangle^{-\k_1-\frac{3}{2}} \langle \eta_2\rangle^{-1}(\langle \eta_2\rangle^{-\k_2-\frac{3}{2}}\langle \eta_3 \rangle^{-\k_3-\frac{1}{2}}+\langle \eta_2 \rangle^{-\k_2-\frac{1}{2}} \langle \eta_3 \rangle^{-\k_3-\frac{3}{2}})d\eta_1 d\eta_2.$$
    If $|\eta_1| \lesssim |\eta_3|$, then $$\frac{|\eta_3|}{\langle \eta_1 \eta_3\rangle} \lesssim \frac{|\eta_3|}{\langle \eta_3\rangle^{2}} \lesssim \langle \eta_3 \rangle^{-1}$$
    and,  if $|\eta_3| \lesssim |\eta_1|$, then $$\frac{|\eta_3|}{\langle \eta_1 \eta_3\rangle} \lesssim \frac{|\eta_1|}{\langle \eta_1\rangle^{2}} \lesssim \langle \eta_1 \rangle^{-1}\lesssim \langle \eta_3\rangle^{-1}.$$
    In either case, the integral is bounded by
    $$\iint \langle \eta_1\rangle^{-\k_1-\frac{3}{2}} \langle \eta_2\rangle^{-1}(\langle \eta_2\rangle^{-\k_2-\frac{3}{2}}\langle \eta_3 \rangle^{-\k_3-\frac{3}{2}}+\langle \eta_2 \rangle^{-\k_2-\frac{1}{2}} \langle \eta_3 \rangle^{-\k_3-\frac{5}{2}})d\eta_1 d\eta_2\lesssim 1.$$
\end{proof}

    We now turn our attention to the integral \eqref{eq: IBP g'}:
    $$-\frac{1}{2}\iint\limits_{|\eta_j| \gg 1}  e^{i\frac{r^2-s^2}{2}}\frac{r}{1+ir^2} \left[-\frac{2ir}{1+ir^2} {f_2\left( r-\eta\right)}+{f_2'\left( r-\eta\right)}\right]f_1\left(\eta-\frac{r+s}{2}\right)f_3\left(\eta-\frac{r-s}{2}\right)drds.$$
    As discussed before, we may integrate by parts on $s$:
    $$\frac{1}{2}\iint\limits_{|\eta_j| \gg 1}  e^{i\frac{r^2-s^2}{2}}\frac{r}{1+ir^2} \left[-\frac{2ir}{1+ir^2} {f_2\left( r-\eta\right)}+{f_2'\left( r-\eta\right)}\right]s \partial_s \left[\frac{1}{1-is^2}f_1\left(\eta-\frac{r+s}{2}\right)f_3\left(\eta-\frac{r-s}{2}\right)\right]drds,$$
    which can be written as
    \begin{align*}
        &-\frac{1}{4}\iint\limits_{|\eta_j| \gg 1} e^{i\frac{r^2-s^2}{2}} \frac{2ir^2}{(1+ir^2)^2}\frac{4is^2}{(1-is^2)^2} \ f_1\ {f_2}\ f_3\ drds\\
         &+\frac{1}{4}\iint\limits_{|\eta_j| \gg 1} e^{i\frac{r^2-s^2}{2}} \frac{2ir^2}{(1+ir^2)^2}\frac{s}{1-is^2} \ {f_2} \left[f_1'\ f_3-f_1\ f_3'\right]drds\\
         & +\frac{1}{4}\iint\limits_{|\eta_j| \gg 1} e^{i\frac{r^2-s^2}{2}} \frac{r}{1+ir^2}\frac{4is^2}{(1-is^2)^2} \ f_1\ {f_2'}\ f_3\ drds \\
         &+\frac{1}{4}\iint\limits_{|\eta_j| \gg 1} e^{i\frac{r^2-s^2}{2}} \frac{r}{1+ir^2}\frac{s}{1-is^2} \ {f_2'} \left[f_1'\ f_3-f_1\ f_3'\right]drds 
    \end{align*}
    or, in the original variables, 
    \begin{align}
         &-\frac{1}{2}\iint\limits_{|\eta_j| \gg 1} e^{i\Phi} \frac{2i(\eta+\eta_2)^2}{(1+i(\eta+\eta_2)^2)^2}\frac{4i(\eta_3-\eta_1)^2}{(1-i(\eta_3-\eta_1)^2)^2} \ f_1\ {f_2}\ f_3\ d\eta_1d\eta_2 \label{eq: ibp on s 1}\\
         &+\frac{1}{2}\iint\limits_{|\eta_j| \gg 1} e^{i\Phi} \frac{2i(\eta+\eta_2)^2}{(1+i(\eta+\eta_2)^2)^2}\frac{\eta_3-\eta_1}{1-i(\eta_3-\eta_1)^2} \ {f_2} \left[f_1'\ f_3-f_1\ f_3'\right]d\eta_1d\eta_2 \label{eq: ibp on s 2}\\
         & +\frac{1}{2}\iint\limits_{|\eta_j| \gg 1} e^{i\Phi} \frac{\eta+\eta_2}{1+i(\eta+\eta_2)^2}\frac{4i(\eta_3-\eta_1)^2}{(1-i(\eta_3-\eta_1)^2)^2} \ f_1\ {f_2'}\ f_3\ d\eta_1d\eta_2 \label{eq: ibp on s 3}\\
         &+\frac{1}{2}\iint\limits_{|\eta_j| \gg 1} e^{i\Phi} \frac{\eta+\eta_2}{1+i(\eta+\eta_2)^2}\frac{\eta_3-\eta_1}{1-i(\eta_3-\eta_1)^2} \ {f_2'} \left[f_1'\ f_3-f_1\ f_3'\right]d\eta_1d\eta_2 \label{eq: ibp on s 4}.
    \end{align}

    % Observe that \eqref{eq: ibp on s 2} and \eqref{eq: ibp on s 4} are symmetric in $\eta_1$ and $\eta_3$, so only need to estimate
    % $$\frac{1}{2}\iint\limits_{|\eta_j| \gg 1} e^{i\Phi} \frac{2i(\eta+\eta_2)^2}{(1+i(\eta+\eta_2)^2)^2}\frac{\eta_3-\eta_1}{1-i(\eta_3-\eta_1)^2}  f_1'(\eta_1)\overline{f_2(\eta_2)}f_3(\eta_3)d\eta_1d\eta_2$$
    % and 
    % $$\frac{1}{2}\iint\limits_{|\eta_j| \gg 1} e^{i\Phi} \frac{\eta+\eta_2}{1+i(\eta+\eta_2)^2}\frac{\eta_3-\eta_1}{1-i(\eta_3-\eta_1)^2} f_1'(\eta_1) \overline{f_2'(\eta_2)} f_3(\eta_3)d\eta_1d\eta_2$$
    % respectively.

    Notice that we can bound every integral by 
    $$\iint\limits \langle \eta+\eta_2\rangle^{-2+\k}\langle \eta_3-\eta_1\rangle^{-2+j} \langle \eta_1\rangle^{-\k_1-\frac{1}{2}-j} \langle\eta_2\rangle^{-\k_2-\frac{1}{2}-\k}\langle \eta_3\rangle^{-\k_3-\frac{1}{2}}d\eta_1d\eta_3$$
    where $\k=0$ for \eqref{eq: ibp on s 1} and  \eqref{eq: ibp on s 2} and $\k=1$ otherwise, and $j=0$ for \eqref{eq: ibp on s 1} and \eqref{eq: ibp on s 3} and $j=1$ otherwise. 

    \begin{lemma}[Estimates for \eqref{eq: IBP g'}]
        Let $\k_j\in \left(-\frac12, \frac12\right)$ and $\k,j \in \{0,1\}$. Then
        $$\iint\limits \langle \eta+\eta_2\rangle^{-2+\k}\langle \eta_3-\eta_1\rangle^{-2+j} \langle \eta_1\rangle^{-\k_1-\frac{1}{2}-j} \langle\eta_2\rangle^{-\k_2-\frac{1}{2}-\k}\langle \eta_3\rangle^{-\k_3-\frac{1}{2}}d\eta_1d\eta_2 \lesssim \langle \eta \rangle^{-\frac{3}{2}-\k_1-\k_2-\k_3}.$$
    \end{lemma}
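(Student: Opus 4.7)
The plan is to carry out a case-by-case analysis on the four values of $(\k, j) \in \{0,1\}^2$, mimicking the case distinctions used in the proof of the estimate for \eqref{eq: IBP 1 1}. The guiding observation is that, in each of the four cases, the combination of the decaying weights concentrates the integrand on a bounded neighborhood of a specific ``principal configuration'' of $(\eta_1,\eta_2,\eta_3)$, on which the remaining weights produce the desired power $\jap{\eta}^{-3/2-\k_1-\k_2-\k_3}$. The total contribution over the principal region is obtained by a suitable linear change of variables that decouples the two sharp weights.

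In the extreme case $(\k,j)=(0,0)$, the weights $\jap{\eta+\eta_2}^{-2}\jap{\eta_3-\eta_1}^{-2}$ localize the integrand near the point $(\eta_1,\eta_2,\eta_3)\approx (\eta,-\eta,\eta)$. After the linear change of variables $a=\eta_2+\eta$, $b=\eta_3-\eta_1$ (whose Jacobian is $1/2$), the factors $\jap{\eta_1}^{-\k_1-1/2}$, $\jap{\eta_2}^{-\k_2-1/2}$, $\jap{\eta_3}^{-\k_3-1/2}$ can be pulled out as $\jap{\eta}^{-3/2-\k_1-\k_2-\k_3}$, while the remaining integral $\iint\jap{a}^{-2}\jap{b}^{-2}\,da\,db$ is $O(1)$. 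In the opposite extreme $(\k,j)=(1,1)$, the strong weights $\jap{\eta_1}^{-\k_1-3/2}\jap{\eta_2}^{-\k_2-3/2}$ localize $\eta_1$ and $\eta_2$ to bounded intervals, forcing $\eta_3\approx \eta$; the remaining factors then give $\jap{\eta}^{-\k_3-5/2}$, which is majorized by the target since $|\k_j|<1/2$ implies $\k_1+\k_2<1$. The mixed cases $(1,0)$ and $(0,1)$ combine both mechanisms: one pair of weights localizes one endpoint frequency while the other pair imposes a sharp alignment between the remaining two; the same type of change of variables then yields bounds of order $\jap{\eta}^{-\k_1-\k_3-2}$ and $\jap{\eta}^{-\k_2-\k_3-2}$ respectively, both majorized by the target thanks to $|\k_j|<1/2$.

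The main technical obstacle will be controlling the tail regions away from the principal configurations, which I expect to handle by a subdivision scheme analogous to Cases 2--5 in the proof of the estimate for \eqref{eq: IBP 1 1}: whenever one of the frequencies $\eta_1,\eta_2,\eta_3$ fails to sit in its principal location, either one of the $\jap{\eta+\eta_2}^{-2+\k}$ or $\jap{\eta_3-\eta_1}^{-2+j}$ weights becomes small, or the corresponding individual weight $\jap{\eta_j}^{-\k_j-1/2}$ (possibly strengthened by $+j$ or $+\k$) supplies enough extra decay to compensate the loss. Finally, the low-frequency regime $|\eta|\lesssim 1$ reduces to a direct integration of the product of the individual weights, bounded using $|\k_j|<1/2$, and matching the trivial target $O(1)=\jap{\eta}^{-3/2-\k_1-\k_2-\k_3}$.
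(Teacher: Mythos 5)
Your plan is correct and is essentially the paper's own argument: the paper likewise isolates the principal configuration $(\eta_1,\eta_2,\eta_3)\simeq(\eta,-\eta,\eta)$ (its Case~1, via the change of variables $(\tilde\eta_1,\tilde\eta_2)=(2\eta_1+\eta_2,\eta_2)$, which decouples the two sharp weights exactly as your $(a,b)$ does), treats the tail regions by the subdivision you describe (its Cases~2--3, using $\alpha=\eta_1+\eta_3$, $\beta=\eta_3-\eta_1$ and one-dimensional convolution bounds that close precisely because $\k_1+\k_3<1$), and finishes with a direct bound for $|\eta|\lesssim 1$. Your exponent bookkeeping for all four $(\k,j)$ cases checks out (the paper only writes out $(\k,j)=(0,0)$ and declares the rest analogous); the one caveat is that your tail analysis remains a sketch, though the deferred convolution estimates are routine and do hold under $|\k_j|<\tfrac12$.
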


    \begin{proof} We present the proof only for $\k, j =0$, 
    $$
    I= \iint\limits \langle \eta+\eta_2\rangle^{-2}\langle \eta_3-\eta_1\rangle^{-2} \langle \eta_1\rangle^{-\k_1-\frac{1}{2}} \langle\eta_2\rangle^{-\k_2-\frac{1}{2}}\langle \eta_3\rangle^{-\k_3-\frac{1}{2}}d\eta_1d\eta_2
    $$
    as the other cases are completely analogous.
  We start with $\eta\gg1$ and  consider several regions.
    
    \noindent $\bullet$ \textbf{Case 1: } $\eta_1 \simeq -\eta_2 \simeq \eta_3 \simeq \eta$. We have
    \begin{align*}
I&\lesssim\iint\limits_{\eta_1\simeq-\eta_2\simeq\eta_3 \simeq \eta} \langle \eta+\eta_2\rangle^{-2}\langle \eta_3-\eta_1\rangle^{-2} \langle\eta \rangle^{-\frac{3}{2}-\k_1-\k_2-\k_3}d\eta_1d\eta_2 \\
        & = \frac{1}{2} \langle \eta\rangle^{-\frac{3}{2}-\k_1-\k_2-\k_3}\int\limits_{\tilde{\eta_1}\simeq \eta} \langle \eta-\tilde \eta_1\rangle^{-2} d\tilde \eta_1\int\limits_{\tilde{\eta_2}\simeq -\eta} \langle \eta+\tilde \eta_2\rangle^{-2} d\tilde \eta_2\lesssim \langle \eta\rangle^{-\frac{3}{2}-\k_1-\k_2-\k_3}
    \end{align*}
    where we performed the change of variables $(\tilde \eta_1, \tilde \eta_2) = (2\eta_1+\eta_2,\eta_2)$.

    % In this region the estimates for the other integrals follow the same argument, albeit with slightly different decays. Indeed, we estimate $(\k,j)=(0,1)$ (which corresponds to \eqref{eq: ibp on s 2}). by 
    % $$\iint\limits_{\eta_1\simeq-\eta_2\simeq\eta_3 \simeq \eta} \langle \eta+\eta_2\rangle^{-2}\langle \eta_3-\eta_1\rangle^{-1} \langle\eta \rangle^{-\k_1-\k_2-\k_3-\frac{5}{2}}d\eta_1d\eta_2 \lesssim \langle \eta\rangle^{-\frac{5}{2}-\k_1-\k_2-\k_3+\varepsilon};$$
    % $(\k,j)=(1,0)$ (which corresponds to \eqref{eq: ibp on s 3}) by 
    % $$\iint\limits_{\eta_1\simeq-\eta_2\simeq\eta_3 \simeq \eta} \langle \eta+\eta_2\rangle^{-1}\langle \eta_3-\eta_1\rangle^{-2} \langle\eta \rangle^{-\k_1-\k_2-\k_3-\frac{5}{2}}d\eta_1d\eta_2 \lesssim \langle \eta\rangle^{-\frac{5}{2}-\k_1-\k_2-\k_3+\varepsilon};$$
    % and $(\k,j)=(1,1)$ (which corresponds to \eqref{eq: ibp on s 4}) by
    % $$\iint\limits_{\eta_1\simeq-\eta_2\simeq\eta_3 \simeq \eta} \langle \eta+\eta_2\rangle^{-1}\langle \eta_3-\eta_1\rangle^{-1} \langle\eta \rangle^{-\k_1-\k_2-\k_3-\frac{7}{2}}d\eta_1d\eta_2\lesssim \langle \eta\rangle^{-\k_1-\k_2-\k_3-\frac{7}{2}+2\varepsilon}$$
    % where the $\varepsilon>0$ comes from a logarithmic divergence.
    
    \noindent $\bullet$ \textbf{Case 2: } $\eta_1 \simeq \eta_3\not\simeq \eta $.

    In this region, considering the change of variables
    $$\begin{cases}
        \alpha = \eta_3+\eta_1\\
        \beta = \eta_3-\eta_1
    \end{cases} \Leftrightarrow\begin{cases}
        \eta_1 = \frac{\alpha-\beta}{2}\\
        \eta_3 = \frac{\alpha+\beta}{2}
    \end{cases}$$
    we write the integral as 
    $$I\sim \frac{1}{2}\iint\limits_{\beta\simeq0} \langle 2\eta-\alpha\rangle^{-2}\langle \beta\rangle^{-2} \left\langle \frac{\alpha+\beta}{2}\right\rangle^{-\k_1-\k_3-1} \langle\eta-\alpha\rangle^{-\k_2-\frac{1}{2}}d\alpha d\beta.$$
The integral in $\beta$ can be estimated as $$\int \langle \beta\rangle^{-2} \left\langle \frac{\alpha+\beta}{2}\right\rangle^{-\k_1-\k_3-1}d\beta \lesssim \langle \alpha \rangle^{-1-\k_1-\k_3}$$
   Since $\alpha\not\simeq 2\eta$, we consider two cases: if $|\alpha| \lesssim |2\eta|$, we bound the integral by
    \begin{align*}
       I \lesssim & \langle \eta\rangle^{-2} \int\limits_{|\alpha| \lesssim |2\eta|} \langle \alpha \rangle^{-1-\k_1-\k_3}   \langle\eta-\alpha\rangle^{-\k_2-\frac{1}{2}}d\alpha \\
        \lesssim & \langle \eta\rangle^{-\frac{5}{2}-\min\{, 1/2+\k_1+\k_3, \k_2\}}.
    \end{align*}
If $|\alpha| \gg |2\eta|$, then we estimate the integral as
    $$ I\lesssim\int\limits_{|\alpha| \gg |2\eta|} \langle \alpha \rangle^{-\k_1-\k_3-\k_2-\frac{7}{2}}   d\alpha \lesssim \langle \eta \rangle^{-\frac{5}{2}-\k_1-\k_2-\k_3}.$$

    \noindent $\bullet$ \textbf{Case 3: } $\eta_1 \not \simeq \eta_3$

    Using the same change of variables, we obtain
    $$I\lesssim \iint\limits_{|\beta|\gtrsim 1} \langle 2\eta-\alpha\rangle^{-2}\langle \beta\rangle^{-2} \left\langle \frac{\alpha-\beta}{2}\right\rangle^{-\k_1-\frac{1}{2}} \langle\eta-\alpha\rangle^{-\k_2-\frac{1}{2}}\left\langle \frac{\alpha+\beta}{2}\right\rangle^{-\k_3-\frac{1}{2}}d\alpha d\beta.$$
    Integrating in $\beta$,
    $$I\lesssim \int\limits_{|\beta|\gtrsim 1}\langle \beta\rangle^{-2} \left\langle \frac{\alpha-\beta}{2}\right\rangle^{-\k_1-\frac{1}{2}}\left\langle \frac{\alpha+\beta}{2}\right\rangle^{-\k_3-\frac{1}{2}}d\alpha \lesssim \langle \alpha\rangle^{-\frac{5}{2}-\min\{\k_3,\k_1\}}$$
   If $|\alpha| \lesssim |2\eta|$ (and $\alpha \not \simeq 2\eta$), we bound the integral by
    $$\langle \eta\rangle^{-2} \int\limits_{|\alpha| \lesssim |2\eta|} \langle \alpha\rangle^{-\frac{5}{2}-\min\{\k_3,\k_1\}+\varepsilon}   \langle\eta-\alpha\rangle^{-\k_2-\frac12}d\alpha \lesssim \langle \eta\rangle^{- \k_2-\frac52}.$$
    If $|\alpha| \gg |2\eta|$, then we estimate the integral as
    $$ I\lesssim \int\limits_{|\alpha| \gg |2\eta|} \langle \alpha\rangle^{-\frac{9}{2}-\min\{\k_3,\k_1\}-\k_2}   d\alpha \lesssim \langle \eta \rangle^{-\frac{7}{2}-\min\{\k_3,\k_1\}-\k_2}.$$

    For $|\eta| \lesssim 1$, we can consider again the change of variables $\alpha,\ \beta$ to write
    $$I \lesssim \iint \langle 2\eta-\alpha\rangle^{-2}\langle \beta\rangle^{-2} \left\langle \frac{\alpha+\beta}{2}\right\rangle^{-\k_1-\frac{1}{2}} \langle\eta-\alpha\rangle^{-\k_2-\frac{1}{2}}\left\langle \frac{\alpha-\beta}{2}\right\rangle^{-\k_3-\frac{1}{2}}d\alpha d\beta.$$
    Particularly, since $\eta$ is small, we can bound the integral by
    $$\iint \langle \beta\rangle^{-2} \left\langle \frac{\alpha+\beta}{2}\right\rangle^{-\k_1-\frac{1}{2}} \langle \alpha\rangle^{-\frac{5}{2}-\k_2}\left\langle \frac{\alpha-\beta}{2}\right\rangle^{-\k_3-\frac{1}{2}}d\alpha d\beta \lesssim \int \langle\alpha\rangle^{-\frac72-\min\{\k_1,\k_3\}-\k_2}d\alpha\lesssim 1.$$
    \end{proof}

\begin{lemma}
    Let $\k_j \in \left( -\frac{1}{4},\frac{1}{4}\right)$ and $g_j \in Z^{\k_j}$, for $j=1,2,3$. Then
    $$\left|\mathcal I_{\text{h}\times \text{h}}^{(+,-,+)}[g_1,g_2,g_3](\eta)\right| \lesssim \jap{\eta}^{-\frac32 - \k_1-\k_2-\k_3}\prod_{j=1}^3\|g_j\|_{Z^{\kappa_j}}, $$
\end{lemma}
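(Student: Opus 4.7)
The plan is to collect the two preceding lemmas, since all the hard analytical work has already been carried out above. First I absorb the singular weights by setting $f_j := |\eta_j|^{-1/2} g_j \in Z^{\k_j+1/2}$, which is legitimate on $\{|\eta_j| \geq 1\}$ (the piece on which $\mathcal{I}_{\text{h}\times \text{h}}^{(+,-,+)}$ is supported up to the symmetric permutations that are handled identically). The phase factors as $\Phi = \tfrac12\bigl((\eta+\eta_2)^2-(\eta_3-\eta_1)^2\bigr)$, so under the change of variables $(r,s)=(\eta+\eta_2,\eta_3-\eta_1)$ I recover exactly the integral analyzed in the excerpt. A single integration by parts in $r$, using $e^{i(r^2-s^2)/2} = \partial_r(re^{i(r^2-s^2)/2})/(1+ir^2)$, splits the operator into the two pieces \eqref{eq: IBP f'h'} and \eqref{eq: IBP g'}.

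For \eqref{eq: IBP f'h'}, the derivative has landed on $f_1$ or $f_3$, and to do a second integration by parts without differentiating those profiles again I pass to the variables $(u,v)=(\tfrac{r+s}{2},\tfrac{r-s}{2})$, which decouples the dependencies ($f_1$ depends only on $u$, $f_3$ only on $v$, $f_2$ on $u+v$). Integrating by parts in $v$ using $e^{i2uv} = \partial_v(v e^{i2uv})/(1+i2uv)$ yields the three expressions \eqref{eq: IBP 1 1}--\eqref{eq: IBP 1 3}, each of which is already shown to be bounded by $\jap{\eta}^{-3/2-\k_1-\k_2-\k_3}$. For \eqref{eq: IBP g'}, the only derivative produced so far falls on $f_2$, which depends only on $r$; hence I may directly integrate by parts in $s$ via $e^{i(r^2-s^2)/2} = \partial_s(s e^{i(r^2-s^2)/2})/(1-is^2)$, producing \eqref{eq: ibp on s 1}--\eqref{eq: ibp on s 4}. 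Each of these is controlled uniformly by the weighted estimate
$$\iint \jap{\eta+\eta_2}^{-2+\kappa}\jap{\eta_3-\eta_1}^{-2+j}\jap{\eta_1}^{-\k_1-\tfrac12-j}\jap{\eta_2}^{-\k_2-\tfrac12-\kappa}\jap{\eta_3}^{-\k_3-\tfrac12}d\eta_1 d\eta_2 \lesssim \jap{\eta}^{-\tfrac32-\k_1-\k_2-\k_3},$$
with $\kappa,j\in\{0,1\}$, which is precisely the content of the second supporting lemma. Summing the two contributions concludes the proof.

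The main obstacle is not in this final assembly — it is bookkeeping — but in the case-splitting inside the two preceding lemmas, where one must balance the resonance factors $\jap{(\eta+\eta_2)(\eta_3-\eta_1)}^{-1}$ against the polynomial decay of the profiles. The restriction $\k_j \in (-\tfrac14,\tfrac14)$ arises precisely to keep these singular integrals summable in the worst regime $\eta_1 \simeq -\eta_2 \simeq \eta_3 \simeq \eta$; elsewhere one gains extra decay, either from the weights $\jap{\eta_j}^{-\k_j-1/2}$ when $|\eta_j| \gg |\eta|$, or from the resonance factor when one of the differences $\eta+\eta_2, \eta_3-\eta_1$ is large.
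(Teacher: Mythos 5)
Your assembly of the all-high-frequencies regime is faithful to the paper: the factorization $\Phi=\tfrac12\bigl((\eta+\eta_2)^2-(\eta_3-\eta_1)^2\bigr)$, the $(r,s)$ and $(u,v)$ changes of variables, the two integrations by parts avoiding second derivatives of the profiles, and the reduction to the two weighted integral lemmas are exactly the paper's argument for that piece.

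However, there is a genuine gap in the opening reduction. You claim that $\mathcal I_{\text{h}\times\text{h}}^{(+,-,+)}$ is supported on $\{|\eta_j|\ge 1 \text{ for all } j\}$, but the cut-off $\sum_{j}\prod_{k\neq j}(1-\phi(\eta_k))$ only forces \emph{at least two} of the three frequencies to be large; the third may be arbitrarily close to $0$. On that part of the support the substitution $f_j=|\eta_j|^{-1/2}g_j\in Z^{\kappa_j+1/2}$ is simply false (the function is unbounded near $\eta_j=0$), and the subsequent integrations by parts can produce non-locally-integrable singularities such as $|\eta_j|^{-3/2}$. The paper devotes a separate, nontrivial portion of the proof to exactly this regime: for $|\eta_1|\lesssim 1\ll|\eta_2|,|\eta_3|$ and for $|\eta_2|\lesssim 1\ll|\eta_1|,|\eta_3|$ it splits further at $|\eta_j|\le\jap{\eta}^{-2}$ (where one estimates directly, using that $|\eta_j|^{-1/2}$ is integrable and the region is small) versus $|\eta_j|\ge\jap{\eta}^{-2}$ (where one integrates by parts only in the variables that do not aggravate the singular weight, and tracks the extra factors $\jap{\eta}^{-1}$ coming from $|\eta-\eta_1|$ or $|\eta_1-\eta_3|$). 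It also handles the sub-case $|\eta_1-\eta_3|\ll|\eta|$ where no integration by parts is available near the resonance. Without this analysis your proof does not cover the full operator, so as written it is incomplete.
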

\begin{proof}

The multilinear estimate for $\mathcal{I}_{\text{h}\times\text{h}}^{(+,-,+)}$ when all frequencies are large is a consequence of the above discussion.   
It remains to derive the multilinear bounds for \eqref{eq:I+-+} when $|\eta_j|<1$ for some $j=1, 2, 3.$ This must be done carefully as the integration by parts may introduce uncontrollable singularities near $\eta_j=0$.

\medskip

\noindent\textit{Step 1. $|\eta_1|\lesssim 1 \ll |\eta_2|,|\eta_3|$.}

We first consider the proof for $\eta \gg 1$. In this region, we leave $\eta_3$ as the dependent variable and integrate by parts \eqref{eq:I+-+} in $\eta_2$ to obtain
$$\iint\limits_{|\eta_1| \lesssim 1 \ll |\eta_2|,|\eta_3|} \frac{e^{i\Phi}}{2i(\eta-\eta_1)} \frac{g_1(\eta_1)}{|\eta_1|^{\frac{1}{2}}}\left[\left(\frac{g_2(\eta_2)}{|\eta_2|^{\frac{1}{2}}} \right)'\frac{g_3(\eta_3)}{|\eta_3|^{\frac{1}{2}}}-\frac{g_2(\eta_2)}{|\eta_2|^{\frac{1}{2}}}\left(\frac{g_3(\eta_3)}{|\eta_3|^{\frac{1}{2}}} \right)' \right]d\eta_1d\eta_2 =: I_1 + I_2.$$
If $|\eta_1| \leq \langle \eta \rangle^{-2}$, since $|\eta_2|, |\eta_3|\gtrsim |\eta|$,  we estimate $I_1+I_2$ directly by
\begin{align*}
    &\ \langle \eta \rangle^{-1}\iint\limits_{|\eta_1| \leq \langle \eta \rangle^{-2}} |\eta_1|^{-\frac{1}{2}} \left(\langle \eta_2 \rangle^{-\k_2-\frac{3}{2}}\langle \eta_3 \rangle^{-\k_3-\frac{1}{2}} +\langle \eta_2 \rangle^{-\k_2-\frac{1}{2}}\langle \eta_3 \rangle^{-\k_3-\frac{1}{2}}\right)d\eta_2 d\eta_1\\
    \lesssim &\ \langle\eta\rangle^{-\frac{3}{2}-\min\{\k_2,\k_3\}} \int\limits_{|\eta_1| \leq \langle \eta \rangle^{-2}} |\eta_1|^{-\frac{1}{2}} d\eta_1 
    \lesssim \langle\eta\rangle^{-\frac{5}{2}-\min\{\k_2,\k_3\}} 
\end{align*}
where we used the fact that $|\eta-\eta_1| \simeq |\eta|$.

If $|\eta_1| \geq \langle \eta \rangle^{-2}$, we may integrate by parts $I_1$ on $\eta_1$ to find
\begin{align*}
    -\iint\limits_{\langle \eta \rangle^{-2}\leq |\eta_1|\lesssim 1} & \frac{e^{i\Phi}}{4(\eta-\eta_1)(\eta_3-\eta_1)}\\
    & \left(\frac{g_2(\eta_2)}{|\eta_2|^{\frac{1}{2}}}\right)'\left[\frac{1}{\eta-\eta_1}\frac{g_1(\eta_1)}{|\eta_1|^{\frac{1}{2}}}\frac{g_3(\eta_3)}{|\eta_3|^{\frac{1}{2}}}+\left(\frac{g_1(\eta_1)}{|\eta_1|^{\frac{1}{2}}} \right)'\frac{g_3(\eta_3)}{|\eta_3|^{\frac{1}{2}}}-\frac{g_1(\eta_1)}{|\eta_1|^{\frac{1}{2}}}\left(\frac{g_3(\eta_3)}{|\eta_3|^{\frac{1}{2}}} \right)' \right]d\eta_1d\eta_2
\end{align*}
which we bound as 
\begin{align*}
    &\ \langle \eta\rangle^{-1}\iint\limits_{\langle \eta \rangle^{-2}\leq |\eta_1|\lesssim 1}  \langle \eta_1\rangle^{-\k_1-\frac{1}{2}}\langle\eta_2\rangle^{-\k_2-\frac{3}{2}}\langle \eta_3 \rangle^{-\k_3-\frac{3}{2}}\left(\langle \eta \rangle^{-1}+\langle \eta_1 \rangle^{-1}+\langle \eta_3 \rangle^{-1} \right)d\eta_1d\eta_2\\\lesssim &\ \langle \eta\rangle^{-\frac{5}{2}-\min\{\k_2,\k_3\}} \int\limits_{\langle \eta \rangle^{-2}\leq |\eta_1|\lesssim 1}  \langle \eta_1\rangle^{-\k_1-\frac{1}{2}}d\eta_1\lesssim \langle \eta\rangle^{-\frac{5}{2}-\min\{\k_2,\k_3\}}.
\end{align*}
For $I_2$, integrating by parts in $\eta_1$, with $\eta_2$ dependent,
\begin{align*}
    I_2 \sim \iint\limits_{\langle \eta \rangle^{-2}\leq |\eta_1|\lesssim 1} & \frac{e^{i\Phi}}{4(\eta-\eta_1)(\eta-\eta_3)}\\
    & \left(\frac{g_3(\eta_3)}{|\eta_3|^{\frac{1}{2}}}\right)'\left[\frac{1}{\eta-\eta_1}\frac{g_1(\eta_1)}{|\eta_1|^{\frac{1}{2}}}\frac{g_2(\eta_2)}{|\eta_2|^{\frac{1}{2}}}+\left(\frac{g_1(\eta_1)}{|\eta_1|^{\frac{1}{2}}} \right)'\frac{g_2(\eta_2)}{|\eta_2|^{\frac{1}{2}}}-\frac{g_1(\eta_1)}{|\eta_1|^{\frac{1}{2}}}\left(\frac{g_2(\eta_2)}{|\eta_2|^{\frac{1}{2}}} \right)' \right]d\eta_1d\eta_3,
\end{align*}
which can be bounded  by
$$\langle \eta\rangle^{-1}\iint\limits_{\langle \eta \rangle^{-2}\leq |\eta_1|\lesssim 1}  \langle \eta_1\rangle^{-\k_1-\frac{1}{2}}\langle\eta_2\rangle^{-\k_2-\frac{3}{2}}\langle \eta_3 \rangle^{-\k_3-\frac{3}{2}}\left(\langle \eta \rangle^{-1}+\langle \eta_1 \rangle^{-1}+\langle \eta_2 \rangle^{-1} \right)d\eta_1d\eta_3\lesssim \jap{\eta}^{-\frac32-\k_1-\k_2-\k_3}.$$
% We conclude that in this regime, if $\k_2+\k_3>-1$ and $\k_j \leq \frac{1}{2}$, the integral is bounded by
% $$\langle \eta\rangle^{-\frac{5}{2}-\min\{\k_2,\k_3\}} \lesssim \langle \eta \rangle^{-\frac{3}{2}-\k_1-\k_2-\k_3}.$$
Now we focus on the case $\eta\lesssim 1$. If $|\eta-\eta_1|\gtrsim \jap{\eta_2}^{-2/3}$, we bound $I_1+I_2$ as
$$
\iint_{|\eta-\eta_1|\gtrsim \jap{\eta_2}^{-\frac23}} \frac{1}{|\eta-\eta_1||\eta_1|^{\frac12}}\jap{\eta_2}^{-2-\k_2-\k_3}d\eta_1d\eta_2 \lesssim \int \jap{\eta_2}^{-\frac43-\k_2-\k_3}d\eta_2\lesssim 1.
$$
If  $|\eta-\eta_1|\ll\jap{\eta_2}^{-2/3}$, we estimate directly:
$$
\iint_{|\eta-\eta_1|\ll \jap{\eta_2}^{-\frac23}} \frac{1}{|\eta_1|^{\frac12}}\jap{\eta_2}^{-1-\k_2-\k_3}d\eta_1d\eta_2 \lesssim \int \jap{\eta_2}^{-\frac43-\k_2-\k_3}d\eta_2\lesssim 1.
$$

% we integrate by parts on $\eta_2$, with $\eta_3$ dependent to obtain {\color{red} CORRIGIR ESSE CASO}
% $$-\iint\limits_{|\eta_1|\lesssim 1 \ll |\eta_2|,|\eta_3|}\frac{e^{i\Phi}}{2i(\eta_3-\eta_1)} \frac{g_1(\eta_1)}{\eta_1^\frac{1}{2}} \left[\left(\frac{g_2(\eta_2)}{\eta_2^\frac{1}{2}} \right)'\frac{g_3(\eta_3)}{\eta_3^\frac{1}{2}}-\frac{g_2(\eta_2)}{\eta_2^\frac{1}{2}}\left(\frac{g_3(\eta_3)}{\eta_3^\frac{1}{2}} \right)' \right]d\eta_1d\eta_2.$$

% Observe that, since $|\eta| \lesssim 1$ and $|\eta_2|,|\eta_3| \gg1$, we necessarily have $\eta_2\simeq -\eta_3$. Moreover, using the fact that $|\eta_3-\eta_1| \simeq |\eta_3|$, we may estimate this integral as
% $$\iint\limits_{|\eta_1| \lesssim 1} |\eta_1|^{-\frac{1}{2}} \langle \eta_2\rangle^{-\k_2-\k_3-3}d\eta_1d\eta_2.$$
% In particular, this integral is bounded as long as $\k_2+\k_3>-2$.

\medskip
\noindent\textit{Step 2. $|\eta_2|\lesssim 1 \ll |\eta_1|,|\eta_3|$.}

We start by considering the case $\eta \gg 1$ and split the analysis in two regions:

\noindent $\bullet$ \textbf{Case 1: }$|\eta_1-\eta_3| \gtrsim |\eta|$. In this region, we may integrate by parts \eqref{eq:I+-+}  in $\eta_1$  to find
\begin{equation}\label{eq:hlh}
    \iint\limits_{|\eta_2|\lesssim 1 \ll |\eta_1|,|\eta_3|} \frac{e^{i\Phi}}{2i(\eta_3-\eta_1)} \frac{g_2(\eta_2)}{\eta_2^\frac{1}{2}} \left[\left(\frac{g_1(\eta_1)}{\eta_1^\frac{1}{2}} \right)'\frac{g_3(\eta_3)}{\eta_3^\frac{1}{2}}-\frac{g_1(\eta_1)}{\eta_1^\frac{1}{2}}\left(\frac{g_3(\eta_3)}{\eta_3^\frac{1}{2}}  \right)'   \right]d\eta_1d\eta_2.
\end{equation}
 If $|\eta_2| \leq \langle \eta \rangle^{-2}$, we estimate the integral crudely by
\begin{align*}
    &\ \langle \eta \rangle^{-1} \iint\limits_{|\eta_2|\leq \langle \eta \rangle^{-2}} |\eta_2|^{-\frac{1}{2}} ( \langle \eta_1\rangle^{-\k_1-\frac{3}{2}}\langle \eta_3\rangle^{-\k_3-\frac{1}{2}}+\langle \eta_1\rangle^{-\k_1-\frac{1}{2}}\langle \eta_3\rangle^{-\k_3-\frac{3}{2}})d\eta_1d\eta_2\\\lesssim &\ \langle \eta\rangle^{-\frac{3}{2}-\min\{\k_1,\k_3\}}\int\limits_{|\eta_2| \leq \langle \eta \rangle^{-2}}|\eta_2|^{-\frac{1}{2}}d\eta_2 \lesssim \langle \eta \rangle^{-\frac{5}{2}-\min\{\k_1,\k_3\}}.
\end{align*}
If $|\eta_2| \geq \langle \eta \rangle^{-2}$, we may integrate by parts in $\eta_2$. We will only show the argument for the first term, since the second is exactly the same. Integrating by parts in $\eta_2$,
\begin{align*}
    -\iint\limits_{\langle \eta \rangle^{-2}\leq |\eta_2|\lesssim 1} & \frac{e^{i\Phi}}{4(\eta-\eta_1)(\eta_3-\eta_1)}\\
    & \left(\frac{g_1(\eta_1)}{\eta_1^{\frac{1}{2}}}\right)'\left[\frac{1}{\eta_3-\eta_1}\frac{g_2(\eta_2)}{\eta_2^{\frac{1}{2}}}\frac{g_3(\eta_3)}{\eta_3^{\frac{1}{2}}}+\left(\frac{g_2(\eta_2)}{\eta_2^{\frac{1}{2}}} \right)'\frac{g_3(\eta_3)}{\eta_3^{\frac{1}{2}}}-\frac{g_2(\eta_2)}{\eta_2^{\frac{1}{2}}}\left(\frac{g_3(\eta_3)}{\eta_3^{\frac{1}{2}}} \right)' \right]d\eta_1d\eta_2.
\end{align*}
Since $|\eta-\eta_1| \simeq |\eta_2+\eta_3| \simeq |\eta_3|$, we estimate this integral as
\begin{align*}
    &\ \langle \eta \rangle^{-1}\iint\limits_{\langle \eta \rangle^{-2}\leq |\eta_2|\lesssim 1} \langle \eta_1 \rangle^{-\k_1-\frac{3}{2}}\langle \eta_2 \rangle^{-\k_2-\frac{1}{2}}\langle \eta_3 \rangle^{-\k_3-\frac{3}{2}}(\langle \eta \rangle^{-1}+\langle \eta_2\rangle^{-1}+\langle \eta_3\rangle^{-1})d\eta_1d\eta_3\\\lesssim &\ \langle \eta \rangle^{-\frac{5}{2}-\min\{\k_1,\k_3\}} \int \limits_{\langle \eta\rangle^{-2}\leq |\eta_2| \lesssim 1} \langle \eta_2 \rangle^{-\k_2-\frac{1}{2}}d\eta_2 \lesssim \langle \eta \rangle^{-\frac{5}{2}-\min\{\k_1,\k_3\}}.
\end{align*}
For $\eta \lesssim 1$,  we necessarily have $\eta_1 \simeq-\eta_3$. Thus, we may integrate by parts on $\eta_1$ and use the fact that $|\eta_3-\eta_1| \simeq |\eta_1|$. The resulting integral is then controlled by
$$\iint\limits_{|\eta_2|\lesssim 1} |\eta_2|^{-\frac{1}{2}} \langle \eta_1\rangle^{-\k_1-\k_3-3}d\eta_1d\eta_2\lesssim 1.$$

\noindent $\bullet$ \textbf{Case 2: }$|\eta| \gg |\eta_1-\eta_3|$.
Observe that, in this region, $\eta_1\simeq \eta_3\simeq \frac{\eta}{2}$. If $|\eta_1-\eta_3|\lesssim |\eta|^{-1}$, we estimate directly \eqref{eq:I+-+}  by
$$
\iint_{|\eta_1-\eta_3|\lesssim |\eta|^{-1}} \frac{1}{|\eta_2|^{\frac12}}\jap{\eta_1}^{-\k_1-\frac12}\jap{\eta_3}^{-\k_3-\frac12}d\eta_1 d\eta_2 \lesssim \jap{\eta}^{-\k_1-\k_3-2}
$$
If $|\eta_1-\eta_3|\gg |\eta|^{-1}$, we integrate by parts in $\eta_1$ (as in Region A) to find \eqref{eq:hlh}. The resulting integral is bounded by
$$
\iint_{|\eta_1-\eta_3|\gg |\eta|^{-1}} \frac{1}{|\eta_1-\eta_3||\eta_2|^{\frac12}}\jap{\eta_1}^{-\k_1-\k_3-2}d\eta_1d\eta_2 \lesssim \jap{\eta}^{-\k_1-\k_3-2}\log \eta.
$$

\end{proof}

\subsection{Bounds for $\mathcal{I}_{\text{h}\times \text{l}\times\text{l}}$} We now consider the case where $|\eta_1|>1$ and $|\eta_2|, |\eta_3|<1$, that is,
\begin{equation}\label{eq:Ihll}
    \mathcal{I}_{\text{h}\times \text{l}\times\text{l}}[g_1,g_2,g_3](\eta)=\iint e^{i\Phi}\frac{g_1(\eta_1)g_2(\eta_2)g_3(\eta_3)}{|\eta_1\eta_2\eta_3|^\frac12}\phi(\eta_2)\phi(\eta_3)(1-\phi(\eta_1))d\eta_1d\eta_2.
\end{equation}
For $\eta_2,\eta_3>0$, consider the change of variables
$$
(\eta_2,\eta_3)= (y_2^2,y_3^2),\quad y_2,y_3>0.
$$
The resulting integral can be written as
\begin{align}
        &\ e^{i\eta^2}\iint_{(\R^+)^2} e^{-i(\eta-y_2^2-y_2^3)^2}\frac{e^{-iy_2^4}g_2(y_2^2)e^{iy_3^4}g_3(y_3^2)g_1(\eta-y_2^2-y_3^2)}{|\eta-y_2^2-y_3^2|^\frac12}\phi(y_2^2)\phi(y_3^2)(1-\phi(\eta-y_2^2-y_3^2))dy_2dy_3\\=&\ \iint_{(\R^+)^2}e^{2i\eta(y_2^2+y_3^2)}\frac{e^{-i(y_2^2+y_3^2)^2}e^{-iy_2^4}g_2(y_2^2)e^{iy_3^4}g_3(y_3^2)g_1(\eta-y_1^2-y_2^2)}{|\eta-y_2^2-y_3^2|^\frac12}\phi(y_2^2)\phi(y_3^2)(1-\phi(\eta-y_2^2-y_3^2))dy_2dy_3\\&\qquad =: \iint_{(\R^+)^2} e^{2i\eta(y_2^2+y_3^2)}G_{+,+}(y_2,y_3;\eta)dy_2dy_3 \label{lxlxh}
\end{align}
where $G_{+,+}(\cdot, \eta)$ is compactly supported in the unit ball and satisfies
\begin{equation}\label{eq:prop_G}
    \|\partial_{y_2}G_{+,+}(\eta)\|_{L^\infty_{y_2,y_3}} + \|\partial_{y_3}G_{+,+}(\eta)\|_{L^\infty_{y_2,y_3}} + \|\partial_{y_2}\partial_{y_3}G_{+,+}(\eta)\|_{L^\infty_{y_2,y_3}}\lesssim \eta^{-\frac12},\quad G_{+,+}(0,0;\eta)=\frac{g_2(0)g_3(0)g_1(\eta)}{\eta^{\frac12}}.
\end{equation}

Similarly, the contribution of the remaining quadrants in the $(\eta_2,\eta_3)$ plane can be written as
$$
\eta_2<0,\eta_3>0:\qquad\iint_{(\R^+)^2} e^{2i\eta(y_2^2-y_3^2)}G_{+,-}(y_2,y_3;\eta)dy_2dy_3, 
$$
$$
\eta_2>0,\eta_3<0:\qquad\iint_{(\R^+)^2} e^{2i\eta(-y_2^2+y_3^2)}G_{-,+}(y_2,y_3;\eta)dy_2dy_3,
$$
$$
\eta_2<0,\eta_3<0:\qquad\iint_{(\R^+)^2} e^{2i\eta(-y_2^2-y_3^2)}G_{-,-}(y_2,y_3;\eta)dy_2dy_3,
$$
where $G_{-,+}$, $G_{+,-}$ and $G_{-,-}$ also satisfy \eqref{eq:prop_G}. To proceed, we need the following technical lemma.

\begin{lemma}\label{lema integral y}
Let $F : \R \times \R \to \R$ be such that $\supp\ F  \subset [0,1] \times [0,1]$ and
\begin{equation}
\|\partial_{y_2}F\|_{L^\infty_{y_2,y_3}} 
+ \|\partial_{y_3}F\|_{L^\infty_{y_2,y_3}} 
+ \|\partial_{y_2}\partial_{y_3}F\|_{L^\infty_{y_2,y_3}} 
\leq C.
\end{equation}
Then, for every $\eta>0$, $\lambda_2,\lambda_3=\pm 1$, we have
\begin{equation}
\left|
\iint_{(\R^+)^2} e^{2i\eta(\lambda_2y_2^2+\lambda_3y_3^2)} F(y_2,y_3)\,dy_2dy_3
- F(0,0)\, \frac{\pi}{8\eta}\, e^{\frac{i\pi}{4}(\lambda_2+\lambda_3)}
\right|
\lesssim C
\left(\frac{\log \eta}{\eta}\right)^2.
\end{equation}
\end{lemma}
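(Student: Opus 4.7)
The plan is to isolate the contribution from the corner stationary point $(0,0)$ using a smooth cutoff and then control the remainder by combining Fubini with an incomplete Fresnel bound. First, introduce a smooth cutoff $\rho \in C_c^\infty([0,\infty))$ with $\rho \equiv 1$ on $[0,1/2]$ and $\supp \rho \subset [0,1]$, and split
\[ F(y_2, y_3) = F(0,0)\,\rho(y_2)\rho(y_3) + R(y_2, y_3), \qquad R(0,0) = 0, \]
with $R$ inheriting the regularity of $F$. The separable principal part factors into two one-dimensional Fresnel-type integrals; since $\rho \equiv 1$ near the endpoint stationary point $y=0$ and vanishes before leaving $[0,1]$, iterated integration by parts on the non-stationary transition region yields
\[ \int_0^\infty \rho(y)\,e^{2i\lambda\eta y^2}\,dy = \tfrac{1}{2}\sqrt{\pi/(2\eta)}\,e^{i\pi\lambda/4} + O(\eta^{-N}) \]
for any $N$, producing exactly $F(0,0)\frac{\pi}{8\eta}e^{i\pi(\lambda_2+\lambda_3)/4}$ modulo rapidly decaying errors.

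Next, I would use the fundamental theorem of calculus to decompose
\[ R(y_2, y_3) = R(y_2, 0) + R(0, y_3) + A(y_2, y_3), \]
where $A(y_2, y_3) := \int_0^{y_2}\!\!\int_0^{y_3} \partial_s\partial_t R(s,t)\,ds\,dt$ vanishes on both coordinate axes and satisfies $|A(y_2, y_3)| \leq Cy_2 y_3$ thanks to the mixed derivative bound. For the $A$-contribution, applying Fubini gives
\[ \iint A\, e^{2i\eta(\lambda_2 y_2^2 + \lambda_3 y_3^2)}\,dy_2\,dy_3 = \iint \partial_s\partial_t R(s,t)\, \mathcal{K}(s;\lambda_2,\eta)\,\mathcal{K}(t;\lambda_3,\eta)\,ds\,dt, \]
with the incomplete Fresnel integral $\mathcal{K}(s;\lambda,\eta) := \int_s^\infty e^{2i\lambda\eta y^2}\,dy$ satisfying $|\mathcal{K}(s;\lambda,\eta)| \lesssim \min(\eta^{-1/2},\,(\eta s)^{-1})$ (via direct estimation for $s \lesssim \eta^{-1/2}$ and integration by parts using $e^{2i\lambda\eta y^2} = \partial_y(e^{2i\lambda\eta y^2})/(4i\lambda\eta y)$ for $s \gtrsim \eta^{-1/2}$). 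The key one-dimensional bound $\int_0^1 |\mathcal{K}(s;\lambda,\eta)|\,ds \lesssim \log\eta/\eta$ then yields the claimed $O(C\log^2\eta/\eta^2)$ contribution.

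The main obstacle lies in estimating the axial remainders $\iint R(y_2, 0) e^{\ldots}\,dy_2\,dy_3$ and $\iint R(0, y_3) e^{\ldots}\,dy_2\,dy_3$, which do not factor favorably: each separates into the product of a non-smooth 1D Fresnel (over the variable along which $R$ varies) and a plain 1D Fresnel (over the other variable). My plan is to apply the Fubini trick once more, writing $R(y_2, 0) = \int_0^{y_2}\partial_s R(s, 0)\,ds$, so that
\[ \iint R(y_2,0)\,e^{\ldots}\,dy_2\,dy_3 = \Bigl(\int_0^\infty \partial_s R(s,0)\,\mathcal{K}(s;\lambda_2,\eta)\,ds\Bigr)\cdot \int_0^\infty e^{2i\lambda_3 \eta y_3^2}\,dy_3, \]
and then invoke the same $\mathcal{K}$-bound together with a careful dyadic decomposition at $y_j \sim \eta^{-1/2}$ in both variables to squeeze out the extra decay needed to reach the sharp $(\log\eta/\eta)^2$ bound. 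Summing all contributions completes the proof.
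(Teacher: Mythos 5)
Up to the cosmetic cutoff $\rho$, your decomposition is the one the paper uses: an exact corner Fresnel product for the main term, a mixed-derivative piece controlled by two factors of $\int_0^1\bigl|\int_s^\infty e^{2i\lambda\eta y^2}\,dy\bigr|\,ds\lesssim \log\eta/\eta$, and two axial remainders. Your treatment of the main term and of the mixed term $A$ is correct and coincides with the paper's estimate of its term $I_1$.

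The gap is precisely where you flag it, and it is not closable. Your own factorization of the axial term gives
\begin{equation}
\left|\int_0^\infty \partial_sR(s,0)\,\mathcal{K}(s;\lambda_2,\eta)\,ds\right|\cdot\left|\int_0^\infty e^{2i\lambda_3\eta y_3^2}\,dy_3\right|\lesssim C\,\frac{\log\eta}{\eta}\cdot \eta^{-1/2}=C\,\frac{\log\eta}{\eta^{3/2}},
\end{equation}
and no dyadic refinement can improve this to $(\log\eta/\eta)^2$, because $\eta^{-3/2}$ is the true size of the axial contribution. Take $F(y_2,y_3)=y_2\,\phi(y_2)\phi(y_3)$ with $\phi$ a bump equal to $1$ near $0$: then $F(0,0)=0$, all derivative bounds hold with $C=O(1)$, and the integral factors as $\bigl(\tfrac{i\lambda_2}{4\eta}+O(\eta^{-N})\bigr)\bigl(\tfrac12\sqrt{\pi/(2\eta)}\,e^{i\pi\lambda_3/4}+O(\eta^{-N})\bigr)\asymp \eta^{-3/2}$, which is incompatible with the stated right-hand side $C(\log\eta/\eta)^2$. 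So the lemma as written is false, and the "extra decay" you hope to squeeze out does not exist unless one additionally assumes $\partial_{y_2}F(0,0)=\partial_{y_3}F(0,0)=0$ or extracts the next term of the asymptotic expansion. (The paper's own proof stumbles at exactly the same point: it bounds its axial terms $I_2,I_3$ "by the same reasoning" as $I_1$, while its displayed factorization only yields $C\log\eta\,\eta^{-3/2}$.) The honest conclusion is $\lesssim C\log\eta\,\eta^{-3/2}$, which is in fact still sufficient for the application, since there $C\sim\eta^{-1/2}$ and one only needs an error of order $o(\eta^{-3/2-\kappa})$ with $\kappa<1/4$.
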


\begin{proof}
We prove only the case $\lambda_2=\lambda_3=1$, as the others are similar. First, since  
$$
\iint_{(\R^+)^2} e^{2i\eta(y_2^2+y_3^2)}\,dy_2dy_3 
=  \frac{\pi}{8\eta}\, e^{\frac{i\pi}{2} },
$$
we have
\begin{align*}
   &\iint_{(\R^+)^2} e^{2i\eta(y_2^2+y_3^2)}F(y_2,y_3)\,dy_2dy_3
   - F(0,0)\, \frac{\pi}{8\eta}\, e^{\frac{i\pi}{2} }\\
   &= \iint_{(\R^+)^2} e^{2i\eta(y_2^2+y_3^2)}[F(y_2,y_3)- F(0,0)]\,dy_2dy_3.
\end{align*}

Since
\begin{align*}
F(y_2, y_3) - F(0,0)
&=\int_{0}^{y_2}\!\!\int_{0}^{y_3}
\partial_{y_2}\partial_{y_3}F(\nu_2, \nu_3)\, d\nu_3\, d\nu_2 
+ \int_{0}^{y_2} \partial_{y_2}F(\nu_2, 0)\, d\nu_2
+ \int_{0}^{y_3}\partial_{y_3}F(0, \nu_3)\, d\nu_3,
\end{align*}
we obtain
$$
\iint_{(\R^+)^2} e^{2i\eta(y_2^2+y_3^2)}[F(y_2,y_3)- F(0,0)]\,dy_2dy_3
= I_1 + I_2 + I_3,
$$
where
$$
I_1=\iint_{(\R^+)^2} e^{2i\eta(y_2^2+y_3^2)}
\int_{0}^{y_2}\!\!\int_{0}^{y_3}
\partial_{y_2}\partial_{y_3}F(\nu_2, \nu_3)\, d\nu_3\, d\nu_2\,dy_2dy_3,
$$
$$
I_2= \iint_{(\R^+)^2} e^{2i\eta(y_2^2+y_3^2)}
\int_{0}^{y_2} \partial_{y_2}F(\nu_2, 0)\, d\nu_2\,dy_2dy_3,
$$
and
$$
I_3= \iint_{(\R^+)^2} e^{2i\eta(y_2^2+y_3^2)} 
\int_{0}^{y_3} \partial_{y_3}F(0, \nu_3)\, d\nu_3\,dy_2dy_3.
$$

Let us start by estimating $I_1$.  
After a change of variables and using Fubini’s theorem, we obtain
\begin{align*}
|I_1|
&= \left|\iint_{(\R^+)^2} 
\left( \int_{\nu_2}^{+\infty}\!\!\int_{\nu_3}^{+\infty}
e^{2i\eta(y_2^2+y_3^2)} \, dy_2dy_3 \right)
\partial_{y_2}\partial_{y_3}F(\nu_2, \nu_3)\, d\nu_2\, d\nu_3 \right| \\
&\leq  
\left( \int_0^{+\infty}\left|\ \!\!\int_{\nu_2}^{+\infty}e^{2i\eta y_2^2} dy_2\right|d\nu_2 \right) 
\left( \int_0^{+\infty}\ \!\!\left| \int_{\nu_3}^{+\infty}e^{2i\eta y_3^2} dy_3\right|d\nu_3 \right)
\|\partial_{y_2}\partial_{y_3}F\|_{L^\infty_{y_2,y_3}} \\
&\leq C \left( \int_0^{+\infty}\left|\ \!\!\int_{\nu_2}^{+\infty}e^{2i\eta y_2^2} dy_2\right|d\nu_2 \right)
\left( \int_0^{+\infty}\!\ \left| \int_{\nu_3}^{+\infty}e^{2i\eta y_3^2} dy_3\right|d\nu_3 \right).
\end{align*}
By Lemma~3 in \cite{CCV20}, it follows that
$$
\left|\int_0^{+\infty}\left|\!\int_{\nu}^{+\infty}e^{2i\eta y^2} \, dy\,\right|d\nu\right|
\lesssim \frac{\log \eta}{\eta}.
$$
Hence,
$$
|I_1| \lesssim C \left( \frac{\log \eta}{\eta}\right)^2.
$$

A similar argument applies to $I_2$ and $I_3$. Indeed,
\begin{align*}
I_2 
&= \iint_{(\R^+)^2} e^{2i\eta(y_2^2+y_3^2)}
\int_{0}^{y_2} \partial_{y_2}F(\nu_1, 0)\, d\nu_1\,dy_2dy_3  \\
&=\left(\int_0^{+\infty}  e^{2i\eta y_3^2} \, dy_3\right)
\left( \int_0^{+\infty}  e^{2i\eta y_2^2}
\int_0^{y_2}\partial_{y_2}F(\nu_1, 0)\, d\nu_1\, dy_2 \right)\\
&= \left(\int_0^{+\infty}  e^{2i\eta y_3^2} \, dy_3\right)
\left( \int_0^{+\infty}  \left[\int_{\nu_1}^{+\infty} e^{2i\eta y_2^2}\, dy_2\right] 
\partial_{y_2}F(\nu_1, 0)\, d\nu_1\,\right)
\end{align*}
Using the same reasoning as before, we conclude that
$$
|I_2| \lesssim C\left( \frac{\log \eta}{\eta}\right)^2,
$$
and similarly,
$$
|I_3| \lesssim C \left( \frac{\log \eta}{\eta}\right)^2.
$$
This completes the proof.
\end{proof}
Racelling that, for $\eta \gg 1$, we have
$$
G_{\pm,\pm}(0,0;\eta) = \frac{g_2(0)\,{g_3(0)}\,g_1(\eta)}{\eta^{1/2}},
$$
applying Lemma \ref{lema integral y} to \eqref{eq:Ihll} yields
\begin{equation}
\left|
\mathcal I_{\text{l}\times \text{l}\times \text{h}}[g_1,g_2,g_3](\eta)
- \frac{\pi}{4\eta^{3/2}}\,g_2(0)\,{g_3(0)}\,g_1(\eta)
\right|
\lesssim 
\langle \eta \rangle^{-5/2}\,(\log\eta)^2.
\end{equation}
as stated in Theorem \ref{thm:multi_mbo_1}
\subsection{Bounds for $\mathcal{I}\label{sec_Illl}_{\text{l}\times\text{l}\times\text{l}}$.} Finally, we consider the case where all frequencies are small,
$$
\mathcal{I}_{\text{l}\times\text{l}\times\text{l}}[g_1,g_2,g_3](\eta)=\iint e^{i\Phi}\frac{g_1(\eta_1)g_2(\eta_2)g_3(\eta_3)}{|\eta_1\eta_2\eta_3|^\frac12}\phi(\eta_1)\phi(\eta_2)\phi(\eta_3)d\eta_1d\eta_2.
$$
In this region, as the domain of integration is bounded, a rough estimate is sufficient:
$$
|\mathcal{I}_{\text{l}\times\text{l}\times\text{l}}[g_1,g_2,g_3](\eta)|\lesssim \iint_{[-1,1]^2} \frac{d\eta_1d\eta_2}{|\eta_1\eta_2\eta_3|^{\frac12}}\lesssim 1.
$$
Putting together the conclusions of Sections \ref{sec:Ihh}-\ref{sec_Illl}, the proof of Theorem \ref{thm:multi_mbo_1} is finished.

\section{Multilinear estimates for mBO - Proof of Theorem \ref{thm:multi_mbo_2}}\label{sec:est_mbo2}

We devote this section to finding suitable estimates for
$$\iint e^{i\Phi}\frac{S_{A,a,B}(\eta_1)S_{A,a,B}(\eta_2)S_{A,a,B}(\eta_3)}{(\eta_1\eta_2\eta_3)^\frac{1}{2}}d\eta_1 d\eta_2.$$
where $S_{A,a,B}$ is defined in \eqref{eq:defi_Saab} as 
$$
 S_{A,a,B}(\eta)=\left(Ae^{ia\log \eta} + B \frac{e^{2i\eta^2/3+3ia\log\eta}}{\eta^2}\right)\chi(\eta),\quad \eta>0,\qquad S_{A,a,B}(-\eta)=\overline{S_{A,a,B}(\eta).}
$$
We decompose the ansatz as
$$
S_{A,a,B}=S_{A,a,0}   + S_{0,a,B}
$$
and focus first on the cubic terms in $S_{A,a,0}$.

\begin{lemma}\label{lem:IA}
    For $\eta>1$,
    $$
    \left|\mathcal{I}[S_{A,a,0}] - \frac{3|A|^2A\pi e^{ia\log \eta}}{\eta^\frac32} - e^{2i\eta^2/3+3ia\log\eta}\frac{i\pi A^3\sqrt{3}e^{-3ia\log 3}}{\eta^\frac32}\right|\lesssim \jap{\eta}^{-\frac72}.
    $$
 
\end{lemma}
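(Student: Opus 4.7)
The approach follows the stationary phase methodology of the \eqref{mKdV} case in \cite{CCV20}, now adapted to the quadratic phase $\Phi = \eta|\eta| - \eta_1|\eta_1| - \eta_2|\eta_2| - \eta_3|\eta_3|$ (with $\eta_3 = \eta - \eta_1 - \eta_2$). Solving $\nabla_{(\eta_1,\eta_2)}\Phi = 0$ across each of the eight sign sectors yields, for $\eta > 1$, exactly four critical points: the three \emph{space-time resonances} $(\eta,\eta,-\eta)$, $(\eta,-\eta,\eta)$, $(-\eta,\eta,\eta)$, at which $\Phi\equiv 0$, and the \emph{diagonal} point $(\eta/3,\eta/3,\eta/3)$, at which $\Phi = 2\eta^2/3$. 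The two terms subtracted in the statement are precisely the leading stationary-phase contributions of these four points.

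The first step is to introduce a smooth partition of unity $1 = \psi_{\mathrm{bulk}} + \sum_{j=1}^{4} \psi_j$ on the $(\eta_1,\eta_2)$-plane, with each $\psi_j$ localizing a mesoscopic neighborhood of radius $R = \eta^{\alpha}$ (for some small $\alpha>0$ to be fixed) around the $j$-th critical point. On the bulk piece one has $|\nabla_{(\eta_1,\eta_2)}\Phi| \gtrsim \eta^{1-\alpha}$, so two iterations of the non-stationary identity $e^{i\Phi} = -i\,\mathrm{div}\bigl(\nabla\Phi\, e^{i\Phi}/|\nabla\Phi|^2\bigr)$, combined with the pointwise bounds $|S_{A,a,0}|\lesssim|A|$, $|S'_{A,a,0}(\eta_j)|\lesssim|A|/|\eta_j|$ and the weight $|\eta_1\eta_2\eta_3|^{-1/2}\lesssim\eta^{-3/2}$, deliver an $O(|A|^3\eta^{-7/2})$ bound, provided $\alpha$ is small enough that the boundary terms generated when derivatives land on the cutoffs $\psi_j$ are of the same order.

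The main work lies in the resonant pieces. Near, say, $(\eta,\eta,-\eta)$, the change of variables $\eta_1 = \eta + \mu$, $\eta_2 = \eta + \nu$ reduces the phase \emph{exactly} to $\Phi = 2\mu\nu$, and the amplitude
\begin{align*}
a(\mu,\nu) := \frac{S_{A,a,0}(\eta+\mu)\, S_{A,a,0}(\eta+\nu)\, \overline{S_{A,a,0}(\eta+\mu+\nu)}}{\bigl[(\eta+\mu)(\eta+\nu)(\eta+\mu+\nu)\bigr]^{1/2}}
\end{align*}
is smooth with $|\partial_\mu^j\partial_\nu^k a| \lesssim |A|^3 \eta^{-3/2 - j - k}$. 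The exact Gaussian identity $\iint e^{2i\mu\nu}\,d\mu\,d\nu = \pi$ (proved by the rotation $(\mu,\nu)\mapsto\bigl((u+v)/\sqrt2,(u-v)/\sqrt2\bigr)$ reducing the phase to $u^2 - v^2$) yields the leading contribution $\pi a(0,0) = \pi|A|^2 A\, e^{ia\log\eta}/\eta^{3/2}$. The remainder is controlled via the fundamental identity $\mu e^{2i\mu\nu} = (2i)^{-1}\partial_\nu e^{2i\mu\nu}$ and its analog in $\nu$: writing $a(\mu,\nu) - a(0,0)$ in integrated form and transferring the extra $\mu$ or $\nu$ factors onto the amplitude by integration by parts generates two derivatives of $a$, hence two extra factors of $\eta^{-1}$, which produces the required $O(|A|^3\eta^{-7/2})$ bound. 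Summing the three symmetric resonances gives $3\pi|A|^2 A\, e^{ia\log\eta}/\eta^{3/2}$. For the diagonal point, the substitution $\eta_j = \eta/3 + \mu_j$ produces the non-degenerate phase $\Phi = 2\eta^2/3 - 2(\mu^2 + \mu\nu + \nu^2)$, and the explicit Gaussian computation of $\iint e^{-2i(\mu^2+\mu\nu+\nu^2)}\,d\mu\,d\nu$---whose value is determined by the determinant and signature of the associated Hessian---combined with the same IBP scheme for the remainder terms, delivers the second oscillatory term in the statement.

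The chief technical obstacle will be the unified bookkeeping of errors coming from the amplitude expansion and the cutoff boundaries. At the space-time resonances the absence of a definite Hessian rules out the direct application of Theorem \ref{fase estacionaria}, so the combination of the parity of $e^{2i\mu\nu}$ with the divergence-form identity $\mu e^{2i\mu\nu} = (2i)^{-1}\partial_\nu e^{2i\mu\nu}$ is essential: a naive first-order Taylor expansion of $a$ would only produce an $O(\eta^{-5/2})$ remainder, insufficient for the stated bound. At the diagonal point the standard stationary-phase lemma applies cleanly, but one still needs to check that the boundary corrections from the cutoff decay faster than any polynomial in $\eta$ (a consequence of the Schwartz regularity of $\psi_j$), so that the final error matches the bulk estimate.
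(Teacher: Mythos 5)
Your proposal follows the same basic strategy as the paper: isolate the four stationary points of the phase (the three space--time resonances and the diagonal point $(\eta/3,\eta/3,\eta/3)$), extract their leading contributions, and kill the non-stationary bulk by repeated integration by parts. The difference is purely in execution. The paper first rescales $\eta_j=\eta p_j$, so that the phase becomes $\eta^2\Psi(p)$ with $\eta^2$ as a clean large parameter, and then simply invokes Theorem \ref{fase estacionaria} at each critical point; you instead stay in the original variables, reduce each resonance to an exact quadratic phase ($2\mu\nu$, resp.\ $2\eta^2/3-2(\mu^2+\mu\nu+\nu^2)$), evaluate the Fresnel integrals explicitly, and control the remainder by hand with the identity $\mu e^{2i\mu\nu}=(2i)^{-1}\partial_\nu e^{2i\mu\nu}$. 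Both routes give the same leading terms and the same $O(\eta^{-7/2})$ error; your version buys an exact (rather than Taylor-approximated) phase at the cost of redoing the stationary-phase bookkeeping manually.

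Two inaccuracies in your write-up are worth correcting, though neither is fatal. First, your stated reason for avoiding Theorem \ref{fase estacionaria} at the space--time resonances --- that the Hessian is indefinite --- is a misconception: the lemma requires only a \emph{nondegenerate} critical point, and at $(1,1,-1)$ the (rescaled) Hessian is $\bigl(\begin{smallmatrix}0&2\\2&0\end{smallmatrix}\bigr)$ with determinant $-4$; the paper applies the lemma there directly, the signature merely entering through the factor $e^{i\pi\sgn D^2\Phi/4}$. Second, on the bulk your bound $|\eta_1\eta_2\eta_3|^{-1/2}\lesssim\eta^{-3/2}$ fails in configurations where one frequency is of order $1$ (the cutoff $\chi$ only guarantees $|\eta_j|\gtrsim 1$), and the gradient lower bound away from the critical set is $|\nabla\Phi|\gtrsim \eta^{\alpha}$ rather than $\eta^{1-\alpha}$; the paper handles this by ordering the frequencies, integrating the singular weight $|p_1|^{-1/2-k}$ explicitly, and iterating the integration by parts enough times that the accumulated gain beats both the weaker gradient bound and the area of the region. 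With those corrections your argument closes.
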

\begin{proof}
 Let us consider the change of variables
$$\begin{cases}
    \eta_1 = \eta p_1\\
    \eta_2 = \eta p_2\\
    \eta_3 = \eta p_3
\end{cases}.$$
Then, we may write the integral as
$$I=\eta^\frac{1}{2}\iint e^{i\eta^2\Psi}\frac{S_{A,a,0}(\eta p_1)S_{A,a,0}(\eta p_2)S_{A,a,0}(\eta p_3)}{(p_1p_2p_3)^\frac{1}{2}}dp_1 dp_2,$$
where $\Psi = 1-|p_1|p_1-|p_2|p_2-|p_3|p_3.$ This resonance function has four stationary points, namely
$$
(1,1,-1), (1,-1,1), (-1,1,1), \left(\frac{1}{3}, \frac13, \frac13\right)
$$
For $j=1,\dots, 4$, let $\phi_j \in C^\infty_c(\mathbb R^2)$ be a cut-off function around a small neighborhood of each stationary point. Write 
$$
I_j=\eta^\frac{1}{2}\iint e^{i\eta^2\Psi}\frac{S_{A,a,0}(\eta p_1)S_{A,a,0}(\eta p_2)S_{A,a,0}(\eta p_3)}{(p_1p_2p_3)^\frac{1}{2}}\phi_j(p_1,p_2)dp_1 dp_2,\quad j=1,\dots, 4,
$$
and
$$
I_5:=I-\sum_{j=1}^4 I_j.
$$
We now estimate each term separately. For $j=1$, we rewrite 
$$
I_1=\eta^\frac{1}{2}|A|^2Ae^{ia\log \eta}\iint e^{i\eta^2\Psi}\frac{e^{ia\log\left|\frac{p_1p_2}{p_3}\right|}}{(p_1p_2p_3)^\frac{1}{2}}\phi_j(p_1,p_2)dp_1 dp_2.
$$
Since we are near a stationary point of $\Psi$ with $\det D^2\Psi=-4$, using the stationary phase lemma (Theorem \ref{fase estacionaria}), this term may be written as
$$I_1=|A|^2A\frac{\pi e^{ia\log \eta}}{\eta^\frac{3}{2}}+O(\eta^{-\frac{7}{2}}).$$
The contribution of $I_j$ for $j=2,3$ is exactly the same. For $j=4$, 
$$
I_4 = \eta^{\frac12}A^3e^{i3a\log\eta}\iint e^{i\eta^2\Psi}\frac{e^{ia\log|p_1p_2p_3|}}{|p_1p_2p_3|^\frac12}\phi_4(p_1,p_2)dp_1dp_2. 
$$
Since $\det D^2\Psi=4/3$ and $\Psi=2/3$ at the last stationary point, the application of the stationary phase lemma (Theorem \ref{fase estacionaria}) yields
$$
I_4= A^3e^{2i\eta^2/3+3ia\log\eta}\frac{i\pi \sqrt{3}e^{-3ia\log 3}}{\eta^\frac32} + O(\eta^{-\frac72}).
$$
For $j=5$, that is, away from any stationary point, suppose that $|p_1|\ge |p_2|\ge |p_3|$. Then we must have 
$$|\partial_{p_1}(\eta^2\Psi)|\sim \eta^2||p_1|-|p_3||\gtrsim \eta^2,$$
while
$$
\left|\partial_{p_1}^k\left(\frac{S_{A,a,0}(\eta p_1)S_{A,a,0}(\eta p_2)S_{A,a,0}(\eta p_3)}{(p_1p_2p_3)^\frac{1}{2}}\left(1-\sum_{j=1}^4\phi_j(p_1,p_2)\right)\right)\right|\lesssim |p_1|^{-\frac12 -k}\mathbbm{1}_{|\eta p_1|>1}.
$$
In particular, integrating by parts $k$ times in $p_1$, the resulting integral can be estimated as
$$
\eta^{\frac12}\iint_{|p_2|<|p_1|, |\eta p_1|>1} \frac{1}{\eta^{2k}}|p_1|^{-\frac12 - k}dp_1dp_2 \lesssim \eta^{-k}.
$$
In particular, taking $k$ large, the contribution of the non-stationary region decays as fast as one desires.

% Finally, we consider the case $0<\eta<1$. If all frequencies are $\lesssim 1$, then we estimate the integral directly by
% $$\iint\limits_{|\eta_1|,|\eta_2|,|\eta_3| \lesssim 1} \langle \eta_1 \rangle^{-\frac{1}{2}}\langle \eta_2 \rangle^{-\frac{1}{2}}\langle \eta_3 \rangle^{-\frac{1}{2}}d\eta_1 d\eta_2 \lesssim 1.$$
% Otherwise, supposing that $|\eta_1|\ge |\eta_2|\ge |\eta_3|$, we necessarily have $||\eta_1|-|\eta_3||\gtrsim |\eta_1|$. As such, integrating by parts as many times as necessary in $\eta_1$ and using the fact that
% $$
% \left|\partial_{\eta_1}\left(\frac{S_{A,a,0}(\eta_1)S_{A,a,0}(\eta_2)S_{A,a,0}(\eta_3)}{|\eta_1\eta_2\eta_3|^{\frac12}}\right)\right|\lesssim 1,
% $$
% the integral
% $$
% \iint e^{i\Phi}\frac{S_{A,a,0}(\eta_1)S_{A,a,0}(\eta_2)S_{A,a,0}(\eta_3)}{|\eta_1\eta_2\eta_3|^{\frac12}}d\eta_1 d\eta_2
% $$
% can be shown to be convergent (and uniformly bounded in $\eta$).
\end{proof}

\begin{lemma}\label{lem:IAB}
    For $\eta>1$,
    $$
    \left|\mathcal{I}[S_{A,a,B}] - \frac{3|A|^2A\pi e^{ia\log \eta}}{\eta^\frac32} - e^{2i\eta^2/3+3ia\log\eta}\frac{i\pi A^3\sqrt{3}e^{-3ia\log 3}}{\eta^\frac32}\right|\lesssim \jap{\eta}^{-\frac72}.
    $$

\end{lemma}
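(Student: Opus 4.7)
The plan is to bootstrap Lemma \ref{lem:IAB} from Lemma \ref{lem:IA} by decomposing the ansatz as $S_{A,a,B} = S_{A,a,0} + S_{0,a,B}$, where
$$S_{0,a,B}(\eta) = B\,\frac{e^{2i\eta^2/3 + 3ia\log\eta}}{\eta^2}\,\chi(\eta),\qquad \eta > 0,\qquad S_{0,a,B}(-\eta) = \overline{S_{0,a,B}(\eta)}.$$
By trilinearity, $\mathcal{I}[S_{A,a,B}]$ splits into eight terms: the ``main'' term $\mathcal{I}[S_{A,a,0}]$, which is already handled by Lemma \ref{lem:IA}, and seven ``mixed'' terms each carrying at least one factor of $S_{0,a,B}$. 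It then suffices to show that every mixed term is $O(\langle\eta\rangle^{-7/2})$.

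The key observation is that each occurrence of $S_{0,a,B}$ provides, simultaneously, a pointwise gain of $|B|/\eta_j^2 \lesssim |A|/\eta_j^2$ and an extra quadratic oscillation $e^{\pm 2i\eta_j^2/3}$. After the same rescaling $\eta_j = \eta p_j$ used in the proof of Lemma \ref{lem:IA}, a typical mixed term takes the form
$$\eta^{1/2}\,\eta^{-2k}\iint e^{i\eta^2\widetilde\Psi(p_1,p_2)}\,\mathcal{A}(p_1,p_2;\eta)\,dp_1\,dp_2,$$
where $k\ge 1$ counts the number of $S_{0,a,B}$ factors, $\mathcal{A}$ is an amplitude satisfying the same uniform estimates as in Lemma \ref{lem:IA}, and the modified phase is
$$\widetilde\Psi(p_1,p_2) = \Psi(p_1,p_2) + \frac{2}{3}\sum_{j\in S}\operatorname{sgn}(p_j)\,p_j^2,$$
with $S\subseteq\{1,2,3\}$ indexing the positions of the $S_{0,a,B}$ factors. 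Since the added piece is purely quadratic, the Hessian of $\widetilde\Psi$ differs from that of $\Psi$ by a constant symmetric matrix, so $\widetilde\Psi$ has only non-degenerate critical points: on each such point (if any fall inside the support of the cut-offs $\chi(\eta p_j)$), the stationary phase lemma (Theorem \ref{fase estacionaria}) delivers $O(\eta^{-2})$; away from critical points, repeated integration by parts in a direction where $\nabla\widetilde\Psi$ does not vanish delivers arbitrary polynomial decay. In either scenario we secure the required $O(\eta^{-2})$ bound on the $(p_1,p_2)$-integral, yielding a total of $\eta^{1/2-2k-2}\le \eta^{-7/2}$ for $k\ge 1$.

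The principal technical obstacle is purely bookkeeping. One must enumerate, for every sign configuration $(\operatorname{sgn}\eta_1,\operatorname{sgn}\eta_2,\operatorname{sgn}\eta_3)$ and every nonempty subset $S\subseteq\{1,2,3\}$, the finite list of critical points of $\widetilde\Psi$, verifying that they remain non-degenerate and that none of them produces a new resonant interaction fast enough to spoil the $\eta^{-7/2}$ decay. Because the perturbation $\frac{2}{3}\sum_{j\in S}\operatorname{sgn}(p_j)p_j^2$ is quadratic with determinant of order one, this case analysis is essentially a copy of the analysis performed for $\Psi$ in Lemma \ref{lem:IA}, with constants that depend only on $S$ and the signs. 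Once this verification is complete, summing the seven mixed contributions with Lemma \ref{lem:IA} yields the asserted bound, and the continuity/Lipschitz estimate \eqref{eq:IA_contr} follows from the same decomposition by applying the mean value theorem in the parameters $(A,a,B)$ to the amplitudes $\mathcal{A}$ above.
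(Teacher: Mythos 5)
Your proposal follows essentially the same route as the paper: decompose $S_{A,a,B}=S_{A,a,0}+S_{0,a,B}$, expand by trilinearity, and observe that each factor of $S_{0,a,B}$ contributes an extra $\jap{\eta}^{-2}$ weight so that the stationary-phase/non-stationary analysis of Lemma \ref{lem:IA} gives at least two additional orders of decay, i.e.\ $O(\jap{\eta}^{-7/2})$. The paper's own proof is in fact terser than yours (it omits all details with precisely this justification), so your additional care in tracking the modified phase $\widetilde\Psi$ and flagging the nondegeneracy check as the point requiring verification is a welcome refinement rather than a deviation.
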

\begin{proof}
    By Lemma \ref{lem:IA}, it suffices to show that the terms with at least one $B$ decay at least as $\jap{\eta}^{-\frac72}$. This follows again from a stationary phase analysis, observing that the terms with $B$ possess an extra $\jap{\eta}^{-2}$ weight (and thus the decay will be two orders faster than in the proof of Lemma \ref{lem:IA}). We omit the details.
\end{proof}

\begin{proof}[Proof of Theorem \ref{thm:multi_mbo_2}]
The first part of Theorem \ref{thm:multi_mbo_2} follows from Lemma \ref{lem:IAB}. To prove the contraction estimate \eqref{eq:IA_contr}, one must use the multilinear structure of $\mathcal{I}$ to write
\begin{align*}
    \mathcal{I}[S_{A_1,a_1,B_1}] - \mathcal{I}[S_{A_2,a_2,B_2}] &= \mathcal{I}[S_{A_1,a_1,B_1}, S_{A_1,a_1,B_1}, S_{A_1,a_1,B_1} - S_{A_2,a_2,B_2}] \\&+ \mathcal{I}[S_{A_1,a_1,B_1}, S_{A_2,a_2,B_2}, S_{A_1,a_1,B_1} - S_{A_2,a_2,B_2}]\\&+ \mathcal{I}[S_{A_2,a_2,B_2}, S_{A_2,a_2,B_2}, S_{A_1,a_1,B_1} - S_{A_2,a_2,B_2}].
\end{align*}
Applying the same arguments in the proofs of Lemmas \ref{lem:IA} and \ref{lem:IAB}, one may easily obtain the desired estimate.
\end{proof}

\section{Multilinear estimates for the NLS equation}\label{sec:nls}
\addtocontents{toc}{\protect\setcounter{tocdepth}{1}}
We now estimate the trilinear operator
$$
    \T(\eta):=\mathcal{T}[h_1,h_2,h_3] (\eta)
    = \iint e^{i \Phi}\, h_1(\eta_1) h_2(\eta_2) h_3(\eta_3)\, d\eta_1 d\eta_2,
$$
with the phase function 
$$
\Phi = \eta^2 - \eta_1^2 + \eta_2^2 - \eta_3^2.
$$
\begin{theorem}[Multilinear Estimates I]
    Let $\k_j  \in \left(-\frac12, \frac12\right)$ and $g_j \in Y^{\k_j}$, for $j=1,2,3$  Then the following estimates hold:
    $$\left|\mathcal{T}[h_1,h_2,h_3](\eta)\right| \lesssim \langle\eta\rangle^{-\k_1-\k_2-\k_3} \prod_{j=1}^3 \|h_j\|_{Y^{\k_j}}. $$
\end{theorem}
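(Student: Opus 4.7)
The plan is to adapt the analysis of the operator $\mathcal{I}_{\text{h}\times\text{h}}^{(+,-,+)}$ in Section~\ref{sec:est_mbo1}, since the NLS resonance function $\Phi=\eta^2-\eta_1^2+\eta_2^2-\eta_3^2$ coincides (up to a factor of $2$) with the mBO phase in that sign configuration and admits the same factorization
$$
\Phi=\frac{(\eta+\eta_2)^2-(\eta_3-\eta_1)^2}{2}.
$$
The main differences with respect to the mBO analysis are the absence of the singular weights $|\eta_1\eta_2\eta_3|^{-1/2}$ (no derivative loss) and the replacement of the $Z^\kappa$ low-frequency behavior by the logarithmic singularity allowed in $Y^\kappa$. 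Writing the integral in terms of the variables $r=\eta+\eta_2$, $s=\eta_3-\eta_1$ followed by $u=(r+s)/2$, $v=(r-s)/2$, one obtains $\Phi=2uv$, which is exactly the structure exploited in Section~\ref{sec:est_mbo1}.

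First, I would treat the case $|\eta|\lesssim 1$. Here the sought bound is simply $|\mathcal{T}(\eta)|\lesssim 1$. A direct estimation, splitting $\{|\eta_j|\lesssim 1\}$ (where one uses $|h_j(\eta_j)|\lesssim \log|\eta_j|$, which is locally integrable) from $\{|\eta_j|\gg 1\}$ (where one uses $|h_j(\eta_j)|\lesssim \langle\eta_j\rangle^{-\kappa_j}$ with $\kappa_j>-1/2$), provides absolute convergence of the double integral after one integration by parts in the dominant high-frequency variable; the integration by parts is legitimate because the boundary where $|\eta_j|\sim 1$ only produces harmless terms, and in the genuinely low-frequency region the integrand is already integrable.

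Next, for $|\eta|\gg 1$, I would reproduce almost verbatim the two-step double integration by parts from Section~\ref{sec:est_mbo1}, using the identities
$$
e^{i\Phi}=\frac{\partial_r(re^{i\Phi})}{1+ir^2}=\frac{\partial_s(se^{i\Phi})}{1-is^2},
$$
then the $(u,v)$-analogues after the intermediate change of variables, being careful that each $h_j$ is differentiated at most once. Partitioning the region $\{|\eta_j|\gg 1\ \forall j\}$ into the five cases (Case 1–Case 5) of Section~\ref{sec:est_mbo1}, the same stationary point $(\eta,-\eta,\eta)$ drives the worst case and yields a decay rate of $\langle\eta\rangle^{-\kappa_1-\kappa_2-\kappa_3}$, which is actually better than what is claimed (the mBO analogue produced an extra $\langle\eta\rangle^{-3/2}$ coming from the $|\eta_1\eta_2\eta_3|^{-1/2}$ weights; removing those weights simply drops this factor). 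The mixed regions, where exactly one or two of the $|\eta_j|$ lie in $[-1,1]$, are handled by first integrating by parts only in the large frequencies (as in Step~1–Step~2 of the proofs of $\mathcal{I}_{\text{h}\times\text{l}\times\text{l}}$ in Section~\ref{sec:est_mbo1}) and then estimating the inner small-frequency integral by the logarithmic bound $|h_j(\eta_j)|\lesssim\log|\eta_j|$, which is integrable on $[-1,1]$; the large-frequency factor still produces the $\langle\eta\rangle^{-\kappa_j}$ decay needed.

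The delicate step—and thus the main obstacle—will be the region where $|\eta|\gg 1$, some $|\eta_j|\lesssim \langle\eta\rangle^{-N}$ is very small, and one would like to integrate by parts in the corresponding variable without losing a power of $|\eta_j|^{-1}$ from the derivative of $h_j$. The resolution, already used for mBO in Section~\ref{sec:est_mbo1}, is to avoid integration by parts in that variable when $|\eta_j|\le \langle\eta\rangle^{-2}$ (instead estimating directly and exploiting the smallness of the integration domain together with the logarithm being $\log\langle\eta\rangle$) and to integrate by parts only when $|\eta_j|\ge \langle\eta\rangle^{-2}$, in which case the factor $|h_j'|\lesssim |\eta_j|^{-1}\lesssim \langle\eta\rangle^2$ is compensated by the $\langle\eta\rangle^{-2}$ gain from the oscillation. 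Combining these contributions yields the claimed bound in every subregion, completing the proof.
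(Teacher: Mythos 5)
Your proposal follows essentially the same route as the paper: at high output frequency with all inputs large, the paper likewise invokes the mBO $(+,-,+)$ analysis verbatim (using $\|h\|_{Y^\kappa}\sim\|h\|_{Z^\kappa}$ away from the origin), and in the mixed regions it performs the same region decomposition, integrating by parts in a large variable and splitting the small variable at a polynomial threshold (the paper uses $|\eta|^{-1}$ where you use $\langle\eta\rangle^{-2}$; both work since the logarithmic singularity of $Y^\kappa$ is integrable).

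One point in your last paragraph needs correcting, though the strategy itself is sound. You claim that for $|\eta_j|\ge\langle\eta\rangle^{-2}$ the factor $|h_j'(\eta_j)|\lesssim|\eta_j|^{-1}\lesssim\langle\eta\rangle^{2}$ is ``compensated by the $\langle\eta\rangle^{-2}$ gain from the oscillation.'' A single integration by parts only produces the factor $1/|\partial_{\eta_j}\Phi|\sim\langle\eta\rangle^{-1}$ in these regions, not $\langle\eta\rangle^{-2}$; and if you account for the $|\eta_j|^{-1}$ pointwise against the oscillatory gain, the surviving bound is only $\langle\eta\rangle^{-\kappa_{\mathrm{high}}}$, which falls short of $\langle\eta\rangle^{-\kappa_1-\kappa_2-\kappa_3}$ by up to $\langle\eta\rangle^{\kappa_1+\kappa_2}$ when the low-frequency exponents are positive. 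The correct accounting (which is what the paper does) keeps the $\langle\eta\rangle^{-1}$ from $1/(\eta_1-\eta_3)$ or $1/(\eta_2+\eta_3)$, integrates $|\eta_j|^{-1}$ over the annulus $[\langle\eta\rangle^{-2},1]$ to get only a $\log\langle\eta\rangle$ loss, and then uses that the sum of the low-frequency exponents is strictly less than $1$ (since each $\kappa_j<\tfrac12$) to absorb $\langle\eta\rangle^{-1}\log\langle\eta\rangle$ into $\langle\eta\rangle^{-\kappa_1-\kappa_2-\kappa_3}$. With that repair the argument closes in every subregion.
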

\begin{proof} To proceed, we divide the proof into several regions:\\

\noindent \textbf{- High $\times$ High $\times$ High: $  1 \ll |\eta_1|, |\eta_2|, |\eta_3|$}.  Analogously to the \eqref{dnls} case, since 
$$
\|f\|_{Y^\kappa} \sim \|f\|_{Z^\kappa} 
$$
for $|\eta|\gg 1$. In this case, we obtain
$$
\left|\int e^{i \Phi}\, h_1(\eta_1) h_2(\eta_2) h_3(\eta_3)\, d\eta_1 d\eta_2 \right| \lesssim \langle \eta \rangle^{-\k_1-\k_2-\kappa_3}.
$$

\noindent \textbf{- High $\times$ Low $\times$ High: $|\eta_2| \lesssim 1 \ll |\eta_1|, |\eta_3|$}
We divide the analysis into two cases.

\noindent \textbf{Case 1: $|\eta_1- \eta_3| \gtrsim  |\eta|$}
Integrating by parts in the variable $\eta_1$, we obtain
$$
\T(\eta)= \iint \frac{e^{i\Phi}}{i(\eta_1-\eta_3)^2} h_1(\eta_1) h_2(\eta_2) h_3(\eta_3)\, d\eta_1 d\eta_2 + \iint \frac{e^{i\Phi}}{2i(\eta_1-\eta_3)} \partial_{\eta_1}[h_1(\eta_1) h_2(\eta_2) h_3(\eta_3)]\, d\eta_1 d\eta_2.
$$
It suffices to estimate the worst term, i.e.
\begin{align*}
\iint \frac{e^{i\Phi}}{2i(\eta_1-\eta_3)} h_1^\prime(\eta_1) h_2(\eta_2) h_3(\eta_3)\, d\eta_1 d\eta_2.
\end{align*}
Since $|\eta_2|\lesssim 1$, without loss of generality we may assume $|\eta_3|\sim |\eta|$, so that
\begin{align*}
\left|\iint \frac{e^{i\Phi}}{2i(\eta_1-\eta_3)} h_1^\prime(\eta_1) h_2(\eta_2) h_3(\eta_3)\, d\eta_1 d\eta_2\right| 
&\lesssim \langle \eta \rangle^{-1} \iint \langle \eta_1\rangle^{-\kappa_1 -1} |\log|\eta_2|| \, \langle \eta_3\rangle^{-\kappa_3}\, d\eta_1 d\eta_2\\
&\lesssim \langle \eta \rangle^{-1-\kappa_3} \iint \langle \eta_1\rangle^{-\kappa_1 -1} |\log|\eta_2||\, d\eta_1 d\eta_2 \\
&\lesssim \langle \eta \rangle^{-1-\kappa_3}\\
&\lesssim \langle \eta \rangle^{-\k_1-\k_2-\kappa_3}.
\end{align*}

\noindent \textbf{Case 2: $|\eta_1- \eta_3| \ll  |\eta|$}
In this case we have $|\eta_1|\sim |\eta_3| \sim |\eta|/2$. We split into two subcases:

\medskip
\noindent\textbf{Case 2.1:} $|\eta_1-\eta_3| \geq |\eta|^{-1}$.  
Integrating by parts in $\eta_1$ and, once again, estimating the worst–case term, we obtain
\begin{align*}
\left|\iint \frac{e^{i\Phi}}{2i(\eta_1-\eta_3)} h_1^\prime(\eta_1) h_2(\eta_2) h_3(\eta_3)\, d\eta_1 d\eta_2\right|
&\lesssim \langle \eta \rangle^{-1-\kappa_1-\kappa_3} \iint |\eta_1-\eta_3|^{-1} |\log|\eta_2||\, d\eta_1 d\eta_2\\
&\lesssim \langle \eta \rangle^{-1-\kappa_1-\kappa_3}\int |\eta_1-\eta_3|^{-1}\, d\eta_1 \\
&\lesssim \langle \eta \rangle^{1-\kappa_1-\kappa_3} |\log|\eta||\\
&\lesssim \langle \eta \rangle^{-1^--\k_2-\kappa_3}\\
&\lesssim \langle \eta \rangle^{-\k_1-\k_2-\kappa_3}.
\end{align*}

\noindent\textbf{Case 2.2:} $|\eta_1-\eta_3| \leq |\eta|^{-1}$. In this region we cannot integrate by parts. Thus, by estimating directly,
\begin{align*}
\left|\iint e^{i\Phi} h_1(\eta_1) h_2(\eta_2) h_3(\eta_3)\, d\eta_1 d\eta_2\right|
&\lesssim \langle \eta \rangle^{-\kappa_1-\kappa_2} \iint |\log|\eta_2||\, d\eta_1 d\eta_2 \\
&\lesssim \langle \eta \rangle^{-1-\kappa_1-\kappa_2}\\
&\lesssim \langle \eta \rangle^{-\k_1-\k_2-\kappa_3}.
\end{align*}

\noindent \textbf{- High $\times$ High $\times$ Low: $|\eta_3|\lesssim1 \ll |\eta_1|,|\eta_2|$}
This case is analogous to the previous one, taking $\eta_2$ as the dependent variable.

\noindent \textbf{- Low $\times$ Low $\times$ High: $|\eta_1|,|\eta_2|\lesssim1 \ll |\eta_3|$}
Here we necessarily have $1 \ll |\eta_3|\sim |\eta|$. We split into two subcases.\\

\noindent\textbf{Case 1:} $|\eta_1|\gtrsim |\eta|^{-1}$. Once again, integrating by parts in $\eta_1$ and estimating the worst–case term, we obtain the bound
\begin{align*}
\left|\iint \frac{e^{i\Phi}}{2i(\eta_1-\eta_3)} h_1^\prime(\eta_1) h_2(\eta_2) h_3(\eta_3)\, d\eta_1 d\eta_2\right|
&\lesssim \langle \eta \rangle^{-1-\kappa_3} \iint |\eta_1|^{-1}\log|\eta_2|\, d\eta_1 d\eta_2 \\
&\lesssim \langle \eta \rangle^{-1-\kappa_3} |\log\!{|\eta|}| \\
&\lesssim \langle \eta \rangle^{-\k_1-\k_2-\kappa_3}.
\end{align*}

\noindent\textbf{Case 2:} $|\eta_1|\ll |\eta|^{-1}$. In this region we cannot integrate by parts. Thus, by estimating directly
\begin{align*}
\left|\iint e^{i\Phi} h_1(\eta_1) h_2(\eta_2) h_3(\eta_3)\, d\eta_1 d\eta_2\right|
&\lesssim \langle \eta \rangle^{-\kappa_3} \iint |\log|\eta_1|||\log|\eta_2||\, d\eta_1 d\eta_2 \\
&\lesssim \langle \eta \rangle^{-1-\kappa_3} |\log\!{|\eta|}| \\
&\lesssim \langle \eta \rangle^{-\k_1-\k_2-\kappa_3}.
\end{align*}

\noindent \textbf{- High $\times$ Low $\times$ Low: $|\eta_2|,|\eta_3|\lesssim1 \ll |\eta_1|$}
This case is analogous to the previous one.

\noindent \textbf{- Low $\times$ High $\times$ Low: $|\eta_1|,|\eta_3|\lesssim1 \ll |\eta_2|$}
Here we have $1 \ll |\eta_2|\sim |\eta|$ and $|\eta_2+\eta_3|\sim |\eta|\gg 1$.\\

\noindent\textbf{Case 1:} $|\eta_3|\geq |\eta|^{-1}$. Integrating by parts in $\eta_{2}$, and since $\eta_{3}$ is the small variable, the worst term is given by 

\begin{align*}
\left|\iint \frac{e^{i\Phi}}{2i(\eta_2+\eta_3)} h_1(\eta_1) h_2(\eta_2) h_3^\prime(\eta_3)\, d\eta_1 d\eta_2\right|
&\lesssim \langle \eta \rangle^{-1-\kappa_2} \iint |\log|\eta_1||\,|\eta_3|^{-1}\, d\eta_3 d\eta_1 \\
&\lesssim \langle \eta \rangle^{-1-\kappa_2} |\log\!{|\eta|}|\\
&\lesssim \langle \eta \rangle^{-\k_1-\k_2-\kappa_3}.
\end{align*}

\noindent\textbf{Case 2:} $|\eta_3|\ll |\eta|^{-1}$.  
Estimating directly,
\begin{align*}
\left|\int e^{i \Phi} h_1(\eta_1) h_2(\eta_2) h_3(\eta_3)\, d\eta_3 d\eta_2\right|
&\lesssim \langle \eta \rangle^{-\kappa_2} \int |\log|\eta_1|||\log|\eta_3||\, d\eta_1 d\eta_3 \\
&\lesssim \langle \eta \rangle^{-1-\kappa_2} |\log\!{|\eta|}| \\
&\lesssim \langle \eta \rangle^{-\k_1-\k_2-\kappa_3}.
\end{align*}

\noindent \textbf{- Low $\times$ Low $\times$ Low: $|\eta_1|,|\eta_2|,|\eta_3|\lesssim1$}
In this case, $|\eta|\lesssim |\eta_1|+|\eta_2|+|\eta_3|\lesssim 1$, and therefore
$$
\left|\iint e^{i\Phi} h_1(\eta_1) h_2(\eta_2) h_3(\eta_3)\, d\eta_1 d\eta_2\right| \lesssim 1.
$$
\end{proof}

\noindent \textbf{Acknowledgments:} The third author, T. S. R. Santos, would like to thank FAPESP and CAPES–PDSE for its support. The third author also gratefully acknowledges the Instituto Superior Técnico, Universidade de Lisboa, for its hospitality during the period in which part of this work was carried out.

\bigskip

\noindent \textbf{Data Availability Statement:} This manuscript has no associated data.

\bibliographystyle{alpha}
\bibliography{Biblio}

\end{document}